\newtheorem{thm}{Theorem}[section]
\newtheorem{claim}[thm]{Claim}
\newtheorem{cor}[thm]{Corollary}
\newtheorem{lem}[thm]{Lemma}
\newtheorem{prop}[thm]{Proposition}
\newtheorem{question}[thm]{Question}
\theoremstyle{definition}
\newtheorem{defn}[thm]{Definition}
\newtheorem{rem}[thm]{Remark}
\newtheorem{exa}[thm]{Example}
\newtheorem{notn}[thm]{Notation}
\newcommand\meet{\wedge}
\newcommand\join{\vee}
\newcommand\Zbb{\mathbb{Z}}
\newcommand\Seq[1]{( #1 )}
\newcommand\Acal{\mathcal{A}}
\newcommand\Bcal{\mathcal{B}}
\newcommand\Ccal{\mathcal{C}}
\newcommand\Dcal{\mathcal{D}}
\newcommand\Ecal{\mathcal{E}}
\newcommand\Fcal{\mathcal{F}}
\newcommand\Gcal{\mathcal{G}}
\newcommand\Hcal{\mathcal{H}}
\newcommand\Ical{\mathcal{I}}
\newcommand\Jcal{\mathcal{J}}
\newcommand\Qcal{\mathcal{Q}}
\newcommand\Gscr{\mathscr{G}}
\newcommand\Pscr{\mathscr{P}}
\newcommand\Sscr{\mathscr{S}}
\newcommand\Tscr{\mathscr{T}}
\renewcommand\Bbb{\mathbb{B}}
\newcommand\Hbb{\mathbb{H}}
\newcommand\Pbb{\mathbb{P}}
\newcommand\Nbb{\mathbb{N}}
\newcommand\Rbb{\mathbb{R}}
\newcommand\Th{{}^{th}}
\newcommand\add{\mathrm{add}}
\newcommand\dens{\mathrm{dens}}
\newcommand\Free{\mathrm{Free}}
\newcommand\colim{\mathrm{colim}}
\renewcommand\emptyset{\varnothing}
\newcommand\analytic{{\mathbf{\Sigma}}^1_1}
\newcommand\CA{{\mathbf{\Pi}}^1_1}
\newcommand\PCA{{\mathbf{\Sigma}}^1_2}
\newcommand\OCA{\mathrm{OCA}}
\newcommand\dfrak{\mathfrak{d}}
\newcommand\symdif{\triangle}
\newcommand\trleq{\trianglelefteq}
\newcommand\pbf{\mathbf{p}}
\newcommand\xbf{\mathbf{x}}
\newcommand\ybf{\mathbf{y}}
\newcommand\Abf{\mathbf{A}}
\newcommand\Bbf{\mathbf{B}}
\newcommand\Gbf{\mathbf{G}}
\newcommand\Xbf{\mathbf{X}}
\newcommand\bb[1]{^{[\![#1]\!]}}
\newcommand\id{\mathrm{id}}
\newcommand\im{\mathrm{im}}
\newcommand\alt{\mathrm{alt}}
\newcommand\sd{\mathrm{sd}}
\newcommand\PH{\mathrm{PH}}
\newcommand\dom{\operatorname{dom}}
\newcommand{\Om}{\Omega}
\newcommand\ON{\mathbf{ON}}
\newcommand\Coll{\mathrm{Coll}}
\begin{document}

\author[Bannister]{Nathaniel Bannister}
\author[Bergfalk]{Jeffrey Bergfalk}
\author[Moore]{Justin Tatch Moore}
\author[Todorcevic]{Stevo Todorcevic}

\keywords{
additivity,
coherent family,
derived limit,
Hechler forcing,
$L(\Rbb)$, 
measurable partition,
$n$-cofinal,
Solovay model,
strong homology,
universally Baire,
weakly compact
}

\thanks{
The authors would like to thank Jindrich Zapletal for his helpful remarks concerning the results
at the end of Section \ref{Sect:topology} and Monroe Eskew for discussion of the state of the art of dense ideals on small cardinals.
The authors would also like to thank the anonymous referee for their very careful reading of the paper.
The useful suggestions which they offered led to a number improvements.
The second author was partially supported by FWF grant Y1012-N35 and a Mar\'{i}a Zambrano Fellowship at the University of Barcelona.
The third author is supported by NSF grant DMS--1854367.
The fourth author is partially supported by CNRS grant UMR7586, NSERC grant 455916, and SFRS grant 7750027-SMART}

\address{Department of Mathematical Sciences, Carnegie Mellon University, USA}
\email{nathanib@andrew.cmu.edu}

\address{Departament de Matem\`{a}tiques i Inform\`{a}tica,
Universitat de Barcelona, Spain}
\email{bergfalk@ub.edu}

\address{Department of Mathematics, Cornell University, USA}
\email{justin@math.cornell.edu}

\address{Department of Mathematics, University of Toronto, Canada}
\email{stevo@math.toronto.edu}

\address{Institut de Math\'ematiques de Jussieu, Paris, France}
\email{stevo.todorcevic@imj-prg.fr}

\address{Matemati\v{c}ki Institut, SANU, Belgrade, Serbia}
\email{stevo.todorcevic@sanu.ac.rs}

\title{A descriptive approach to higher derived limits}

\begin{abstract}
We  present  a  new  aspect of the study of  higher  derived  limits.
More precisely,  we introduce a complexity measure for the elements of higher derived limits over the directed set $\Omega$ of functions from $\Nbb$ to $\Nbb$ and prove that cocycles of this complexity are images of cochains of the roughly the same complexity.
In the course of this work, we isolate a partition principle for powers of directed sets and show that whenever this principle holds, the corresponding derived limit ${\lim}^n$ is additive; vanishing results for this limit are the typical corollary.
The formulation of this partition hypothesis synthesizes and
clarifies several recent advances in this area.
\end{abstract}

\maketitle

\section{Introduction}

The first explicit treatments of the derived limits ${\lim}^n$
of the inverse limit functor were a cluster of works appearing around 1960
\cite{EiMo,Mil,Nob,Roos,Yeh}.\footnote{Higher derived limits did, however, implicitly figure in earlier works by, most notably, both Steenrod \cite{Ste} and Cartan and Eilenberg \cite{CaEi}.}
Milnor's \cite{Mil} may have been the most influential, for his isolation therein of a ${\lim}^1$ term in the cohomology of a mapping telescope foreshadowed this functor's role in a wide range of (co)limit phenomena of greater later prominence in algebraic topology---in the study of localizations and completions, homotopy (co)limits, and phantom maps, for example \cite{BK, MaPo, McG}.
Eilenberg and Moore established the first derived limit's importance for spectral sequence computations at around the same time \cite{EiMo}, and connections of the higher derived limits ${\lim}^n$ to the homological dimensions of rings were discovered very soon thereafter \cite{Oso}.
They have formed a fundamental part of mathematicians' toolkits ever since \cite{Wei}.

Within just a few years, the relevance of set-theoretic considerations to derived limits had grown conspicuous \cite{Gob, Mitch, Oso}.
This relevance has manifested in more recent decades as a growing literature on the set theory of the derived limits of inverse systems indexed by the partial order ${^\omega}\omega$ (see \cite[\textsection 1]{SVHDL} for a brief survey).
A major stimulus for much of this work was the formulation in \cite{MP} of a necessary condition for the additivity of strong homology in terms of such limits, together with the reformulation of the most basic instance of this condition in terms of the \emph{triviality} of certain \emph{coherent families of functions} indexed by ${^\omega}\omega$.
This last condition was promptly shown to be independent of the ZFC axioms \cite{MP, DSV, To89}, leaving open for the next three decades the question of whether the additivity of strong homology on any robust class of topological spaces might be independent of the ZFC axioms as well.

This question was answered in \cite{strong_hom_add} by developing the aforementioned implications into a more general circuit of equivalences, or near-equivalences, between
\begin{enumerate}
\item \label{item1} the commutativity of the ${\lim}^n$ and $\colim$ functors,
\item \label{item2} the additivity of strong homology, and
\item \label{item3} the triviality of higher-dimensional coherent families of functions,
\end{enumerate}
each on suitably restricted domains.
The third of these items had recently been shown to be consistent relative to a weakly compact cardinal in \cite{SVHDL}; it followed that item (\ref{item2}) on the category of locally compact separable metric spaces and, equivalently, item (\ref{item1}) for countable discrete diagrams of inverse sequences of finitely-generated abelian groups (in the category of pro-abelian groups) were both consistent relative to a weakly compact cardinal as well.
The solution, in short, consisted of a reduction of the questions of (\ref{item1}) and (\ref{item2}) to the more combinatorial question of (\ref{item3}), together with a solution to the latter.

The present work takes this reduction one step further: here we introduce a family of purely set-theoretic hypotheses $\PH_n$ on partitions of powers of ${^\omega}\omega$ and show these hypotheses to lie at the heart of the rather technical arguments of \cite{strong_hom_add} and \cite{SVHDL}.
More precisely, we distill those arguments into an initial step establishing the simultaneous consistency of the principles $\PH_n$ for all $n\in\omega$,
followed by a ZFC deduction from the latter of conditions (\ref{item1}) through (\ref{item3}) above.

This decomposition carries a number of benefits.
To begin with, it fully disentangles the combinatorial and algebraic components of arguments whose hybridity had hitherto been an impediment to their comprehension.
It thereby facilitates a much closer analysis of the set-theoretic content of these arguments, and this, indeed, is our work's main contribution: we show that, in the presence of a suitable large cardinal assumption,
the partition hypotheses $\PH_n$ hold for all \emph{universally Baire} partitions of powers of ${^\omega}\omega$.
This carries the corollary that, modulo a mild large cardinal hypothesis, for all $n\in\omega$ all universally Baire $n$-coherent families of functions indexed by ${^\omega}\omega$ are trivial,
answering Questions 6 and 7.12 of \cite{B17} and \cite{SVHDLwLC}, respectively, and generalizing a main result of \cite{To98}.
Moreover, the associated trivializations can themselves be taken to have low complexity relative to the $n$-coherent family; in particular, in the presence of suitable large cardinal hypotheses, they are universally Baire.

The partition hypotheses $\PH_n$, moreover, are clearly of some interest in their own right.
For example, they readily generalize to any directed partial order $\Lambda$ and to the ordinals $\omega_n$ in particular.
In our penultimate section, we record several basic but intriguing recognitions about the latter.
We show, for example, that the hypothesis $\PH_n(\omega_n)$ fails for all $n\in\omega$.
We also describe, for each $n$, conditions implying the principle $\PH_n(\omega_{n+1})$.
Already when $n=1$, however, these conditions carry considerable large cardinal strength, while for $n>1$ they are not even known to be consistent with the ZFC axioms.

\subsection*{Organization of this paper}
Section \ref{Sect:prelim} contains a review of standard notation, definitions, and set-theoretic concepts which will be used throughout the paper.
It also contains a list of references which give more detailed introductions to the different topics in set theory and algebraic topology which we will need.
In Section \ref{Sect:PH}, we formulate the Partition Hypotheses $\PH_n$ which are our main object of study.
Sections \ref{Sect:topology} and \ref{Sect:MeasPartHyp} together comprise our descriptive set-theoretic analysis of these principles.
In Section \ref{Sect:topology} we introduce refinements of the standard topological and Baire measurability structures on powers of ${^\omega}\omega$ and in Section \ref{Sect:MeasPartHyp} we show that the hypotheses $\PH_n$ hold for partitions which are measurable with respect to these structures.
We obtain as an immediate corollary a negative answer (modulo a large cardinal hypothesis) to the question, appearing in both \cite{B17} and \cite{SVHDLwLC}, of whether a nontrivial $n$-coherent family may be analytic.

Sections \ref{Sect:forcing} and \ref{Sect:PartHypAdd} together show how the hypotheses $\PH_n$ effect a decomposition of recent consistency results on the additivity of strong homology, and of ${\lim}^n$, into two distinct steps.
Section \ref{Sect:forcing} shows that these hypotheses hold in any generic extension by a finite-support iteration of Hechler forcings of weakly compact length.
This is the only explicit appearance of forcing in our arguments; readers who are not fluent with this technique may take Theorem \ref{PHfromHechlers} as a black box at no cost to their comprehension of any other section of the paper.
Section \ref{Sect:PartHypAdd} deduces additivity conclusions for ${\lim}^n$ from $\PH_n$. 
Section \ref{Sect:Generalizing} treats the two most natural generalizations of these hypotheses, namely to arbitrary products of directed posets, and to the ordinals.
After showing that the first of these carries the additivity implications for higher limits that one would hope for, we record the results on the ordinals $\omega_n$ mentioned above; we then reconnect these results to our main focus, partition hypotheses on powers of ${^\omega}\omega$.
In a third subsection, we show that partition hypotheses admit succinct formulation within the framework of simplicial sets, formulations in which they figure, suggestively, as only minor variations on classical partition relations.
We conclude with a number of open questions.

\section{Notation and preliminaries}
\label{Sect:prelim}

Although this paper is intended to be self-contained, we begin by listing
some standard references which some readers may find helpful, depending on their
background.
General information about set theory, including forcing, can be found in
Kunen's \cite{set_theory:kunen}.
Kechris's \cite{kechris} is the standard reference for descriptive set theory (e.g. \emph{Borel}, \emph{$\analytic$-sets}, \emph{$\CA$-sets}).
Kanamori's \cite{higher_infinite} is an encyclopedic account of large cardinals (e.g. \emph{weakly compact}, \emph{supercompact}, \emph{measurable}, $x^\sharp$), including their
history and motivation.
It also includes some additional results in descriptive set theory which will be needed (\emph{Shoenfield absoluteness}, \emph{Martin-Solovay absoluteness}).
An introduction to the notion of a \emph{universally Baire set} can be found in \cite{UB}.
Relevant background material on homological algebra and on higher derived limits
can be found in Marde\v{s}i\'c's \cite{SSH}.
Finally, many of the proofs in the present paper have their roots in
\cite{strong_hom_add, SVHDL}.

We now turn to our review.
As is standard, the symbol $\omega$ will denote the set of finite ordinals,
a set which coincides with the nonnegative integers.
All counting and indexing will begin at $0$ unless otherwise indicated.
If $X$ is a set and $n \in \omega$, we will write $X^n$ to denote the set of $n$-tuples of elements of $X$.
We will identify $X^1$ with $X$ and $X^n \times X$ with $X^{n+1}$.
Also, if $f$ is a function defined on a subset of $X^n$ and $(x_0,\ldots,x_{n-1})$ is in the domain of $f$,
we will write $f(x_0,\ldots,x_{n-1})$ instead of $f((x_0,\ldots,x_{n-1}))$.
In particular, if $f$ is a function defined on a subset of $X^{n+1}$,
$\xbf=(x_0,\ldots,x_{n-1}) \in X^n$, and $y \in X$,
we will write $f(\xbf,y)$ for $f(x_0,\ldots,x_{n-1},y)$ which in turn is really
$f((x_0,\ldots,x_{n-1},y))$.
By convention the $0$-tuple, also known as the null sequence,
is the empty set $\emptyset$.

We will use $\Omega$ to denote the collection of all strictly
increasing functions from $\omega$ to $\omega$ and $\Sigma$ to denote all finite strictly increasing sequences
of elements of $\omega$.
If $s \in \Sigma$, let $[s]$ denote the set of all elements of $\Omega$ which extend $s$.
This is a basic clopen set in the Polish topology on $\Omega$.
If $x,y \in \Omega$, we will write $x \join y$ and $x \meet y$ for their coordinatewise maximum and
minimum respectively.
\emph{Borel} will always refer to the metric topology on $\Omega$ and its powers unless specified otherwise.

Recall that a \emph{quasi-order} is a set $\Pbb$ equipped with a reflexive
transitive relation; we will also write \emph{quasi-order} to refer to the relation itself.
We will say that $\Pbb$ is a \emph{quasi-lattice} if there exist associative
operations $\meet$ and $\join$ on $\Pbb$
such that:
\begin{itemize}

\item $x \meet y \leq x,y$ and for all $z$, we have that $z \leq x,y$ implies $z \leq x \meet y$;

\item $x,y \leq x \join y$ and for all $z$, we have that $x,y \leq z$ implies $x \join y \leq z$.

\end{itemize}

If $\Pbb$ is a quasi-order and $n \geq 0$,
$\Pbb^{[n]}$ consists of those elements of $\Pbb^n$ whose coordinates
occur in (weakly) increasing order ($\Pbb^{[0]} = \Pbb^0$ consists
only of the null sequence).
We will write $\Pbb^{\leq n}$ to denote $\bigcup_{i=1}^n \Pbb^{i}$
and $\Pbb^{[\leq n]}$ to denote $\bigcup_{i=1}^n \Pbb^{[i]}$
(note that $i=0$ is excluded in both cases).
If $\xbf,\ybf \in \Pbb^{\leq n}$, we will write $\xbf \trleq \ybf$ to denote that $\xbf$ can be obtained from $\ybf$ by deleting coordinates. 

Let $\Pbb\bb{n} \subseteq \prod_{i=1}^n \Pbb^{i}$ consist of all $\sigma$ which are
$\trleq$-increasing.
If $F:\Omega^{\leq n} \to \Omega$ is such that $\xbf \trleq \ybf$ implies $F(\xbf) \leq F(\ybf)$,
then define $F^*:\Omega\bb{n} \to \Omega^{[n]}$
by $$F^*(\sigma) := F\circ \sigma = \Seq{F(\sigma(i)) \mid 1 \leq i \leq n}.$$
We note here that by our convention, both $\Pbb\bb{1}$ and $\Pbb^{[1]}$ are identified with $\Pbb$.

The quasi-order of primary interest in this paper is $\Omega$ equipped with the 
order of eventual dominance.
For all $k \in \omega$ we equip $\Omega \cup \Sigma$ with the quasi-order
$\leq^k$ defined by $x \leq^k y$ if $\dom(y) \subseteq \dom(x)$ and
$x(i) \leq y(i)$ for all $i \in \dom (y)$ with $k \leq i$.
If $x,y \in \Omega$, we define $x \leq^* y$ to mean $x \leq^k y$ for some $k$.
We will write $\leq$ for $\leq^0$, noting that in this case
$\leq$ is antisymmetric and hence a partial order.
If $k > 0$, both $\leq^k$ and $\leq^*$ fail to be antisymmetric.
Notice that the operations $x \meet y := \min (x,y)$ and $x \join y := \max(x,y)$ witness
that $(\Omega,\leq^k)$ is a quasi-lattice for each $k$ and that $(\Omega,\leq^*)$ is a quasi-lattice.
If an implicit reference is made to a quasi-order on $\Omega$, it refers to
$\leq^*$.
For instance, 
$\Omega^{[n]}$ consists of tuples which are $\leq^*$-increasing.

Section \ref{Sect:forcing} assumes the reader is proficient in forcing.
As noted, a standard treatment of forcing can be found in \cite{set_theory:kunen};
we will also utilize the ``dot convention'' for denoting names for elements of generic extensions---see, e.g., \cite{Jech}.
Recall that \emph{Hechler forcing} is the partially ordered
set $\Hbb$ consisting of all pairs $p=(s_p,x_p)\in\Sigma\times\Omega$ for which $s_p\subseteq x_p$.
We call $s_p$ the \emph{stem} of $x_p$.
The order on $\Hbb$ is defined by
$q\leq p$ if and only if $s_p\subseteq s_q$ and $x_q\geq x_p$.
Following standard forcing terminology,
elements of $\Hbb$ will be referred to as
\emph{Hechler conditions}
or simply \emph{conditions}.
It should be noted that while only Section \ref{Sect:forcing} will require knowledge of forcing,
$(\Hbb,\leq)$ is closely tied to the topology $\tau$ on $\Omega$
appearing in Section \ref{Sect:topology}.
Moreover the elementary definitions and arguments of Section \ref{Sect:topology} are informed
by the perspective of forcing.

Recall that a \emph{Polish space} is a topological
space which is separable and completely metrizable.
The class of Polish spaces is closed under countable products and taking closed subspaces.
In particular, $\Hbb \subseteq \Sigma \times \omega^\omega$ is a Polish space, where $\Sigma$ is equipped with the discrete topology.
Also, if $X$ is a countable set, then $\Pscr(X)$ is a compact Polish space when given the topology generated by the following sets, for $x \in X$:
$$\{A \in \Pscr(X) : x \in A\} \qquad \qquad \{A \in \Pscr(X) \mid x \not \in A\}.$$
The Borel sets in a Polish space are the elements of the smallest $\sigma$-algebra which
contains the open sets.
The projective hierarchy of $\mathbf{\Sigma}^1_n$- and $\mathbf{\Pi}^1_n$-sets is defined recursively as follows.
The $\mathbf{\Sigma}^1_0$-sets are the Borel sets.
The $\mathbf{\Pi}^1_n$-sets are those sets whose complement is $\mathbf{\Sigma}^1_n$.
The $\mathbf{\Sigma}^1_{n+1}$-sets are those sets which are a continuous image (e.g. a projection) of a $\mathbf{\Pi}^1_n$-set.
$\analytic$-sets are often referred to as \emph{analytic sets};
$\CA$-sets are often referred to as \emph{coanalytic sets}.
By a result of Souslin, the Borel sets are precisely those sets which are both anaytic and coanalytic.
A function is Borel (or $\CA$, $\PCA$) if its graph is.
We note that the $\mathbf{\Sigma}^1_{n+1}$-functions include the $\mathbf{\Pi}^1_n$-functions and are closed under composition.
Borel functions coincide with those functions with the property that preimages of open sets are Borel.

In order to demonstrate that a set is projective, one typically examines the logical structure of the description of the set.
For instance Borel sets are closed under quantification over countable sets:
if $X$ is Polish, $S$ is countable, and $B \subseteq X \times S$ is Borel, then so are the sets:
\[
\{x \in X \mid \exists s \in S ((x,s) \in B)\}
\]
\[
\{x \in X \mid \forall s \in S ((x,s) \in B)\}.
\]
Similarly, if $n \geq 1$,
the $\mathbf{\Sigma}^1_n$-sets are closed under existential quantification over a Polish space while the $\mathbf{\Pi}^1_n$-sets are closed under
universal quantification over a Polish space.
Furthermore, if $Z \subseteq X \times Y$, $X$ and $Y$ are Polish and $Z$ is $\mathbf{\Pi}^1_n$,
then
\[
\{x \in X \mid \exists y \in Y ((x,y) \in Z)\}
\]
is a $\mathbf{\Sigma}^1_{n+1}$-set.

At several points in our argument, we will need the
Kond\^{o}-Novikov Uniformization Theorem for $\CA$-sets.

\begin{thm}(see \cite[36.14]{kechris}) \label{KNU}
  Suppose that $X$ and $Y$ are Polish spaces and $R \subseteq X \times Y$ is a $\CA$-relation.
  There is $\CA$-function $\varphi \subseteq R$ with the same domain as $R$.
\end{thm}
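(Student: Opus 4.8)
The plan is to reproduce the classical argument of Kond\^{o} and Novikov (in the lightface refinement due to Addison), organized as two reductions followed by a ``leftmost branch'' selection.

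First I would reduce to the case in which $Y$ is a closed subspace of $\NN$. For an arbitrary Polish $Y$ there are a closed set $C\subseteq\NN$ and a continuous bijection $g\colon C\to Y$, whose inverse $g^{-1}\colon Y\to C$ is Borel by the Lusin--Souslin theorem. Pulling $R$ back along $\id_X\times g$ gives $R'=\{(x,z)\in X\times C:(x,g(z))\in R\}$, which is $\CA$ (as $C$ is closed and $g$ continuous) and has the same domain as $R$; and from a $\CA$-uniformization $\psi\subseteq R'$ of $R'$ one recovers the $\CA$-uniformization $\varphi:=g\circ\psi$ of $R$, since its graph is $\{(x,y):(x,g^{-1}(y))\in\psi\}$ and $\CA$ is closed under Borel preimages. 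No reduction of $X$ is needed. So I may assume $Y\subseteq\NN$ is closed, hence zero-dimensional with its inherited lexicographic order.

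Next I would invoke the tree representation of $\CA$ sets: a continuous assignment $(x,y)\mapsto T(x,y)$ of trees on $\omega$ with $(x,y)\in R$ if and only if $T(x,y)$ is well founded. From the Kleene--Brouwer ordering of $T(x,y)$ --- a well ordering precisely when $T(x,y)$ is well founded --- interleaved with the successive coordinates of $y$, I would extract a sequence of norms $\varphi_n\colon R\to\ON$ forming a \emph{$\CA$-scale}: each $\varphi_n$ is a $\CA$-norm (so the relations $\le^*_{\varphi_n}$ and $<^*_{\varphi_n}$ are $\CA$), and whenever $(x,y_i)\in R$ with $y_i\to y$ and $n\mapsto\varphi_n(x,y_i)$ eventually constant for each $n$, then $(x,y)\in R$ with $\varphi_n(x,y)$ at most the eventual value. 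Granting the scale, for $x$ in the domain of $R$ I would let $\varphi(x)$ be the unique element of $R_x$ picked out by the leftmost-branch recursion: put $A_{-1}=R_x$ and, for $n\ge 0$, $\lambda_n=\min\{\varphi_n(x,y):y\in A_{n-1}\}$ and $A_n=\{y\in A_{n-1}:\varphi_n(x,y)=\lambda_n\}$; each $A_n$ is nonempty, the nested intersection is a singleton because the norms encode the coordinates of $y$, and the scale-limit property places its unique member in $R_x$. Finally I would verify that $\mathrm{graph}(\varphi)$ is $\CA$: it equals the set of $(x,y)\in R$ such that for every $y'$, either $(x,y')\notin R$ or the norm-sequence of $(x,y)$ lexicographically precedes or equals that of $(x,y')$; since the $\varphi_n$ are $\CA$-norms, the clause on $y'$ rewrites as a $\CA$ condition built from $\le^*_{\varphi_n}$ and $<^*_{\varphi_n}$, so that the whole set has the form $\{(x,y)\in R:\forall y'\,(\ldots\CA\ldots)\}$ and is therefore $\CA$.

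The hard part will be the construction of the $\CA$-scale above --- equivalently, the verification that the pointclass $\CA$ has the scale property. This is the genuine combinatorial content (the Novikov--Kond\^{o}--Addison analysis): the norms must be defined so that each comparison relation is uniformly $\CA$, which demands careful bookkeeping of how the Kleene--Brouwer orderings of $T(a)$ and $T(b)$ are matched up. A secondary subtlety is showing that the leftmost-branch member lies in $R_x$ itself rather than merely in its closure --- precisely the point at which a single $\CA$-norm would not suffice and the full strength of a scale is used. The remaining ingredients --- the reduction of $Y$, the tree representation over an arbitrary Polish $X$, and the logical-form computation for $\mathrm{graph}(\varphi)$ --- are routine.
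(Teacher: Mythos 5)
The paper does not prove this statement; it is cited as the classical Kond\^{o}--Novikov uniformization theorem, with \cite[36.14]{kechris} standing in for the proof. Your outline correctly reproduces the standard Novikov--Kond\^{o}--Addison argument found there --- reduction of $Y$ to a closed subset of $\NN$, the $\CA$-scale obtained by interleaving Kleene--Brouwer ranks with the coordinates of $y$, the leftmost-branch selection, and the complexity computation for the graph via the relations $\le^*_{\varphi_n}$ and $<^*_{\varphi_n}$ --- and you rightly flag the scale property of $\CA$ as the one substantive step you have deferred rather than verified.
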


Recall that a subset $A$ of a topological space $X$ has \emph{the property of Baire} in $X$
if there is an open subset $U$ of $X$ such that the symmetric difference between $A$
and $U$ is a meager subset of $X$.

\begin{defn}
A subset $A$ of a Polish space $X$ is \emph{universally Baire}
if for every Hausdorff topological space $Y$ and every continuous map $f: Y\rightarrow X$, the preimage $f^{-1}(A)$ has the property of Baire in $Y$.
A function $f:A \to B$ is universally Baire if $A$ and $B$ are universally Baire subsets of Polish spaces
and the graph of $f$ is universally Baire.
\end{defn}

The collection of subsets of a given Polish space which are universally Baire forms a $\sigma$-algebra.
Moreover, all $\analytic$- and $\CA$-sets are universally Baire.
In the presence of large cardinals, the universally Baire sets enjoy much stronger closure properties.

\begin{thm} \cite{UB} \cite{Omega-conjecture} \label{rel_UB}
Suppose either that there is a supercompact cardinal or a proper class of Woodin cardinals.
If $X,Y \in L(\Rbb)$ are Polish spaces, $A \subseteq X$ is universally Baire, and $B \subseteq Y$ is in 
$L(\Rbb,A)$, then $B$ is universally Baire.
In particular, every $\PCA$-set is universally Baire
and the class of universally Baire functions is closed
under composition.
\end{thm}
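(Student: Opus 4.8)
The plan is to reduce the statement to sets of reals and then, under the hypothesis, to show that every set of reals lying in $L(\Rbb,A)$ is $\infty$-homogeneously Suslin --- homogeneously Suslin at $\kappa$ for every cardinal $\kappa$ --- after which the comparatively easy implication from $\infty$-homogeneous Suslinness to universal Baireness finishes the proof. The reduction is routine: every Polish space in $L(\Rbb)$ is, in $L(\Rbb)$ itself, Borel isomorphic to a closed subset of $\omega^\omega$ or to a countable discrete space, the universally Baire sets form a $\sigma$-algebra closed under preimages by Borel functions, and $L(\Rbb)=L(\Rbb,\emptyset)$; so we may assume $X=Y=\omega^\omega$ and work with $L(\Rbb,A)$. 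The ambient framework I would use is the Feng--Magidor--Woodin characterization from \cite{UB}: $A\subseteq\omega^\omega$ is universally Baire if and only if, for every cardinal $\kappa$, there are trees $S,T$ on $\omega\times\kappa$ with $A=p[S]$ and $\omega^\omega\setminus A=p[T]$ whose projections remain complementary in every set-forcing extension. An $\infty$-homogeneous Suslin representation supplies such an $S$ together with a $T$ obtained from the Martin--Solovay tree, and verifying that this complementation persists generically is the easy direction; I would carry that out first, so that it remains only to certify $\infty$-homogeneous Suslinness.

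The bulk of the argument is then to place every set of reals of $L(\Rbb,A)$ into the pointclass $\Gamma$ of $\infty$-homogeneously Suslin sets. I would first record, by the Martin--Steel tree-production machinery, that $\Gamma$ is closed under complements, countable unions and intersections, and quantification over $\omega^\omega$: below any Woodin cardinal $\delta$ the projection of a ${<}\delta$-weakly homogeneously Suslin set is again ${<}\delta$-weakly homogeneously Suslin, and the complement of a ${<}\delta$-weakly homogeneously Suslin set is ${<}\delta$-homogeneously Suslin, and the hypothesis furnishes Woodin cardinals cofinal in $\ON$, so these operations may be applied at arbitrarily large $\delta$. Consequently $\Gamma$ contains every Borel set, hence every projective set, and it contains $A$ by hypothesis. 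What still needs to be established --- and this is the heart of the matter --- is that this closure propagates through the entire transfinite constructibility hierarchy of $L(\Rbb,A)$, not merely through finitely many rounds of real quantification over $A$ and the Borel sets.

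That last step is where the genuine difficulty lies and where the strength of the hypothesis is really used. The standard route is the derived-model (or ``tree production lemma'') analysis: adjoining $A$ as a predicate one has $L(\Rbb,A)\models\mathrm{AD}^+$, and one shows by induction up the $L(\Rbb,A)$-hierarchy that every set of reals definable over an initial segment of this inner model from $A$, reals, and ordinal parameters carries an $\infty$-weakly homogeneous Suslin representation, hence --- by the complementation step above --- an $\infty$-homogeneous one. The place where a proper class of Woodins is indispensable is exactly here: the homogeneity systems must draw on measures concentrating on arbitrarily large Woodin cardinals in order that the associated trees survive set forcing of unbounded size. Granting this, every $B\subseteq\omega^\omega$ with $B\in L(\Rbb,A)$ lies in $\Gamma$ and is therefore universally Baire. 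In the case of a supercompact cardinal $\kappa$ one runs the same scheme --- $\kappa$ is in particular a limit of Woodin cardinals, and the tree representations witnessing any prescribed instance of universal Baireness are objects of bounded rank, so the needed configurations are available below $\kappa$; alternatively one routes the argument through the stationary tower at $\kappa$. I would treat the proper-class case in detail and remark that the supercompact case is identical, referring to \cite{UB} and \cite{Omega-conjecture}.

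The two ``in particular'' assertions then require no further large-cardinal input. A $\PCA$-set has the form $\{x\in\omega^\omega:\exists y\in\omega^\omega\ \varphi(x,y)\}$ for some arithmetic $\varphi$, hence is an element of $L[z]\subseteq L(\Rbb)$ for a suitable real $z$, so is universally Baire by the main statement with $A=\emptyset$. For closure under composition, given universally Baire functions $f$ and $g$ I would pick a single universally Baire set $A$ coding the pair $(\mathrm{graph}(f),\mathrm{graph}(g))$ --- available since the universally Baire sets form a $\sigma$-algebra --- and note that $\mathrm{graph}(g\circ f)=\{(x,z):\exists y\ (x,y)\in\mathrm{graph}(f)\text{ and }(y,z)\in\mathrm{graph}(g)\}$ is definable in $L(\Rbb,A)$, hence universally Baire by the main statement, the passage between the various Polish spaces being absorbed by routine Borel-isomorphism bookkeeping. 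The single main obstacle, to reiterate, is the inductive identification of the sets of reals of $L(\Rbb,A)$ with $\infty$-homogeneously Suslin sets; everything else is either soft or classical.
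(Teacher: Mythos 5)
The paper does not actually prove Theorem \ref{rel_UB}: it is imported wholesale from \cite{UB} and \cite{Omega-conjecture} and used as a black box, so there is no in-paper argument to compare yours against. Your outline does track the standard literature proof (the Feng--Magidor--Woodin tree characterization, $\infty$-homogeneously Suslin representations, the Martin--Steel closure theorems, and a derived-model induction through $L(\Rbb,A)$), and the preliminary reductions and the two ``in particular'' clauses are handled essentially correctly --- modulo the slip that a $\PCA$-set is $\exists^{\omega^\omega}\CA$, not $\exists^{\omega^\omega}$-arithmetic, and that such a set is an element of $L(\Rbb)$ rather than of $L[z]$ (only the former is what you need, and it does hold).

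There are, however, two genuine gaps. First, the step you yourself identify as ``the heart of the matter'' --- that every set of reals of $L(\Rbb,A)$ carries an $\infty$-weakly homogeneous Suslin representation --- is deferred entirely to ``the standard route.'' The Martin--Steel closure properties give you only the projective-in-$A$ sets; propagating through the full constructibility hierarchy of $L(\Rbb,A)$ is the content of Woodin's theorem, and without it the proposal is an accurate map of the proof rather than a proof. Second, the supercompact case is dispatched with an argument that does not work as stated. Reflecting Woodinness below a supercompact $\kappa$ yields Woodin cardinals cofinal in $\kappa$, hence at best ${<}\kappa$-homogeneous representations and ${<}\kappa$-universal Baireness; it says nothing about forcings of size $\geq\kappa$, and ``the needed configurations are available below $\kappa$'' does not address them (nor does the stationary tower at $\kappa$, which likewise only sees posets of size ${<}\kappa$). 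Woodin's actual argument in \cite{W88} uses the supercompactness measures on $\mathcal{P}_\kappa(\lambda)$ for arbitrarily large $\lambda$ to build weakly homogeneous trees at every cardinal; the supercompact case is genuinely different from the proper-class-of-Woodins case and cannot be reduced to it by reflection.
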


Here $L(\Rbb)$ is the minimum model of ZF which contains all of the reals and ordinals;
if $A \subseteq \Rbb$, $L(\Rbb,A)$ is the minimum model of ZF which contains all of the reals, ordinals, and the set $A$.
Notice that the assumption that the Polish spaces $X$ and $Y$ are in $L(\Rbb)$ is a superficial one:
any Polish space
is homeomorphic to a closed subspace of $\Rbb^\Nbb$ and $L(\Rbb)$ contains all such subspaces.

\section{A Partition Hypothesis for \texorpdfstring{$\Omega^{n}$}{On}}
\label{Sect:PH}

We now define our main object of study.

\begin{defn}
\label{def:main_n_cofinal}
Suppose that $\Pbb$ is a directed quasi-order.
A function $F:\Pbb^{\leq n} \to \Pbb$ is \emph{$n$-cofinal} if:
\begin{itemize}

\item $x \leq F(x)$ for all $x \in \Pbb$;

\item if $\xbf \trleq \ybf$ are in $\Pbb^{\leq n}$, then $F(\xbf) \leq F(\ybf)$. 

\end{itemize}
\end{defn}

\begin{defn}
For all $n \in \omega$, define the \emph{Partition Hypothesis} associated to $\Pbb^{n+1}$ and a cardinal $\lambda$ to be the following statement:
\begin{description}

\item[$\PH_n (\Pbb,\lambda)$]
For all $c:\Pbb^{n+1} \to \lambda$ there is an $(n+1)$-cofinal
$F:\Pbb^{\leq n+1} \to \Omega$ such that $c \circ F^*$ is constant. 

\end{description}
$\PH_n$ will denote $\PH_n(\Omega,\omega)$.
\end{defn}
Our interest will be exclusively in the special case $\Pbb = \Omega$ and $\lambda = \omega$ for much of the paper; 
we will return to the general setting in Section \ref{Sect:Generalizing}.

Several observations are now in order.
First, the values of $c$ on elements of $\Omega^{n+1}$ which are not
$\leq^*$-increasing are not relevant---i.e., $\PH_n$ is really a statement about partitions of
$\Omega^{[n+1]}$.
Second, notice that partition hypotheses grow in strength with $n$, in the sense that $\PH_{n+1}$ implies $\PH_n$ for all $n\in\omega$.
In Section \ref{Sect:Generalizing} we record partition hypotheses of order $n$ which are consistent, but whose order-$(n+1)$ instances are not.
Third, $\PH_0$ is a ZFC theorem.
In fact, something formally stronger is true:
since the $\leq^{*}$ ordering on $\Omega$ is $\sigma$-directed,
for any $f:\Omega \to\omega$ there exists an $i\in\omega$ such that $\Upsilon:= f^{-1}(i)$ is $\leq^{*}$-cofinal and consequently $\leq^k$-cofinal in $\Omega$ for some $k\in\omega$
(see \cite[Lemma 3]{strong_hom_add}).
Let $F:\Omega \to \Upsilon$ be such that $x \leq^k F(x)$ for all $x \in \Omega$; 
this $F$ then witnesses $\mathrm{PH}_0$ not only with respect to the $\leq^*$ ordering, but with respect to the $\leq^k$ ordering on $\Omega$ as well.

The principle at work here implies the following more general lemma.

\begin{lem}\label{extensionlemma} Fix $n\in\omega$ and a function $c:\Omega^{n+1}\to\omega$.
For any $\leq^*$-cofinal $\Upsilon\subseteq\Omega$ and $(n+1)$-cofinal function $F:\Upsilon^{\leq n+1}\to\Omega$ extending the identity map for which the composition $c\circ F^*$ is constant, there exists an $(n+1)$-cofinal $\bar{F}:\Omega^{\leq n+1}\to\Omega$ extending $F$ with $c\circ \bar{F}^*$ constant.
Moreover, if there is a $k$ such that $F$ is $(n+1)$-cofinal with respect to $\leq^k$ then for some $\ell\in\omega$, $\bar{F}$ may be taken to $(n+1)$-cofinal with respect to the ordering $\leq^\ell$; similarly, if $F$ maps into $\Upsilon$ then $\bar{F}$ may be taken to map into $\Upsilon$ as well.
\end{lem}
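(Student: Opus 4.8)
The plan is to extend $F$ coordinate by coordinate — or rather, dimension by dimension — across all of $\Omega$, using $\sigma$-directedness of $\leq^*$ (equivalently $\leq^k$-cofinality arguments as in the proof that $\PH_0$ holds) to absorb each new point into the already-constructed cofinal map while keeping $c \circ \bar F^*$ constant. Concretely, I would first reduce to the case where $F$ is defined with respect to a fixed $\leq^k$ (if we only have $\leq^*$-cofinality, refine $\Upsilon$ to a single $\leq^k$-cofinal piece as in the $\PH_0$ discussion, at the cost of increasing $k$). Fix the constant value $i = c(F^*(\sigma))$. The goal is to define $\bar F$ on $\Omega^{[\le n+1]}$, agreeing with $F$ on $\Upsilon^{[\le n+1]}$, with $c(\bar F^*(\tau)) = i$ for every $\trleq$-increasing tuple $\tau$, and with $\bar F$ mapping into $\Upsilon$ when $F$ does.

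The key technical device is the following: given a finite set $S \subseteq \Omega$, a tuple $\xbf \in \Omega^{[m]}$, and the constraint that $\bar F(\xbf)$ must lie $\geq^k$ above finitely many already-chosen values $\bar F(\ybf)$ for $\ybf \trleq \xbf$ (the "lower" constraints) and below finitely many not-yet-chosen $\bar F(\zbf)$ for $\xbf \trleq \zbf$ — here one uses that those $\zbf$ will themselves be handled at a later stage and their values pushed up as needed — one wants a single value that additionally makes the relevant coordinates of $c$ come out equal to $i$. Since there are only countably many tuples $\xbf \in \Omega^{[\le n+1]}$... this is false — $\Omega$ is uncountable, so I must instead do a transfinite recursion of length $|\Omega|$, or, better, build $\bar F$ by a fusion/direct-limit argument over an increasing chain of countable $\leq^*$-cofinal... no. The right approach is: enumerate $\Omega = \{x_\alpha : \alpha < \dfrak'\}$ where $\dfrak'$ is the cardinality, put $\Upsilon_0 = \Upsilon$ and $\Upsilon_{\alpha+1} = \Upsilon_\alpha \cup \{x_\alpha\}$, and at each successor stage extend the $(n+1)$-cofinal map from $\Upsilon_\alpha^{\le n+1}$ to $\Upsilon_{\alpha+1}^{\le n+1}$, taking unions at limits. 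So the whole lemma reduces to the single-point extension step: given an $(n+1)$-cofinal $F_\alpha : \Upsilon_\alpha^{\le n+1} \to \Omega$ (resp. $\to \Upsilon_\alpha$) with $c \circ F_\alpha^*$ constantly $i$, and a new point $x \notin \Upsilon_\alpha$, extend to $\Upsilon_\alpha \cup \{x\}$.

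For that single-point step, one must define $F_{\alpha+1}$ on all new tuples, i.e. all $\trleq$-increasing tuples of length $\le n+1$ from $\Upsilon_\alpha \cup \{x\}$ that use $x$. Process these by increasing length. For a new tuple $\xbf$ of length $m$, the monotonicity requirement $\ybf \trleq \xbf \Rightarrow F_{\alpha+1}(\ybf) \le^\ell F_{\alpha+1}(\xbf)$ and $\xbf \trleq \zbf \Rightarrow F_{\alpha+1}(\xbf) \le^\ell F_{\alpha+1}(\zbf)$ forces $F_{\alpha+1}(\xbf)$ into an interval between a finite join of already-defined values below and (the eventual) finite meet of values above — but the values above are precisely the new tuples of larger length that use $x$, which we have not yet defined, so there is no upper obstruction yet; we only need $F_{\alpha+1}(\xbf)$ above the finite join $J$ of the $F_\alpha(\ybf)$ for $\ybf \trleq \xbf$ in $\Upsilon_\alpha^{\le n+1}$, above $x$ itself, and above $\max(\xbf)$. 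The surviving task is to choose such a value so that for each $\leq^*$-increasing $(n+1)$-tuple $\tau$ obtained by filling $\xbf$ out (when $m = n+1$, $\tau = \xbf$; when $m < n+1$ we will also need the fills, but those are handled at the final length), $c$ takes value $i$. When $m = n+1$: we need $F_{\alpha+1}^*$ on $\Omega\bb{n+1}$ to land in $c^{-1}(i)$. Here is where the $\PH_0$-style argument reenters: fixing the lower-length values (hence fixing, for a given $(n+1)$-tuple $\xbf$ using $x$, the corresponding tuple $\tau = (F_{\alpha+1}(\xbf \restriction \text{first coords}), \dots)$ up to the last coordinate $F_{\alpha+1}(\xbf)$), the map $y \mapsto c(\dots, y)$ partitions a $\leq^k$-cofinal subset of $\Omega$ into countably many pieces, so one piece is $\leq^{k'}$-cofinal; choosing $F_{\alpha+1}(\xbf)$ in that piece and above $J$ gives value $i$. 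One has to run this simultaneously over the finitely many $(n+1)$-tuples $\xbf$ that use $x$, which is fine since a finite intersection of $\leq^{k'}$-cofinal sets is $\leq^{k''}$-cofinal. The "maps into $\Upsilon$" variant is the same, intersecting with $\Upsilon$, which is $\leq^*$-cofinal by hypothesis. The $\ell$ produced this way increases along the recursion, but only finitely often in any single-point step; since the paper only asks for "some $\ell$", and the whole construction is a transfinite recursion, I should be slightly careful — in fact each single-point step bumps $k$ to some $k''$, so over uncountably many steps $\ell$ could blow up. The fix: do all the partitioning uniformly. Actually, a cleaner route avoids this entirely: one does not need $\bar F$ to be $\le^\ell$-cofinal for a uniform $\ell$ during the recursion; one only needs the final $\bar F$ to be $\le^\ell$-cofinal for \emph{some} $\ell$, and since $\Omega^{\le n+1}$ for fixed domain-chain can be handled... the honest statement is that I expect this bookkeeping — keeping the exponent $\ell$ bounded along a transfinite recursion — to be the main obstacle, and I would resolve it by noting that each extension step can be performed with the \emph{same} exponent $\ell = k$ throughout: the key point is that if $\Upsilon' \subseteq \Omega$ is $\le^k$-cofinal then for $f : \Upsilon' \to \omega$ some fiber is $\le^k$-cofinal (not merely $\le^{k'}$ for larger $k'$), because $\le^k$ is already $\sigma$-directed, so "$\le^k$-cofinal" is preserved; similarly finite intersections of $\le^k$-cofinal sets are $\le^k$-cofinal. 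With that observation the exponent never increases, $\ell = k$ works, and the transfinite recursion goes through cleanly; the first half of the lemma (with $\le^*$) is the special case where one first passes to a single $\le^k$ fiber of the trivial partition.
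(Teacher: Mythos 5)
There is a genuine gap, and it sits exactly where your proposal leans on the pigeonhole argument. At the final step of your single-point extension you write that, after fixing the lower-length values, the map $y \mapsto c(\dots,y)$ has a $\leq^{k'}$-cofinal fiber, and that choosing $F_{\alpha+1}(\xbf)$ in that fiber ``gives value $i$.'' The $\sigma$-directedness argument only yields that \emph{some} fiber is cofinal; nothing forces that fiber to be $c^{-1}(i)$, where $i$ is the constant value already achieved by $c\circ F^*$ on $\Upsilon\bb{n+1}$. (Already for $n=0$: if $\Upsilon=c^{-1}(0)$ and $F=\id$ on $\Upsilon$, the extension must send each $x$ into $c^{-1}(0)$ above $x$; a cofinal fiber of $c$ of some other color is useless.) The hypothesis $c\circ F^*\equiv i$ controls only tuples of the form $F^*(\vec\sigma)$ for $\vec\sigma\in\Upsilon\bb{n+1}$, so the only way to guarantee that the new $(n+1)$-tuples still receive color $i$ is to arrange that the image of $\bar F^*$ is contained in the image of $F^*$. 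Your construction instead introduces genuinely new tuples whose colors are uncontrolled, and no amount of bookkeeping on the exponent $\ell$ repairs this; the recursion could be forced to change color at the very first new point.

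The paper's proof achieves exactly the containment just described, in one line: since $\Upsilon$ is $\leq^m$-cofinal for some $m$, fix $g:\Omega\to\Upsilon$ with $g\restriction\Upsilon=\id$ and $x\leq^\ell g(x)$ for $\ell=\max(k,m)$, and set $\bar F(x_0,\dots,x_j):=F(g(x_0),\dots,g(x_j))$. Then $\bar F$ extends $F$, is $(n+1)$-cofinal with respect to $\leq^\ell$, maps into $\Upsilon$ whenever $F$ does, and satisfies $\bar F^*(\sigma)=F^*(g\circ\sigma)$ with $g\circ\sigma\in\Upsilon\bb{n+1}$, so $c\circ\bar F^*\equiv i$ automatically. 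This also renders your transfinite recursion, and the attendant worry about the exponent growing along it, unnecessary.
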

\begin{proof}
Suppose that $n$, $F$, and $\Upsilon$ are given as in the statement of the lemma and that $F$ is $(n+1)$-cofinal with respect to $\leq^k$. 
Again observe that $\Upsilon$ is $\leq^m$-cofinal in $\Omega$ for some $m\in\omega$.
Let $\ell=\max(k,m)$. 
Fix a function $g:\Omega\to\Upsilon$ with $g\restriction\Upsilon=\id$ and $x\leq^\ell g(x)$ for all $x\in\Omega$.
Define $\bar{F}$ by setting $\bar{F}(x_0,\dots,x_j):= F(g(x_0),\dots,g(x_j))$.
It is easily checked that $\bar{F}$ is an $(n+1)$-cofinal function as in the lemma's conclusion.
\end{proof}

This ``concentration on some $\leq^k$'' phenomenon holds for our partition hypotheses very generally.
More precisely, note that by modifying $c$ in the statement of $\PH_n$, we may assume
that whenever $\xbf$ is in $\Omega^{[n]}$, $c(\xbf)$ records (in addition to its original data) the least $k$
such that the coordinates of $\xbf$ are $\leq^k$-increasing.
If $F$ is an $(n+1)$-cofinal function such that $c \circ F^*$ is constant, then
for some $k$, $F(\xbf)\leq^kF(\ybf)$ for each $\xbf \trianglelefteq \ybf$. 
Moreover, since $(\Om,\leq^*)$ is $\sigma$-directed, there is an $\ell$ such that $\Upsilon_\ell:=\{x\mid x\leq^\ell F(x)\}$ is $\leq^*$ cofinal and hence $\leq^m$ cofinal for some $m$. 
Extending $F\upharpoonright\Upsilon_\ell$ to all of $\Omega^{[n]}$ as in Lemma \ref{extensionlemma} yields an $\bar{F}$ which is $(n+1)$-cofinal with respect to the order $\leq^{\max(m,k,\ell)}$.
In consequence, in any application of $\PH_n$ below, we may assume that the witnessing function $F$ is $(n+1)$-cofinal with respect to $\leq^k$ for some $k$.

\section{A notion of measurability associated to \texorpdfstring{$\Omega^{[n]}$}{O[n]}}

\label{Sect:topology}
In this section, we introduce variants of the standard topological and Baire measurability structures on $\Omega^{[n]}$ which will be instrumental in the argument of our main results.
\subsection*{A topology on \texorpdfstring{$\Omega$}{O}}

In addition to the Polish topology on $\Omega$,
we will also utilize the following stronger topology $\tau$
which is generated by the basic open sets
$$N_k(x) := \{y \in \Omega \mid x \leq y \textrm{ and } x \restriction k = y \restriction k \}.$$
This topology (already considered in \cite{To89})  is first countable, Choquet, and
has a $\sigma$-centered base (although it is nonseparable).
Notice that there is a natural order isomorphism between the Hechler poset $\Hbb$ and the basic open sets in $\tau$ ordered by containment:
$p \mapsto N_{|s_p|}(x_p)$;
we will let $N_p$ denote $N_{|s_p|}(x_p)$.
If $\pbf \in \Hbb^\omega$, define $W(\pbf) = \bigcup_{n=0}^\infty N_{\pbf(n)}$.
Clearly $W(\pbf)$ is Borel and $\tau$-open.
We will now isolate a sufficient criterion on $\pbf \in \Hbb^\omega$ to ensure that $W(\pbf)$ is dense.

Define $S:\Hbb^\omega \to \Pscr(\Sigma)$ by $S(\pbf) := \{s_{\pbf(n)} \mid n \in \omega\}$ and note
that $S$ is a Borel function---its graph is a Borel subset of the Polish space
$\Hbb^\omega \times \Pscr(\Sigma)$.
To see this, observe that
$(\pbf,A)$ is in the graph of $S$ if and only if
\[
(\forall n \in \omega\ (s_{\pbf(n)} \in A)) \land (\forall s \in \Sigma\ \exists n \in \omega\ ((s \not \in A) \lor (s = \pbf(n)))).
\]
Since the set of triples $(\pbf,A,n)$ such that $s_{\pbf(n)} \in A$ is clopen and hence Borel, the set of $(\pbf,A)$ such that 
$\forall n \in \omega\ (s_{\pbf(n)} \in A))$ is Borel (in fact it is closed).
Similarly, the set of pairs $(\pbf,A)$ such that 
\[
\forall s \in \Sigma\ \exists n \in \omega\ ((s \not \in A) \lor (s = \pbf(n)))
\]
is Borel (in fact $G_\delta$).
It follows that the graph of $S$ is Borel.
(This is an illustration of complexity computation mentioned in Section \ref{Sect:prelim}.)

\begin{defn}
A subset $S_0$ of $\Sigma$ is \emph{strongly dense}
if for all $(s,x) \in \Hbb$,
there is a $t \in S_0$ extending $s$ such that $x \leq t$.
We will let $\Sscr$ denote the collection of all strongly dense subsets of $\Sigma$.
\end{defn}

\begin{lem} \label{dense_open_basis}
If $\pbf \in \Hbb^\omega$ and $S(\pbf)$ is strongly dense, then $W(\pbf)$ is dense.
Moreover, if $S_0 \subseteq \Sigma$ is strongly dense and $U \subseteq \Omega$ is dense and open 
with respect to $\tau$, then
there is a $\pbf \in \Hbb^\omega$ such that $S(\pbf) \subseteq S_0$, $W(\pbf) \subseteq U$,
and $S(\pbf)$ is strongly dense.
\end{lem}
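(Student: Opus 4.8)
The plan is to prove the two assertions separately; the first is routine, and the ``moreover'' part is where the one real idea lies. For the first assertion it suffices to show that $W(\pbf)$ meets every nonempty basic $\tau$-open set, i.e.\ every $N_p$ with $p=(s,x)\in\Hbb$. Applying the strong density of $S(\pbf)$ to the condition $p$ gives an $n$ such that $t:=s_{\pbf(n)}$ extends $s$ and $x(i)\le t(i)$ for every $i\in\dom(t)$. I would then produce a point of $N_p\cap N_{\pbf(n)}$ directly: let $y\in\Omega$ agree with $t$ on $|t|$ and, for $i\ge|t|$, set $y(i):=\max\{x(i),\,x_{\pbf(n)}(i),\,y(i-1)+1\}$ (omitting the last entry if $i=|t|=0$). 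Since $t=s_{\pbf(n)}\subseteq x_{\pbf(n)}$, the sequences $t$ and $x_{\pbf(n)}$ share the initial segment $t$, and one checks routinely that $y$ is strictly increasing, that $y\restriction|t|=t$ (hence $y\restriction|s|=s$), and that $y\ge x$ and $y\ge x_{\pbf(n)}$ coordinatewise; thus $y\in N_p$ and $y\in N_{\pbf(n)}\subseteq W(\pbf)$.

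For the ``moreover'' part, the naive strategy --- build a strongly dense set of stems ``from $S_0$'' and one ``from $U$'' and intersect --- fails, because an intersection of strongly dense subsets of $\Sigma$ need not be strongly dense. Instead I would work inside $\Hbb$. Put $D_U:=\{q\in\Hbb:N_q\subseteq U\}$. First, $D_U$ is dense: given $p$, choose $z\in U\cap N_p$ (using $\tau$-density of $U$), then $k\ge|s_p|$ with $N_k(z)\subseteq U$ (using that $U$ is $\tau$-open and $z\in U$), and observe that $(z\restriction k,\,z)\le p$ lies in $D_U$. Second, $D_U$ is downward closed in $\Hbb$, since $q\le p$ implies $N_q\subseteq N_p$. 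Using the strong density of $S_0$, the set $E:=\{q\in\Hbb:s_q\in S_0\}$ is also dense: given $p=(s,x)$, pick $t\in S_0$ extending $s$ with $x(i)\le t(i)$ on $\dom(t)$, take $x'\in\Omega$ extending $t$ with $x'\ge x$ coordinatewise, and note $(t,x')\le p$ and $(t,x')\in E$. Now the crucial observation is that, since $D_U$ is downward closed, $D_U\cap E$ is dense: descend first into $D_U$ and then into $E$, the second step not leaving $D_U$.

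Finally I would extract $\pbf$ from $D_U\cap E$. Set $T:=\{s_q:q\in D_U\cap E\}$; then $T\subseteq S_0$, $T$ is countable (being a subset of $\Sigma$), and $T$ is strongly dense --- for $(s,x)\in\Hbb$, density of $D_U\cap E$ yields $(s',x')\le(s,x)$ in $D_U\cap E$, and $s'$ extends $s$, lies in $S_0$, and satisfies $x(i)\le x'(i)=s'(i)$ for $i<|s'|$ (as $s'\subseteq x'$ and $x'\ge x$), so $s'$ witnesses strong density at $(s,x)$. Being strongly dense, $T$ is infinite; enumerate it as $\langle t_n:n\in\omega\rangle$, for each $n$ fix some $q_n\in D_U\cap E$ with $s_{q_n}=t_n$, and put $\pbf(n):=q_n$. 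Then $S(\pbf)=T\subseteq S_0$ is strongly dense and $W(\pbf)=\bigcup_n N_{q_n}\subseteq U$ since each $q_n\in D_U$. The only non-routine step here is the recognition that $D_U$ is downward closed, which is exactly what rescues the intersection argument; everything else is bookkeeping with the Hechler order and the sets $N_k(x)$.
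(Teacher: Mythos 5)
Your proof is correct and takes essentially the same route as the paper's: your set $T=\{s_q \mid q\in D_U\cap E\}$ coincides with the paper's auxiliary set $S_1$ of stems in $S_0$ admitting an extension $x$ with $N_{|s|}(x)\subseteq U$, and the key verification of strong density is the same two-step argument (first shrink $N_q$ to a basic open set inside $U$, then apply strong density of $S_0$ below it). The packaging via dense and downward-closed subsets of $\Hbb$ is only a cosmetic, forcing-flavored rephrasing of the paper's argument.
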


\begin{proof}
First suppose that $S(\pbf)$ is strongly dense for some $\pbf \in \Hbb^\omega$.
Let $N_k(x)$ be an arbitrary basic open set.
Since $S(\pbf)$ is strongly dense, there is an $n \in \omega$ such that $s_{\pbf(n)}$
extends $x \restriction k$ and $x \leq s_{\pbf(n)}$.
Define $y = x \join x_{\pbf(n)}$ and observe that
$y \in N_k(x) \cap W(\pbf)$ with $y \in W(\pbf)$ witnessed by $n$.

Now suppose that $U$ and $S_0$ are given as in the statement of the lemma.
Let $S_1$ be the set of all $s \in S_0$ such that for some $x \in \Omega$ extending $s$,
$N_{|s|}(x) \subseteq U$.
Let $\pbf \in \Hbb^\omega$ be such that $S(\pbf) = S_1$ and such that if $(s,x)$ is in the range of $\pbf$,
then $N_{|s|}(x) \subseteq U$.
Since $S(\pbf) \subseteq S_0$ and $W(\pbf) \subseteq U$, it suffices to show that
$S_1$ is strongly dense.

To this end, let $q \in \Hbb$ be arbitrary.
Since $U \cap N_q$ is nonempty, it contains a basic open set $N_{q'}$.
Since $S_0$ is strongly dense, there is a $t \in S_0$ which extends $s_{q'}$ such
that $x_{q'} \leq t$.
If $y \in \Omega$ extends $t$ and $x_{q'} \leq y$, then $N_{|t|}(y) \subseteq U$ and therefore
$t \in S_1$, $t$ extends $s_{q}$, and $x_q \leq t$ as desired.
\end{proof}

Define ${\widehat \Gscr} := (\Hbb^\omega)^\omega$ and let 
$\Gscr \subseteq {\widehat \Gscr}$ to be the set of sequences
$G$ such that for all $n \in \omega$, $S(G(n)) \in \Sscr$.
If $G \in {\widehat \Gscr}$, define $W(G):=\bigcap_{n=0}^\infty W(G(n))$;
note that $W(G)$ is a Borel set which is $G_\delta$ with respect to $\tau$.
Furthermore, if $G \in \Gscr$, then $W(G)$ is dense.
We now turn to some complexity calculations.

\begin{lem} \label{Gscr_CA}
$\Sscr$ and $\Gscr$ are $\CA$-sets and there are $\CA$-functions
$g:\Gscr \times \Hbb \to \Omega$ and $\tilde g:\Gscr^{<\omega} \times \Hbb \to \Omega$ such that
$$g(G,p) \in W(G) \cap N_p$$
$$\tilde g(G_0,\ldots,G_{n-1},p) \in \bigcap_{i < n} W(G_i) \cap N_p$$
whenever $G \in \Gscr$, $(G_0,\ldots,G_{n-1}) \in \Gscr^{<\omega}$, and $p \in \Hbb$.
\end{lem}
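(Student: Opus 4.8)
The plan is to express $\Sscr$ and $\Gscr$ through their defining logical formulas to see that they are $\CA$, to recognize the two desired selectors as single-valued parts of a $\CA$ relation whose domain is all of $\Gscr\times\Hbb$ (respectively $\Gscr^{<\omega}\times\Hbb$), and then to invoke the Kond\^o--Novikov uniformization theorem (Theorem \ref{KNU}).

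First, unwinding the definitions, a set $S_0\subseteq\Sigma$ lies in $\Sscr$ if and only if
\[
\forall (s,x)\in\Hbb\ \exists t\in\Sigma\ \bigl(t\in S_0\ \wedge\ s\subseteq t\ \wedge\ \forall i\in\dom(t)\ x(i)\leq t(i)\bigr).
\]
The matrix here is clopen in $\Pscr(\Sigma)\times\Hbb\times\Sigma$ (the first conjunct by the definition of the topology on $\Pscr(\Sigma)$, the third because $\dom(t)$ is finite), so $\exists t\in\Sigma$ leaves the set Borel and the universal quantifier over the Polish space $\Hbb$ puts $\Sscr$ at the level $\CA$. For $\Gscr$ I would use that $S:\Hbb^\omega\to\Pscr(\Sigma)$ is Borel (established earlier in this section): each map $G\mapsto S(G(n))$ on $\widehat\Gscr$ is then Borel, so $\{G:S(G(n))\in\Sscr\}$ is the Borel preimage of the $\CA$-set $\Sscr$ and hence $\CA$, whence $\Gscr=\bigcap_n\{G:S(G(n))\in\Sscr\}$ is $\CA$. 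The identical bookkeeping shows that $\Gscr^{<\omega}$ is a $\CA$-subset of the Polish space $\bigsqcup_n\widehat\Gscr^n$ of finite sequences from $\widehat\Gscr$.

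Next I would introduce the relation
\[
R:=\bigl\{(G,p,y)\in\widehat\Gscr\times\Hbb\times\Omega : G\in\Gscr\ \wedge\ y\in N_p\cap W(G)\bigr\}.
\]
Since $\{(q,y):y\in N_q\}$ is closed in $\Hbb\times\Omega$ and $\{(\pbf,y):y\in W(\pbf)\}=\bigcup_m\{(\pbf,y):y\in N_{\pbf(m)}\}$ is Borel, the set $\{(G,p,y):y\in N_p\cap\bigcap_n W(G(n))\}$ is Borel; intersecting it with the $\CA$-set $\{(G,p,y):G\in\Gscr\}$ (a continuous preimage of $\Gscr$) shows that $R$ is $\CA$. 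The one substantive step is that $\dom(R)=\Gscr\times\Hbb$: for $G\in\Gscr$ each $W(G(n))=\bigcup_m N_{G(n)(m)}$ is $\tau$-open and, by Lemma \ref{dense_open_basis}, $\tau$-dense; since $\tau$ is Choquet and hence Baire, $W(G)=\bigcap_n W(G(n))$ is a dense $G_\delta$, and thus meets the nonempty $\tau$-open set $N_p$. Applying Theorem \ref{KNU} to $R\subseteq(\widehat\Gscr\times\Hbb)\times\Omega$ then produces a $\CA$-function $g\subseteq R$ with $\dom(g)=\dom(R)=\Gscr\times\Hbb$; by construction $g(G,p)\in W(G)\cap N_p$ for every $(G,p)\in\Gscr\times\Hbb$.

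For $\tilde g$ I would rerun the same argument with the relation
\[
\tilde R:=\bigl\{\bigl((G_0,\dots,G_{n-1}),p,y\bigr) : (G_0,\dots,G_{n-1})\in\Gscr^{<\omega}\ \wedge\ y\in N_p\cap\bigcap_{i<n}W(G_i)\bigr\},
\]
the only change being that totality of $\tilde R$ now uses that a finite intersection of $\tau$-dense $G_\delta$ sets is again a dense $G_\delta$ in the Baire space $(\Omega,\tau)$. Alternatively, once $\tilde g$ is available one may simply take $g:=\tilde g\restriction(\Gscr^1\times\Hbb)$ under the identification $\Gscr^1=\Gscr$. I expect the only real content of the proof to be this density (Baire-category) observation; everything else is routine quantifier counting, and the reason it suffices to work at the $\CA$ level, rather than fuss over whether $R$ admits a Borel uniformization, is precisely that Kond\^o--Novikov uniformizes arbitrary $\CA$ relations.
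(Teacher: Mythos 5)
Your proposal is correct and follows essentially the same route as the paper: express $\Sscr$ as a universal quantification over $\Hbb$ of a Borel matrix, pull back through the Borel map $S$ and intersect countably to get $\Gscr$, and uniformize the $\CA$ relation $R$ via Kond\^o--Novikov, with the density of $W(G)\cap N_p$ (via the Baire property of the Choquet topology $\tau$) guaranteeing that $\dom(R)=\Gscr\times\Hbb$. The only cosmetic difference is in obtaining $\tilde g$: the paper interleaves $(G_0,\dots,G_{n-1})$ into a single $\widetilde G\in\Gscr$ and sets $\tilde g(G,p)=g(\widetilde G,p)$, whereas you apply uniformization a second time to $\tilde R$; both work.
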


\begin{proof}
Define $\Tscr$ to be the set of all $(S_0,s,x) \in \Pscr(\Sigma) \times \Sigma \times \Omega$
such that there is a $t \in S_0$ which extends $s$ with $x \leq^{|s|} t$.
Since $\Tscr$ is Borel (it is metrically open), it follows that
$$
\Sscr = \{S_0 \in \Pscr(\Sigma) \mid \forall (s,x) \in \Sigma \times \Omega \ ((S_0,s,x) \in \Tscr)\}
$$
is $\CA$ ($\Sscr$ is the complement of the projection of the complement of $\Tscr$; we can also see this by appealing to the fact that the $\CA$-sets include the Borel
sets and are closed under universal quantification over Polish spaces).
Since the $\CA$-sets are closed under taking preimages by Borel functions \cite[32A]{kechris},
it follows that the preimage of $\Sscr$ under $S$ is $\CA$.
Since $\Gscr \subseteq (\Hbb^\omega)^\omega$ consists of those
sequences $\Seq{\pbf_n \mid n \in \omega}$ such that for all $n$, $S(\pbf_n) \in \Sscr$,
$\Gscr$ is an intersection of countably many $\CA$-sets and hence is $\CA$ (see \cite[32A]{kechris}).

Now consider the relation $R \subseteq {\widehat \Gscr} \times \Hbb \times \Omega$ consisting of 
all $(G,p,y)$ such that $G \in \Gscr$ and $y \in W(G) \cap N_p$.
Since the set of all $(G,p,y)$ such that $y \in W(G) \cap N_p$ is a Borel set, $R$ is $\CA$.
By Theorem \ref{KNU}, there is a $\CA$-function
$g:\Gscr \times \Hbb \to \Omega$ such that the graph of $g$ is contained in $R$.
Now if $G = (G_0,\ldots,G_n) \in \Gscr^{<\omega}$, define $\widetilde G \in \Gscr$ by
$\widetilde G(k) = G_m (i)$ if $k = m n + i$; 
if $G$ is the null sequence, define $\widetilde G$ to be the contant sequence with value the greatest element of $\Hbb$
(corresponding to the trivial open set $\Omega$).
Define $\tilde g(G,p) = g({\widetilde G},p)$, noting that $\tilde g$ is also a $\CA$-function.
\end{proof}

We will fix, for the remainder of the paper, $\CA$-functions $g$ and $\tilde g$ satisfying the conclusion of Lemma \ref{Gscr_CA}.

\subsection*{\texorpdfstring{$\Hcal_n$}{Hn}-measurability}

We will now develop an abstract higher dimensional analog of Baire measurability with respect to $\tau$ which
we will call \emph{$\Hcal_n$-measurability}.
We will prove that, in the presence of a large cardinal hypothesis,
the Partition Hypothesis holds for $\Hcal_n$-measurable partitions of $\Omega^{[n]}$ 
and that \emph{universally Baire} subsets of $\Omega^{[n]}$ are $\Hcal_n$-measurable. 
While it seems possible to show that $\Hcal_n$-measurability is Baire measurability with respect to a suitable topology
on $\Omega^{[n]}$, this would introduce unnecessary
complications and we choose not to pursue this.
We note that, if one is willing to make a stronger large cardinal assumption, it is possible
to prove the results of this section using the general framework provided by \cite[\S 5.1]{forcing_idealized}.
 
Define $\Hcal$ to be the collection of all Borel sets of the form $N_k(x) \setminus E$ such that
$E$ is $\tau$-meager.
The \emph{stem} of $N_k(x) \setminus E$ is $x \restriction k$.
This is well defined since every nonempty $\tau$-open set is nonmeager and since
the basic open sets $N_k(x)$ are both closed and open.
If $n \geq 0$, $Z \subseteq \Omega^{[n+1]}$, and $\xbf = (x_0,\ldots,x_{n-1}) \in \Omega^{[n]}$, define
$$Z_\xbf := \{y \in \Omega \mid (x_0,\ldots,x_{n-1},y) \in Z\}.$$
Note that if $n=0$, then $\Omega^{[0]} = \{\emptyset\}$ and $Z_\emptyset = Z$ modulo our convention of identifying $Z$ and $Z^1$.
We will now recursively define $\Hcal_n$ for $n \geq 0$ as well as define what the stem of an element of
$\Hcal_n$ is.

\begin{defn}
Set $\Hcal_0 = \{\Omega^{[0]}\}$; the stem of $\Omega^{[0]}$ is $\emptyset$.
Define $\Hcal_{n+1}$ to consist of all Borel sets $Z \subseteq \Omega^{[n+1]}$ such that for some $s \in \Sigma$:
\begin{itemize}

\item $X:=\{\xbf \in \Omega^{[n]} \mid Z_\xbf \ne \emptyset\}$ is in $\Hcal_n$;

\item for all $\xbf \in \Omega^{[n]}$ either $Z_\xbf \in \Hcal$ with stem $s$ or $Z_\xbf = \emptyset$.

\end{itemize}
The stem of $Z$ is the element of $\Sigma^{n+1}$ whose first $n$ entries are the stem of $X$ and whose
final entry is $s$.
\end{defn}

Observe that if $A \subseteq B$ are in $\Hcal_n$, then the stem of $A$ extends the stem of $B$ coordinatewise.
Also, $\Hcal_1$ coincides with $\Hcal$ modulo our convention of identifying $\Omega^{[1]}$ and $\Omega$.

There are two natural notions of smallness associated to each $\Hcal_n$.
We will ultimately show that they coincide if we assume a large cardinal hypothesis.

\begin{defn}
  A subset $X$ of $\Omega^{[n]}$ is \emph{$\Hcal_n$-nowhere dense} if for every $A \in \Hcal_n$ there is a $B \subseteq A \setminus X$ in $\Hcal_n$;
  $X$ is \emph{$\Hcal_n$-meager} if it is a countable union of $\Hcal_n$-nowhere dense sets.
  Similarly one defines $\Hcal$-nowhere dense and $\Hcal$-meager.
\end{defn}

\begin{defn}
Define  $\Ical$ to be the $\sigma$-ideal of $\tau$-meager subsets of $\Omega$ and
define  $\Ical_0:= \{\emptyset\}$ to be the trivial ideal on $\Omega^{[0]}$.
Define $\Ical_{n+1}$ to consist of all $I \subseteq \Omega^{[n+1]}$ such that for some Borel set
$Z \subseteq \Omega^{[n+1]}$, $I\subseteq Z$ and
\[
\{\xbf \in \Omega^{[n]} \mid Z_\xbf \not \in \Ical\} \in \Ical_n
\]
\end{defn}

It is easily verified that each $\Ical_n$ is closed under taking countable unions.
It will be useful to work with certain elements of $\Ical_n$ having a particularly nice form.

\begin{defn}
A set $I$ in $\Ical_{n+1}$ is \emph{full}  if:
\begin{itemize}

\item  $I$ is Borel,

\item either $n =0$ or $\{\xbf \in \Omega^{[n]} \mid I_\xbf \not \in \Ical\}$
is full, and

\item whenever $\xbf \in \Omega^{[n]}$ and $I_\xbf \not \in \Ical$, $I_\xbf = \Omega$.
\end{itemize}

\end{defn}

The utility in this definition comes from the fact that if $B \in \Hcal_n$ and $E \in \Ical_n$ is full,
then $B \setminus E$ is in $\Hcal_n$.
We also have the following lemma relating $\Ical_n$ and the $\Hcal_n$-meager sets.

\begin{lem} \label{full_cover}
Every element of $\Ical_n$ is contained in a full element of $\Ical_n$.
In particular, every element of $\Ical_n$ is $\Hcal_n$-nowhere dense.
\end{lem}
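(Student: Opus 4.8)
The plan is to establish the first assertion by induction on $n$ — meaningful only for $n\geq 1$, since ``full'' is defined only for the ideals $\Ical_{m}$ with $m\geq 1$ — and then to read off the second from it via the remark preceding the lemma. For the base case $n=1$: unwinding the definition of $\Ical_{n+1}$ at $n=0$ and using $\Ical_0=\{\emptyset\}$, one sees that $\Ical_1$ is exactly the collection of subsets of $\Omega$ contained in a Borel $\tau$-meager set. Any such Borel $\tau$-meager superset $Z$ of a given $I\in\Ical_1$ is full: it is Borel, the clause asking the set $\{\xbf\in\Omega^{[0]}\mid Z_\xbf\notin\Ical\}$ to be full is waived because the relevant index is $0$, and the final clause is vacuous since $Z=Z_\emptyset\in\Ical$.

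For the inductive step, fix $I\in\Ical_{n+1}$ with $n\geq 1$ and a Borel witness $Z\supseteq I$ for which $X_0:=\{\xbf\in\Omega^{[n]}\mid Z_\xbf\notin\Ical\}\in\Ical_n$; by the inductive hypothesis choose a full $X\in\Ical_n$ with $X_0\subseteq X$. I would then set
\[
E\;:=\;\{(\xbf,y)\in\Omega^{[n+1]}\mid\xbf\in X\}\;\cup\;Z ,
\]
the idea being to inflate the fiber over each point of $X$ to the entire slice of $\Omega^{[n+1]}$ over that point, while retaining $Z$ to absorb the (meager) fibers over points of $\Omega^{[n]}\setminus X_0$. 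Then $E$ is Borel and contains $Z\supseteq I$; for $\xbf\in X$ the fiber $E_\xbf$ is the whole slice of $\Omega^{[n+1]}$ over $\xbf$ (here $Z_\xbf$ is swallowed up, already being a subset of that slice), while for $\xbf\in\Omega^{[n]}\setminus X$ we have $\xbf\notin X_0$, hence $E_\xbf=Z_\xbf\in\Ical$. Since every nonempty fiber of $\Omega^{[n+1]}$ is a nonempty $\tau$-open subset of $\Omega$ and $(\Omega,\tau)$ is Baire (being Choquet), every such fiber is non-$\tau$-meager; consequently $\{\xbf\in\Omega^{[n]}\mid E_\xbf\notin\Ical\}=X$. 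As $X\in\Ical_n$ this shows $E\in\Ical_{n+1}$ with $E$ itself serving as the required Borel witness, and as $X$ is full this shows $E$ is full, completing the induction.

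For the ``in particular'' clause with $n\geq 1$: given $J\in\Ical_n$, take a full $E\in\Ical_n$ with $J\subseteq E$; for any $A\in\Hcal_n$ the set $A\setminus E$ belongs to $\Hcal_n$ by the remark preceding the lemma, and $A\setminus E\subseteq A\setminus J$, which witnesses that $J$ is $\Hcal_n$-nowhere dense. For $n=0$ the claim is immediate, as the only member of $\Ical_0$ is $\emptyset$ and $\Omega^{[0]}=\Omega^{[0]}\setminus\emptyset$ lies in $\Hcal_0$. I do not foresee a genuine obstacle here: the one point that needs care is that the inflated fibers over $X$ really are non-meager, so that the support $\{\xbf\mid E_\xbf\notin\Ical\}$ of the set we build is \emph{exactly} $X$ (hence full), rather than some proper — and possibly non-full — subset of it; past that, the argument is bookkeeping within the recursive definitions of $\Ical_n$ and fullness.
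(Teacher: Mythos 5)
Your proof is correct and follows essentially the same route as the paper's: induct on $n$, pass to a Borel superset $Z$, take a full $X\in\Ical_n$ containing $\{\xbf\mid Z_\xbf\notin\Ical\}$ by the inductive hypothesis, and enlarge $Z$ by the cylinder over $X$ (the paper writes this as $Z\cup(X\times\Omega)$), with the ``in particular'' clause then following from the remark that $B\setminus E\in\Hcal_n$ for $B\in\Hcal_n$ and $E$ full. Your extra care about the fibers over $X$ being the full (non-meager, since $\tau$ is Baire) slices of $\Omega^{[n+1]}$ rather than literally all of $\Omega$ is a reasonable tidying of a minor imprecision in the paper's definition of fullness, not a divergence in method.
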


\begin{proof}
The proof is by induction on $n$.
By convention, $\emptyset$ is a full element of $\Ical_0$.
Suppose now that $Z \in \Ical_{n+1}$ is given.
By replacing $Z$ by a superset, we may assume that $Z$ is Borel.
Let $X$ be a full set in $\Ical_n$ which contains
$\{\xbf \in \Omega^{[n]} \mid Z_\xbf \not \in \Ical\}$.
It follows that $Z \cup (X \times \Omega)$ is a full element of $\Ical_{n+1}$ containing $Z$.
\end{proof}

We are now ready to define the notion of $\Hcal_n$-measurability.

\begin{defn}
  A subset $X$ if $\Omega^{[n]}$ is \emph{$\Hcal_n$-measurable} if there is a Borel set $B$ such that
  $B \symdif X : = (B \setminus X) \cup (X \setminus B)$ is $\Hcal_n$-meager.
\end{defn}

Notice that the $\Hcal_n$-measurable sets form a $\sigma$-algebra which includes Borel sets and the $\Hcal_n$-meager sets.
We will prove below that in the presence of a suitable large cardinal hypothesis,
all $\PCA$-sets are $\Hcal_n$-measurable.
This assertion itself will be needed as a hypothesis in some of our results.

\begin{notn} ($\dagger_n$) denotes the hypothesis that if $m \leq n$, every $\PCA$-subset of $\Omega^{[m]}$ is $\Hcal_m$-measurable.
($\dagger$) denotes the assertion that ($\dagger_n$) holds for all $n$.
\end{notn}

Much of the relevance of $\PCA$-sets and their measurability comes via the following complexity computation.

\begin{lem} \label{category_quant}
If $Z \subseteq \Omega^{[n+1]}$ is Borel, then both 
\[
\{\xbf \in \Omega^{[n]} \mid Z_{\xbf} \in \Ical\}
\]
and its complement are $\PCA$-sets.
\end{lem}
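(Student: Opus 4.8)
The plan is to reduce everything to membership in the ``standard'' dense open and dense-$G_\delta$ sets $W(\pbf)$, $W(G)$ of Section~\ref{Sect:topology}, and then to read off the complexity bound from Lemma~\ref{Gscr_CA}, which says that $\Gscr$ is $\CA$. The heart of the argument is a pair of characterizations of $\tau$-meagerness and of its negation.

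First I would prove, for arbitrary $A\subseteq\Omega$, that $A\in\Ical$ if and only if there is $G\in\Gscr$ with $W(G)\cap A=\emptyset$. For the forward implication, a $\tau$-meager $A$ is contained in a countable union $\bigcup_n C_n$ of $\tau$-closed sets with $\tau$-dense open complements $U_n$; since $\Sigma$ is itself strongly dense, Lemma~\ref{dense_open_basis} yields, for each $n$, a $\pbf_n\in\Hbb^\omega$ with $S(\pbf_n)$ strongly dense and $W(\pbf_n)\subseteq U_n$, so that $G:=(\pbf_n)_{n\in\omega}\in\Gscr$ and $W(G)\subseteq\bigcap_nU_n\subseteq\Omega\setminus A$. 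For the converse, if $W(G)\cap A=\emptyset$ with $G\in\Gscr$ then $A\subseteq\Omega\setminus W(G)=\bigcup_n\bigl(\Omega\setminus W(G(n))\bigr)$, and each $\Omega\setminus W(G(n))$ is $\tau$-closed with empty $\tau$-interior because $W(G(n))$ is $\tau$-open and, by the first part of Lemma~\ref{dense_open_basis}, $\tau$-dense; hence $A$ is $\tau$-meager.

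Next I would establish the dual characterization for \emph{Borel} $A\subseteq\Omega$: $A\notin\Ical$ if and only if there exist $G\in\Gscr$ and $p\in\Hbb$ with $W(G)\cap N_p\subseteq A$. The ``if'' direction is easy, since $W(G)\cap N_p$ is nonempty (it contains $g(G,p)$, by Lemma~\ref{Gscr_CA}) and $\tau$-comeager in the nonempty $\tau$-open set $N_p$, hence non-$\tau$-meager, while being a subset of $A$. For ``only if'' I would invoke the property of Baire with respect to $\tau$: each $[s]$ is $\tau$-open, so the $\tau$-Borel $\sigma$-algebra contains all Polish-Borel sets, and since the sets with the $\tau$-Baire property form a $\sigma$-algebra, every Borel $A$ has it. Thus a non-$\tau$-meager Borel $A$ differs from some nonempty $\tau$-open set by a $\tau$-meager set; choosing a basic $N_p$ inside that open set, $N_p\setminus A$ is a Borel $\tau$-meager set, and the previous paragraph gives $G\in\Gscr$ with $W(G)\cap(N_p\setminus A)=\emptyset$, i.e.\ $W(G)\cap N_p\subseteq A$. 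I should stress that this dual form is what is really needed: the naive negation of the first characterization only exhibits $\{\xbf\mid Z_\xbf\notin\Ical\}$ as the complement of a $\PCA$-set, whereas we want it to be $\PCA$ itself.

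Finally I would apply both characterizations to the (Borel) sections $Z_\xbf$ of a Borel $Z\subseteq\Omega^{[n+1]}$, obtaining
\[
\{\xbf\mid Z_\xbf\in\Ical\}=\bigl\{\xbf\mid\exists\,G\in\Gscr\ \forall\,y\in\Omega\ \bigl(y\in W(G)\to(\xbf,y)\notin Z\bigr)\bigr\}
\]
and
\[
\{\xbf\mid Z_\xbf\notin\Ical\}=\bigl\{\xbf\mid\exists\,G\in\Gscr\ \exists\,p\in\Hbb\ \forall\,y\in\Omega\ \bigl(y\in W(G)\cap N_p\to(\xbf,y)\in Z\bigr)\bigr\}.
\]
A complexity computation of the kind carried out for the function $S$ in Section~\ref{Sect:topology} shows that $\{(G,y)\mid y\in W(G)\}$ and $\{(p,y)\mid y\in N_p\}$ are Borel, so the matrices above are Borel; prefixing $\forall\,y\in\Omega$ turns each into a $\CA$ relation; conjoining with the $\CA$ set $\Gscr$ of Lemma~\ref{Gscr_CA} leaves it $\CA$; and the outer $\exists\,G$ (respectively $\exists\,G\,\exists\,p$) over a Polish space produces a $\PCA$-set. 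The degenerate case $n=0$ is immediate since $\Omega^{[0]}$ is a singleton. I expect the only genuine obstacle to be the ``only if'' half of the second characterization — showing that a non-$\tau$-meager Borel set must actually contain some $W(G)\cap N_p$ — since this is exactly what keeps the complement within $\PCA$, and is the single place where the non-separable topology $\tau$, its being Choquet, and Lemma~\ref{dense_open_basis} are used in an essential way.
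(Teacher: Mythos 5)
Your proof is correct and takes essentially the same route as the paper's: the paper likewise characterizes $Z_\xbf\in\Ical$ by the existence of $G\in\Gscr$ with $W(G)\cap Z_\xbf=\emptyset$ (via Lemma \ref{dense_open_basis}) and $Z_\xbf\notin\Ical$ by the existence of $p\in\Hbb$ and $G\in\Gscr$ with $W(G)\cap N_p\subseteq Z_\xbf$ (via the $\tau$-Baire property of Borel sets), and then reads off $\PCA$ from the $\CA$-ness of $\Gscr$. You have simply supplied in full the details of these two equivalences, which the paper leaves implicit.
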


\begin{proof}
By Lemma \ref{dense_open_basis},
$Z_\xbf \in \Ical$ is equivalent to
  \[
\exists G \in {\widehat \Gscr}\ \,\forall x \in \Omega \ \Big((G \in \Gscr) \land \big((z \in W(G)) \rightarrow (z \not \in Z_\xbf)\big)\Big).
  \]
 By Lemma \ref{Gscr_CA}, $\Gscr$ is a $\CA$-set and hence $\{\xbf \in \Omega^{[n]} \mid Z_{\xbf} \in \Ical\}$ is a $\PCA$-set (see discussion in Section \ref{Sect:prelim}).
 
Next observe that since $Z$ is Borel,  each $Z_{\xbf}$ has the Baire property with respect to $\tau$.
Thus $Z_\xbf \not \in \Ical$ is equivalent to
  \[
  \exists p \in \Hbb \ \exists G \in {\widehat \Gscr}\ \, \forall z \in \Omega\ \Big((G \in \Gscr) \land \big((z \in W(G) \cap N_p) \rightarrow (z  \in Z_\xbf)\big)\Big).
  \]
It follows that $\{\xbf \in \Omega^{[n]} \mid Z_{\xbf} \not \in \Ical\}$ is $\PCA$.
 \end{proof}

We will view each collection $\Hcal_n$ as being ordered by containment.
It will be helpful to borrow the following terminology from forcing.

\begin{defn}
  Two elements of $\Hcal_n$ are \emph{compatible} if their intersection contains an element of $\Hcal_n$;
  otherwise they are \emph{incompatible}.
\end{defn}

\begin{lem} \label{Hn_ccc}
For all $n$, every family of pairwise incompatible elements of $\Hcal_n$ is countable.
\end{lem}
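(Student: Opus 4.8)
The plan is to prove $\Hcal_n$-ccc by induction on $n$, leveraging the recursive structure of the definition of $\Hcal_{n+1}$ together with the fact that the base case essentially reduces to the countable chain condition of the Hechler poset $\Hbb$ (equivalently, of the topology $\tau$). For the base case $n=0$ the statement is trivial since $\Hcal_0 = \{\Omega^{[0]}\}$ is a singleton, so the first nontrivial instance is $n=1$, where $\Hcal_1 = \Hcal$. For $\Hcal$, I would argue as follows: an element of $\Hcal$ has the form $N_k(x)\setminus E$ with $E$ meager and has a well-defined stem $x\restriction k \in\Sigma$. Since two basic open sets $N_k(x)$ and $N_j(y)$ with the \emph{same} stem $s$ are compatible in $\tau$ (their intersection contains $N_{|s|}(x\join y)$, which is nonempty and $\tau$-open, hence non-meager, so after removing the two meager sets it still contains an element of $\Hcal$ with stem $\supseteq s$), any uncountable family of pairwise incompatible elements of $\Hcal$ must use uncountably many distinct stems — but $\Sigma$ is countable, contradiction. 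So $\Hcal$ is ccc.

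For the inductive step, suppose the claim holds for $\Hcal_n$ and let $\{Z_\alpha : \alpha < \omega_1\}$ be a purported uncountable pairwise-incompatible family in $\Hcal_{n+1}$. Each $Z_\alpha$ has a stem in $\Sigma^{n+1}$, whose first $n$ coordinates form the stem of $X_\alpha := \{\xbf\in\Omega^{[n]} \mid (Z_\alpha)_\xbf\neq\emptyset\} \in \Hcal_n$ and whose last coordinate is some $s_\alpha\in\Sigma$. Since $\Sigma$ is countable, by passing to an uncountable subfamily we may assume $s_\alpha = s$ is fixed. Now I claim the sets $X_\alpha\in\Hcal_n$ must be pairwise incompatible in $\Hcal_n$: if $X_\alpha$ and $X_\beta$ were compatible, there is $Y\in\Hcal_n$ with $Y\subseteq X_\alpha\cap X_\beta$; but then for each $\xbf\in Y$, both $(Z_\alpha)_\xbf$ and $(Z_\beta)_\xbf$ lie in $\Hcal$ with the common stem $s$, hence (by the base-case compatibility fact, applied fiberwise) their intersection contains an element of $\Hcal$ with stem extending $s$ — and one can assemble these fiberwise choices, using a uniformizing/Borel-selection argument, into a single Borel set $W\subseteq Z_\alpha\cap Z_\beta$ that witnesses $Z_\alpha$ and $Z_\beta$ being compatible in $\Hcal_{n+1}$, a contradiction. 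Thus $\{X_\alpha\}$ is an uncountable antichain in $\Hcal_n$, contradicting the inductive hypothesis.

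The main obstacle I anticipate is the fiberwise-assembly step: verifying that the pointwise compatibility of the fibers $(Z_\alpha)_\xbf, (Z_\beta)_\xbf$ genuinely yields a \emph{Borel} set $W\in\Hcal_{n+1}$ below both $Z_\alpha$ and $Z_\beta$. The natural move is to take $W$ to be the restriction of $Z_\alpha\cap Z_\beta$ over the base $Y$, after removing, fiberwise, a meager set — but one must ensure the removed meager sets can be chosen uniformly so that the union is Borel (this is precisely the kind of definability bookkeeping the earlier lemmas, e.g. Lemma \ref{Gscr_CA} and Lemma \ref{category_quant}, are designed to support). A cleaner alternative, which I would try first, is to avoid constructing $W$ explicitly and instead argue by contradiction purely combinatorially: if $Z_\alpha\cap Z_\beta$ contained no element of $\Hcal_{n+1}$, then on an $\Hcal_n$-nowhere-dense-complement of fibers at least one of $(Z_\alpha)_\xbf\cap(Z_\beta)_\xbf$ must fail to contain an element of $\Hcal$ with stem $s$; combined with the fixed common stem $s$ this forces those fibers to be $\tau$-meager on a large set, which, via Lemma \ref{full_cover} and the definition of $\Ical_n$, pushes the incompatibility of $Z_\alpha, Z_\beta$ down to incompatibility of $X_\alpha$ and $X_\beta$ in $\Hcal_n$ directly. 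Either route reduces the $(n+1)$-dimensional ccc to the $n$-dimensional ccc plus the countability of $\Sigma$, closing the induction.
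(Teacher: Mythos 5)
Your proposal is correct and is essentially the paper's own argument: the paper proves directly that any two elements of $\Hcal_{n+1}$ with the same stem have intersection again in $\Hcal_{n+1}$ (so a pairwise incompatible family injects into the countable set of stems), which is precisely your fiberwise compatibility step run as a positive induction rather than via an uncountable antichain and contradiction. The ``fiberwise-assembly'' obstacle you anticipate is in fact not there: for $\xbf$ in the common base, $(Z_\alpha)_\xbf\cap(Z_\beta)_\xbf = N_{|s|}(y_0\join y_1)\setminus(E_0\cup E_1)$ is already an element of $\Hcal$ with stem $s$ (no uniformization or extra meager removal is needed), so $W=(Y\times\Omega)\cap Z_\alpha\cap Z_\beta$ is Borel and lies in $\Hcal_{n+1}$ outright.
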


\begin{proof}
It suffices to show that if $Z_0$ and $Z_1$ are in $\Hcal_n$ and have the same stem, then
$Z_0 \cap Z_1$ is in $\Hcal_n$ and has the same stem as them both.
This is proved by induction on $n$.
If $n=0$, this is trivial.
Suppose now that it's true for $n$ and $Z_0,Z_1 \in \Hcal_{n+1}$ have the same stem $(s_0,\ldots,s_n)$.
Set
\[
X_i:= \{\xbf \in \Omega^{[n]} \mid (Z_i)_\xbf \ne \emptyset \}
\]
By our inductive assumption, $X_0 \cap X_1$ is in $\Hcal_n$ and has stem $(s_0,\ldots,s_{n-1})$.
Now suppose that $\xbf \in X_0 \cap X_1$.
Observe that $(Z_0 \cap Z_1)_\xbf = (Z_0)_\xbf \cap (Z_1)_\xbf$.
By assumption $(Z_i)_{\xbf} = N_k(y_i) \setminus E_i$ for $y_0,y_1 \in \Omega$ and $E_i \subseteq \Omega$ is $\tau$-meager such that
$s_n = y_0 \restriction k = y_1 \restriction k$.
We are finished with the observation that $N_k(y_0) \cap N_k(y_1) = N_k(y_0 \join y_1)$
and hence $(Z_0 \cap Z_1)_\xbf = N_k(y_0 \join y_1) \setminus (E_0 \cup E_1)$.
\end{proof}

Lemma \ref{Hn_ccc} has the following important consequence.

\begin{lem} \label{Borel_approx}
  If $X \subseteq \Omega^{[n]}$ is $\Hcal_n$-measurable, 
there are Borel sets $A,B \subseteq \Omega^{[n]}$ such that $A \subseteq X \subseteq B$ and
both $X \setminus A$ and $B \setminus X$ are $\Hcal_n$-meager.
\end{lem}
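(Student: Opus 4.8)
The plan is to combine the $\Hcal_n$-measurability hypothesis with a standard ``exhaustion'' argument using the countable chain condition of Lemma~\ref{Hn_ccc}. Since $X$ is $\Hcal_n$-measurable, fix a Borel set $C$ with $C \symdif X$ being $\Hcal_n$-meager, say $C \symdif X \subseteq \bigcup_{k} N_k$ with each $N_k$ $\Hcal_n$-nowhere dense; by enlarging, we may assume each $N_k$ is Borel (replace $N_k$ by a Borel superset that is still $\Hcal_n$-nowhere dense — here one should check that $\Hcal_n$-nowhere dense Borel sets are cofinal among $\Hcal_n$-nowhere dense sets, which follows because ``$B \subseteq A \setminus X$ in $\Hcal_n$'' only sees $X$ through a Borel hull). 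Set $M := \bigcup_k N_k$, a Borel $\Hcal_n$-meager set with $C \symdif X \subseteq M$. Then $A_0 := C \setminus M$ and $B_0 := C \cup M$ are Borel with $A_0 \subseteq X \subseteq B_0$, and both $X \setminus A_0 \subseteq M$ and $B_0 \setminus X \subseteq M$ are $\Hcal_n$-meager.

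The subtlety the lemma is really after is presumably that this crude argument suffices, i.e. that one can always find such Borel $A, B$; the displayed reduction already does this, so the role of Lemma~\ref{Hn_ccc} is to guarantee that the notion of $\Hcal_n$-meagerness is not pathological — specifically, that a countable union of $\Hcal_n$-nowhere dense \emph{Borel} sets remains ``small enough'' that its complement relative to any $A \in \Hcal_n$ still contains an element of $\Hcal_n$. This is exactly a Baire-category statement for the forcing $\Hcal_n$, and the ccc (Lemma~\ref{Hn_ccc}) is what makes $\Hcal_n$ behave like a measure algebra / Baire space: given $A \in \Hcal_n$ and Borel $\Hcal_n$-nowhere dense sets $N_k$, one recursively thins $A = A \setminus \emptyset \supseteq A \setminus N_0 \supseteq \cdots$, at each stage using $\Hcal_n$-nowhere density to land inside $\Hcal_n$ again, and uses ccc to conclude the diagonal intersection is nonempty and in $\Hcal_n$. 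I would phrase this as a preliminary observation (a ``Baire category theorem for $\Hcal_n$''), proved by induction on $n$ exactly as in the proof of Lemma~\ref{Hn_ccc}: the $n=0$ case is trivial, and the inductive step reduces a nowhere-dense cover of $\Hcal_{n+1}$ to a cover of $\Hcal_n$ in the first coordinates together with a $\tau$-meager cover in the last coordinate, where the classical Baire category theorem for the Choquet topology $\tau$ applies.

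The main obstacle I anticipate is the bookkeeping in that inductive Baire-category step: one must verify that the Borel set witnessing $\Hcal_{n+1}$-nowhere density can be chosen uniformly in the first $n$ coordinates so that the fibers $(N_k)_{\xbf}$ are $\tau$-meager for $\xbf$ outside an $\Hcal_n$-nowhere dense set, which requires the fullness technology of Lemma~\ref{full_cover} (every element of $\Ical_n$ sits inside a full one, and $B \setminus E \in \Hcal_n$ when $E \in \Ical_n$ is full). Once that is in place, the proof of Lemma~\ref{Borel_approx} is the three-line reduction above. I would therefore structure the writeup as: (1) state and prove the Baire category theorem for $\Hcal_n$ by induction, invoking Lemmas~\ref{full_cover} and~\ref{Hn_ccc}; (2) upgrade an arbitrary $\Hcal_n$-meager set to a Borel $\Hcal_n$-meager superset; (3) take symmetric differences with the Borel approximant $C$ to produce $A$ and $B$.
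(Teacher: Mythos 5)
Your final three-line reduction is exactly the paper's: cover $C \symdif X$ by a Borel $\Hcal_n$-meager set $E$ and take $A = C\setminus E$, $B = C\cup E$. The genuine content of the lemma, however, is the step you dispatch parenthetically: that every $\Hcal_n$-nowhere dense set has a \emph{Borel} $\Hcal_n$-nowhere dense superset. Your justification --- that the definition ``only sees $X$ through a Borel hull'' --- is circular, since producing a Borel hull of $X$ that is still nowhere dense is precisely what has to be shown. The actual argument is where Lemma~\ref{Hn_ccc} enters, and it enters for a different reason than you assign to it: given a nowhere dense $Z$, take a maximal pairwise-incompatible family $\Acal\subseteq\Hcal_n$ of sets disjoint from $Z$; nowhere density of $Z$ forces $\Acal$ to be a maximal antichain in all of $\Hcal_n$; the ccc makes $\Acal$ countable, so $\Omega^{[n]}\setminus\bigcup\Acal$ is Borel, contains $Z$, and is itself $\Hcal_n$-nowhere dense (this is Claims~\ref{antichain_complement} and~\ref{Borel_NWD} in the paper). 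The role of ccc is thus to make a complement of a maximal antichain Borel, not to underwrite a Baire category theorem.

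Relatedly, the ``Baire category theorem for $\Hcal_n$'' that occupies most of your writeup is not needed for this lemma at all: the conclusion only asserts that $X\setminus A$ and $B\setminus X$ are $\Hcal_n$-meager, not that meager sets fail to contain elements of $\Hcal_n$. That latter statement is essentially Lemma~\ref{Hn_Fubini}, which in the paper is proved \emph{after} Lemma~\ref{Borel_approx}, requires the additional hypothesis $(\dagger_n)$, and uses the Kond\^{o}--Novikov uniformization machinery rather than a simple fusion argument; your sketch of proving it ``exactly as in the proof of Lemma~\ref{Hn_ccc}'' by recursively thinning inside $\Hcal_{n+1}$ would not go through as stated. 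Since Lemma~\ref{Borel_approx} carries no $(\dagger)$ hypothesis, routing its proof through that result would also weaken the statement. Dropping the detour and supplying the maximal-antichain argument for Borel hulls repairs the proof.
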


\begin{proof}
  We begin with a pair of claims which will also be needed later.

  \begin{claim} \label{antichain_complement}
    If $\Acal \subseteq \Hcal_n$ is a maximal family of pairwise incompatible sets, then
    $\Omega^{[n]} \setminus \bigcup \Acal$ is Borel and $\Hcal_n$-nowhere dense. 
  \end{claim}

  \begin{proof}
By Lemma \ref{Hn_ccc}, $\Acal$ is countable and hence
$Y := \Omega^{[n]} \setminus \bigcup \Acal$ is a Borel.
To see that $Y$ is $\Hcal_n$-nowhere dense, let $B \in \Hcal_n$ be arbitrary and let
$A \in \Acal$ be such that $A \cap B$ contains some $C \in \Hcal_n$.
Then $C \subseteq A$ and hence is disjoint from $Y$. 
Since $C \subseteq B$ and $B$ was arbitrary, we are done.
  \end{proof}
  
  \begin{claim} \label{Borel_NWD}
    Every $\Hcal_n$-nowhere dense set is contained
    in a Borel $\Hcal_n$-nowhere dense set.
  \end{claim}

  \begin{proof}
    Let $Z \subseteq \Omega^{[n]}$ be $\Hcal_n$-nowhere dense and
    $\Acal \subseteq \Hcal_n$ be a maximal collection with the properties that $\bigcup \Acal \cap Z = \emptyset$ and that $\Acal$ is pairwise incompatible.
To see that $\Acal$ is moreover maximal with respect to being pairwise incompatible, suppose that $B \in \Hcal_n$
and let $C \subseteq B$ be in $\Hcal_n$ and disjoint from $Z$.
Since $C$ is compatible with some element of $\Acal$, so is $B$.
It now follows from Claim \ref{antichain_complement} that
$Y:= \Omega^{[n]} \setminus \bigcup \Acal$ is a Borel $\Hcal_n$-nowhere dense set
which contains $Z$.
\end{proof}

Observe that we have also established that every $\Hcal_n$-meager set is contained in a Borel
$\Hcal_n$-meager set.
Now let $X$ be given as in the statement of the lemma
and let $C \subseteq \Omega^{[n]}$ be a Borel
set such that $X \symdif C$ is $\Hcal_n$-meager.
Let $E \subseteq \Omega^{[n]}$ be a Borel $\Hcal_n$-meager set which contains
$X \symdif C$.
It follows that $A := C \setminus E$ and $B := C \cup E$ satisfy the conclusion of the Lemma.
\end{proof}

When combined with ($\dagger$), Lemma \ref{Borel_approx} also allows us to reduce the complexity of $\PCA$-functions
at the cost of removing an $\Hcal_n$-meager set. 

\begin{lem}[$\dagger_n$] \label{PCA_fn_meas} 
If $f$ is a partial $\PCA$-function from $\Omega^{[n]}$ to $\Omega$,
then there is a Borel $B \subseteq \dom(f)$
such that $f \restriction B$ is Borel and $\dom(f) \setminus B$ is $\Hcal_n$-meager.
\end{lem}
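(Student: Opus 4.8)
The plan is to run the standard uniformization-plus-Borel-approximation argument, using ($\dagger_n$) to pass from $\PCA$-complexity to Borel complexity modulo an $\Hcal_n$-meager error. First I would consider the graph $\Gamma_f \subseteq \Omega^{[n]} \times \Omega$ of $f$. Since $f$ is a $\PCA$-function, $\Gamma_f$ is a $\PCA$-set; ($\dagger_{n+1}$) — which follows from ($\dagger_n$) together with the fact that the graph lives naturally inside $\Omega^{[n+1]}$ after a suitable order-preserving coordinate rearrangement, or more simply ($\dagger$) as stated — tells us that $\Gamma_f$ is $\Hcal_{n+1}$-measurable. The tricky bookkeeping point is that $\Gamma_f$ sits in $\Omega^{[n]}\times\Omega$ rather than literally in $\Omega^{[n+1]}$, so I would first record how to transfer $\Hcal_{n+1}$-measurability across this identification; the key observation is that for $\xbf$ fixed, the section $(\Gamma_f)_\xbf$ is a singleton whenever $\xbf\in\dom(f)$ and empty otherwise, so questions about $\Ical$-smallness of sections are trivial and the whole structure of $\Hcal_{n+1}$ and $\Ical_{n+1}$ over this graph reduces to the structure of $\Hcal_n$ and $\Ical_n$ on $\dom(f)$.

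Next I would apply Lemma \ref{Borel_approx} to the $\Hcal_{n+1}$-measurable set $\Gamma_f$, obtaining a Borel $A \subseteq \Gamma_f$ with $\Gamma_f \setminus A$ being $\Hcal_{n+1}$-meager. Here $A$ is the graph of a partial Borel function $h$ which agrees with $f$ on $\dom(h)$; since $\dom(h)$ is a Borel subset of $\dom(f)$ and $h$ is Borel, $h$ is the restriction of $f$ to a Borel set. It then remains to show $\dom(f)\setminus\dom(h)$ is $\Hcal_n$-meager. This is where the projection from $\Omega^{[n+1]}$ down to $\Omega^{[n]}$ must be controlled: I would argue that the projection of an $\Hcal_{n+1}$-meager set whose sections are all singletons or empty is $\Hcal_n$-meager. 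Concretely, $\Gamma_f\setminus A$ is contained in a Borel $\Hcal_{n+1}$-meager set (using the observation noted in the proof of Lemma \ref{Borel_approx} that every $\Hcal_n$-meager set lies in a Borel one), hence in a countable union of Borel $\Hcal_{n+1}$-nowhere dense sets $D_k$; and for each $k$ the set $\{\xbf \mid (D_k)_\xbf \neq \emptyset\}$ is the relevant projection, which I expect to be $\Hcal_n$-nowhere dense by unwinding the recursive definition of $\Hcal_{n+1}$-nowhere denseness together with the definition of $\Hcal_{n+1}$ (an element of $\Hcal_{n+1}$ with a given stem restricts, via the set $X$ of nonempty sections, to an element of $\Hcal_n$).

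The main obstacle I anticipate is precisely this last step — showing that the projection to $\Omega^{[n]}$ of an $\Hcal_{n+1}$-nowhere dense set (with singleton sections) is $\Hcal_n$-nowhere dense. The subtlety is that $\Hcal_{n+1}$-nowhere denseness is phrased in terms of being avoidable by elements of $\Hcal_{n+1}$, and an element $B \in \Hcal_{n+1}$ avoiding $D_k$ need not have \emph{all} of its sections nonempty over a fixed $\Hcal_n$-set; but the definition of $\Hcal_{n+1}$ guarantees that the set of $\xbf$ for which $B_\xbf \neq \emptyset$ is itself in $\Hcal_n$, and over such $\xbf$ the section $B_\xbf$ is an honest element of $\Hcal$. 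Given $A_0 \in \Hcal_n$ and the Borel $\Hcal_{n+1}$-nowhere dense $D_k$, I would build, by $\sigma$-completeness of the ideal and the countable chain condition of Lemma \ref{Hn_ccc}, a single $B \in \Hcal_{n+1}$ disjoint from $D_k$ whose nonempty-section set refines $A_0$; then that nonempty-section set is the desired element of $\Hcal_n$ inside $A_0 \setminus (\text{projection of } D_k)$. Assembling these over $k\in\omega$ and invoking $\sigma$-completeness of $\Hcal_n$-meagerness finishes the proof. I would present the projection lemma as a small internal claim to keep the argument readable.
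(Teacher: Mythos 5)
Your approach has a fatal flaw at exactly the step you flagged as the ``main obstacle'': the projection claim is false, and in fact the entire strategy of approximating the graph inside the product space cannot work. The point is that a set all of whose vertical sections are singletons (or empty) is \emph{automatically} small in the product ideal: singletons are $\tau$-nowhere dense, so every section of $\Gamma_f$ lies in $\Ical$, and hence any Borel superset of $\Gamma_f$ with the same sections witnesses $\Gamma_f\in\Ical_{n+1}$; by Lemma \ref{full_cover} this makes $\Gamma_f$ itself $\Hcal_{n+1}$-nowhere dense. Consequently the Borel inner approximation $A$ supplied by Lemma \ref{Borel_approx} may be taken to be $\emptyset$, the ``error'' $\Gamma_f\setminus A$ may be all of $\Gamma_f$, and its projection is all of $\dom(f)$ --- which is typically not $\Hcal_n$-meager. (Already for $n=1$ and $f$ a total Borel function $\Omega\to\Omega$, the graph is a Borel member of $\Ical_2$ whose projection is all of $\Omega$.) So no projection lemma of the kind you want can hold, and the graph-uniformization route gives no information. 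There are two further problems: your argument invokes ($\dagger_{n+1}$) where the lemma assumes only ($\dagger_n$) --- and this matters, since the lemma is applied inside the induction establishing Lemma \ref{Hn_Fubini} at level $n+1$ from ($\dagger_n$), so strengthening the hypothesis would break that induction --- and the graph of $f$ does not naturally embed in $\Omega^{[n+1]}$ at all, since $f(\xbf)$ need not $\leq^*$-dominate the coordinates of $\xbf$.

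The correct argument stays entirely in the domain space $\Omega^{[n]}$ and never touches the product. For each $s\in\Sigma$ the preimage $f^{-1}([s])$ is a $\PCA$-subset of $\Omega^{[n]}$, so by ($\dagger_n$) and Lemma \ref{Borel_approx} there is a Borel $B_s\subseteq f^{-1}([s])$ with $f^{-1}([s])\setminus B_s$ $\Hcal_n$-meager. Setting $B=\bigcap_{m=0}^\infty\bigcup_{|s|=m}B_s$, one checks that $B\subseteq\dom(f)$, that $\dom(f)\setminus B$ is contained in the countable union $\bigcup_{s}\bigl(f^{-1}([s])\setminus B_s\bigr)$ and hence is $\Hcal_n$-meager, and that $B\cap f^{-1}([s])=B\cap B_s$ is Borel for every $s$, so $f\restriction B$ is Borel.
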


\begin{proof}
  Let $f$ be given as in the statement of the lemma.
  Observe that for each $s \in \Sigma$, 
  $$f^{-1}([s]) = \{\xbf \in \Omega^{[n]} \mid \exists y \in [s]\ ((\xbf,y) \in f) \}$$ is $\PCA$.
  By our hypothesis and Lemma \ref{Borel_approx}, there is a Borel set $B_s \subseteq f^{-1}([s])$ such that
  $f^{-1}([s]) \setminus B_s$ is $\Hcal_n$-meager.
  Define $$B = \bigcap_{n=0}^\infty \bigcup_{|s| = n} B_s$$
  and observe that $B \subseteq \dom(f)$ and
  \[
  \dom(f) \setminus B \subseteq \bigcup_{s \in \Sigma} f^{-1}([s]) \setminus B_s
    \]
  is
  $\Hcal_n$-meager.
  Furthermore, $B \cap f^{-1}([s]) = B \cap B_s$ is a Borel set for each $s$.
  Thus $f \restriction B$ is Borel.
\end{proof}

\begin{lem} \label{Hn_Fubini}
Assume $(\dagger_n)$.
The $\Hcal_{n+1}$-meager sets coincide with $\Ical_{n+1}$.
Moreover:
\begin{enumerate}

\item The $\Hcal_{n+1}$-meager and $\Hcal_{n+1}$-nowhere dense sets coincide.

\item \label{Hn_nonmeager} No element of $\Hcal_{n+1}$ is $\Hcal_{n+1}$-meager.

\item \label{Hn_gen}
If $Z$ is a Borel $\Hcal_{n+1}$-nonmeager, then $Z$ contains an element of $\Hcal_{n+1}$.

\end{enumerate}
\end{lem}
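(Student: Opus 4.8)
The plan is to prove all four assertions simultaneously, by induction on $n$, using that $(\dagger_n)$ entails $(\dagger_m)$ for all $m\le n$, so that the statement of the lemma is available for all smaller values of $n$. For the base case $n=0$ the hypothesis is vacuous and one works directly with $\Hcal=\Hcal_1$ and $\Ical=\Ical_1$: I would first check that $X\subseteq\Omega$ is $\Hcal$-nowhere dense exactly when it is $\tau$-meager, the nontrivial direction using Lemma \ref{Hn_ccc} to fix a countable maximal antichain $\Acal\subseteq\Hcal$ of sets disjoint from $X$; $\Hcal$-nowhere-density of $X$ together with maximality of $\Acal$ force the union of the underlying basic open sets to be $\tau$-dense, and $X$ is then covered by this union's (meager) complement together with the meager ``error terms'' of the members of $\Acal$. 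Hence $\Hcal$-meager $=\Hcal$-nowhere dense $=\Ical$; item (2) holds because nonempty $\tau$-open sets are $\tau$-nonmeager, and item (3) because a $\tau$-nonmeager Borel set $Z$ contains $N_p$ modulo a $\tau$-meager set by the Baire property, so $Z\cap N_p\in\Hcal$.

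For the inductive step fix $n\ge 1$, assume the lemma for all smaller values, and assume $(\dagger_n)$; the heart of the matter is item (3). Suppose $Z\subseteq\Omega^{[n+1]}$ is Borel and $\Hcal_{n+1}$-nonmeager. By Lemma \ref{full_cover} $Z\notin\Ical_{n+1}$, so $X:=\{\xbf\mid Z_\xbf\notin\Ical\}\notin\Ical_n$; by Lemma \ref{category_quant} $X$ is $\PCA$, hence $\Hcal_n$-measurable by $(\dagger_n)$, hence --- via Lemma \ref{Borel_approx} and the induction hypothesis, which identifies $\Ical_n$ with the $\Hcal_n$-meager sets --- contains some $A_0\in\Hcal_n$. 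For $\xbf\in A_0$ the section $Z_\xbf$ is a $\tau$-nonmeager Borel set, so by its Baire property and Lemma \ref{dense_open_basis} there are $p\in\Hbb$ and $G\in\Gscr$ with $N_p\cap W(G)\subseteq Z_\xbf$. The relation $R:=\{(\xbf,p,G)\mid\xbf\in A_0,\ G\in\Gscr,\ N_p\cap W(G)\subseteq Z_\xbf\}$ is $\CA$ (the containment clause is co-analytic, and $\Gscr$ is $\CA$ by Lemma \ref{Gscr_CA}) and has domain $A_0$, so Theorem \ref{KNU} supplies a $\CA$-selector $\xbf\mapsto(p_\xbf,G_\xbf)$. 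By (the proof of) Lemma \ref{PCA_fn_meas}, applied to the $\PCA$-function $\xbf\mapsto p_\xbf$, I may discard an $\Hcal_n$-meager set and assume $\xbf\mapsto p_\xbf$ is Borel on a Borel set $B'\subseteq A_0$ --- still $\Hcal_n$-nonmeager, since $A_0$ is (item (2) at the previous level). Splitting $B'$ according to the countably many values of the stem $s_{p_\xbf}$ and shrinking once more via item (3) at the previous level yields $B_0\in\Hcal_n$, $B_0\subseteq B'$, on which $s_{p_\xbf}$ takes a constant value $t$. Then $W:=Z\cap\{(\xbf,y)\mid\xbf\in B_0,\ y\in N_{p_\xbf}\}$ is Borel, $W\subseteq Z$, and for $\xbf\in B_0$ we have $W_\xbf=Z_\xbf\cap N_{p_\xbf}=N_{p_\xbf}\setminus E_\xbf$ with $E_\xbf:=N_{p_\xbf}\setminus Z_\xbf$ contained in the $\tau$-meager set $N_{p_\xbf}\setminus W(G_\xbf)$; hence $W_\xbf$ is a nonempty element of $\Hcal$ with stem $t$, and since $\{\xbf\mid W_\xbf\neq\emptyset\}=B_0\in\Hcal_n$ we conclude $W\in\Hcal_{n+1}$.

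Running exactly this construction with a Borel $\Hcal_{n+1}$-nowhere dense set $Y$ in place of $Z$ yields the Fubini-type fact that $\{\xbf\mid Y_\xbf\notin\Ical\}\in\Ical_n$: otherwise $\{\xbf\mid Y_\xbf\notin\Ical\}$ is $\Hcal_n$-nonmeager, the construction produces $W\in\Hcal_{n+1}$ with $W\subseteq Y$, and this contradicts $\Hcal_{n+1}$-nowhere-density of $Y$, since there is no element of $\Hcal_{n+1}$ inside $W\setminus Y=\emptyset$. Item (2) follows: were some $W\in\Hcal_{n+1}$ a union of countably many Borel $\Hcal_{n+1}$-nowhere dense sets $Y_k$ (using Claim \ref{Borel_NWD}), then off the set $\bigcup_k\{\xbf\mid(Y_k)_\xbf\notin\Ical\}$, which by the Fubini fact is a countable union of elements of $\Ical_n$ and hence $\Hcal_n$-meager, every $W_\xbf=\bigcup_k(Y_k)_\xbf$ would lie in $\Ical$; but then the $\Hcal_n$-set $\{\xbf\mid W_\xbf\neq\emptyset\}$ (each nonempty $W_\xbf\in\Hcal$ being $\tau$-nonmeager) would be $\Hcal_n$-meager, against item (2) at the previous level. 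Finally, for the main equality and item (1): $\Ical_{n+1}$ consists of $\Hcal_{n+1}$-nowhere dense sets by Lemma \ref{full_cover}, while an $\Hcal_{n+1}$-meager set is contained in a Borel $\Hcal_{n+1}$-meager set (Claim \ref{Borel_NWD}), which the Fubini fact places in $\Ical_{n+1}$; so the $\Hcal_{n+1}$-meager sets, the $\Hcal_{n+1}$-nowhere dense sets, and $\Ical_{n+1}$ all coincide.

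The hard part is the measurable uniformization inside the proof of (3): converting the bare $\PCA$-measurability provided by $(\dagger_n)$ into a genuinely Borel assignment $\xbf\mapsto p_\xbf$, on an $\Hcal_n$-nonmeager Borel set and with constant stem, of a Hechler condition whose basic open set is contained in $Z_\xbf$ up to a $\tau$-meager error. It is essential to carry the witness $G$ along in the relation $R$ --- rather than merely asserting that $N_p\setminus Z_\xbf$ is $\tau$-meager --- precisely so that $R$ is $\CA$ and Theorem \ref{KNU} applies, even though only $p_\xbf$ is used at the end to recognize $W$ as an element of $\Hcal_{n+1}$. Everything else reduces to routine bookkeeping with the $\sigma$-ideals $\Ical_m$ and with Borel approximations, supplied by Lemmas \ref{full_cover}, \ref{Borel_approx}, \ref{PCA_fn_meas} and Claim \ref{Borel_NWD}.
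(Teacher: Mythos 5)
Your proposal is correct and follows essentially the same route as the paper's proof: the induction on $n$, the Kond\^{o}--Novikov uniformization of the $\CA$-relation carrying the witness $G$, the use of $(\dagger_n)$ with Lemmas \ref{Borel_approx} and \ref{PCA_fn_meas} to Borelize the selector on an $\Hcal_n$-positive set with stabilized stem, and the contrapositive Fubini argument identifying Borel $\Hcal_{n+1}$-nowhere dense sets with $\Ical_{n+1}$. The only differences are cosmetic (you stabilize the stem after Borelizing rather than before, and your witness $W$ has sections $Z_\xbf\cap N_{p_\xbf}$ rather than $N_{p_\xbf}\cap W(G_\xbf)$), and your item (2) argument, while valid, can be shortened by noting that no (nonempty) element of $\Hcal_{n+1}$ is $\Hcal_{n+1}$-nowhere dense and invoking item (1).
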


\begin{proof}
The proof is by induction on $n$.
To see the base case $n=0$, observe that the $\Hcal$-meager sets coincide with the collection $\Ical$ which consists
of the $\tau$-meager sets.
Since every $\tau$-meager set is contained in a Borel $\tau$-meager set, every
$\Hcal$-meager set is contained in a Borel $\Hcal$-meager set.
It follows that every $\Hcal$-meager set is $\Hcal$-nowhere dense.
Since $\tau$ is Choquet and hence Baire, no element of $\Hcal$ is $\Hcal$-meager.
Finally, if $Z$ is a Borel $\Hcal$-nonmeager set, then $Z$ has the Baire property with respect to $\tau$ and
therefore contains an element of $\Hcal$.
This establishes the base case of the lemma via our convention of identifying $\Omega$ with $\Omega^{[1]}$.

Now suppose $n > 0$.
By Lemma \ref{full_cover}, we know that every element of $\Ical_{n+1}$ is $\Hcal_{n+1}$-nowhere dense.
We will first prove that if $Z$ is a Borel set not in $\Ical_{n+1}$, then $Z$ is not $\Hcal_{n+1}$-nowhere dense.
Toward this end, suppose that $X:=\{\xbf \in \Omega^{[n]} \mid Z_\xbf \not \in \Ical\}$ is not
in $\Ical_n$.
By our induction hypothesis, $X$ is $\Hcal_n$-nonmeager.
Define $R$ to consist of all $(\xbf,p,G) \in \Omega^{[n]} \times \Hbb \times \widehat{\Gscr}$ such that for all $z \in \Omega$:
\begin{itemize}

\item $G \in \Gscr$ and

\item if $z \in N_p \cap W(G)$, then $(\xbf,z) \in Z$.

\end{itemize}
Observe that for $\xbf \in \Omega^{[n]}$, $\xbf \in X$ if and only if there exist
$p$ and $G$ such that $(\xbf,p,G) \in R$.
$R$ is a $\CA$-relation and therefore by Theorem \ref{KNU}, there is a $\CA$-function
$\psi: X \to \Hbb \times \Gscr$ whose graph is contained in $R$.
Let $s \in \Sigma$ be such that 
\[
X_s:= \{\xbf \in X \mid \exists y \in \Omega\ \exists G \in {\widehat \Gscr}\ \,(\psi(x) = ((s,y),G))\}
\]
is $\Hcal_n$-nonmeager.
Since $X_s$ is $\PCA$, ($\dagger_n$) implies that $X_s$ is $\Hcal_n$-measurable.
By Lemmas \ref{Borel_approx} and \ref{PCA_fn_meas} and our induction hypothesis, $X_s$ contains an $A \in \Hcal_n$ such
that $\psi \restriction A$ is Borel.
Define $B$ to be all $(\xbf,z) \in \Omega^{[n+1]}$ such that $\xbf\in A$ and if $\psi(\xbf) = (p,G)$, then $z \in N_p \cap W(G)$.
To see that $B$ is Borel, observe that is both $\analytic$ and $\CA$:
\begin{itemize}
\item $(\xbf,z) \in B$ if and only if there is a $(p,G) \in \Hbb \times {\widehat \Gscr}$ such that $(\xbf,p,G) \in \psi \restriction A$ and $z \in N_p \cap W(G)$

\item $(\xbf,z) \in B$ if and only if for all $(p,G) \in \Hbb \times {\widehat \Gscr}$, if $(\xbf,p,G) \in \psi \restriction A$, then $z \in N_p \cap W(G)$.

\end{itemize}
It follows that $B \in \Hcal_{n+1}$ and $B \subseteq Z$.
In particular, $Z$ is not $\Hcal_{n+1}$-nowhere dense.

Notice that we have established (by way of contraposition), that if $Z \subseteq \Omega^{[n+1]}$
is Borel and $\Hcal_{n+1}$-nowhere dense, then $Z \in \Ical_{n+1}$.
By Claim \ref{Borel_NWD} and countable additivity of $\Ical_{n+1}$, it follows that
every $\Hcal_{n+1}$-meager set is in $\Ical_{n+1}$.
Since every element of $\Ical_{n+1}$ is $\Hcal_{n+1}$-nowhere dense, this also
establishes the first auxiliary conclusion of the lemma.
Since no element of $\Hcal_{n+1}$ is $\Hcal_{n+1}$-nowhere dense,
it also follows that no element of $\Hcal_{n+1}$ is $\Hcal_{n+1}$-meager.
To see the remaining auxiliary conclusion of the lemma,
suppose that $Z$ is Borel and $\Hcal_{n+1}$-nonmeager.
We have established that $Z \not \in \Ical_{n+1}$ and
therefore that there is a $B \subseteq Z$ in $\Hcal_{n+1}$.
\end{proof}

We will need the following generalization of the classical \emph{Banach-Mazur game} (see \cite{kechris}).
If $\Fcal$ is a collection of nonempty sets ordered by containment and $X \subseteq \bigcup \Fcal$,
then the \emph{Banach-Mazur game} associated
to $(X,\Fcal)$ is defined as follows.
Two players Nonempty and Empty alternately play a $\subseteq$-decreasing sequence of elements of $\Fcal$,
with Nonempty making the first move.
Nonempty wins a play of the game if the intersection of the sequence of plays has nonempty intersection with $X$.

\begin{lem}\label{nonmeager}
A subset $X$ of $\Omega^{[n]}$ is $\Hcal_n$-nonmeager
if and only if Empty does not have a winning strategy in the Banach-Mazur game played on
$(X,\Hcal_n)$.
\end{lem}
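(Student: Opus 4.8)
The plan is to mimic the classical proof of the Banach--Mazur theorem (as in \cite{kechris}), adapting each step to the combinatorial structure of $\Hcal_n$ established in the preceding lemmas. One direction is routine: if $X$ is $\Hcal_n$-meager, then $X \subseteq \bigcup_m Y_m$ with each $Y_m$ being $\Hcal_n$-nowhere dense, and Empty wins the Banach--Mazur game played on $(X,\Hcal_n)$ by the standard diagonalization---at stage $m$, after Nonempty plays some $B \in \Hcal_n$, Empty uses the definition of $\Hcal_n$-nowhere dense to choose a response $B' \subseteq B \setminus Y_m$ inside $\Hcal_n$; the resulting intersection misses every $Y_m$ and hence misses $X$. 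Here the only thing to check is that a legal decreasing sequence of moves exists at each stage, which is exactly what $\Hcal_n$-nowhere density supplies.

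The substantive direction is the converse: assuming Empty has a winning strategy $\sigma$, I want to produce a Borel $\Hcal_n$-meager set covering $X$. The standard device is to build, by recursion on the length of a run, a ``Cantor scheme'' of positions compatible with $\sigma$: one constructs a countable tree $T$ of partial plays, consistent with $\sigma$, such that at every node the Nonempty-responses chosen form a \emph{maximal} pairwise incompatible family below the current $\sigma$-response. Lemma \ref{Hn_ccc} guarantees each such family is countable, so $T$ is countable and only countably many sets from $\Hcal_n$ appear in it. One then argues that the ``frontier'' at depth $m$---the complement in $\Omega^{[n]}$ of the union of the $\sigma$-responses at level $m$---is $\Hcal_n$-nowhere dense (by the maximality, exactly as in Claim \ref{antichain_complement}), and that any point of $X$ that survives into depth $m$ must actually lie in that depth-$m$ union (else $\sigma$ would not be winning, since the run through that point is $\sigma$-consistent yet meets $X$). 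Hence $X$ is contained in the union over $m$ of these depth-$m$ frontiers together with a limit residue; each is $\Hcal_n$-nowhere dense, so $X$ is $\Hcal_n$-meager, contradicting the hypothesis. Equivalently, run this for an arbitrary $X$: if Empty has a winning strategy then $X$ is $\Hcal_n$-meager, which is the contrapositive of the claim.

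The step I expect to be the main obstacle is the bookkeeping that makes the recursion actually close: at each node I must choose a maximal pairwise incompatible family of elements of $\Hcal_n$ lying below a given member of $\Hcal_n$, and I need this to be possible \emph{relativized below an arbitrary $B \in \Hcal_n$}. This requires knowing that the poset $\Hcal_n$ is ``locally'' well-behaved---that compatibility is witnessed by a common refinement inside $\Hcal_n$ (which is part of the content of the proof of Lemma \ref{Hn_ccc}) and that below any $B \in \Hcal_n$ there is \emph{some} element of $\Hcal_n$, so that the families are nonempty and Empty always has a legal move. A secondary subtlety is verifying that the frontier sets are genuinely Borel; but this follows from Lemma \ref{Hn_ccc} (each antichain is countable) exactly as in Claim \ref{antichain_complement}, so no new descriptive-set-theoretic input is needed. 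Once these local properties are in hand, the argument is the verbatim Banach--Mazur scheme, and I would present it by citing \cite{kechris} for the combinatorial skeleton and filling in only the three points above.
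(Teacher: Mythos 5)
Your proof follows essentially the same route as the paper's: the easy direction by diagonalizing against the nowhere dense pieces, and the converse by building a $\sigma$-consistent tree of refining maximal antichains (countable by Lemma \ref{Hn_ccc}) whose level-$k$ complements are $\Hcal_n$-nowhere dense by Claim \ref{antichain_complement} and cover $X$. The ``obstacles'' you flag are exactly the ingredients the paper itself invokes, so the proposal matches the intended argument.
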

\begin{proof}
This follows the standard proof for the case of the ideal of meager subsets of a topological space.
For example, if $X$ is the union of a sequence $X_k$ $(k<\omega)$ of $\Hcal_n$-nowhere dense sets,
Empty's winning strategy at stage $l$ is to play a set in $\Hcal_n$
which is disjoint from $\bigcup_{k \leq l} X_k$.
Conversely, if $\sigma$ is a strategy of Empty, we build a sequence
$\Acal_k$ $(k<\omega)$ of maximal antichains of $\Hcal_n$ so that $\Acal_{k+1}$
refines $\Acal_k$
for all $k$ and so that to every branch $\Bcal$ of the tree
$(\bigcup_{k=0}^\infty \Acal_k, \supseteq)$ there corresponds a play of the Banach-Mazur game on $(X,\Hcal_n)$
in which Empty uses $\sigma$.
This in particular means that we have $(\bigcap \Bcal)\cap X=\emptyset$
for every branch $\Bcal$ of $(\bigcup_{k =0}^\infty \Acal_k, \supseteq)$.
It follows that $X$ is covered by the collection $$\{\Omega^{[n]}\setminus \bigcup \Acal_k \mid k \in \omega\}$$
which, by Claim \ref{antichain_complement}, consists of $\Hcal_n$-nowhere dense sets.
\end{proof}

\begin{prop}[$\dagger_{n-1}$]\label{ub}
Every universally Baire subset of $\Omega^{[n]}$
is $\Hcal_n$-measurable.
\end{prop}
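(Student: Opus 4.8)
The plan is to reduce the whole statement, by a maximal-antichain argument, to a single \emph{local decidability} assertion, after first disposing of $n=1$. For $n=1$ the proposition is almost immediate: the identity map $(\Omega,\tau)\to(\Omega,\text{Polish})$ is continuous since $\tau$ refines the Polish topology, so any universally Baire $X\subseteq\Omega$ has the property of Baire with respect to $\tau$; writing $X\symdif U$ for a $\tau$-meager set with $U$ $\tau$-open, a maximal antichain $\Acal$ of basic $\tau$-open sets contained in $U$ is countable by Lemma~\ref{Hn_ccc}, $\bigcup\Acal$ is Borel, and $U\setminus\bigcup\Acal$ is $\tau$-nowhere dense by maximality, whence $X\symdif\bigcup\Acal\in\Ical$ and $X$ is $\Hcal_1$-measurable. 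So fix $n\ge2$; throughout I would use $(\dagger_{n-1})$, which also activates Lemmas~\ref{Hn_Fubini}, \ref{Borel_approx}, and~\ref{PCA_fn_meas} in dimensions $\le n-1$. I would then reduce the proposition to the assertion that for every universally Baire $Y\subseteq\Omega^{[n]}$ and every $A\in\Hcal_n$ there is $A'\in\Hcal_n$ with $A'\subseteq A$ for which $A'\setminus Y$ or $A'\cap Y$ is $\Hcal_n$-meager.

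Granting this, fix universally Baire $X\subseteq\Omega^{[n]}$ and let $\Bcal\subseteq\Hcal_n$ be a maximal antichain among those $A'\in\Hcal_n$ with $A'\setminus X$ or $A'\cap X$ $\Hcal_n$-meager. Since by the assertion every element of $\Hcal_n$ has such an $\Hcal_n$-subset, $\Bcal$ is maximal in all of $\Hcal_n$, and Lemma~\ref{Hn_ccc} makes it countable, so by Claim~\ref{antichain_complement} the set $\Omega^{[n]}\setminus\bigcup\Bcal$ is Borel and $\Hcal_n$-nowhere dense. Splitting $\Bcal=\Bcal_+\sqcup\Bcal_-$ according to which alternative holds and putting $B:=\bigcup\Bcal_+$ (Borel), both $B\setminus X=\bigcup_{\Bcal_+}(A'\setminus X)$ and $X\setminus B\subseteq(\Omega^{[n]}\setminus\bigcup\Bcal)\cup\bigcup_{\Bcal_-}(X\cap A')$ are $\Hcal_n$-meager, so $X$ is $\Hcal_n$-measurable.

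To prove the local assertion I would imitate the nonmeager branch of the proof of Lemma~\ref{Hn_Fubini}, working over $\Omega^{[n-1]}$, with one twist. Fix trees $T,S$ with $Y=p[T]$ and $\Omega^{[n]}\setminus Y=p[S]$ (these exist since $Y$ is universally Baire \cite{UB}) and let $\tilde A\in\Hcal_{n-1}$ be the shadow $\{\xbf\in\Omega^{[n-1]}:A_\xbf\ne\emptyset\}$. Expressing ``$z\in Y_\xbf$'' as $\neg\exists h\,(\xbf,z,h)\in[S]$ and ``$z\notin Y_\xbf$'' as $\neg\exists g\,(\xbf,z,g)\in[T]$ as convenient, the sets
\[
X_1:=\{\xbf\in\tilde A:\exists q\in\Hbb\ \exists G\in\widehat{\Gscr}\ (G\in\Gscr\ \wedge\ N_q\cap W(G)\subseteq A_\xbf\cap Y_\xbf)\}
\]
and its analogue $X_0$ (with $A_\xbf\setminus Y_\xbf$ in place of $A_\xbf\cap Y_\xbf$) are $\PCA$, by the quantifier-counting of Lemma~\ref{category_quant} applied through these tree representations; and since each section $Y_\xbf$ is universally Baire, hence has the property of Baire with respect to $\tau$, a routine use of Lemma~\ref{dense_open_basis} gives $X_0\cup X_1=\tilde A$. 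Suppose first $X_1$ is $\Hcal_{n-1}$-nonmeager. Uniformizing (Theorem~\ref{KNU}) the coanalytic-in-$S$ relation assigning to $\xbf\in X_1$ a witnessing pair $(q,G)$, then passing to an $\Hcal_{n-1}$-nonmeager $\PCA$ subset on which the stem of $q$ equals a fixed $s$ and which, by $(\dagger_{n-1})$ with Lemmas~\ref{Borel_approx} and~\ref{Hn_Fubini}, contains an $A_1\in\Hcal_{n-1}$, then shrinking $A_1$ by Lemma~\ref{PCA_fn_meas} to an $A_2\in\Hcal_{n-1}$ on which $\xbf\mapsto(q_\xbf,G_\xbf)$ is Borel, one forms $A':=\{(\xbf,z):\xbf\in A_2,\ z\in N_{q_\xbf}\cap W(G_\xbf)\}$. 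As in Lemma~\ref{Hn_Fubini} this $A'$ is simultaneously $\analytic$ and $\CA$, hence Borel; it lies in $\Hcal_n$ with final stem entry $s$; it is contained in $A$; and $A'_\xbf\subseteq Y_\xbf$ for all $\xbf$, so $A'\setminus Y=\emptyset$. If instead $X_1$ is $\Hcal_{n-1}$-meager, then $X_0$ is $\Hcal_{n-1}$-nonmeager, and the symmetric construction over $X_0$ (using that $A_\xbf\cap Y_\xbf$ lies in a Borel $\tau$-meager set) yields $A'\subseteq A$ in $\Hcal_n$ with $A'\cap Y=\emptyset$.

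The hard part is the complexity bookkeeping just indicated: we are permitted only $(\dagger_{n-1})$, i.e.\ measurability of genuine $\PCA$ sets, and not any large-cardinal closure of the universally Baire $\sigma$-algebra, so $X_0$, $X_1$, and the relations uniformized to build $A'$ must be certified $\PCA$ through the Souslin and co-Souslin representations $p[T]$, $p[S]$ of $Y$, rather than by treating $Y$ as a black box; this is the one point where ``universally Baire'' does work beyond ``Borel'', and it is what forces the passage through trees. The remaining items---that $X_0\cup X_1=\tilde A$, the stem-fixing and Borelization steps, and the verification that the resulting $A'$ has the asserted form in $\Hcal_n$ and the asserted inclusions---are direct transcriptions of the corresponding portions of the proof of Lemma~\ref{Hn_Fubini}.
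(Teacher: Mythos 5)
Your reduction to the ``local decidability'' assertion is sound, and your $n=1$ argument is correct (indeed it is the cleanest way to see that case: $\tau$ refines the Polish topology, so universal Baireness applies directly to the identity map). The gap is in the complexity bookkeeping for $n\geq 2$, precisely at the point you flag as the crux. The trees $T,S$ furnished by \cite{UB} for a universally Baire set $Y$ are trees on $\omega^{n+1}\times\lambda$ for an \emph{uncountable} ordinal $\lambda$; if they could be taken on $\omega^{n+1}\times\omega$, then $Y$ would be both $\analytic$ and $\CA$, hence Borel by Souslin's theorem, and the proposition would be vacuous. Consequently ``$\exists h\,(\xbf,z,h)\in[S]$'' quantifies over $\lambda^\omega$, not over a Polish space, and ``$z\in Y_\xbf$'' is only co-$\lambda$-Souslin, not $\CA$. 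The displayed set $X_1$ is therefore not certified $\PCA$ by the quantifier count of Lemma \ref{category_quant} (which uses essentially that ``$z\in Z_\xbf$'' is Borel), the relation you wish to uniformize is not $\CA$, so Theorem \ref{KNU} does not apply, and ($\dagger_{n-1}$) together with Lemmas \ref{Borel_approx} and \ref{PCA_fn_meas} cannot be invoked for $X_0$, $X_1$, or the selector $\xbf\mapsto(q_\xbf,G_\xbf)$. The only general closure principle that would rescue these steps is the large-cardinal closure of the universally Baire sets (Theorem \ref{rel_UB}), which is both circular here and far beyond the stated hypothesis ($\dagger_{n-1}$).

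The paper's proof avoids sections and complexity computations entirely: it forms the complete Boolean algebra $\Bbb$ of Borel sets modulo $\Hcal_n$-meager sets (complete by Lemmas \ref{Hn_ccc} and \ref{Hn_Fubini}), passes to a dense subspace $X$ of its Stone space carrying a continuous map $f:X\to\Omega^{[n]}$, and applies the \emph{definition} of universal Baireness to conclude that $f^{-1}(Z)$ has the property of Baire in $X$; the regular open approximation then corresponds to a countable antichain $\Acal\subseteq\Hcal_n$, and Banach--Mazur arguments (Lemma \ref{nonmeager}) transfer the meagerness of $f^{-1}(Z)\symdif(V\cap X)$ back to the $\Hcal_n$-meagerness of $Z\symdif\bigcup\Acal$. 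In effect the paper does for general $n$ exactly what you did for $n=1$ --- realize $\Hcal_n$-measurability as Baire measurability for a space mapping continuously onto $\Omega^{[n]}$ --- rather than inducting through the sections. If you want to keep your section-by-section scheme, you would need to replace the $\PCA$ certifications by an appeal to Theorem \ref{rel_UB} (strengthening the hypothesis to a supercompact or a proper class of Woodins), or find a genuinely different route to the local decidability assertion.
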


\begin{proof}
Fix a universally Baire subset $Z$ of $\Omega^{[n]}$.
Let $\Bbb$ be the Boolean algebra of all Borel subsets of $\Omega^{[n]}$
modulo the $\Hcal_n$-meager sets.
For any Borel set $B \subseteq \Omega^{[n]}$, write $[B]$ for the element of $\Bbb$ it represents.
Since $\Bbb$ is c.c.c.\ by Lemmas \ref{Hn_ccc} and \ref{Hn_Fubini} and since it is countably complete, it is a complete Boolean algebra.
By Lemmas \ref{Hn_ccc} and \ref{Hn_Fubini},
every positive element of $\Bbb$ is of the form $[B]$ for some $B$ which is a countable union
of sets in $\Hcal_n$.

Let $K$ be the Stone space consisting of all ultrafilters on $\Bbb$.
Define
$X \subseteq K$ to consist of all $p \in K$ such that for some $f(p) \in \Omega^{[n]}$,
$[U] \in p$ whenever $U \subseteq \Omega^{[n]}$ is a metrically open set containing $f(p)$.
Equivalently, $p$ is in $X$ if and only if for every $\epsilon >0$, there is a Borel $B \subseteq \Omega^{[n]}$ with diameter less than $\epsilon$
such that $[B] \in p$ (here we have fixed a complete, compatible metric on $\Omega^{[n]}$).
Notice that, for a given $p \in X$, $f(p)$ is necessarily unique and that
the function $f:X \to \Omega^{[n]}$ is continuous.
Observe, for any $\epsilon > 0$, $\Omega^{[n]}$ can be covered by countably many Borel sets of diameter at most $\epsilon$.
Since the ideal of $\Hcal_n$-nonmeager sets is countably additive, Lemma \ref{Hn_Fubini} implies that
every Borel subset $B$ of $\Omega^{[n]}$ which is not $\Hcal_n$-meager 
contains an element of $\Hcal_n$ of arbitrarily small diameter (which is itself not $\Hcal_n$-meager by (\ref{Hn_nonmeager})
of Lemma \ref{Hn_Fubini}).
Thus any positive element of $\Bbb$ is contained in
a filter $\Fcal$ such that for every $\epsilon > 0$, there is a $B \in \Hcal_n$ of diameter less than $\epsilon$
such that $[B] \in \Fcal$.
Since any ultrafilter extending $\Fcal$ is in $X$, it follows that $X$ is dense.
Since $Z$ is universally Baire, the preimage $Y:=f^{-1}(Z)$ has the property of Baire in $X$.
Fix a regular open subset $V$ of $K$ such that $M:=Y \symdif (V\cap X)$
is meager in $X$.
Since $\Bbb$ is complete, $V$ corresponds to an element of $\Bbb$.
Let $\Acal \subseteq \Hcal_n$ be a countable pairwise disjoint family
such that $V = [\bigcup \Acal]$.

It suffices to show that
$Z$ differs from the Borel set $\bigcup \Acal$ by an $\Hcal_n$-meager set.
Suppose for contradiction that one of the sets $\bigcup \Acal \setminus Z$ or
$Z\setminus \bigcup \Acal$ is not $\Hcal_n$-meager.
If  $\bigcup \Acal \setminus Z$ is not $\Hcal_n$-meager,
then for some $A\in \Acal$ the set $A\setminus Z$ is not $\Hcal_n$-meager.
Then using Lemma \ref{Hn_Fubini}, we can find a run of the Banach-Mazur game on $\Hcal_n \restriction (A\setminus Z)$
with intersection $x$
such that $f^{-1}(x)$ is disjoint from $M$.
This would show that $V\cap X$ has a point belonging to neither the set $Y$ nor $M$, a contradiction.
On the other hand, if the set $Z\setminus \bigcup \Acal$ is not $\Hcal_n$-meager, we could find $B\in \Hcal_n$ incompatible with every member of $\Acal$ such that $B\cap Z$ is not in $\Hcal_n$-meager.
Applying the Banach-Mazur argument again, we find a point $x$ in $B\cap Z$ whose preimage $f^{-1}(x)$ is disjoint from $M$.
This yields a point of $Y$ that does not belong to either of the sets $V$ or $M$, a contradiction.
\end{proof}

\begin{prop} \label{dagger_prop}
  If for every $a \subseteq \omega$, $a^\sharp$ exists, then {\emph{($\dagger$)}} holds.
  In particular, \emph{($\dagger$)} follows from the existence of a measurable cardinal.
\end{prop}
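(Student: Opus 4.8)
The plan is to establish, by induction on $n$, the formally stronger statement $(\dagger_n)$; since $(\dagger)$ is the conjunction of all the $(\dagger_n)$, this suffices. The case $n=0$ is vacuous, so fix $n\geq 1$, assume $(\dagger_{n-1})$, and let $Z$ be a $\PCA$ subset of $\Omega^{[m]}$ with $m\leq n$. The case $m<n$ is immediate from $(\dagger_{n-1})$, so assume $m=n$. Under $(\dagger_{n-1})$, Lemma \ref{Hn_Fubini} supplies the features of $\Hcal_n$ that we need: the $\Hcal_n$-meager sets are precisely the members of $\Ical_n$; the Boolean algebra $\Bbb_n$ of Borel subsets of $\Omega^{[n]}$ modulo $\Ical_n$ is complete, being countably complete and, by Lemma \ref{Hn_ccc}, c.c.c.; and every Borel set is trivially $\Hcal_n$-measurable. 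It therefore suffices to produce, for each $\PCA$ set $Z\subseteq\Omega^{[n]}$, a Borel set $B^{*}$ with $Z\symdif B^{*}\in\Ical_n$. This is Solovay's classical argument for the Baire property, carried out with the algebra $\Bbb_n$ in place of the Cohen algebra.

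To carry it out, fix a real $a$ such that $Z$ is $\PCA$ in the parameter $a$ and $L[a]$ contains a Shoenfield tree $T$ for $Z$; then for any real $x$ the statement $x\in Z$ is equivalent to ``$T_x$ is illfounded'' and so is absolute between $V$ and $L[a][x]$. Since $a^{\sharp}$ exists, every uncountable cardinal of $V$ is inaccessible in $L[a]$; as $L[a]\models\mathrm{GCH}$ this yields $(2^{\aleph_0})^{L[a]}<\aleph_1^{V}$, so the algebra $\Bbb_n^{L[a]}$ computed inside $L[a]$ is countable in $V$. Working in $L[a]$, let $\dot r$ name the $\Bbb_n^{L[a]}$-generic real, let $e\in\Bbb_n^{L[a]}$ be the supremum of those conditions forcing $\dot r$ to satisfy the $\PCA$-definition of $Z$, fix a maximal antichain $\Acal$ of $\Bbb_n^{L[a]}$ below $e$ (countable, as $\Bbb_n^{L[a]}$ is c.c.c.\ in $L[a]$), and set $B^{*}:=\bigcup\Acal$, a Borel set with a code in $L[a]$. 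Writing $D$ for the set of reals that are $\Bbb_n^{L[a]}$-generic over $L[a]$, a routine computation of the forcing relation inside $L[a]$, together with the absoluteness of $Z$ just noted, shows that $r\in Z\iff r\in B^{*}$ for every $r\in D$. Since, by the transfer statement below, $\Omega^{[n]}\setminus D$ is a countable union of $\Ical_n$-small Borel sets and hence lies in $\Ical_n$, we get $Z\symdif B^{*}\subseteq\Omega^{[n]}\setminus D\in\Ical_n$, completing the induction.

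The remaining ingredient is the \emph{transfer statement}: for each $m\leq n$ and every $\mathbf{\Delta}^1_2$ set $W\subseteq\Omega^{[m]}$ with a code in $L[a]$, if $W\in\Ical_m^{L[a]}$ then $W\in\Ical_m^{V}$. (The hypothesis must be phrased for $\mathbf{\Delta}^1_2$ rather than merely Borel $W$, since the recursive definition of $\Ical_m$ refers at the next level down to the $\mathbf{\Delta}^1_2$ fibre set $\{\xbf:Z_\xbf\notin\Ical\}$.) It is proved by a secondary induction along that recursive definition. At the successor step the witnessing Borel superset of $W$ lies in $L[a]$; the inclusion clause ``$W\subseteq Z$'' is $\mathbf{\Pi}^1_2$ and hence transfers from $L[a]$ to $V$ by Shoenfield absoluteness; and the clause ``$\{\xbf:Z_\xbf\notin\Ical\}\in\Ical_{m-1}$'' is passed to the secondary inductive hypothesis, which is legitimate precisely because Lemma \ref{category_quant} pins $\{\xbf:Z_\xbf\notin\Ical\}$ at complexity $\mathbf{\Delta}^1_2$ --- exactly the level at which Shoenfield absoluteness between $L[a]$ and $V$ still operates. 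The base case $m=1$ is the same, using that by Lemma \ref{dense_open_basis} ``$W$ is $\tau$-meager'' reads as $\exists G\,(G\in\Gscr\wedge W(G)\cap W=\emptyset)$, whose matrix is again $\mathbf{\Pi}^1_2$ for $\mathbf{\Delta}^1_2$ $W$. Checking that the recursion never lets the relevant complexity escape $\mathbf{\Delta}^1_2$ is the one genuinely delicate point; granting it, the large-cardinal hypothesis has entered only through the abundance of generics over the models $L[a]$ secured in the previous paragraph.

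Finally, for the concluding assertion: if $\kappa$ is measurable and $j\colon V\to M$ is the ultrapower by a normal measure on $\kappa$, then for every $a\subseteq\omega$ we have $j(a)=a$ and $L[a]$ is computed identically in $M$ and $V$, so $j$ restricts to a nontrivial elementary embedding $j\restriction L[a]\colon L[a]\to L[a]$, whence $a^{\sharp}$ exists. (We remark that the conclusions of this section can alternatively be derived, at the cost of a stronger large-cardinal hypothesis, from the idealized-forcing machinery of \cite[\S 5.1]{forcing_idealized}; the argument above is the more economical route.)
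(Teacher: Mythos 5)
Your proposal takes a genuinely different route from the paper's, which simply combines Proposition \ref{ub} (under $(\dagger_n)$, universally Baire subsets of $\Omega^{[n+1]}$ are $\Hcal_{n+1}$-measurable) with the Feng--Magidor--Woodin theorem that, given sharps for all reals, every $\PCA$-set is universally Baire. Your direct Solovay-style argument over $L[a]$ is the right idea in dimension $n=1$ --- there it essentially reproduces the Brendle--L\"owe characterization quoted after the proposition --- but for $n\geq 2$ it has two genuine gaps.

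First, the transfer statement. The clause ``$\{\xbf: Z_\xbf\notin\Ical\}\subseteq Y$'' unwinds to $\forall\xbf\,(\xbf\in Y\vee Z_\xbf\in\Ical)$, and by Lemma \ref{category_quant} the matrix ``$Z_\xbf\in\Ical$'' is $\PCA$ --- it carries an existential real quantifier over the witness $G\in\Gscr$ --- so the sentence is $\mathbf{\Pi}^1_3$, not $\mathbf{\Pi}^1_2$ as you assert. Upward $\mathbf{\Pi}^1_3$-absoluteness from $L[a]$ to $V$ fails outright (``every real belongs to $L[a]$'' is such a sentence), and the concrete failure point is that for $\xbf\in V\setminus L[a]$ nothing in $L[a]$ supplies the witness $G(\xbf)$; producing one would require $\xbf$ itself to be sufficiently generic over $L[a]$ for the lower-dimensional forcing, a circularity your induction does not address. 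This is exactly why Proposition \ref{Hn_coll} works over intermediate Levy-collapse extensions $V[a]$ sitting below a measurable, where Martin--Solovay absoluteness covers the $\mathbf{\Pi}^1_3$ step, rather than over $L[a]$. Second, your argument uses inside $L[a]$ that $\Bbb_n^{L[a]}$ is c.c.c., that the complement of a maximal antichain is $\Ical_n$-small, and more generally the conclusions of Lemma \ref{Hn_Fubini}; all of these are proved only under $(\dagger_{n-1})$, and already $(\dagger_1)$ fails in $L[a]$ (by the Brendle--L\"owe equivalence, since $\aleph_1^{L[a]}$ is a successor cardinal, hence not inaccessible, in $L[a]$). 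Finally, note that your argument uses the sharp only to make $\aleph_1^V$ inaccessible in $L[a]$; were it correct as written, it would settle the paper's open question of whether $(\dagger)$ follows from that inaccessibility alone, which is a strong indication that the missing absoluteness is precisely where the real content lies.
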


\begin{rem}
Brendle and L\"owe \cite{BL}
have shown that ($\dagger_1$) is equivalent to the assertion that $\aleph_1$ is an inaccessible cardinal
in $L[r]$ for any $r \subseteq \omega$.
\end{rem}

\begin{proof}
  We will verify ($\dagger_n$) inductively.
  Notice that ($\dagger_0$) is a vacuous statement.
  Given ($\dagger_n$), Theorem \ref{ub} implies that all universally Baire subsets of $\Omega^{[n+1]}$ are $\Hcal_{n+1}$-measurable.
  By \cite[Theorem 3.4]{UB}, every $\PCA$-set subset of $\Omega^{[n+1]}$ is universally Baire and therefore ($\dagger_{n+1}$) holds.
\end{proof}

\begin{cor}
  If there is a supercompact cardinal or a proper class of Woodin cardinals, then every subset of
  $\Omega^{[n]}$ belonging to the inner model $L(\Rbb)$ is $\Hcal_n$-measurable.
\end{cor}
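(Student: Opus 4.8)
The plan is to chain together three results already established in the excerpt: Proposition~\ref{dagger_prop}, Theorem~\ref{rel_UB}, and Proposition~\ref{ub}.

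First I would note that either hypothesis of the corollary is far stronger than the assertion ``$a^\sharp$ exists for every $a\subseteq\omega$'': a supercompact cardinal is a limit of measurable cardinals, and there are measurable cardinals below any Woodin cardinal, so in both cases the sharp of every $a\subseteq\omega$ exists. By Proposition~\ref{dagger_prop} this yields $(\dagger)$, and in particular $(\dagger_m)$ for every $m$; hence the hypothesis $(\dagger_{n-1})$ of Proposition~\ref{ub} is available for each $n$ (the case $n=0$ being in any event trivial, since $\Omega^{[0]}=\{\emptyset\}$ and every subset of it is Borel, hence $\Hcal_0$-measurable).

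Next I would show that any $Z\in L(\Rbb)$ with $Z\subseteq\Omega^{[n]}$ is universally Baire. Since $\Omega$ is a closed subspace of $\omega^\omega$, the power $\Omega^n$ is Polish, it lies in $L(\Rbb)$, and $\Omega^{[n]}$ is a Borel subset of it. Applying Theorem~\ref{rel_UB} with the (trivially universally Baire) set $A=\emptyset$, for which $L(\Rbb,A)=L(\Rbb)$, we conclude that $Z$, being a subset of the Polish space $\Omega^n$ that lies in $L(\Rbb)$, is universally Baire; as $Z\subseteq\Omega^{[n]}$, it is a universally Baire subset of $\Omega^{[n]}$. Proposition~\ref{ub}, whose hypothesis $(\dagger_{n-1})$ we have just secured, then gives that $Z$ is $\Hcal_n$-measurable, which is the desired conclusion.

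I do not anticipate a genuine obstacle, as the cited results carry the argument; the only point requiring a little care is the bookkeeping about ambient spaces---namely, checking that a set regarded as a subset of $\Omega^{[n]}$ feeds correctly into Theorem~\ref{rel_UB} (which is stated for subsets of Polish spaces) and that the resulting notion of universal Baireness is exactly the one consumed by Proposition~\ref{ub}. This is settled by viewing $\Omega^{[n]}$ inside the Polish space $\Omega^n$ and using that it is Borel there, so that universal Baireness of a subset does not depend on which of the two spaces it is regarded as sitting in.
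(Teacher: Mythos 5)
Your proposal is correct and takes essentially the same route as the paper: both derive the universal Baireness of all $L(\Rbb)$-subsets of $\Omega^{[n]}$ from Woodin's theorem (the paper cites \cite{W88} directly, you extract it from Theorem \ref{rel_UB} with $A=\emptyset$, which is the same result), obtain $(\dagger)$ from the existence of sharps via Proposition \ref{dagger_prop}, and conclude by Proposition \ref{ub}. Your extra bookkeeping about ambient Polish spaces and the justification that either large cardinal hypothesis yields sharps are fine and merely make explicit what the paper leaves implicit.
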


\begin{proof}
In \cite{W88} (see also \cite{UB}), 
Woodin showed that either large cardinal hypothesis implies 
every subset of $\Omega^{[n]}$ belonging to the inner model  $L(\Rbb)$ is universally Baire.
Since either large cardinal hypothesis implies that for every $x \subseteq \omega$, $x^\sharp$ exists,
the corollary therefore follows from Propositions \ref{ub} and \ref{dagger_prop}.
\end{proof}

Thus, assuming a standard large cardinal axiom, the inner model $L(\Rbb)$ is a natural model of the statement that all subsets of $\Omega^{[n]}$ are $\Hcal_n$-measurable.
However, as in the case of Lebesgue measurability, if we are interested just in the consistency of this statement, we can reduce the large cardinal assumption considerably (still, the large cardinal assumption is more substantial
than in the case of Lebesgue measurability).

\begin{prop} \label{Hn_coll}
If $\mu$ is a measurable cardinal and $\kappa < \mu$ is inaccessible, then in every generic extension
  by the Levy collapse $\Coll (\omega, \kappa)$, all subsets of $\Omega^{[n]}$ definable from an $\omega$-sequence of ordinals are $\Hcal_n$-measurable.
\end{prop}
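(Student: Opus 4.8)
The plan is to pass to $V[G]$, for $G\subseteq\Coll(\omega,\kappa)$ generic over $V$, and to have the two large-cardinal hypotheses play separate roles. Because $|\Coll(\omega,\kappa)|=\kappa<\mu$, the L\'evy--Solovay theorem on small forcing keeps $\mu$ measurable in $V[G]$; in particular $a^{\sharp}$ exists for every $a\subseteq\omega$ in $V[G]$, so $(\dagger)$ holds in $V[G]$ by Proposition \ref{dagger_prop}, and with it the entire structure theory of Section \ref{Sect:topology}: each poset $(\Hcal_n,\subseteq)$ is c.c.c.\ (Lemma \ref{Hn_ccc}); the $\Hcal_{n+1}$-meager sets form the Fubini-style ideal $\Ical_{n+1}$ and no element of $\Hcal_{n+1}$ is $\Hcal_{n+1}$-meager (Lemma \ref{Hn_Fubini}); $\Hcal_n$-measurable sets admit two-sided Borel approximations modulo an $\Hcal_n$-meager error (Lemma \ref{Borel_approx}); $\Hcal_n$-non-meagerness is detected by a Banach--Mazur game (Lemma \ref{nonmeager}); and, by Lemma \ref{category_quant}, deciding whether a Borel set is $\Hcal_m$-meager is a projective question whose complexity is absolute between a model and its set-forcing extensions (by Shoenfield absoluteness, and, for larger $m$, by Martin--Solovay absoluteness, using the sharps available in $V[G]$). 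What remains---and this is where the inaccessibility of $\kappa$ and the homogeneity of the L\'evy collapse are used---is to upgrade $(\dagger)$, a statement only about $\PCA$ sets, to the assertion that every subset of $\Omega^{[n]}$ definable from an $\omega$-sequence of ordinals is $\Hcal_n$-measurable.

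For this one runs Solovay's argument with the notion of forcing $(\Hcal_n,\subseteq)$ playing the role that Cohen forcing plays in the classical proof that such sets have the Baire property. Fix $X\subseteq\Omega^{[n]}$ defined in $V[G]$ by $\varphi(\cdot,a)$ with $a$ an $\omega$-sequence of ordinals. By standard properties of the L\'evy collapse there is $\alpha<\kappa$ with $a\in W:=V[G_{\alpha}]$ and $V[G]=W[H]$ for $H$ generic over $W$ for a poset again forcing-equivalent to a L\'evy collapse of the $W$-inaccessible $\kappa$; moreover every element of $\Omega^{[n]}$ in $V[G]$ already lies in some $W[H\restriction\beta]$, and---since $\kappa$ stays inaccessible in $W$---the set of reals, hence of Borel codes, of $\tau$-meager Borel sets, and of members of each $\Hcal_m$, belonging to $W$ is countable in $V[G]$. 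Consequently the points of $\Omega^{[n]}$ generic over $W$ for $(\Hcal_n,\subseteq)$---meeting every $W$-coded dense subset---form an $\Hcal_n$-comeager set in $V[G]$. Solovay's recursion on the logical complexity of $\varphi$, run against the tail collapse (which remains a full L\'evy collapse after any bounded stage, so that each quantifier of $\varphi$ can be ``absorbed'' into a fresh chunk of the generic) together with the homogeneity of that collapse, then shows that on this comeager set the truth of $\varphi(\cdot,a)$ is governed by a family of $\Hcal_n$-conditions definable over $W$; with the c.c.c.\ of $\Hcal_n$ and the absoluteness of $\Hcal_n$-meagerness recorded above, this produces a Borel set---a union of a countable maximal antichain of $\Hcal_n$---whose symmetric difference with $X$ is $\Hcal_n$-meager. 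Hence $X$ is $\Hcal_n$-measurable.

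I expect the main obstacle to be the verification that $(\Hcal_n,\subseteq)$ is ``tame enough'' for Solovay's machinery, given that---as stressed in Section \ref{Sect:topology}---$\Hcal_n$ is presented not as the open sets of a topology but as the Borel sets fibred over an $\Hcal_{n-1}$-base whose nonempty fibres lie in $\Hcal$ with a common stem. Concretely one must confirm (i) that the poset $(\Hcal_n,\subseteq)$ and its meager ideal $\Ical_n$ are absolutely defined, so that they are computed the same way in $W$ and in $V[G]$---where Lemma \ref{category_quant}, the structural lemmas under $(\dagger)$, and the ambient absoluteness are used, together with Theorem \ref{KNU} and Lemma \ref{PCA_fn_meas} to replace the definable auxiliary objects produced along the way by Borel ones off an $\Hcal_n$-meager set---and (ii) that the points generic over $W$ for $(\Hcal_n,\subseteq)$ are $\Hcal_n$-comeager and locally determine $\varphi$, for which the Banach--Mazur characterisation of Lemma \ref{nonmeager} and the Borel-approximation statement of Lemma \ref{Borel_approx} are the natural tools. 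At the cost of a stronger large-cardinal hypothesis one could sidestep this entire verification by identifying $\Hcal_n$-measurability with Borel measurability for the $\sigma$-ideal generated by the $\Hcal_n$-meager sets and appealing to the idealized-forcing framework of \cite[\S 5.1]{forcing_idealized}.
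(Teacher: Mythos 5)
Your proposal is correct and follows essentially the same route as the paper: L\'evy--Solovay preservation of the measurable gives $(\dagger)$ in every intermediate model $V[a]$, Solovay's definability argument is then run with the forcing of Borel non-$\Ical_n$ sets (in which $\Hcal_n$ is dense), the countability in $V[G]$ of $\Ical_n\cap V[a]$ makes the $V[a]$-generic points $\Hcal_n$-comeager, and a countable maximal antichain deciding $\varphi(\check a,\dot x)$ yields the Borel approximation. The "main obstacle" you flag is exactly what the paper's Claim \ref{HnIn_abs} supplies, by induction on $n$ using Shoenfield absoluteness for the ($\mathbf{\Pi}^1_2$) statement $B\in\Ical_n$ and Martin--Solovay absoluteness for the ($\mathbf{\Pi}^1_3$) statement $B\in\Hcal_n$, the latter being where the measurable above the collapse is genuinely needed.
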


\begin{proof}
Let $\ON$ denote the class of ordinals.
While we will need to utilize the measurable cardinal to ensure that $\Hcal_n$ and $\Ical_n$ are sufficiently absolute and that ($\dagger$) holds in various generic
extensions,
we will otherwise closely follow the modern expositions of Solovay's original proof \cite{So70} for Lebesgue measurability of
such sets in the Levy collapse forcing extension (see, for example, \cite{higher_infinite}).
Let $G$ be the generic filter of $\Coll (\omega, \kappa)$.
Note that by \cite{LevySolovay}, any intermediate generic extension of $V[G]$ by $\Coll(\omega,\delta)$ for $\delta < \kappa$
satisfies that $\mu$ is a measurable cardinal and therefore
that ($\dagger$) holds by Proposition \ref{dagger_prop}.

If $B \subseteq \Omega^{[n]}$ is a Borel set, then both $B$ and its complement are projections of trees on $\omega^n \times \omega$.
We say that this pair of trees is the \emph{code} of $B$.
We will always view Borel sets in a given model as being constructed using their code.
So, for instance, $V[G] \models B \in \Hcal_n$ is the assertion that the Borel set described by $B$'s code is in $V[G]$'s interpretation
of the definable set $\Hcal_n$.
Notice that while codes for a given Borel set are not unique, the interpretation does not depend on the choice of code.

\begin{claim} \label{HnIn_abs}
  Suppose that $a \in \ON^\omega \cap V[G]$ and $B \subseteq \Omega^{[n]}$ is a Borel set coded in $V[a]$.
  The following are true:
  \begin{enumerate}
    
  \item \label{In_abs} $V[a] \models B \in \Ical_n$ if and only if $V[G] \models B \in \Ical_n$;
    
  \item \label{Hn_abs} if $V[a] \models B \in \Hcal_n$, then $V[G] \models B \in \Hcal_n$.
  \end{enumerate}
\end{claim}

\begin{proof}
Observe that for a given $B$, since $V[a] \models (\dagger)$, the reverse implication in (\ref{In_abs}) follows from the forward implication in (\ref{In_abs}) together with (\ref{Hn_abs}):
if $V[a] \models B \not \in \Ical_n$, then by Lemma \ref{Hn_Fubini}, $V[a] \models \exists A \in \Hcal_n \ (A \subseteq B)$ which by (\ref{Hn_abs}) implies $V[G] \models \exists A \in \Hcal_n \ (A \subseteq B)$
 and thus by Lemma \ref{Hn_Fubini}, $V[G] \models A \not \in \Ical_n$ and hence $V[G] \models B \not \in \Ical_n$. 
  With this in mind, we will prove the claim by induction on $n$.
  Notice that the case $n=0$ is vacuously true.
  Now suppose that the claim holds for $n$ and that $B \subseteq \Omega^{[n+1]}$ is coded in $V[a]$.
  If $V[a] \models B \in \Ical_{n+1}$,
  then there is a Borel set $A$ coded in $V[a]$ such that
  $$
  V[a] \models (A \in \Ical_n) \land \forall \xbf \in \Omega^{[n]}\ [(\xbf \not \in A) \rightarrow (B_\xbf \in \Ical)]
  $$
  By our induction hypothesis, $V[G] \models A \in \Ical_n$.
  Also $$\forall \xbf \in \Omega^{[n]}\ [(\xbf \not \in A) \rightarrow (B_\xbf \in \Ical)]$$ is a $\mathbf{\Pi}^1_2$-sentence
  with parameters in $V[a]$.
  It follows from Shoenfield's absoluteness theorem \cite{Sho} (see also \cite[13.15]{higher_infinite}) that
  $$V[G] \models \forall \xbf \in \Omega^{[n]}\ [(\xbf \not \in A) \rightarrow (B_\xbf \in \Ical)]$$
  and hence that $V[G] \models B \in \Ical_{n+1}$.

  Next suppose that $$V[a] \models B \in \Hcal_{n+1} \textrm{ with stem }(s_0,\ldots,s_n).$$
  Let $A$ be the Borel set with code in $V[a]$ such that
  $$V[a] \models (A = \{\xbf \in \Omega^{[n]} \mid B_\xbf \ne \emptyset\}) \land (A \in \Hcal_n).$$
  By our induction hypothesis $V[G] \models A \in \Hcal_n$.
 Observe that $B_\xbf \in \Hcal$ is equivalent to 
 $$\exists y \ \exists k \ \exists G \ \forall z\ [((z \in W(G)) \rightarrow ((z \in N_k(y) \leftrightarrow (z \in B_\xbf))]$$
 (with quantifier ranges as in the proof of Lemma \ref{category_quant}).  
 Thus
  $$\forall \xbf \in \Omega^{[n]} \ [(\xbf \not \in A) \rightarrow (B_\xbf \in \Hcal \textrm{ with stem }s_n)]$$
  is a $\mathbf{\Pi}^1_3$-sentence.
Since $\Coll(\omega,\kappa)$ has cardinality less than a measurable cardinal,
  the Martin-Solovay Absoluteness Theorem \cite{MartinSolovay} (see also \cite[15.6]{higher_infinite}) implies this sentence is satisfied by $V[G]$ and therefore
  $V[G] \models B \in \Hcal_{n+1}$.
\end{proof}

Let $\Qcal_n$ be the poset consisting of all Borel subsets of $\Omega^{[n]}$ which are not in $\Ical_n$.
Observe if $a \in \ON^\omega \cap V[G]$, then since $V[a] \models (\dagger)$, Theorem \ref{Hn_Fubini} yields that
$V[a]$ satisfies $\Hcal_n \subseteq \Qcal_n$ is a dense suborder.
By Claim \ref{HnIn_abs}, $\Qcal_n^{V[a]} = \Qcal_n^{V[G]} \cap V[a]$.

Now let $X$ in $V[G]$ be a subset of $\Omega^{[n]}$ definable from $a \in \ON^\omega$.
Mimicking Solovay's argument, find a formula $\varphi(a,x)$ such that $V[G] \models x \in X$
if and only if $V[a,x] \models \varphi(a,x)$ (see, e.g., \cite[11.12]{higher_infinite}).
Since $\Ical_n$ a $\sigma$-ideal generated by Borel sets and since $\Ical_n \cap V[a]$ is countable in $V[G]$, $E:=\bigcup (\Ical_n \cap V[a])$
is a Borel set in $\Ical_n^{V[G]}$.
From this point forward, $\Qcal_n$ will always be interpreted in $V[a]$.

\begin{claim} \label{Hx_gen}
$V[G]$ satisfies that for any $x \in \Omega^{[n]} \setminus E$,
$H_x := \{B \in \Qcal_n \mid x \in B\} \subseteq \Qcal_n$ is a $V[a]$-generic filter.
\end{claim}

\begin{proof}
To see that $H_x$ is a filter, suppose that $A,B \in H_x$.
Since $x \in A \cap B$, $A \cap B \not \in \Ical_n \cap V[a]$ and therefore $A \cap B \in \Qcal_n$ is a lower bound for $A,B$.
Next suppose that $\Acal \subseteq \Qcal_n$ is a maximal antichain in $V[a]$.
By Claim \ref{antichain_complement}, $E_0:=\Omega^{[n]} \setminus \bigcup \Acal$ is a Borel $\Hcal_n$-nowhere dense set.
By Lemma \ref{Hn_Fubini}, $E_0 \in \Ical_n$ and since $E_0 \in V[a]$, $E_0 \subseteq E$.
Since $x \not \in E_0$, it follows that $x \in A$ for some $A \in \Acal$.
Thus $A \in H_x \cap \Acal$ and we have shown that $H_x$ is $V[a]$-generic.
\end{proof}

\begin{claim}
Every condition of $\Qcal_n$ forces that the intersection of the generic filter is a singleton.
\end{claim}

\begin{proof}
For each $k$, define $\Dcal_k \subseteq \Qcal_n$ to consist of all $B$ such that for all $\xbf,\ybf \in B$ and $i < n$,
$x_i \restriction k = y_i \restriction k$.
Any element $B$ of $\Qcal_n$ is a countable union of sets in $\Dcal_k \cup \Ical_n$.
Since $\Ical_n$ is closed under taking countable unions, at least one of these sets must be in $\Dcal_k$.
Thus $B$ has a subset in $\Dcal_k$ and since $B$ was arbitary, it follows that $\Dcal_k$ is dense.
It follows that every condition of $\Qcal_n$ forces that the intersection of the generic filter has at most one element.

If $B \in \Qcal_n$ is any condition, let $x \in B \setminus E$.
Then $H_x \subseteq \Qcal_n$ is $V[a]$-generic filter containing $B$ such that $\bigcap H_x = \{x\} \ne \emptyset$.
Thus $B$ can not force that the intersection of the generic filter is empty.
Since no condition forces that the intersection of the generic filter is empty, every condition must force that it is nonempty.
\end{proof}

Now let $\Acal \subseteq \Qcal_n \cap V[a]$ be a maximal antichain consisting of $A$ such that $A$ decides $\varphi(\check a,\dot x)$ where
$\dot x$ is the name for the unique element of the intersection of the $V[a]$-generic filter for $\Qcal_n$. 
Let $Y$ be the union of those elements of $\Acal$ which force $\varphi(\check a,\dot x)$.
Since $\Acal$ is countable, $Y$ is Borel.
It therefore suffices to show that $X \symdif Y \subseteq E$.
To see this, suppose that $x \in \Omega^{[n]} \setminus E$.
By Claim \ref{Hx_gen}, $H_x$ is $V[a]$-generic and in particular, there is an $A \in \Acal$ with $x \in A$.
It follows that $x \in X$ if and only if $V[a][x] \models \varphi(a,x)$ if and only if $A$ forces $\varphi(\check a, \dot x)$ if and only if $x \in A \subseteq Y$.
\end{proof}

\section[Measurable Partition Hypothesis]{The Partition Hypothesis for measurable partitions}
\label{Sect:MeasPartHyp}

Our goal in this section is to prove the Partition Hypothesis holds for partitions which are $\Hcal_n$-measurable.
In fact, the $n$-cofinal function which witnesses
the conclusion of the Partition Hypothesis can be taken to be $\PCA$.

\begin{thm}[$\dagger$] \label{partition}
Suppose $n \geq 0$ and $X \subseteq \Omega^{[n]}$ is $\Hcal_n$-measurable and not $\Hcal_n$-meager.
If $c: X \to \omega$ is $\Hcal_n$-measurable, then there is an $n$-cofinal function $F:\Omega^{\leq n} \to \Omega$ which is $\PCA$ such that the
range of $F^*$ is contained in $X$ and $c \circ F^*$ is constant.
\end{thm}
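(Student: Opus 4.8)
The plan is to argue by induction on $n$, the case $n=0$ being trivial. So fix $n$, assume the statement for $n$, and let $X\subseteq\Omega^{[n+1]}$ and $c\colon X\to\omega$ be as hypothesized. Since the $\Hcal_{n+1}$-meager sets form a $\sigma$-ideal not containing $X=\bigcup_{i\in\omega}c^{-1}(i)$, some color class $c^{-1}(i)$ fails to be $\Hcal_{n+1}$-meager; as it is $\Hcal_{n+1}$-measurable, Lemma \ref{Borel_approx} supplies a Borel $A\subseteq c^{-1}(i)$ with $c^{-1}(i)\setminus A$ being $\Hcal_{n+1}$-meager, so $A$ is Borel and not $\Hcal_{n+1}$-meager, hence by Lemma \ref{Hn_Fubini} it contains some $Z\in\Hcal_{n+1}$. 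It therefore suffices to produce an $(n+1)$-cofinal $\PCA$ function $F\colon\Omega^{\leq n+1}\to\Omega$ whose $F^*$-image is contained in $Z$: the image then lies in $Z\subseteq A\subseteq c^{-1}(i)\subseteq X$ and $c\circ F^*\equiv i$.

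Write $(s_0,\dots,s_n)$ for the stem of $Z$ and set $X_Z:=\{\xbf\in\Omega^{[n]}\mid Z_\xbf\neq\emptyset\}\in\Hcal_n$, recalling that for $\xbf\in X_Z$ the fiber $Z_\xbf$ lies in $\Hcal$ with stem $s_n$. Arguing essentially as in the proof of Lemma \ref{Hn_Fubini} — Kond\^o--Novikov uniformization (Theorem \ref{KNU}) of the $\CA$ relation consisting of all $(\xbf,p,G)\in X_Z\times\Hbb\times\widehat{\Gscr}$ with $G\in\Gscr$ and $N_p\cap W(G)\subseteq Z_\xbf$ (its domain is $X_Z$ by Lemma \ref{dense_open_basis}), then sorting on the stem of the $\Hbb$-coordinate and invoking ($\dagger_n$) together with Lemmas \ref{Borel_approx}, \ref{PCA_fn_meas} and \ref{Hn_Fubini} — I obtain a fixed $t\in\Sigma$, a set $A''\in\Hcal_n$ with $A''\subseteq X_Z$, and Borel functions $\xbf\mapsto h(\xbf)\in\Omega$ and $\xbf\mapsto G_\xbf\in\Gscr$ on $A''$ with $h(\xbf)\restriction|t|=t$ and $N_{|t|}(h(\xbf))\cap W(G_\xbf)\subseteq Z_\xbf$ for every $\xbf\in A''$. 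Since $A''$ is Borel (hence $\Hcal_n$-measurable) and not $\Hcal_n$-meager, the inductive hypothesis applied to $A''$ with a constant coloring yields an $n$-cofinal $\PCA$ function $F_0\colon\Omega^{\leq n}\to\Omega$ whose $F_0^*$-image is contained in $A''$.

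It remains to extend $F_0$ to an $F$ on $\Omega^{\leq n+1}$: put $F:=F_0$ on $\Omega^{\leq n}$, and given $v=(v_0,\dots,v_n)\in\Omega^{n+1}$ proceed as follows. Only finitely many $\sigma\in\Omega\bb{n+1}$ have $\sigma(n+1)=v$ (each is a maximal $\trleq$-chain up to $v$); let $\xbf_1,\dots,\xbf_r\in A''$ list the finitely many tuples $F_0^*(\sigma\restriction n)$ so arising, and let $\ybf_1,\dots,\ybf_m$ list the subtuples $\ybf\trleq v$ in $\Omega^{\leq n}$. Each $h(\xbf_l)$ has $h(\xbf_l)\restriction|t|=t$, so the join $w:=\bigvee_{l\leq r}h(\xbf_l)$ again satisfies $w\restriction|t|=t$, and one may choose, in a Borel way from $v$, a $w'\in\Omega$ with $w'\restriction|t|=t$, $w'\geq w$, and $w'(j)\geq F_0(\ybf_l)(j)$ for all $l\leq m$ and $w'(j)\geq v_l(j)$ for all $l\leq n$, for every $j\geq|t|$. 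Setting $p_v:=(t,w')$ and $F(v):=\tilde g(G_{\xbf_1},\dots,G_{\xbf_r},p_v)$, Lemma \ref{Gscr_CA} gives $F(v)\in N_{|t|}(w')\cap\bigcap_{l\leq r}W(G_{\xbf_l})\subseteq\bigcap_{l\leq r}\big(N_{|t|}(h(\xbf_l))\cap W(G_{\xbf_l})\big)\subseteq\bigcap_{l\leq r}Z_{\xbf_l}$, and $F(v)$ dominates $w'$ coordinatewise, whence $F_0(\ybf_l)\leq^{|t|}F(v)$ and $v_l\leq^{|t|}F(v)$. From these facts one checks routinely that $F$ is $(n+1)$-cofinal (monotonicity along $\trleq$, and domination on $1$-tuples — from $F_0$ when $n\geq1$, from the last inequality when $n=0$), that its $F^*$-image lies in $Z$ (for $\sigma$ with top $v$ one has $F^*(\sigma)=(F_0^*(\sigma\restriction n),F(v))$ with $F_0^*(\sigma\restriction n)$ equal to some $\xbf_l$, so $F(v)\in Z_{F_0^*(\sigma\restriction n)}$), and that $F$ is $\PCA$ (since $F_0$ is $\PCA$, $h$ and $G_{(\cdot)}$ are Borel, $\tilde g$ is $\CA$, and the $\PCA$-functions include the $\CA$-functions and are closed under composition).

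I expect the crux of the argument to be the finitary combinatorics of the last step: for each $v$ only finitely many fibers $Z_{\xbf_l}$ are relevant, and $F(v)$ must land in all of them simultaneously without spoiling $\trleq$-monotonicity or the $\PCA$ bound. What makes this possible is that the relevant $h(\xbf_l)$ share the single stem $t$ — so the basic $\tau$-open sets $N_{|t|}(h(\xbf_l))$ meet in another such set — while $\tilde g$ of Lemma \ref{Gscr_CA} absorbs the finite intersection of the residual dense $\tau$-open sets $W(G_{\xbf_l})$. The other delicate point, already implicit in the proof of Lemma \ref{Hn_Fubini}, is the uniformization bookkeeping: passing from the $\CA$ uniformizer to a Borel selection defined on a genuine element of $\Hcal_n$, rather than merely off an $\Hcal_n$-meager set, is precisely where ($\dagger$) enters.
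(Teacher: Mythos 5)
Your proof is correct and follows essentially the same route as the paper's: induction on $n$, Kond\^{o}--Novikov uniformization of the fiberwise witnesses $N_p\cap W(G)\subseteq Z_{\xbf}$, ($\dagger$) together with Lemmas \ref{Borel_approx}, \ref{PCA_fn_meas} and \ref{Hn_Fubini} to obtain a Borel selector on a non-$\Hcal_n$-meager set, the inductive hypothesis to land the range of $F_0^*$ in that set with a constant stem, and $\tilde g$ applied to the finitely many chains below each top-level tuple. The only (harmless) organizational differences are that you fix the stem $t$ by restricting the domain before invoking the inductive hypothesis, where the paper instead applies the inductive hypothesis to the coloring given by the stem of $\bar p$, and that you build the domination of the $F_0(\ybf_l)$ and of the coordinates of $v$ directly into the Hechler condition $p_v$, where the paper derives it from the membership of $x_{\bar p(\ybf)}$ in $Z_{\ybf}$.
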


\begin{proof}
The proof is by induction on $n$.
The bulk of the work will be in establishing the
following claim, whose proof will utilize the induction hypothesis of the theorem.

\begin{claim}[$\dagger$] \label{nonmeager_to_n-cofinal}
Suppose $n \geq 0$.
If $Z \subseteq \Omega^{[n]}$ is Borel and
$\Hcal_n$-nonmeager, then there is an
$n$-cofinal function $F$ which is $\PCA$ such that the range of $F^*$ is contained in $Z$.
\end{claim}

\begin{proof}
If $n= 0$, we note that $\Omega^{[n]}$ consists only of the null sequence and
$\Omega^{[\leq n]} := \bigcup_{k=1}^n \Omega^{[k]}$ is empty.
Thus the base case of the theorem is vacuously true.

Next suppose that $n=1$ and $Z \subseteq \Omega = \Omega^{[1]}$ is Borel and nonmeager.
Since $Z$ is Borel, it has the Baire property with respect to $\tau$ and there exist $G \in \Gscr$ and $p \in \Hbb$ such
that $N_p \cap W(G) \subseteq Z$.
Set $k = |s_p|$ and
for each $z \in \Omega$, let $y= y(z) \in \Omega$ be defined by
$$
y(i) := 
\begin{cases}
x_p (i) & \textrm{ if } i < k \\
\max(x_p(i),z(i)) & \textrm{ if } i \geq k \\
\end{cases}.
$$
Observe that $y:\Omega \to N_p$ is continuous and $y \restriction N_p$ is the identity.
Set $F(z) := g(G,(s_p, y(z)))$.
Since $g$ is $\CA$, $F$ is $\PCA$.
Moreover, it is vacuously true that $F$ is increasing with respect to $\trleq$.
Since $g(G,(s_p,y(z)))$ is in $N_k(y(z))$, we have $z \leq^k y(z) \leq^0 F(z)$ and hence
$z \leq^k F(z)$.
Finally, $\Omega^{[\leq 1]} = (\Omega^{[1]})^1$ which we have identified with $\Omega$.
If $z \in \Omega$, then
$F^*(z)= F(z) = g(G,(s_p,y(z)))$ is in $N_k(y(z)) \cap W(G) \subseteq Z$.

Now suppose that the theorem is true for a given $n \geq 1$.
Let $Z \subseteq \Omega^{[n+1]}$ be Borel and $\Hcal_{n+1}$-nonmeager.
By Lemma \ref{Hn_Fubini}, 
\[
\{\xbf \in \Omega^{[n]} \mid Z_\xbf \textrm{ is }\Hcal\textrm{-nonmeager}\}
\]
contains a Borel set $X_0$ which is $\Hcal_n$-nonmeager.
Define
$R \subseteq X_0 \times \Hbb \times \Gscr$
to consist of all $(\xbf,p,G)$ such that $\xbf \in X_0$
and for all $y \in \Omega$, $y \in N_p \cap W(G)$ implies
$(\xbf, y) \in Z$.
Since $X_0$ and $Z$ are Borel, $R$ is $\CA$.
Observe that since Borel sets are Baire measurable with respect to $\tau$,
if $\xbf \in X_0$, then there exist $p$ and $G$ such that $(\xbf,p,G) \in R$.

By Theorem \ref{KNU}, there are functions 
$\bar p:X_0 \to \Hbb$ and
$\bar{G}:X_0 \to \Gscr$ such that $\xbf \mapsto (\bar p(\xbf), \bar {G}(\xbf))$ is $\CA$ and
for all $\xbf \in X_0$, 
\[
(\xbf,\bar p(\xbf),\bar G(\xbf)) \in R.
\]
Observe that this implies $\bar p$ and $\bar G$ are all $\PCA$-functions.
By Lemma \ref{PCA_fn_meas}, we may find a Borel set $X \subseteq X_0$ which is $\Hcal_n$-nonmeager such that
$\bar p \restriction X$ and $\bar G \restriction X$ are Borel functions.
By the inductive hypothesis for the theorem,
there is a $\PCA$-function $\bar F:\Omega^{\leq n} \to \Omega$ and
$k \in \omega$ and $s \in \Sigma$ such that:
\begin{itemize}

\item $\bar F$ is $n$-cofinal with respect to $\leq^k$;

\item the range of $\bar F^*$ is contained in $X$;

\item the first coordinate of $\bar p \circ \bar F^*$ is constant, taking
the value $s$.

\end{itemize}
For $\xbf \in \Omega^{n+1}$, let $\sigma^0(\xbf), \ldots, \sigma^m (\xbf)$ list the elements of
\[
\{ \sigma \in  \Omega\bb{n} \mid \sigma(n-1) \trleq \xbf\} = \{\sigma \in \Omega\bb{n} \mid (\sigma,\xbf) \in \Omega\bb{n+1}\}.
\]
Here we choose the order of the $\sigma^j(\xbf)$'s to be increasing with respect to some suitable
lexicographic order so that the maps $\xbf \mapsto \sigma^j(\xbf)$ are each continuous.
Define $F:\Omega^{\leq n+1} \to \Omega$ by setting
$F \restriction \Omega^{\leq n} := \bar F$ and
\[
F(\xbf) := \tilde g \left( \Big(\bar G\big(\bar F^* (\sigma^0(\xbf))\big),\ldots,\bar G\big(\bar F^*(\sigma^m(\xbf))\big)\Big),
\Big(s,\max_j x_{\bar p (\bar F^* (\sigma^j (\xbf))} \Big)\right).
\]
Observe that since the first coordinate of $\bar p \circ \bar F^*$ is constantly $s$, 
$s$ is an initial part of $x_{\bar p(\bar F^* (\sigma^i (\xbf)))}$ for each $i$ and
$U:=\bigcap_{i=0}^m N_{\bar p (\bar F^* (\sigma^i (\xbf)))}$ is a nonempty $\tau$-open set
which contains $F(\xbf)$.

We now wish to show that $F$ satisfies the conclusion of the theorem.
Since $F$ is obtained by composing $\PCA$-functions, it is $\PCA$.
Since $n \geq 1$, $F \restriction \Omega = \bar F \restriction \Omega$ and therefore
$x \leq^k F(x)$ whenever $x \in \Omega$.

In order to see $F$ is increasing, it suffices to show that if $\xbf \in \Omega^{n+1}$ and
$0 \leq i \leq n$, then $F(\xbf^i) = \bar F(\xbf^i) \leq F(\xbf)$ where $\xbf^i$ is the result of removing the $i$th least element of $\xbf$.
Fix $i$ and let $j \leq m$ be such that $\sigma^j(\xbf)$ contains $\xbf^i$.
Observe that the maximum (and final) coordinate of $\bar F^*(\sigma^j(\xbf))$ is $\bar F(\xbf^i)$.
By definition, $F(\xbf)$ is an element of 
$N_{\bar p (\bar F^*(\sigma^j(\xbf)))}$.
For each $\ybf \in \Omega^{n}$, since $x_{\bar p(\ybf)}$ is in $W(\bar G) \cap N_{\bar p(\ybf)}$, it is in $Z_{\ybf}$
and in particular dominates each coordinate of $\ybf$.
It follows that $F(\xbf^i) \leq F(\xbf)$ and that $F$ is increasing.
 
Finally we will show that if $\tau$ is in $\Omega\bb{n+1}$, then $F^*(\tau)$ is in $Z$.
Let $\xbf$ be the final coordinate of $\tau$ and let $j \leq m$ be such that
$\tau = (\sigma^j(\xbf) , \xbf)$.
By hypothesis, $\bar F^*(\sigma^j(\xbf))$ is in $X$.
By definition, $F(\xbf)$ is an element of
$$N_{\bar p (\bar F^*(\sigma^j(\xbf)))} \cap W\Big(\bar G\big(\bar F^*(\sigma^j(\xbf))\big)\Big).$$
Recall that $W\Big(\bar G\big(\bar F^*(\sigma^j(\xbf))\big)\Big)$ was selected so that any element $y$ of this intersection satisfied
that $(\bar F^*(\sigma^j(\xbf)) , y)$ is in $Z$.
Thus letting $y=F(\xbf)$, we have $F^*(\tau) = (\bar F^*(\sigma^j(\xbf)) , y)$ is in $Z$ and we are done.
\end{proof}

To complete the proof of Theorem \ref{partition},
suppose that $c:X \to \omega$ is given as in its
statement.
Since $\Hcal_n$-meager sets are closed under taking
countable unions, there is a $k$ such
that $c^{-1}(k)$ is not $\Hcal_n$-meager.
By Lemma \ref{Borel_approx},
$c^{-1}(k)$ contains a Borel set $X$ which is 
$\Hcal_n$-nonmeager.
By Claim \ref{nonmeager_to_n-cofinal},
there is an $F$ satisfying the conclusion of the theorem.
\end{proof}

As noted, a corollary of Theorem \ref{partition} answers a question appearing in both \cite{B17} and \cite{SVHDLwLC}.
We pause to review this question's main notions.
If $x \in \Omega$, define $I(x):=\{(i,j)\mid j\leq x(i)\}$.
For any $\xbf\in\Omega^{n}$ let $\bigwedge \xbf$ denote the meet of the coordinates of $\xbf$ and set $I(\xbf):= I(\bigwedge \xbf)$.
The following definition appears in \cite{B17}; all sums and comparisons therein are taken over the intersections of the relevant functions' domains.
\begin{defn}
\label{def:classical_n_coherence}
Fix $n>0$. A collection $\Phi=\{\varphi_{\xbf}  \mid\xbf\in\Omega^{n}\}$ with each $\varphi_\xbf \in \Zbb^{I(\xbf)}$ is \emph{n-coherent} if
$$\sum_{i=0}^{n}(-1)^i \varphi_{\xbf^i} \restriction I(\xbf)=^* 0$$
for all $\xbf\in\Omega^{n+1}$, where $=^*$ denotes mod finite equality.
$\Phi$ is \emph{trivial} if
\begin{itemize}
\item $n=1$ and there exists a $\psi:\omega^2\to\Zbb$ such that
$$\varphi_x=^* \psi \restriction I(x)$$
for all $x\in\Omega$, or
\item $n>1$ and there exists a $\Psi=\{\psi_{\xbf}\mid\xbf\in\Omega^{n-1}\}$ such that
$$\varphi_{\xbf} =^* \sum_{i=0}^{n-1}(-1)^i\psi_{\xbf^i} \restriction I(\xbf) $$
for all $\xbf\in\Omega^{n}$.
\end{itemize}
\end{defn}
It will sometimes be useful to view an element of $\Zbb^{I(\xbf)}$ as an element of $P:=\Zbb^{\omega \times \omega}$ via the convention that a function
takes the value $0$ outside of its domain.
Thus any collection $\Phi$ as in Definition \ref{def:classical_n_coherence} is naturally viewed as a subset of the product of $\Omega^{n}$ with the Polish space
$P$, and it was in exactly this sense that Todorcevic showed that any analytic $1$-coherent family $\Phi$ is trivial (\cite{To98}). The question appearing in both \cite{B17} and \cite{SVHDLwLC} was whether an analytic $n$-coherent family could be nontrivial for any $n>1$.
The next corollary provides a strong answer to this question under the hypothesis ($\dagger$).
\begin{cor}[$\dagger$]
  \label{cor:CA_cohere->PCA_triv}
Every analytic $n$-coherent family of functions $\Phi$ admits a $\PCA$ trivialization.
\end{cor}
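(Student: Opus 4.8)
The plan is to exhibit $\Phi$ as the coboundary of an explicitly defined $\PCA$ cochain, by using Theorem~\ref{partition} to normalize $\Phi$ along a definable cofinal map. To begin, note that since $\Phi$ is analytic and is (the graph of) a \emph{total} function $\Omega^n\to P$, it is in fact Borel: for Borel $B\subseteq P$ the set $\{\xbf\mid\varphi_\xbf\in B\}$ is both $\analytic$ (being $\exists p\,(p\in B\wedge(\xbf,p)\in\Phi)$) and $\CA$ (being $\forall p\,((\xbf,p)\in\Phi\to p\in B)$, where totality of $\Phi$ is used), hence Borel by Souslin's theorem. Consequently the \emph{defect}
\[
d:\Omega^{[n+1]}\to P,\qquad d(\xbf):=\sum_{i=0}^{n}(-1)^i\varphi_{\xbf^i}\restriction I(\xbf)
\]
is Borel, and $n$-coherence of $\Phi$ (Definition~\ref{def:classical_n_coherence}) is exactly the assertion that $d$ takes all of its values in the \emph{countable} subgroup $A\leq P$ of finitely supported functions; fixing a bijection between $A$ and $\omega$, we regard $d$ as a Borel---hence $\Hcal_{n+1}$-measurable---coloring $d:\Omega^{[n+1]}\to\omega$.

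Since $(\dagger)$ yields $(\dagger_{n+1})$, and since $\Omega^{[n+1]}$ is not $\Hcal_{n+1}$-meager---it contains $\{(x_0,\dots,x_n)\mid x_0\leq\dots\leq x_n\}$, which lies in $\Hcal_{n+1}$ (its nonempty fibres are cones $N_0(x)\in\Hcal$, and one inducts on $n$, the base case using $\Omega=N_0(\id)\in\Hcal$), so that no subset of it is $\Hcal_{n+1}$-meager by Lemma~\ref{Hn_Fubini}---Theorem~\ref{partition} applies and produces a $\PCA$, $(n+1)$-cofinal $F:\Omega^{\leq n+1}\to\Omega$ with $d\circ F^*$ constant, say of value $e\in A$. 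By the concentration-on-some-$\leq^k$ discussion following Lemma~\ref{extensionlemma} we may moreover take $F$ to be $(n+1)$-cofinal with respect to a fixed $\leq^k$ and $\{F(x)\mid x\in\Omega\}$ to be $\leq^k$-cofinal in $\Omega$.

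Constancy of $d\circ F^*$ is precisely the ``normalization'' step of \cite{strong_hom_add,SVHDL}: it asserts that the pulled-back cochain $\xbf\mapsto\varphi_{F(\xbf)}$ has coboundary identically $e$ on the range of $F^*$---\emph{on the nose}, not merely mod finite. Running the standard telescoping computation over the $\trleq$-chains below a fixed $(n+1)$-tuple then produces a trivialization of the restriction of $\Phi$ to tuples drawn from the $\leq^k$-cofinal set $\{F(x)\mid x\in\Omega\}$ (the fixed contribution of $e$ being absorbed into a single coordinate, where, being finitely supported, it does not disturb the relevant mod-finite equalities); for $n=1$ this instead assembles a single $\psi\in P$ as the union of the now exactly coherent chain $\{\varphi_{F(x)}\mid x\in\Omega\}$. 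Propagating this trivialization to all of $\Omega^n$ by the usual extension of trivializations off a cofinal set, in the spirit of Lemma~\ref{extensionlemma} (legitimate here because we require only \emph{some} trivialization) and realized via $F$, yields the desired $\Psi=\{\psi_\ybf\mid\ybf\in\Omega^{n-1}\}$. Finally, since $F$ is $\PCA$, since the precomposition and the finite $\Zbb$-arithmetic defining $\Psi$ are Borel, and since the $\PCA$ classes are closed under composition with Borel functions and under the quantifications appearing in these definitions (Theorem~\ref{KNU} supplying any uniformizations required), $\Psi$ is $\PCA$.

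The main obstacle is this last step: organizing the telescoping so that the \emph{exact} relations delivered by Theorem~\ref{partition} assemble into a genuine trivializing cochain, with careful control of the finitely-many-column discrepancies introduced by working with $\leq^k$ in place of $\leq$ and of the correction term $e$. This is the technical---but by now routine---normalization argument of \cite{strong_hom_add}, here executed with an eye to descriptive complexity; a secondary point is the bookkeeping needed to certify that each auxiliary function, in particular the selectors extracted via Theorem~\ref{KNU}, stays within the $\PCA$ hierarchy.
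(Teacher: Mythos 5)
Your proposal is correct and follows essentially the same route as the paper: color $\Omega^{[n+1]}$ by the (finitely supported, hence countably-valued) coboundary defect of $\Phi$, apply Theorem~\ref{partition} to obtain a $\PCA$ $(n+1)$-cofinal $F$ normalizing that defect, and observe that the standard telescoping trivialization is then a composition of $\PCA$ and Borel operations. The only organizational difference is that the paper feeds $F$ directly into the global construction of Theorem~\ref{coherent_trivial} (so no separate ``trivialize on the cofinal range, then extend'' step is needed), whereas you defer the same algebra to the literature; both are fine.
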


\begin{proof} 
For any partial function $\varphi$ from $\omega^2$ to $\Zbb$, write $s[\varphi]$ for the restriction of $\varphi$ to its support.
The map $\varphi \mapsto s[\varphi]$ is Borel.
Define $c$ on $\Omega^{n+1}$ by
$$
c(\xbf) := s[d\Phi (\xbf)] = s\left[ \sum_{i=0}^n (-1)^i \varphi_{\xbf^i} \restriction I(\xbf) \right].
$$
Since $\Phi$ is analytic, so is $c$.
Since $\Phi$ is $n$-coherent, the range
of $c$ is contained in the collection of finite partial functions
from $\omega^2$ into $\Zbb$ and in particular
is countable.
By Theorem \ref{partition}, there exists a $n$-cofinal $F:\Omega^{\leq n+1}\to\Omega$ which is $\PCA$
such that $c\circ F^*$ is constant.
The existence of such an $F$ is sufficient for standard trivialization constructions which we'll review in a more general setting in Section \ref{Sect:PartHypAdd} below.
More precisely, $F$ will witness exactly that instance of $\PH_n$ which is applied in the trivialization argument of Theorem \ref{coherent_trivial} below, within which the triviality of families like $\Phi$ figure as a special case.
Since the trivialization constructed therein is composed of $\Phi$, the function $F$, sums, and the operations $*$ and $d$ (also defined in the proof of Theorem \ref{coherent_trivial}), the conclusion of 
the corollary follows from the closure
of the $\PCA$-functions under composition.
\end{proof}

A modification of this proof also yields
the following corollary.
\begin{cor}[$\dagger$] \label{cor:UB_cohere->UB_triv}
Every universally Baire $n$-coherent family of functions admits a universally Baire trivialization.
\end{cor}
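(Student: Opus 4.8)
The plan is to repeat the proof of Corollary~\ref{cor:CA_cohere->PCA_triv} essentially verbatim, systematically replacing ``analytic'' and ``$\PCA$'' by ``universally Baire.'' The replacements are licensed by Theorem~\ref{rel_UB}, which, under the large cardinal hypothesis appearing there (a supercompact, or a proper class of Woodin cardinals --- a genuine strengthening of ($\dagger$), which is all that Theorem~\ref{partition} itself requires), tells us both that every $\PCA$ set is universally Baire, and that the universally Baire functions are closed under composition, and more uniformly that $L(\Rbb,A)$ consists of universally Baire sets whenever $A$ is universally Baire.

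Concretely: given a universally Baire $n$-coherent family $\Phi$ --- so the map $\xbf\mapsto\varphi_\xbf$, viewed as a subset of $\Omega^n\times P$, is universally Baire --- I would set $c(\xbf):=s[d\Phi(\xbf)]$ on $\Omega^{n+1}$ exactly as before, its range being the countable set of finite partial functions from $\omega^2$ to $\Zbb$. Since $c$ is obtained from $\Phi$ by pre- and post-composition with continuous and Borel maps (namely $\xbf\mapsto(\xbf^0,\dots,\xbf^n)$, coordinatewise summation, restriction to $I(\xbf)$, and $s[\,\cdot\,]$), it is universally Baire, hence $\Hcal_{n+1}$-measurable by Proposition~\ref{ub}. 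Applying Theorem~\ref{partition} to $c$ then yields a $\PCA$, $(n+1)$-cofinal $F:\Omega^{\leq n+1}\to\Omega$ with $c\circ F^*$ constant and with the range of $F^*$ contained in a single level set of $c$; by Theorem~\ref{rel_UB}, this $F$ is in fact universally Baire. As in Corollary~\ref{cor:CA_cohere->PCA_triv}, $F$ witnesses exactly the instance of $\PH_n$ invoked in the trivialization argument of Theorem~\ref{coherent_trivial}, so that argument produces a trivialization $\Psi$ of $\Phi$; since $\Psi$ is assembled from $\Phi$, $F$, finite sums, and the operations $*$ and $d$, all of which are universally Baire or Borel, another appeal to the composition closure in Theorem~\ref{rel_UB} (or, equivalently, the observation that $\Psi\in L(\Rbb,\Phi)$) shows $\Psi$ is universally Baire.

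There is little genuinely new work here; the point deserving care is that the complexity counting of Corollary~\ref{cor:CA_cohere->PCA_triv} --- ``built from $\CA$, $\PCA$, and Borel pieces, hence $\PCA$'' --- must be replaced by the single uniform statement ``built from universally Baire and Borel pieces, hence universally Baire,'' which is precisely where Theorem~\ref{rel_UB} is used and why the large cardinal hypothesis must be strengthened past ($\dagger$). In particular I would want to check that the intermediate objects arising inside the proof of Theorem~\ref{partition} --- the chosen level set of $c$, the Borel $\Hcal_{n+1}$-nonmeager subset extracted from it via Lemma~\ref{Borel_approx}, and the Kond\^{o}--Novikov uniformizations --- remain definable from $\Phi$ together with a real, so that the resulting $F$, and hence $\Psi$, lie in $L(\Rbb,\Phi)$; this is routine but is the one place calling for a little attention.
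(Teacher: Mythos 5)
Your proposal matches the paper's own proof, which likewise just reruns Corollary \ref{cor:CA_cohere->PCA_triv} with exactly the three substitutions you name: $c$ is universally Baire as a composition of universally Baire and Borel maps, universally Baire subsets of $\Omega^{[n]}$ are $\Hcal_n$-measurable by Proposition \ref{ub}, and the $\PCA$ witness $F$ (hence the assembled trivialization $\Psi$) is universally Baire by the closure properties of Theorem \ref{rel_UB}. Your remark that these closure properties require large cardinal strength beyond ($\dagger$) itself is, if anything, a more careful reading of what the paper's terse ``under the hypothesis'' is implicitly assuming.
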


\begin{proof}
The proof is the same are for Corollary
\ref{cor:CA_cohere->PCA_triv} except for a few minor
modifications:
\begin{itemize}

\item The function $c$ is a composition of universally Baire functions and hence is universally Baire under the hypothesis.

\item By Proposition \ref{ub}, univerally Baire subsets of
$\Omega^{[n]}$ are $\Hcal_n$-measurable.

\item Under the hypothesis, $\PCA$-functions are
universally Baire \cite{UB}.

\end{itemize}
\end{proof}

Corollary \ref{cor:UB_cohere->UB_triv} admits the following interpretation: the families $\Phi$ and $\Psi$ of Definition \ref{def:classical_n_coherence} correspond to cocycles and coboundaries of the standard cochain complex $\mathcal{K}(\Bbf/\Abf)$ for computing the derived limits of a well-studied inverse system termed $\Abf$ in the literature (see \cite{MP,B17,SVHDL}).
Combining Corollary \ref{cor:UB_cohere->UB_triv} with Theorem \ref{rel_UB}, we have the following result.
\begin{cor}
Suppose there is
a supercompact cardinal or a proper class of Woodin cardinals.
The model $L(\Rbb)$ satisfies that ${\lim}^n \Abf = 0$ for all $n$.
\end{cor}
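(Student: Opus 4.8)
The plan is to unwind the vanishing $\lim^n\Abf=0$ into the triviality of $n$-coherent families and then to combine Theorem~\ref{rel_UB} with Corollary~\ref{cor:UB_cohere->UB_triv}; the only real subtlety is to keep the trivialization inside $L(\Rbb)$. As recalled in the paragraph preceding the corollary, $\lim^n\Abf$, computed in any model $M\supseteq\Rbb$ of enough set theory, is the $n$-th cohomology of the cochain complex $\mathcal{K}(\Bbf/\Abf)^{M}$, whose $n$-cocycles are exactly the $n$-coherent families $\Phi$ lying in $M$ and whose $n$-coboundaries are their trivializations $\Psi$. Thus $M\models\lim^n\Abf=0$ is equivalent to the assertion that every $n$-coherent family $\Phi\in M$ is trivialized by some $\Psi\in M$, and it suffices to fix an arbitrary $n\in\omega$ and an $n$-coherent family $\Phi\in L(\Rbb)$---viewed, as in Definition~\ref{def:classical_n_coherence}, as a subset of the Polish space $\Omega^{n}\times P$---and to produce a trivialization $\Psi$ with $\Psi\in L(\Rbb)$.

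First I would observe that the large cardinal hypothesis places us inside the scope of our earlier results. Since $L(\Rbb)=L(\Rbb,\emptyset)$ and $\emptyset$ is universally Baire, Theorem~\ref{rel_UB} gives that $\Phi$ is universally Baire; the hypothesis also implies that $x^\sharp$ exists for every $x\subseteq\omega$, and hence that $(\dagger)$ holds, by Proposition~\ref{dagger_prop}. Since $n$-coherence of $\Phi$ is a statement involving only quantification over $\Omega^{n+1}$ below the parameter $\Phi$, it is absolute between $L(\Rbb)$ and $V$, so $V$ too sees $\Phi$ as $n$-coherent. Corollary~\ref{cor:UB_cohere->UB_triv} therefore applies in $V$ and yields a universally Baire trivialization $\Psi$ of $\Phi$.

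The hard part will be arranging that some trivialization of $\Phi$ actually lies in $L(\Rbb)$, since a universally Baire set need not. For this I would trace through the proof of Corollary~\ref{cor:UB_cohere->UB_triv}: there $\Psi$ is assembled from $\Phi$, the $n$-cofinal map $F$ supplied by Theorem~\ref{partition} for the colouring $c(\xbf)=s[d\Phi(\xbf)]$, and the standard coordinatewise operations $\sum$, $*$, $d$. The map $F$ is itself obtained from finitely many applications of the Kond\^{o}--Novikov uniformization (Theorem~\ref{KNU}) to $\CA$-relations built from Borel data, after the reductions of Lemmas~\ref{Borel_approx} and~\ref{PCA_fn_meas}. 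Since every real already belongs to $L(\Rbb)$, the Borel codes for the colour-class approximations needed in those reductions may be chosen in $L(\Rbb)$, and then $F$---and hence $\Psi$---is definable from $\Phi$ using only quantification over reals; as $\Phi\in L(\Rbb)$ and $L(\Rbb)$ is a $\mathrm{ZF}$-model containing all the reals, this gives $F,\Psi\in L(\Rbb)$. Finally, ``$\Psi$ trivializes $\Phi$''---the system of mod-finite equalities of Definition~\ref{def:classical_n_coherence}---again involves only quantification over reals below the parameters $\Phi,\Psi$, hence is absolute between $L(\Rbb)$ and $V$, so $L(\Rbb)$ sees $\Psi$ as a trivialization of $\Phi$. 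Since $\Phi$ and $n$ were arbitrary, $L(\Rbb)\models\lim^n\Abf=0$ for all $n$.

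A tidier alternative to this definability bookkeeping would be to run the entire argument inside $L(\Rbb)$ itself: under the hypothesis $x^\sharp$ exists (absolutely) for every real $x$, so $L(\Rbb)\models(\dagger)$ by Proposition~\ref{dagger_prop}, and, by Woodin's theorem that $L(\Rbb)$ then believes every set of reals to be universally Baire, Corollary~\ref{cor:UB_cohere->UB_triv} may be invoked internally. I expect either route to work, with the definability check of the third paragraph---or the internalization argument just sketched---being the only genuine content beyond citing results already established.
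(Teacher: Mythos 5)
Your main route is exactly the paper's (the paper gives no explicit proof beyond ``combine Corollary \ref{cor:UB_cohere->UB_triv} with Theorem \ref{rel_UB}''): under the large cardinal hypothesis every set in $L(\Rbb)$ is universally Baire, so Corollary \ref{cor:UB_cohere->UB_triv} applied in $V$ yields a trivialization that is definable from $\Phi$ and real parameters, hence lies in $L(\Rbb)$, with the trivialization statement absolute since it quantifies only over reals. One caution about your ``tidier alternative'': running the argument internally is actually the more delicate route, since $L(\Rbb)$ need not satisfy AC (so the Stone-space and antichain arguments behind Proposition \ref{ub} are not obviously available there) and Woodin's theorem asserts universal Baireness of sets of $L(\Rbb)$ as computed in $V$, not as an internal statement of $L(\Rbb)$ --- so the definability bookkeeping of your third paragraph is the right way to go.
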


As the proof of Corollary \ref{cor:CA_cohere->PCA_triv} suggests, we will soon follow \cite{strong_hom_add} in adopting more general notions of $n$-coherence than those of Definition \ref{def:classical_n_coherence}; as these notions indeed subsume the classical ones, there is no danger of terminological confusion.
These notions apply to a broad class of inverse systems indexed by $\Omega$, and although it seems reasonable to expect a statement like that of Corollary \ref{cor:UB_cohere->UB_triv} to hold for this more general class of systems, the tasks both of framing and arguing such a statement appear to be nontrivial ones.
They likely will comprise the main work of answering Question \ref{ques:def_lim} of our conclusion below. 

\section{Forcing the Partition Hypothesis}
\label{Sect:forcing}
The arguments of \cite{SVHDL} and \cite{strong_hom_add} both invoke strong combinatorial properties of the \emph{weakly compact Hechler model} of Theorem \ref{PHfromHechlers} below.
In this and the following section, we show that these properties may be more simply regarded as the assertion that $\PH_n$ holds for every $n\in\omega$.
Since we will utilize and adapt the results and proofs of \cite{SVHDL} and \cite{strong_hom_add}, we will follow the style and notational conventions of those papers even when they differ
from the present article.

\begin{thm}
\label{PHfromHechlers}
If $\kappa$ is a weakly compact cardinal and $\Hbb_\kappa$ is the length-$\kappa$ finite support iteration of
Hechler forcing, then any generic extension by $\Hbb_\kappa$ satisfies $\PH_n$.
\end{thm}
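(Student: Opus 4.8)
The plan is to recast, within the present framework, the reflection-and-recursion argument by which \cite{SVHDL} establishes the triviality of $n$-coherent families in the weakly compact Hechler model; one proceeds by induction on $n$, in parallel with the proof of Theorem \ref{partition}. Fix a weakly compact cardinal $\kappa$ and a filter $G$ generic for $\Hbb_\kappa$, and for $\alpha\le\kappa$ write $V_\alpha:=V[G\cap\Hbb_\alpha]$ and $\Omega_\alpha:=\Omega^{V_\alpha}$. Three standard features of this iteration will be in play throughout. (i)~$\Hbb_\kappa$ is ccc, and since each $\Hbb_\alpha$ with $\alpha<\kappa$ has size below $\kappa$ while $\mathrm{cf}(\kappa)>\omega$, every real of $V[G]$ already lies in some $V_\alpha$; hence $\Omega=\bigcup_{\alpha<\kappa}\Omega_\alpha$ is an increasing union of sets of size below $\kappa$. (ii)~The Hechler generic $d_\alpha\in\Omega$ introduced at stage $\alpha$ eventually dominates every element of $\Omega_\alpha$, so $\langle d_\alpha\mid\alpha<\kappa\rangle$ is a $\leq^*$-increasing and $\leq^*$-cofinal scale; in particular it is $\leq^*$-linearly ordered by its indexing. (iii)~Although $\kappa$ is no longer inaccessible in $V[G]$ --- there $2^{\aleph_0}=\kappa$ --- the weak compactness of $\kappa$ survives in $V$ and will be used there, via its equivalence with $\Pi^1_1$-indescribability, applied to a predicate $R\subseteq V_\kappa$ that codes $\Hbb_\kappa$ together with a nice name $\dot c$ for the coloring $c:\Omega^{[n+1]}\to\omega$ at hand.

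Assume inductively that $\PH_{n-1}$ holds in $V[G]$, the base case $\PH_0$ being a theorem of ZFC, and fix $c:\Omega^{[n+1]}\to\omega$. By Lemma \ref{extensionlemma} --- together with the ``concentration on some $\leq^k$'' remark following it --- it suffices to produce, for the $\leq^*$-cofinal scale $\Upsilon:=\{d_\alpha\mid\alpha<\kappa\}$, an $(n+1)$-cofinal $F\colon\Upsilon^{\leq n+1}\to\Omega$ which extends the identity on $\Upsilon$, is cofinal with respect to some $\leq^k$, and has $c\circ F^*$ constant; Lemma \ref{extensionlemma} then lifts this to the $\bar F\colon\Omega^{\leq n+1}\to\Omega$ required by $\PH_n$. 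Since $\Upsilon$ is $\leq^*$-linearly ordered, such an $F$ is, in essence, an assignment $\vec\alpha\mapsto F(\vec\alpha)\in\Omega$ to tuples $\vec\alpha$ of ordinals below $\kappa$ of length at most $n+1$, required to satisfy $F(\langle\alpha\rangle)=d_\alpha$, to be $\trleq$-monotone, and to send every $\sigma\in\Upsilon\bb{n+1}$ to an $(n+1)$-tuple $F^*(\sigma)$ of some single, prescribed $c$-color $i^*$. One would construct $F$ by recursion along a club $C\subseteq\kappa$: at stage $\delta\in C$, having defined $F$ on all tuples with indices below $\delta$, one uses the Hechler generics of the block $[\delta,\delta')$ --- with $\delta'$ the next element of $C$ --- to extend $F$ to all tuples with indices below $\delta'$, choosing each new value $F(\vec\alpha)$ to dominate both $\{d_{\alpha_i}\}$ and the finitely many $F$-values already committed on the proper subtuples of $\vec\alpha$, and so that every $\sigma\in\Upsilon\bb{n+1}$ running through $\vec\alpha$ receives color $i^*$. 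Limit stages of this recursion demand nothing, as $F$ is assembled as an increasing union.

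The single ingredient that must be secured in advance --- and the only point at which the weak compactness of $\kappa$ is genuinely needed --- is the choice of the target color $i^*$ and of the club $C$ that together keep this recursion from getting stuck: one must rule out that, at some $\delta\in C$, the colors already committed along the ``old'' segments of the $\sigma$'s running through a freshly introduced tuple are jointly incompatible with $i^*$. Following \cite{SVHDL}, this is handled by first running the recursion one dimension lower --- invoking the inductive hypothesis $\PH_{n-1}$ to fix $F\restriction\Upsilon^{\leq n}$ and the residual color data it carries --- and then reflecting: the assertion that the remaining top-dimensional extension problem is solvable, for color $i^*$, at club-many stages below $\kappa$ is arranged so as to be expressible as a $\Pi^1_1$ statement over $(V_\kappa,\in,R)$, whereupon $\Pi^1_1$-indescribability in $V$ produces a club $C$ along which the recursion runs, while a further reflection of the same type knits the stagewise pieces into a single $F$. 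Carrying out this reflection --- in particular, the bookkeeping that keeps the relevant assertions $\Pi^1_1$ and that extracts an $(n+1)$-cofinal function rather than the trivialization constructed in \cite{SVHDL} --- is the principal obstacle; once it is in place, $F$ is in hand, and Lemma \ref{extensionlemma} completes the proof.
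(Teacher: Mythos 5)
Your proposal diverges substantially from the paper's argument, and as written it contains a genuine gap rather than a complete alternative proof. The decisive step---choosing the target color $i^*$ and the club $C$ so that the block-by-block recursion defining $F$ never gets stuck---is precisely the combinatorial content of the theorem, and you explicitly defer it (``carrying out this reflection \ldots is the principal obstacle''). Beyond being unexecuted, the mechanism you sketch is doubtful on two counts. First, the assertion that the top-dimensional extension problem is solvable at a given stage is existential over names for reals, i.e.\ over subsets of $V_\kappa$, so its natural complexity over $(V_\kappa,\in,R)$ is $\Sigma^1_1$, not $\Pi^1_1$; $\Pi^1_1$-indescribability reflects $\Pi^1_1$ truths of $V_\kappa$ downward to some $V_\alpha$ and does not by itself ``produce a club'' along which an existential extension problem is solvable. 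Second, the inductive appeal to $\PH_{n-1}$ has no identified target: $\PH_{n-1}$ concerns colorings of $\Omega^{[n]}$, and you never specify which $n$-ary coloring it is to be applied to, nor how a homogeneous $n$-cofinal function for that coloring would constrain the values of the $(n+1)$-ary coloring $c$ on tuples ending in a freshly introduced $F(\vec{\alpha})$. (The paper's proof is not an induction on $n$ at all; only the trivial base observation about $\PH_0$ is shared.)

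For comparison, the paper works entirely in the ground model with names: for each $\vec{\alpha}\in[\kappa]^{n+1}$ it fixes a condition $q_{\vec{\alpha}}$ deciding the value $i(\vec{\alpha})$ of $\dot f$ on the corresponding tuple of Hechler generics, applies the weakly compact partition relation to the ground-model coloring $\vec{\alpha}\mapsto i(\vec{\alpha})$ to obtain a homogeneous unbounded set, uniformizes the conditions via the $\Delta$-system Lemma \ref{HechlerDeltaSystem} (a second use of weak compactness), and then uses a genericity argument (Lemma \ref{strings}) to find, in the extension, ordinals $\alpha_\sigma$ for $\sigma\in[B]^{\leq n+1}$ with all the conditions $q_{\vec{\alpha}[\vec{\sigma}]}$ in $G$; these conditions simultaneously force the monotonicity of $F$ on the resulting $\Upsilon$ and the constancy of $f\circ F^*$, after which Lemma \ref{extensionlemma} finishes as in your outline. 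If you wish to salvage your route, the essential task is to recast the ``non-stuckness'' of the recursion as a genuinely $\Pi^1_1$ statement over $(V_\kappa,\in,R)$ and to explain how its reflection yields the club $C$ and the color $i^*$; that is exactly the work the proposal leaves undone.
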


As noted in Section \ref{Sect:PH}, $\PH_0$ is a ZFC theorem. 
To argue the $n>0$ instances of Theorem \ref{PHfromHechlers} it will suffice to define ``partial witnesses'' $F$ to $\PH_n$ in the sense of Lemma \ref{extensionlemma}; this will be our approach.
Underlying these functions' domains will be  $\leq^*$-cofinal subcollections $\Upsilon$ of the Hechler reals added by the iteration $\Hbb_\kappa$.
Letting $\Hbb_\alpha$ more generally denote the length-$\alpha$ finite support iteration of Hechler posets, the elements of $\Hbb_\kappa$ are finite partial functions $p$ from $\kappa$ for which $\alpha\in\dom(p)$ implies that $p(\alpha)$ is a nice $\Hbb_\alpha$-name for a Hechler condition.
For each $\alpha<\kappa$ let $\dot{z}_\alpha$ be an $\Hbb_\kappa$-name for the $\alpha\Th$ Hechler real added by $\Hbb_\kappa$.

We will require several lemmas from \cite{SVHDL}; the following, which appears also as Lemma 6 in \cite{strong_hom_add}, combines its Lemmas 3.4 and 4.3.
Recall that if $A$ is a set of ordinals, then $[A]^n$ denotes the collection of all $n$-element subsets of $A$ and $[A]^{<n}$ denotes the collection of subsets of $A$
of cardinality less than $n$.
Elements of these collections are identified with their increasing enumerations;
if $\vec{\alpha}$ is a finite set of ordinals, we will write $\alpha_i$ for the $i\Th$-least element of $\vec{\alpha}$.
If $\vec{\alpha}$ is an initial part of $\vec{\beta}$ we will write $\vec{\alpha} \sqsubseteq \vec{\beta}$.
We denote by $D$ the dense subset of $\Hbb_\kappa$ consisting of conditions $q$ such that for all $\eta\in\mathrm{dom}(q)$ the stem of $q(\eta)$ is determined by $q\restriction\eta$; this is the stem which we reference as $s_{q_{\vec{\alpha}}(\eta)}$ in item (1) below.
\begin{lem}
\label{HechlerDeltaSystem}
Let $\kappa$ be a weakly compact cardinal,
$n$ be a positive integer, and let $\langle q_{\vec{\alpha}}\,|\,\vec{\alpha} \in [\kappa]^n\rangle$ be a family of
  conditions in $D\subseteq\Hbb_\kappa$.
  Let $u_{\vec{\alpha}} = \dom(q_{\vec{\alpha}})$.
  Then there is an unbounded set $A \subseteq \kappa$, a family
  $\langle u_{\vec{\alpha}} \mid \vec{\alpha} \in [A]^{<n}
  \rangle$, a natural number $\ell$, and a set of stems $\langle s_i \mid i < \ell \rangle$
  such that:
  \begin{enumerate}
    \item $|u_{\vec{\alpha}}| =
    \ell$ for all $\vec{\alpha} \in [A]^n$, and if $\eta$ is the $i\Th$ element of
    $u_{\vec{\alpha}}$ then $s_{q_{\vec{\alpha}}(\eta)} = s_i$.
    \item $A$ and $\langle u_{\vec{\alpha}} \mid \vec{\alpha} \in [A]^{\leq n} \rangle$
    satisfy 
    \begin{enumerate}
    \item  for all $\vec{\alpha} \in [A]^{<n}$,
    \begin{enumerate}
      \item if $\beta \in A$ and $\vec{\alpha} < \beta$, then
      $u_{\vec{\alpha}} < \beta$,
      \item if $\vec{\beta} \in [A]^{\leq n}$ satisfies
      $\vec{\alpha} \sqsubseteq \vec{\beta}$, then $u_{\vec{\alpha}} \sqsubseteq
      u_{\vec{\beta}}$,
      \item the set $\{u_{\vec{\alpha}
      ^\frown \langle \beta \rangle} \mid \beta \in A\backslash(\max(\vec{\alpha})+1)\}$ forms a $\Delta$-system with root $u_{\vec{\alpha}}$;
    \end{enumerate}
    \item for all $m \leq n$ and all $\vec{\alpha}, \vec{\beta} \in [A]^m$,
    \begin{enumerate}
    \item $|u_{\vec{\alpha}}| = |u_{\vec{\beta}}|$, and
    \item if $\vec{\alpha}$ and $\vec{\beta}$ are aligned, then $u_{\vec{\alpha}}$
    and $u_{\vec{\beta}}$ are aligned.\footnote{
    Two finite sets of ordinals $u$ and $v$ are \emph{aligned} if $|u|=|v|$ and $|u\cap\alpha|=|v\cap\alpha|$ for all $\alpha\in u\cap v$.} 
    \end{enumerate}
    \end{enumerate}
    \item \label{q_cohere} $q_{\vec{\beta}} \restriction u_{\vec{\alpha}}
    = q_{\vec{\gamma}} \restriction u_{\vec{\alpha}}$ for all $\vec{\alpha} \in [A]^{<n}$ and $\vec{\beta}, \vec{\gamma} \in
    [A]^n$ such that $\vec{\alpha} \sqsubseteq \vec{\beta}$ and $\vec{\alpha}
    \sqsubseteq \vec{\gamma}$.
  \end{enumerate}
\end{lem}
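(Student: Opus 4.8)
The plan is to obtain $A$ by three successive refinements of $\kappa$, each justified by a partition property of the weakly compact $\kappa$, while exploiting throughout that $\kappa$ is inaccessible: thus $\kappa^{<\kappa}=\kappa$, $\Hbb_\kappa$ is ccc of size $\leq\kappa$, and $|\Hbb_\alpha|<\kappa$ for every $\alpha<\kappa$. The hypothesis $q_{\vec\alpha}\in D$ enters at once, since it makes each $s_{q_{\vec\alpha}(\eta)}$ an honest element of $\Sigma$, so that stems may appear inside colourings. \emph{Step 1} homogenises the ``type'' of $q_{\vec\alpha}$: colour $[\kappa]^n$ by sending $\vec\alpha$ to the isomorphism type of the finite linear order $u_{\vec\alpha}\cup\vec\alpha$ with the two subsets $u_{\vec\alpha}$, $\vec\alpha$ marked, together with the increasing list of stems $s_{q_{\vec\alpha}(\eta)}$, $\eta\in u_{\vec\alpha}$. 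This colouring has countably many values, so $\kappa\to(\kappa)^n_\omega$ gives an unbounded $A_0$ homogeneous for it; on $[A_0]^n$ the number $\ell:=|u_{\vec\alpha}|$, the stem sequence $\langle s_i\mid i<\ell\rangle$, and the combinatorial shape recording how the elements of $u_{\vec\alpha}$ interleave with $\vec\alpha$ are now fixed. This already yields clause (1) and the uniformity underpinning clause (2).

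\textbf{Step 2 (the $\Delta$-system structure).} Inside $A_0$ one runs the natural generalisation of the $\Delta$-system lemma to $[\kappa]^n$-indexed families of finite sets. Proceeding downwards from $m=n-1$ to $m=0$, one defines $u_{\vec\alpha}$ for $\vec\alpha\in[A]^m$ to be the stable common part of the sets $u_{\vec\alpha^\frown\beta}$; Fodor's lemma is invoked repeatedly to freeze those coordinates of $u_{\vec\alpha^\frown\beta}$ that lie below the next element of $A$, and $A$ is thinned to grow fast enough that the genuinely $\beta$-dependent coordinates of the $u_{\vec\alpha^\frown\beta}$ become pairwise disjoint above that frozen part. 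The shape-homogeneity from Step 1 descends to every level $m\leq n$ and delivers the constancy of $|u_{\vec\alpha}|$ on $[A]^m$ and the alignment clause (2)(b)(ii). The outcome is an unbounded $A\subseteq A_0$ together with $\langle u_{\vec\alpha}\mid\vec\alpha\in[A]^{\leq n}\rangle$ satisfying clauses (1) and (2); in particular (2)(a) makes $u_{\vec\alpha}$, for $\vec\alpha\in[A]^{<n}$, an initial segment of $u_{\vec\beta}$ whenever $\vec\alpha\sqsubseteq\vec\beta$, and contained in $\delta_{\vec\alpha}:=\min(A\setminus(\max\vec\alpha+1))<\kappa$.

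\textbf{Step 3 (coherence).} It remains to secure clause (3). Fix $m<n$ and consider $f_m\colon[A]^n\to\Hbb_\kappa$ defined by $f_m(\vec\beta)=q_{\vec\beta}\restriction u_{\{\beta_0,\dots,\beta_{m-1}\}}$ (which makes sense, as $u_{\{\beta_0,\dots,\beta_{m-1}\}}\subseteq\dom(q_{\vec\beta})$ by Step 2). By the canonical partition theorem of Erd\H{o}s--Rado, valid for the weakly compact $\kappa$ since $\kappa\to(\kappa)^{2n}_{<\omega}$, there are an unbounded $A'\subseteq A$ and a set $I\subseteq\{0,\dots,n-1\}$ such that for $\vec\beta,\vec\gamma\in[A']^n$ one has $f_m(\vec\beta)=f_m(\vec\gamma)$ exactly when $\vec\beta$ and $\vec\gamma$ agree on every coordinate in $I$. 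If some $j\geq m$ lay in $I$, then fixing the first $m$ coordinates (which pins down $u_{\{\beta_0,\dots,\beta_{m-1}\}}$ and hence the domain) and letting the remaining coordinates vary, $f_m$ would take $\kappa$ distinct values — any two extensions differing at the $j$th coordinate give distinct values of $f_m$ — all of which lie in the single poset $\Hbb_{\delta_{\{\beta_0,\dots,\beta_{m-1}\}}}$ of size $<\kappa$, a contradiction. Hence $I\subseteq\{0,\dots,m-1\}$, i.e.\ $f_m(\vec\beta)$ depends only on $\{\beta_0,\dots,\beta_{m-1}\}$, which is clause (3) at level $m$. Applying this in turn for $m=0,1,\dots,n-1$ — each step merely refines the index set further, and every property obtained earlier is inherited by unbounded subsets — produces the $A$ required by the lemma.

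\textbf{The main obstacle} is the ``simultaneous stabilisation'' running through Steps 2 and 3: for any single tuple $\vec\alpha$ the relevant domains and restrictions can be made constant by a bare pressing-down or pigeonhole argument, but doing so uniformly across all $\vec\alpha$ is precisely what forces the use of the partition relation and of the canonical Ramsey theorem, together with a substantial amount of notational bookkeeping — all of which is already carried out in the arguments of Lemmas 3.4 and 4.3 of \cite{SVHDL}, which the present statement essentially reassembles.
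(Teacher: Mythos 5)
The paper contains no proof of this lemma to compare against: it is imported verbatim from \cite{SVHDL} (its Lemmas 3.4 and 4.3, combined as Lemma 6 of \cite{strong_hom_add}), and your sketch correctly reassembles the standard argument from those sources. Step 1 (homogenizing the countably many possible ``types''---size $\ell$, stem sequence, interleaving pattern of $u_{\vec{\alpha}}$ with $\vec{\alpha}$---via $\kappa\to(\kappa)^n_\omega$) and Step 3 (canonization plus the counting observation that all restrictions $q_{\vec{\beta}}\restriction u_{\vec{\alpha}}$ live in a set of size $<\kappa$ once $u_{\vec{\alpha}}\subseteq\delta_{\vec{\alpha}}<\kappa$ is fixed, by inaccessibility and the ccc of the iteration) are both sound; the canonical Ramsey theorem you invoke does hold at weakly compact cardinals by the usual Erd\H{o}s--Rado proof run through $\kappa\to(\kappa)^{2n}_{k}$. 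The one place the sketch under-describes the mechanism is Step 2: for a \emph{fixed} $\vec{\alpha}$ the root of $\{u_{\vec{\alpha}^\frown\langle\beta\rangle}\mid\beta\}$ is indeed extracted by pressing down and thinning, but making the roots, the coherence $u_{\vec{\alpha}}\sqsubseteq u_{\vec{\beta}}$, and the alignment conditions uniform across all $\vec{\alpha}\in[A]^{<n}$ simultaneously cannot be achieved by intersecting the resulting $\kappa$ many stationary sets; it requires colouring $[\kappa]^{2n}$ (or $[\kappa]^{n+1}$) by the full interaction pattern of the domains, exactly as in Lemma 3.4 of \cite{SVHDL}. Your closing paragraph acknowledges this and defers to that source, which is also all the paper itself does, so I regard this as a looseness of exposition rather than a gap; with that caveat the proposal is correct and follows the same route as the cited proofs.
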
 

We now turn more directly to the argument that $V^{\Hbb_\kappa}\vDash\PH_n$ for all $n\geq 0$. 
\begin{proof}[Proof of Theorem \ref{PHfromHechlers}]
We have already noted that the theorem is true when $n=0$.
Fix $n > 0$ and an $\Hbb_\kappa$-name $\dot{f}$ and a $p\in\Hbb_\kappa$ forcing that $\dot{f}$ is a function from $\dot{\Omega}^{n+1}$ to $\omega$. Set $A_0:=\kappa\backslash(\max(\dom(p))+1)$.
For all $\vec{\alpha}\in [A_0]^{n+1}$ fix a $q_{\vec{\alpha}}\leq p$ in $D$ such that \begin{align}\label{whatsforced}
q_{\vec{\alpha}}\Vdash``\dot{z}_{\alpha_0}<\dots<\dot{z}_{\alpha_n}\text{ and }\dot{f}(\{\dot{z}_{\alpha_0},\dots,\dot{z}_{\alpha_n}\})=i(\vec{\alpha})\text{''}\end{align}
for some $i(\vec{\alpha})\in\omega$.
Apply the weak compactness of $\kappa$ to thin $A_0$ to a cofinal $A_1\subseteq\kappa$ such that $i(\vec{\alpha})$ equals some fixed $i$ for all $\vec{\alpha}\in [A_1]^{n+1}$.

Now apply Lemma \ref{HechlerDeltaSystem} to $\langle q_{\vec{\alpha}} \mid \vec{\alpha} \in [A_1]^{n+1} \rangle$ to find an unbounded $A \subseteq A_1$
  together with sets $\langle u_{\vec{\alpha}} \mid \vec{\alpha} \in [A]^{\leq n}
  \rangle$, a natural number $\ell$, and stems $\langle s_i \mid i < \ell \rangle$
  as in the statement of the lemma.
  Next, define conditions $\left\langle q_{\vec{\alpha}} ~ \middle| ~ \vec{\alpha}
  \in [A]^{\leq n} \right\rangle$
  as follows: for each $\vec{\alpha}$ in $[A]^{\leq n}$ let $\vec{\beta}$ be
  an element of $[A]^{n+1}$ such that $\vec{\alpha} \sqsubseteq \vec{\beta}$
  and let $q_{\vec{\alpha}} = q_{\vec{\beta}} \restriction u_{\vec{\alpha}}$.
  By item (\ref{q_cohere}) of Lemma \ref{HechlerDeltaSystem}, these definitions
  are independent of all of our choices of $(n+1)$-tuples $\vec{\beta}$.
  Moreover, the fact that $q_\emptyset=\bigcap_{\vec{\alpha}\in [A]^{n+1}}q_{\vec{\alpha}}$ implies that $q_\emptyset\leq q$.
  
 We claim that $q_\emptyset$ forces the existence of an $\Upsilon$ and $F$ as in Lemma \ref{extensionlemma}; in other words, $q_\emptyset$ forces that the conclusion of $\PH_n$ holds for the function $\dot{f}$.
 Since $\dot{f}$ named an arbitrary function from $\dot{\Omega}^{n+1}$ to $\omega$, showing this will conclude our proof.
 
We argue this claim by first partitioning $A$ into $n + 1$ disjoint and unbounded subsets $\{\Gamma_i\mid 1\leq i\leq n+1\}$.
 Let $\dot{B}$ be a $\Hbb_\kappa$-name for the set of
  $\alpha \in \Gamma_1$ such that $q_{\langle\alpha\rangle} \in \dot{G}$, where $\dot{G}$ is the canonical name for the $\Hbb_\kappa$-generic filter.
Observe that
  \[
    q_\emptyset \Vdash\text{``}\dot{B} \text{ is unbounded in } \kappa\text{''}.
  \]
  To see this, fix an $r\leq q_\emptyset$ and $\eta<\kappa$; it will suffice to find an $\alpha\in (\Gamma_1 \backslash\eta)$ such that $r$ and $q_{\langle \alpha\rangle}$ are compatible.
  To this end, note that as $\langle u_{\langle\alpha\rangle}\mid\alpha\in A\rangle$ forms a $\Delta$-system, there exists an $\alpha\in (\Gamma_1 \backslash\eta)$ with $u_{\langle\alpha\rangle}\backslash u_\emptyset\cap \dom(r)=\emptyset$.
  Since $q_{\langle \alpha\rangle}\restriction u_\emptyset=q_\emptyset$ and $q_\emptyset\geq r$, the conditions $q_{\langle \alpha\rangle}$ and $r$ are indeed compatible, as desired.
  
  This set $\dot{B}$ will index those Hechler reals comprising the $\Upsilon\subseteq\Omega$ upon which we'll define the function $F$ in the generic extension.
  This definition will depend on one further lemma; to state it, we adopt the following conventions.
\begin{defn}\label{61}
  For any nonempty $\tau$ in $[\kappa]^{<\omega}$, a \emph{subset-initial segment
  of $\tau$} is a sequence $\sigma_1 \subset \cdots \subset \sigma_m \subseteq \tau$
  such that
  \begin{itemize}
    \item $m \leq |\tau|$ and
    \item $|\sigma_i| = i$ for all $i$ with $1 \leq i \leq m$.
  \end{itemize}
  We write $\vec{\sigma} \vartriangleleft \tau$ to indicate that
  $\vec{\sigma}$ is a subset-initial segment of $\tau$.
  When ordinals $\alpha_{\sigma_i}$ have been associated to each element of a subset-initial $\vec{\sigma}\vartriangleleft \tau$ then we write $\vec{\alpha}[\vec{\sigma}]$ for the sequence $\langle \alpha_{\sigma_1},\dots,\alpha_{\sigma_m}\rangle$.
\end{defn}
The following appears (together with its proof) as Lemma 6.7 in \cite{SVHDL}.
\begin{lem}\label{strings}
   Let $\kappa$ be a weakly compact cardinal and fix $\tau\in [\kappa]^{n+1}$.
   The condition $q_\emptyset$ forces the following to hold
   in $V^{\Hbb_\kappa}$:
   whenever $1 < m \leq n+1$ and $\left\{\alpha_\sigma ~ \middle| ~ \sigma\in[\tau]^{<m}\textnormal{
   and }\sigma\neq\emptyset\right\}$ are such that
   \begin{enumerate}
     \item $\alpha_{\langle \gamma\rangle} = \gamma$ for all $\gamma \in \tau$,
     \item $\alpha_\rho<\alpha_\sigma$ whenever $\rho$ is a proper subset of $\sigma$,
     \item $\alpha_\sigma\in \Gamma_{|\sigma|}$ for all nonempty
     $\sigma\in[\tau]^{<m}$, and
     \item for any $1\leq\ell< m$ and subset-initial segment $\vec{\sigma}
     \vartriangleleft \tau$ of length $\ell$, we have $q_{\vec{\alpha}[\vec{\sigma}]}
     \in \dot{G}$ (in particular, $\eta \in \dot{B}$ for all $\eta \in \tau$),
   \end{enumerate}
   then it follows that there exists a collection $\left\{\alpha_\sigma ~ \middle| ~ \sigma\in[\tau]^m\right\}\subseteq\Gamma_m$ which satisfies
   \begin{enumerate}
   \setcounter{enumi}{4}
     \item $\alpha_\rho<\alpha_\sigma$ whenever $\rho$ is a proper subset of $\sigma$, and
     \item for any subset-initial segment $\vec{\sigma} \vartriangleleft \tau$
     of length $m$, we have $q_{\vec{\alpha}[\vec{\sigma}]} \in \dot{G}$.
   \end{enumerate}
 \end{lem}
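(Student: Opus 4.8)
The plan is to prove the lemma by a density computation below $q_\emptyset$: I would show that no condition extending $q_\emptyset$ can force a counterexample. So suppose toward a contradiction that some $r\leq q_\emptyset$, some integer $m$ with $1<m\leq n+1$, and some names $\langle\dot\alpha_\sigma\mid\sigma\in[\tau]^{<m},\ \sigma\neq\emptyset\rangle$ are such that $r$ forces that these $\dot\alpha_\sigma$ satisfy (1)--(4) but that no $\langle\alpha_\sigma\mid\sigma\in[\tau]^m\rangle\subseteq\Gamma_m$ satisfies (5)--(6). By extending $r$ finitely often I may assume that $r\in D$, that $r$ decides each $\dot\alpha_\sigma$ to be a fixed ordinal $\gamma_\sigma$, and---since $r$ then forces (4)---that $r\leq q_{\vec\gamma[\vec\sigma]}$ for every subset-initial segment $\vec\sigma\vartriangleleft\tau$ of length $\ell<m$; here $\gamma_{\langle\eta\rangle}=\eta$ for $\eta\in\tau$ by (1), $\gamma_\rho<\gamma_\sigma$ whenever $\rho\subsetneq\sigma$ by (2), and $\gamma_\sigma\in\Gamma_{|\sigma|}$ by (3). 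In particular $r$ extends each $q_{\vec\gamma[\vec\sigma]}$ on its domain $u_{\vec\gamma[\vec\sigma]}$.

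Next I would choose the ordinals $\alpha_\sigma\in\Gamma_m$ one $\sigma\in[\tau]^m$ at a time, using the $\Delta$-system structure supplied by Lemma \ref{HechlerDeltaSystem}. Fix $\sigma\in[\tau]^m$ and let $\vec\rho$ range over the finitely many subset-initial segments of $\tau$ of length $m-1$ whose last term is a subset of $\sigma$. For each such $\vec\rho$ the family $\{q_{\vec\gamma[\vec\rho]{}^\frown\langle\beta\rangle}\mid\beta\in A\setminus(\max(\vec\gamma[\vec\rho])+1)\}$ has domains forming a $\Delta$-system with root $u_{\vec\gamma[\vec\rho]}$, and by item (\ref{q_cohere}) of Lemma \ref{HechlerDeltaSystem} each of its members restricts on that root to $q_{\vec\gamma[\vec\rho]}$, a condition which $r$ already extends. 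Since $\dom(r)$ is finite, for all but finitely many $\beta$ the set $u_{\vec\gamma[\vec\rho]{}^\frown\langle\beta\rangle}\setminus u_{\vec\gamma[\vec\rho]}$ is disjoint from $\dom(r)$; as there are only finitely many pairs $(\sigma,\vec\rho)$ in play, I can therefore pick distinct $\alpha_\sigma\in\Gamma_m$, each exceeding $\gamma_\rho$ for all $\rho\subsetneq\sigma$ and large and ``fresh'' enough that for every $\vec\rho$ the sets $u_{\vec\gamma[\vec\rho]{}^\frown\langle\alpha_\sigma\rangle}\setminus u_{\vec\gamma[\vec\rho]}$ are pairwise disjoint and disjoint from $\dom(r)$, and each tuple $\vec\gamma[\vec\rho]{}^\frown\langle\alpha_\sigma\rangle$ lies in $[A]^m$. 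The alignment and coherence clauses of Lemma \ref{HechlerDeltaSystem} then guarantee that the finitely many restricted conditions $q_{\vec\gamma[\vec\rho]{}^\frown\langle\alpha_\sigma\rangle}$ form a commuting system whose members all agree with $r$ on their overlaps, so that $r$ together with all of them admits a common lower bound $r^*$.

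Then $r^*$ forces each $q_{\vec\gamma[\vec\rho]{}^\frown\langle\alpha_\sigma\rangle}$ into $\dot G$, so the family $\langle\alpha_\sigma\mid\sigma\in[\tau]^m\rangle$ satisfies (6); it satisfies (5) because $\alpha_\sigma>\gamma_\rho$ for every $\rho\subsetneq\sigma$, and $\alpha_\sigma\in\Gamma_m$ holds by construction. Hence $r^*\leq r$ forces that a family as in the conclusion exists, contradicting the choice of $r$; since $r\leq q_\emptyset$ was arbitrary, $q_\emptyset$ forces the statement of the lemma. The step I expect to be the real obstacle is the amalgamation in the previous paragraph: because conditions of $\Hbb_\kappa$ are finite tuples of names rather than plain sets, checking that the restricted conditions $q_{\vec\gamma[\vec\rho]{}^\frown\langle\alpha_\sigma\rangle}$ genuinely commute and amalgamate with $r$ requires carefully tracking the stems and the alignment of the domains $u_{\vec\gamma[\vec\rho]}$---precisely the bookkeeping that the stem-uniformity, alignment, and coherence clauses of Lemma \ref{HechlerDeltaSystem} were arranged to support---and I would carry it out following the amalgamation argument given in the proof of that lemma and in \cite[Lemma 6.7]{SVHDL}.
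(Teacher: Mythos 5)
The paper does not prove this lemma itself---it imports it together with its proof from \cite[Lemma 6.7]{SVHDL}---and your density-plus-amalgamation argument is essentially that proof: decide the names $\dot{\alpha}_\sigma$ below an arbitrary $r\leq q_\emptyset$, choose each new $\alpha_\sigma\in\Gamma_m$ large enough that the petals of the relevant $\Delta$-systems avoid $\dom(r)$ and the previously used coordinates, and amalgamate via the stem-uniformity, alignment, and coherence clauses of Lemma \ref{HechlerDeltaSystem}. One small correction: for a \emph{fixed} $\alpha_\sigma$ the petals $u_{\vec{\gamma}[\vec{\rho}]^\frown\langle\alpha_\sigma\rangle}\setminus u_{\vec{\gamma}[\vec{\rho}]}$ taken over the various subset-initial segments $\vec{\rho}$ cannot in general be made pairwise disjoint by choosing $\alpha_\sigma$ large (nothing prevents them from sharing coordinates, e.g.\ $\alpha_\sigma$ itself), but this disjointness is not actually needed---since the sets $\Gamma_i$ are pairwise disjoint, any two tuples $\vec{\gamma}[\vec{\rho}]^\frown\langle\alpha_\sigma\rangle$ are aligned, so clauses (1) and (2)(b) of Lemma \ref{HechlerDeltaSystem} give matching decided stems on any overlap and the amalgamation with $r$ goes through exactly as you indicate.
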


We now fix an $\Hbb_\kappa$-generic filter $G$ containing $q_\emptyset$ and work in $V[G]$. We denote the interpretation of names therein simply by removing their dots and we let $\Upsilon=\{z_\alpha\mid\alpha\in B\}$.
Repeated application of Lemma \ref{strings} will determine a collection of ordinals $\{\alpha_\sigma\mid\sigma\in [B]^{\leq n+1}\backslash\{\emptyset\}\}$. For any $\xbf= (z_{\gamma_0},\dots,z_{\gamma_m} ) \in\Upsilon^{\leq n+1}$ then let $F(\xbf)=z_{\alpha_\sigma}$, where $\sigma=\{\gamma_0,\dots,\gamma_m\}$. Observe that $F$ satisfies
\begin{itemize}
\item $F(x)=x$ for all $x\in\Upsilon$, and
\item $F(\xbf) \leq F(\ybf)$ for any $\xbf\trleq \ybf$ in $\Upsilon^{\leq n+1}$.
\end{itemize}
The first point follows from item (1) of Lemma \ref{strings}, which entails that $F(z_\gamma)=z_{\alpha_{\langle \gamma\rangle}}=z_\gamma$ for all $\gamma\in B$.
The second point follows from items (2) and (5) of Lemma \ref{strings}, together with the fact that any $\rho\subset\sigma$ in  $[B]^{\leq n+1}\backslash\{\emptyset\}$ appear in some length-$(n+1)$ subset-initial $\vec{\sigma} \vartriangleleft \tau\subset B$ such that $q_{\vec{\alpha}[\vec{\sigma}]}\in G$.
By equation (\ref{whatsforced}), such conditions $q_{\vec{\alpha}[\vec{\sigma}]}$ will force the desired inequality, as well as the fact that $f\circ F^*$ takes the constant value $i$. 
\end{proof}

\section{The Partition Hypothesis and the additivity of \texorpdfstring{${\lim}^n$}{limn} for \texorpdfstring{$\Omega$}{O}-systems}
\label{Sect:PartHypAdd}
In the course of establishing the main results of \cite{strong_hom_add}, the authors
isolated a class of inverse systems of abelian groups indexed by $\Omega$.
These inverse systems are specified by a set of data called an \emph{$\Omega$-system}.
In this section, we recall the notion of an $\Omega$-system
and deduce from the Partition Hypothesis the additivity of derived limits associated to
their inverse systems.
\begin{defn}
An \emph{$\Omega$-system} is specified by an indexed collection $\Gcal=\{G_{n,k} \mid n,k \in \omega\}$
of finitely generated abelian groups together with compatible homomorphisms
$\pi_{n,j,k}: G_{n,k} \to G_{n,j}$ for
each $n$ and $j \leq k<\omega$.
Such data give rise to the following additional objects:
\begin{itemize}

\item 
For each $x \in \Om$, set 
$G_x := \bigoplus_{n=0}^\infty G_{n,x(n)}$
and
$\bar{G}_x := \prod_{n=0}^\infty G_{n,x(n)}$.
We regard $G_x$ as a subset of $\bar{G}_x$.

\item
For each $x \leq y$ in $\Om$ let
$\pi_{x,y}:\bar{G}_y \to \bar{G}_x$
denote the product homomorphism $\prod_{n=0}^\infty \pi_{n,x(n),y(n)}$. Write $\pi_{x,y}$ for these maps' restrictions $G_y\to G_x$ as well.
\end{itemize}
We write $\bar{\Gbf}$ and $\Gbf$ for the inverse systems over $\Omega$ whose terms are the groups $\bar{G}_x$ and $G_x$, respectively, and whose bonding maps are $\pi_{x,y}$.
We denote the $n\Th$ tower of groups in an $\Omega$-system by $\Gbf_n$; more precisely, $\Gbf_n$ is the inverse system indexed by $\omega$ with $(\Gbf_n)_k=G_{n,k}$.
An important point in what follows is that $\bigoplus_{n\in\omega}\Gbf_n\cong \Gbf$ in the category of pro-abelian groups.
 \end{defn}

We compute derived limits via the \emph{alternating chain complex}:

\begin{defn} \label{limscochaincomplex}
  Given an inverse system $\Xbf=(X_p,\pi_{p,q},\Pbb)$ over a quasi-lattice $\Pbb$,
  we say that $\Phi\in\prod_{p_0,\ldots,p_n}X_{\bigwedge p_i}$ is \emph{alternating} if for each $p_0,\ldots,p_n \in \Pbb$ and permutation
  $\sigma$ of $\{0,\ldots,n\}$, $\Phi(p_0,\ldots,p_n)=\operatorname{sgn}(\sigma)\Phi(p_{\sigma(0)},\ldots,\Phi(p_{\sigma(n)}))$. 
We define the cochain complex $C_{\alt}^\bullet(\Xbf)$ by 
\[C_{\alt}^n(\Xbf):=\left\{\Phi\in\prod_{p_0,\ldots,p_n}X_{\bigwedge_i p_i}\middle|\Phi\text{ is alternating}\right\}\]
with coboundary maps 
\[d^n \colon C_{\alt}^n(\Xbf) \to C_{\alt}^{n+1}(\Xbf)\]
given by
\[
d^n (\Phi)(\vec{p}\,) := \sum_{i=0}^{n+1}(-1)^i \pi_{\bigwedge \vec{p}, \bigwedge \vec{p}^{\,i}} (\Phi(\vec{p}^{\:i}))
\]
where $\vec{p}^{\:i} = (p_0,\ldots ,\widehat{p_i},\ldots ,p_{n+1})$ denotes the omission of the $i$th coordinate from $\vec{p} = (p_0,\ldots,p_{n+1})$.
Unless there is need for clarity, we will generally suppress the superscript on
$d^n$.
\end{defn}
\begin{defn}
Given an inverse system $\Xbf=(X_p,\pi_{p,q},\Pbb)$,
define ${\lim}^n\Xbf$ to be the $n\Th$ cohomology group
$\ker(d^{n})/\im(d^{n-1})$ of $C_{\alt}^\bullet(\Xbf)$ where $d^{-1}$ is the zero map. 
\end{defn}

Observe that for each $k$,
the inclusion map $\Gbf_k \to \bigoplus_i \Gbf_i \cong \Gbf$ induces inclusion maps
$C_{\alt}^n(\Gbf_k) \to C_{\alt}^n(\Gbf)$ for each $n$.
It is readily checked that, when passing to cohomology, these induce inclusions
${\lim}^n \Gbf_k \to {\lim}^n \Gbf$.
As ${\lim}^n$ is finitely additive, this in turn induces the inclusion map $\bigoplus_k {\lim}^n\,\Gbf_k\to{\lim}^n\, \Gbf$.
Our goal in this section will be to prove that $\PH_n$ implies that this induced
inclusion map $\bigoplus_k{\lim}^n\,\Gbf_k\to{\lim}^n\, \Gbf$ is an isomorphism.
We will start with the case $n=1$, which will be verified directly.

\begin{prop} \label{lim1_additive}
Suppose $\PH_1$. 
The induced inclusion map $\bigoplus_i{\lim}^1\,\Gbf_i\to{\lim}^1\, \Gbf$ is an isomorphism. 
\end{prop}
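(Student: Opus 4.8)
The map under consideration is injective by the observations preceding the statement, so the content is surjectivity: given an arbitrary cocycle $\Phi\in C_{\alt}^{1}(\Gbf)$ representing a class in ${\lim}^{1}\Gbf$, the plan is to produce an integer $N$, a $0$-cochain $\psi\in C_{\alt}^{0}(\Gbf)$, and for each $n<N$ a cocycle $\Psi_{n}$ of $\Gbf_{n}$, so that $\Phi-d\psi$ is a cocycle of the finite subsystem $\Gbf_{0}\oplus\cdots\oplus\Gbf_{N-1}$ whose class splits, by finite additivity of ${\lim}^{1}$, as $\sum_{n<N}$ of classes pulled back from the $\Gbf_{n}$; this exhibits $[\Phi]$ in the image of $\bigoplus_{n}{\lim}^{1}\Gbf_{n}\to{\lim}^{1}\Gbf$.

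First I would feed $\Phi$ to $\PH_{1}$. For $x\leq^{*}y$ the value $\Phi(x,y)$ lies in $G_{x\wedge y}=\bigoplus_{m}G_{m,(x\wedge y)(m)}$, hence has finite support $\operatorname{supp}\Phi(x,y)\subseteq\omega$; define $c\colon\Omega^{2}\to\omega$ so that on $\Omega^{[2]}$ it codes the pair $\big(\operatorname{supp}\Phi(x,y),\,k(x,y)\big)$, where $k(x,y)$ is least with $x\leq^{k(x,y)}y$ (a map into a countable set, which we identify with $\omega$). Applying $\PH_{1}$ gives a $2$-cofinal $F\colon\Omega^{\leq 2}\to\Omega$ with $c\circ F^{*}$ constant, and by the ``concentration on some $\leq^{k}$'' discussion after Lemma \ref{extensionlemma} we may take $F$ to be $2$-cofinal with respect to a fixed $\leq^{k_{0}}$. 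Reading off constancy of $c\circ F^{*}$ over $\Omega\bb{2}$---whose elements have the form $(F(a),F(a,b))$ or $(F(b),F(a,b))$ with $a\leq^{*}b$---one gets a fixed finite $S_{0}\subseteq\omega$ with $\operatorname{supp}\Phi(F(a),F(a,b))=\operatorname{supp}\Phi(F(b),F(a,b))=S_{0}$ for all $a\leq^{*}b$. Set $N:=1+\max S_{0}$ (or $N:=0$ if $S_{0}=\varnothing$); then the projection $\Phi'$ of $\Phi$ onto the coordinates $m\geq N$ vanishes on every pair $(F(a),F(a,b))$ and $(F(b),F(a,b))$.

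The main step is to upgrade this vanishing to a trivialization modulo the first $N$ towers. Since a cofinal subsystem has an isomorphic pro-object and hence the same derived limits---and naturally so, compatibly for $\Gbf$ and for each $\Gbf_{n}$---it suffices to carry out the construction after restricting the inverse system to the $\leq^{*}$-cofinal set $\Upsilon$ given by the range of $F$ on $\Omega^{\leq 2}$ (cofinal because $x\leq F(x)$). On $\Upsilon$ I would build $\psi$ out of $\Phi$ and $F$ in the style of the trivialization arguments of Section \ref{Sect:PartHypAdd}, this proposition being in effect the base case $n=1$ of Theorem \ref{coherent_trivial}: at $u=F(\xbf)$ one takes $\psi(u)$ to be a suitable signed combination of values $\Phi(F(\xbf),F(\sigma))$ with $\xbf\trleq\sigma$, and then one applies the cocycle identity $\Phi(p,r)=\Phi(p,q)+\pi_{p,q}\Phi(q,r)$ (valid for $p\leq q\leq r$) along the edges $F(a)\leq F(a,b)$ to check that $d\psi$ agrees with $\Phi$ in every coordinate $\geq N$---precisely because those edges carry only coordinates in $S_{0}$. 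Having arranged that $(\Phi-d\psi)\restriction\Upsilon$ is a cocycle of $\Gbf_{0}\oplus\cdots\oplus\Gbf_{N-1}$, finite additivity of ${\lim}^{1}$ splits its class over $n<N$, and transporting back along the cofinal-restriction isomorphism places $[\Phi]$ in the image of $\bigoplus_{n}{\lim}^{1}\Gbf_{n}$.

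I expect the final step to be the only real difficulty: pinning down the cochain $\psi$ and carrying out the bookkeeping with the meets $\bigwedge$ and the bonding maps $\pi$ on the restricted system, and verifying that the particular alternating combination of instances of the cocycle identity made available by the $2$-cofinality of $F$ cancels the tail $\Phi'$ exactly. Everything else---the construction of the partition $c$, the invocation of $\PH_{1}$, the $\leq^{k}$-concentration, and the passage to a cofinal subsystem---should be routine given the preliminaries and Section \ref{Sect:PH}.
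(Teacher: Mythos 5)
Your setup matches the paper's: you color pairs by (a code for) the support of $\Phi(x,y)$, apply $\PH_1$ to get a $2$-cofinal $F$ with the supports of $\Phi(F(a),F(a,b))$ and $\Phi(F(b),F(a,b))$ uniformly bounded, and concentrate on some $\leq^{k}$. But the proposal stops exactly where the proposition's actual content begins: you never write down the trivializing cochain $\psi$, and you explicitly defer "pinning down the cochain $\psi$" and the cancellation check as "the only real difficulty." That difficulty \emph{is} the proof; everything you do carry out (the coloring, the invocation of $\PH_1$, the bound $N$ on the supports) is the routine part. Moreover, your guessed shape for $\psi$ --- a signed combination of values $\Phi(F(\xbf),F(\sigma))$ defined only on the range of $F$, followed by a transport along a cofinal-restriction isomorphism --- is both an unnecessary detour and not quite the right formula.

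What the paper does is simpler and avoids the cofinal subsystem entirely: choose $m\geq m_0$ with $x\leq^m F(x)$ for all $x$, and define $\Psi(x)$ for \emph{every} $x\in\Omega$ to be the projection of $\Phi(x,F(x))$ onto $\bigoplus_{i\geq m}G_{i,x(i)}$ (note the first argument is $x$ itself, not $F(x)$). The verification is a single identity: after projecting onto $\bigoplus_{i\geq m}G_{i,\min(x(i),y(i))}$,
\[
0=d\Phi(x,y,F(x,y))-d\Phi(x,F(x),F(x,y))+d\Phi(y,F(y),F(x,y))
=\Phi(x,y)-d\Psi(x,y)-\Phi(F(x),F(x,y))+\Phi(F(y),F(x,y)),
\]
and the last two terms die under the projection because their supports are bounded by $m_0\leq m$. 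Hence $\Phi-d\Psi$ is supported on the first $m$ towers and $[\Phi]$ lies in the image of $\bigoplus_{i<m}{\lim}^1\Gbf_i$. Without this explicit $\Psi$ and the displayed cancellation (or an equivalent), your argument is incomplete.
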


\begin{proof}
In order to see that the inclusion is a surjection,
fix a cocycle $\Phi\in C_{\alt}^1(\Gbf)$ and define $c:\Omega^{2}\to\omega$ by
\[c(x,y):=\min\left\{m\in\omega\,\middle|\, \Phi(x,y)\in \bigoplus_{i=0}^m G_{i,\min(x(i),y(i))}\right\}.\]
Let $F$ witness $\PH_1$ for $c$ and set $m_0$ as the constant value of $c\circ F^*$. 
By the comments made following the formulation of $\PH_n$, we may assume
without loss of generality that there is an $m \geq m_0$ such that $x\leq^m F(x)$ holds for all
$x \in \Omega$.
Set $\Psi(x)$ be the projection of $\Phi(x,F(x))$ to $\bigoplus_{i=m}^\infty G_{i,x(i)}$ and note $\Psi\in C_{\alt}^0(\Gbf)$.
Observe that, after projecting to $\bigoplus_{i=m}^\infty G_{i,\min(x(i),y(i))}$,
\[\begin{aligned}
0&=&&d\Phi(x,y,F(x,y))-d\Phi(x,F(x),F(x,y))+d\Phi(y,F(y),F(x,y))\\
&=&&\Phi(y,F(x,y))-\Phi(x,F(x,y))+\Phi(x,y)\\
&&&-\Phi(F(x),F(x,y))+\Phi(x,F(x,y))-\Phi(x,F(x))\\
&&&+\Phi(F(y),F(x,y))-\Phi(y,F(x,y))+\Phi(y,F(y))\\
&=&&\Phi(x,y)-d\Psi(x,y)-\Phi(F(x),F(x,y))+\Phi(F(y),F(x,y)),
\end{aligned}\]
where the first equality is using that $\Phi$ is a cocycle, the second is using the definition of $d$, and the third is cancelling like terms with opposite signs and the definition of $d\Psi$. 
Notice that, by our choice of $F$, each of $\Phi(F(x),F(x,y))$ and $\Phi(F(y),F(x,y))$ are supported on the first $m$ summands. 
Therefore, $[\Phi]$ is in the image of the map from $\bigoplus_{i=0}^{m-1}\lim{}^1\Gbf_i$ as $\Phi$ is equal, up to a coboundary, to a cocycle with each coordinate supported in the first $m$ coordinates.
\end{proof}

With a bit more care, the above proof generalizes to higher derived limits.

\begin{thm} \label{coherent_trivial}
Suppose $\PH_n$. 
The induced inclusion map $\bigoplus_i {\lim}^n\Gbf_i\to{\lim}^n\Gbf$ is an isomorphism. 
\end{thm}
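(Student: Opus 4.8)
The plan is to mimic the argument of Proposition~\ref{lim1_additive} but to use the full strength of $\PH_n$ together with an induction on dimension that handles the fact that in higher dimensions the analog of the single correction term $\Psi$ must itself be modified dimension by dimension. First I would fix a cocycle $\Phi\in C_{\alt}^n(\Gbf)$ and define a coloring $c:\Omega^{n+1}\to\omega$ recording, for each $\xbf\in\Omega^{[n+1]}$, the least $m$ such that $\Phi(\xbf)$ lies in $\bigoplus_{i=0}^m G_{i,(\bigwedge\xbf)(i)}$; as in Proposition~\ref{lim1_additive}, by the remarks following the statement of $\PH_n$ I may take the witnessing $(n+1)$-cofinal $F:\Omega^{\leq n+1}\to\Omega$ to satisfy $x\leq^m F(x)$ for all $x$ for some fixed $m$ at least as large as the constant value of $c\circ F^*$. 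The point of this choice is that $\Phi$ evaluated on any tuple each of whose coordinates is of the form $F(\ybf)$ is supported on the first $m$ summands.

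The core of the argument is to produce, from $F$ and $\Phi$, a cochain $\Psi\in C_{\alt}^{n-1}(\Gbf)$ such that $\Phi-d\Psi$ is (after projecting away the first $m$ summands) cohomologous to a cocycle supported on finitely many towers $\Gbf_0,\dots,\Gbf_{m-1}$, which then places $[\Phi]$ in the image of $\bigoplus_i{\lim}^n\Gbf_i$. I would define $\Psi$ by a formula of the shape $\Psi(\xbf):=\sum_{\text{faces}}\pm\,\Phi(\text{mixture of }\xbf\text{-coordinates and }F\text{-values})$, generalizing the $n=1$ choice $\Psi(x)=\Phi(x,F(x))$; concretely one uses the operations denoted $*$ and $d$ alluded to in Corollary~\ref{cor:CA_cohere->PCA_triv}, where $*$ takes a cochain and an $n$-cofinal function and produces a cochain of one lower degree by inserting $F$-values into the last coordinate(s). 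The identity to verify is the higher analog of the three-line computation in Proposition~\ref{lim1_additive}: expanding $d\Phi$ on the appropriate collection of $(n+2)$-tuples built from $\xbf$ and its $F$-images, using that $\Phi$ is a cocycle, and cancelling like terms with opposite signs, one obtains
\[
\Phi(\xbf) - d\Psi(\xbf) = \sum_{j}\pm\,\Phi\big(\text{tuples each of whose entries is an }F\text{-value}\big),
\]
and every term on the right is supported on the first $m$ summands by the choice of $F$.

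The step I expect to be the main obstacle is the bookkeeping in that cancellation identity: one must set up the right telescoping sum over face maps so that, after applying $d\Phi=0$, exactly the "all-$F$-values" terms survive, and one must check that the resulting $\Psi$ is genuinely alternating and lands in $C_{\alt}^{n-1}(\Gbf)$ rather than merely in the corresponding product of $\bar G$'s (this uses that $F$ is $(n+1)$-cofinal, so the relevant meets are dominated appropriately, keeping supports finite on the complementary summands). I would organize this by an auxiliary lemma establishing the cochain identity $d(\Phi * F) + (d\Phi)*F = \Phi - (\text{all-}F \text{ term})$ for the operation $*$ at each level, proved by a sign-chasing induction on $n$; once that lemma is in hand, the theorem follows by applying it with $d\Phi=0$, projecting to $\bigoplus_{i\geq m}$, and concluding as in Proposition~\ref{lim1_additive} that $[\Phi]$ lies in the image of $\bigoplus_{i<m}{\lim}^n\Gbf_i\hookrightarrow{\lim}^n\Gbf$. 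Injectivity of the inclusion map is already known (it was noted before the statement that the inclusions ${\lim}^n\Gbf_k\to{\lim}^n\Gbf$ are induced by inclusions of cochain complexes, and finite additivity gives injectivity on the direct sum), so only surjectivity requires $\PH_n$.
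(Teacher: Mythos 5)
Your plan follows the paper's proof essentially verbatim: the same coloring $c_\Phi$, the same normalization of $F$ to be $(n+1)$-cofinal with respect to some $\leq^k$, and the same iterated cone construction via the operations $*$ and $d$ (satisfying $d(x*y)=d(x)*y+(-1)^sx$), with the trivialization obtained once one shows that after telescoping cancellation only ``all-$F$-value'' terms survive, each supported on finitely many summands. The bookkeeping you flag as the main obstacle is precisely what the paper carries out with its recursively defined formal expressions $A_s$, $C_s$, $S_s$ and the accompanying cancellation lemma, so your proposal is correct in approach and would be completed by exactly that induction.
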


\begin{proof}
As in the ${\lim}^1$ case, fix a cocycle $\Phi$ representing an element of ${\lim}^n\Gbf$. 
Let $F$ witness the conclusion of $\PH_n$ for the partition 
\[c_\Phi(x_0,\ldots,x_n)\mapsto\min\left\{m\in\omega\,\middle|\,\Phi(x_0,\ldots,x_n)\in\bigoplus_{i=0}^mG_{i,\bigwedge x_j(i)}\right\}.\]
As noted following the formulation of $\PH_n$, we may choose $F$ so that for some fixed
$k$, $F$ is $(n+1)$-cofinal with respect to $\leq^k$ and $c\circ F^*$ is constant with value at most $k$.
Let $\Free(X)$ denote the free abelian group over $X$ and denote the basis element corresponding to $x$ by $e(x)$ (inside the argument of $e$, for readability we may omit the brackets demarcating a collection $x$).
Organizing our argument are several interrelated formal expressions.
For $1 \leq s\leq n$ and $\rho\in\Om^{s}$,
\[A_s(\rho)\text{ denotes an element of }\Free(\{x\mid \rho_0 \leq^k x\leq F(\rho)\}^{s+1}).\]
For $\tau\in\Om^{s+1}$,
\[C_s(\tau)\text{ denotes an element of }\Free(\{x\mid \tau_0 \leq^k x \leq F(\tau)\}^{s+1})\text{, and}\]
\[S_s(\tau)\text{ denotes an element of }\Free(\{x\mid \tau_0 \leq^k x \leq F(\tau)\}^{s+1}).\]
For any $\rho$ as above, we define
\[d\colon \Free(\{x\mid \rho_0 \leq^k x\}^{s+1})\to \Free(\{x\mid \rho_0 \leq^k x\}^{s})\]
by $de(x_0,\ldots ,x_s):=\sum_i(-1)^ie(x_0,\ldots ,\widehat{x_i},\ldots ,x_s)$.
Similarly, for any $y\in\Om$ we define an operation $x \mapsto x*y$ from
$\Free(\{x\mid\rho_0 \leq^k x \leq y\}^{s})$  to $\Free(\{x\mid\rho_0 \leq^k x \leq y\}^{s+1})$ 
by setting $e(x_0,\ldots ,x_{s-1})*y=e(x_0,\ldots ,x_{s-1},y)$.
Together, these two operations satisfy the relation 
\[d(x*y)=d(x)*y+(-1)^s x.\]
The idea of these expressions will be the following: $A_s$ is the stage $s$ approximation to a $\Psi$ satisfying $d\Psi-\Phi=0$ after some fixed finite number of coordinates, $S_s$ is (up to sign) the coboundary of $A_{s+1}$, and $C_s$ is an error term recording the difference between the coboundary of $A_s$ and $\Phi$. 
Note that for each $\rho$, $\Phi$ determines a map $\Ecal_{\Phi}^\rho:\Free(\{x\mid \bigwedge\rho \leq^k x\}^{n+1})\to  G_{\bigwedge\rho}$, namely the map given by sending $e(\sigma)$ to $\Phi(\sigma)$, then projecting to $G_{\bigwedge\rho}$ using zero maps in the first $k$ coordinates and the $\Omega$-system's bonding maps on all subsequent coordinates.
For the base case $A_1$, we set
\[A_1(\rho)=e(\rho,F(\rho))\]
as in the proof of Proposition \ref{lim1_additive}.
In general, we let
\begin{align*} C_s(\tau) & =e(\tau)-\sum_{i<s+1}(-1)^iA_s(\tau^i),\\
S_s(\tau) & =d\left(C_s(\tau)*F(\tau)\right),\textnormal{ and}\\
A_{s+1}(\tau) & =(-1)^{s+1}C_s(\tau)*F(\tau).\end{align*}
The following lemma is the key point ensuring that our approximations converge to a $\Psi$ as above. 
If $\tau \in \Omega^s$, it will be convenient to write $\tau^{\bb{s}}$ for all elements of $\Omega^{\bb{s}}$ whose last coordinate is $\tau$.
\begin{lem}
\label{lem:13}
For all $s\leq n$ 
and all $\tau \in \Omega^{s+1}$, 
$C_s(\tau)$ is of the form $(-1)^{s+1}S_s(\tau)$ plus terms of the form $e(F^*(\vec\sigma))$ for $\vec\sigma\in\tau^{\bb{s+1}}$. 
\end{lem}

The proof of this lemma will complete our proof of the theorem.
For by this decomposition of $C_n(\tau)$, together with our hypotheses on $F$ and the fact that $\Ecal_{\Phi}^\rho(S_n(\tau))=0$ for all $\tau\in\Omega^{n+1}$ and all $\rho\in\Omega^{n+1}$ (since $\Ecal_\Phi^\rho(S_n(\tau))$ is a sum of coboundary terms of $\Phi$ and $\Phi$ is a cocycle), the family $\Psi$ defined by $\Psi(\rho)=\Ecal_\Phi^\rho(A_n(\rho))$ satisfies $d\Psi(\tau)=\Phi(\tau)$ once projected onto $\bigoplus_{i=k+1}^\infty G_{i,\bigwedge\tau(i)}$; in particular, $[\Phi]$ is in the image of the map from $\bigoplus_{i=0}^k\lim^n\Gbf_i$.

\begin{proof}[Proof of Lemma \ref{lem:13}]
Observe that $S_s$ can be rewritten as follows: 
\[\begin{aligned}
&S_s(\tau)&&=d(C_s(\tau))*F(\tau)+(-1)^{s+1}C_s(\tau)\\
&&&=d(e(\tau))*F(\tau)-d\left(\sum_{i<s+1}(-1)^iA_s(\tau^i)\right)*F(\tau)+(-1)^{s+1}C_s(\tau)\\
(*) &&&=\sum_{i<s+1}(-1)^ie(\tau^i,F(\tau))-\sum_{i<s+1}(-1)^i\sum_{j<s+1}[A_s(\tau^i)*F(\tau)]^j\\
&&&\quad+(-1)^{s+1}C_s(\tau).
\end{aligned}\]

The proof proceeds by induction on $s$ to show that $(*)$ is a sum of terms of the form $e(F^*(\vec\sigma))$ for $\vec\sigma\in\tau^{\bb{ s+1}}$. 
The $s=1$ case was given already in the proof of Proposition \ref{lim1_additive}, noting that $S_1(x,y)=de(x,y,F(x,y))-de(x,F(x),F(x,y))+de(y,F(y),F(x,y))$, the expression used in that proof.

Now assume that $s>1$ and the induction hypothesis holds for $s-1$. 
Note that, by the definition of $S_s$, for $s>2$ $(*)$ is equal to
\[\sum_{i<s+1}(-1)^ie(\tau^i,F(\tau))-(-1)^s\sum_{i<s+1}(-1)^iS_{s-1}(\tau^i)*F(\tau);\]
this is perhaps more apparent from the line above in our earlier calculation plus the observation that 
\[S_s(\tau)=(-1)^{s+1}d(A_{s+1}(\tau)).\]
Note that the induction hypothesis implies that $(-1)^sS_{s-1}(\tau^i)*F(\tau)$ is of the form $C_{s-1}(\tau^i)*F(\tau)$ plus terms of the form $e(F^*(\vec\sigma))$ with $\vec\sigma\in\tau^{\bb{s+1}}$ since $e(F^*(\vec\rho))*F(\tau)=e(F^*(\vec\rho^\frown\langle \tau\rangle))$ which is of the appropriate form. 
Thus $(*)$ reduces to terms of the form $e(F^*(\vec\sigma_1))$ plus

\adjustbox{max width=\textwidth}{$\begin{aligned}
& \sum_{i<s+1}(-1)^{i} e\left(\tau^{i}, F({\tau})\right)-\sum_{i<s+1}(-1)^{i}\left[C_{s-1}\left(\tau^{i}\right) * F({\tau})\right] \\
=& \sum_{i<s+1}(-1)^{i} e\left(\tau^{i}, F({\tau})\right)-\sum_{i<s+1}(-1)^{i}\left[e\left(\tau^{i}, F({\tau})\right)-\sum_{j<s}(-1)^{j} A_{s-1}\left(\left(\tau^{i}\right)^{j}\right) * F({\tau})\right] \\
=& \sum_{i<s+1} \sum_{j<s}(-1)^{i+j} A_{s-1}\left(\left(\tau^{i}\right)^{j}\right) * F({\tau}).
\end{aligned}$}

The key observation is that for any term with $i\leq j$, the term with $i'=j+1$ and $j'=i$ is the same but with opposite sign, thus showing that 
\[\sum_{i<s+1} \sum_{j<s}(-1)^{i+j} {A}_{s-1}\left(\left(\tau^{i}\right)^{j}\right) * F({\tau})=0,\]
which completes the proof.
\end{proof}
\end{proof}

The next theorem summarizes this section's results; for further details of the second implication, see \cite{strong_hom_add}.
It is shown in \cite{strong_hom_add} that (\ref{add}) implies (\ref{triv}), although the converse is unclear. 
\begin{thm}
\label{thm:summary}
If $\PH_n$ holds for all $n\in\omega$, then each of the following holds as well.
\begin{enumerate}
    \item For any $\Om$-system $\Gcal$ and $n\in\omega$, the inclusion map
    \[\bigoplus_{k\in\omega}{\lim}^n\,\Gcal_k\to{\lim}^n\,\Gbf\]
    is an isomorphism.
    Put differently, in the category of pro-abelian groups, for any $n\in\omega$ and countable discrete diagram of inverse sequences of finitely-generated abelian groups, the functors ${\lim}^n$ and $\colim$ commute.
    \label{add}
    \item Strong homology is additive and has compact supports on the class of locally compact separable metric spaces.
    \item Every $n$-coherent family of functions is $n$-trivial.
    \label{triv}
\end{enumerate}
\end{thm}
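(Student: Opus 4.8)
The plan is to assemble the theorem from results already established here together with the reduction machinery of \cite{strong_hom_add}; no genuinely new argument is required. Conclusion (\ref{add}) is precisely Theorem \ref{coherent_trivial} applied at each $n\in\omega$: the hypothesis $\PH_n$ gives, for every $\Omega$-system $\Gcal$, that the inclusion $\bigoplus_{k\in\omega}{\lim}^n\,\Gcal_k\to{\lim}^n\,\Gbf$ is an isomorphism. The ``put differently'' reformulation uses only the observation recorded after the definition of an $\Omega$-system, namely that $\bigoplus_{n\in\omega}\Gbf_n\cong\Gbf$ in the category of pro-abelian groups, together with the fact, established in \cite{strong_hom_add}, that every countable discrete diagram of inverse sequences of finitely-generated abelian groups is, up to pro-isomorphism, of this form; the commutation of ${\lim}^n$ and $\colim$ on such diagrams is then a restatement of the displayed isomorphism.

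For conclusion (\ref{triv}), I would appeal directly to the trivialization construction carried out inside the proof of Theorem \ref{coherent_trivial}. Given an $n$-coherent family $\Phi$ (in the general sense adopted from \cite{strong_hom_add}, which subsumes Definition \ref{def:classical_n_coherence}), one applies $\PH_n$ to the partition recording the support of $\Phi$, obtains a witnessing $(n+1)$-cofinal $F$, and then the family $\Psi(\rho):=\Ecal_\Phi^\rho(A_n(\rho))$ of that proof furnishes, after projecting past finitely many coordinates, an $n$-trivialization of $\Phi$. Since $n$-coherent families arise exactly as the cocycles of the alternating cochain complexes attached to the relevant $\Omega$-systems, this is literally the computation of Theorem \ref{coherent_trivial}, specialized. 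Alternatively one may cite the implication (\ref{add})$\Rightarrow$(\ref{triv}) of \cite{strong_hom_add}. The middle conclusion, on strong homology, is the topological payoff and is likewise not proved afresh: it is obtained by feeding conclusion (\ref{add}) into the circuit of (near-)equivalences of \cite{strong_hom_add} recalled in the introduction, whereby commutation of ${\lim}^n$ with $\colim$ on countable discrete diagrams of inverse sequences of finitely-generated abelian groups yields both the additivity of strong homology and the existence of compact supports on the category of locally compact separable metric spaces.

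The main obstacle, such as it is, is not analytic but organizational: one must be careful to confirm that the trivialization argument of Theorem \ref{coherent_trivial} genuinely covers the general notion of $n$-coherence, i.e.\ that the cocycle and coboundary data of the pertinent $\Omega$-systems (the system $\Abf$ of Definition \ref{def:classical_n_coherence} and its generalizations) are exactly the $n$-coherent families and their $n$-trivializations, so that no separate argument for conclusion (\ref{triv}) is needed beyond specializing that proof. The remaining work is simply to indicate clearly which earlier statement---and which external result of \cite{strong_hom_add}---supplies each of the three conclusions.
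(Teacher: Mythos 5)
Your proposal matches the paper's treatment: conclusion (1) is exactly Theorem \ref{coherent_trivial} applied for each $n$, and the paper likewise obtains (2) and (3) by routing conclusion (1) through the reductions of \cite{strong_hom_add} (the paper explicitly notes that (1)$\Rightarrow$(3) is shown there and refers the reader there for the second implication). Your additional observation that (3) can also be read off directly from the trivialization construction inside the proof of Theorem \ref{coherent_trivial} is consistent with the remarks the paper makes after Corollary \ref{cor:CA_cohere->PCA_triv}, so nothing is missing.
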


\section{Generalizing the Partition Hypothesis}
As indicated, notions of $n$-cofinal functions and associated partition hypotheses make sense on any directed partial order.
Any systematic treatment of these generalizations evidently falls beyond the scope of the present work; in this section, however, we do record a few basic observations about
\begin{itemize}

\item partition hypotheses for arbitrary products of partial orders, and
\item partition hypotheses for ordinals.

\end{itemize}
Our interest in the latter is motivated in part by connections between the behaviors of the ${\lim}^n$ functors on $\Omega$-indexed inverse systems and their behavior on $\dfrak$-indexed inverse systems, connections operative in the model $V^{\Hbb_\kappa}$ of Section \ref{Sect:forcing}, for example.
(Here $\dfrak$ is the minimum cardinality of a cofinal subset of $\Omega$.)
These hypotheses appear also to be of some interest in their own right, particularly on the ``small'' cardinals $\omega_n$, where they raise multiple substantial questions which we record in our conclusion.
We conclude this section with a discussion of partition hypotheses within the framework of simplicial sets, which affords us one further, and surprisingly natural, framing of the principles $\PH_n(\Pbb,\lambda)$.

\label{Sect:Generalizing}
\subsection*{Generalizing to arbitrary quasi-orders}

In this section, we will return to the full generality of the partition hypotheses $\PH_n(\Pbb,\lambda)$ introduced in Section \ref{Sect:PH}.
One value of this generalization is that it allows for a comparison of partition hypotheses on various quasi-orders.

\begin{lem} \label{reduction}
Suppose that $\Pbb,\mathbb{Q}$ are directed quasi-orders, $\PH_n(\Pbb,\lambda)$ holds, and $f\colon\Pbb\to\mathbb{Q}$ is a monotone map with cofinal image. 
Then $\PH_n(\mathbb{Q},\lambda)$ holds. 
\end{lem}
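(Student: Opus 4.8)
The plan is to reduce $\PH_n(\mathbb{Q},\lambda)$ to $\PH_n(\Pbb,\lambda)$ by transporting a colouring of $\mathbb{Q}^{n+1}$ back along $f$ to a colouring of $\Pbb^{n+1}$, applying the hypothesis there, and then pushing the resulting $(n+1)$-cofinal function forward along $f$. First I would fix a colouring $c\colon\mathbb{Q}^{n+1}\to\lambda$ and, using that $f$ has cofinal image, choose a map $g\colon\mathbb{Q}\to\Pbb$ together with a section-like property: since $\im(f)$ is cofinal in $\mathbb{Q}$, for each $q\in\mathbb{Q}$ there is $p\in\Pbb$ with $q\leq f(p)$, so pick $g(q)$ to be such a $p$. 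Then define $c'\colon\Pbb^{n+1}\to\lambda$ by $c'(p_0,\dots,p_n):=c(f(p_0),\dots,f(p_n))$; this is well defined because $f$ is monotone, so $f$ maps $\Pbb^{[n+1]}$ into $\mathbb{Q}^{[n+1]}$ (only the values on increasing tuples matter, as noted after the definition of $\PH_n$).

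Next I would apply $\PH_n(\Pbb,\lambda)$ to $c'$ to obtain an $(n+1)$-cofinal $F\colon\Pbb^{\leq n+1}\to\Pbb$ with $c'\circ F^*$ constant. The goal is to build from $F$ an $(n+1)$-cofinal $H\colon\mathbb{Q}^{\leq n+1}\to\mathbb{Q}$ with $c\circ H^*$ constant. The natural candidate is $H(\mathbf{q}):=f\bigl(F(g(q_0),\dots,g(q_j))\bigr)$ for $\mathbf{q}=(q_0,\dots,q_j)\in\mathbb{Q}^{\leq n+1}$, i.e. $H=f\circ F\circ g^{\leq n+1}$ where $g^{\leq n+1}$ applies $g$ coordinatewise. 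One then checks the two clauses of $(n+1)$-cofinality: monotonicity under $\trleq$ is immediate since $g$ applied coordinatewise, $F$, and $f$ are all $\trleq$-monotone (for $g$ and $f$ this is because deleting coordinates commutes with coordinatewise application of a function); and the inflationary clause $q\leq H(q)$ follows from $q\leq f(g(q))\leq f(F(g(q)))$, using the choice of $g$ for the first inequality and $g(q)\leq F(g(q))$ together with monotonicity of $f$ for the second. Finally, $H^*(\sigma)=f\bigl(F^*(g\circ\sigma)\bigr)$ in the appropriate sense, so $c\circ H^*(\sigma)=c\bigl(f(F^*(g\circ\sigma))\bigr)=c'(F^*(g\circ\sigma))$, which is the constant value of $c'\circ F^*$; hence $c\circ H^*$ is constant.

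The one genuine subtlety — and the step I expect to require the most care in writing up — is bookkeeping the interaction between $F^*$, coordinatewise application of $g$, and $f$, so that the equality $c\circ H^* = c'\circ F^*\circ(\text{coordinatewise }g)$ type identity is literally correct at the level of the $\Pbb\bb{n+1}$ and $\mathbb{Q}\bb{n+1}$ formalism of Section~\ref{Sect:prelim}. Concretely, one must verify that if $\sigma\in\mathbb{Q}\bb{n+1}$ then applying $g$ to every entry of every coordinate-tuple of $\sigma$ produces an element $\sigma'\in\Pbb\bb{n+1}$ (i.e.\ $\trleq$-increasing is preserved), and that $F^*(\sigma')$ followed by $f$ coordinatewise equals $H^*(\sigma)$; this is routine but notation-heavy. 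Note also that, unlike Lemma~\ref{extensionlemma}, here we make no claim about preserving a specific $\leq^k$, so there is nothing extra to track. I would present the argument in the order: (1) fix $c$, construct $g$; (2) define $c'$ and note well-definedness on increasing tuples; (3) invoke $\PH_n(\Pbb,\lambda)$; (4) define $H$ and verify the two cofinality clauses; (5) verify constancy of $c\circ H^*$.
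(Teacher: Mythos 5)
Your proposal is correct and follows essentially the same route as the paper's proof: choose $g$ with $q\leq f(g(q))$, pull the colouring back along $f$, apply $\PH_n(\Pbb,\lambda)$, and push the resulting cofinal function forward as $f\circ F\circ(\text{coordinatewise }g)$. The verifications you sketch (the two cofinality clauses and the constancy of $c\circ H^*$) are exactly the ones the paper leaves to the reader, and they go through as you describe.
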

\begin{proof}
Let $g\colon\mathbb{Q}\to\Pbb$ be such that $f(g(q))\geq q$ for each $q\in\mathbb{Q}$. 
Given $c\colon \mathbb{Q}^{[n+1]}\to\lambda$, let $\widetilde{c}\colon\Pbb^{[n+1]}\to\lambda$ be given by $\widetilde{c}(p_0,\ldots,p_n)=c(f(p_0),\ldots,f(p_n))$. 
Let $F\colon\mathbb{P}^{\leq n+1}\to\mathbb{P}$ be such that $\widetilde{c}\circ F^*$ is constant. 
Define $\bar{F}\colon\mathbb{Q}^{\leq n+1}\to\mathbb{Q}$ by $\bar{F}(q_0,\ldots,q_m)=f(F(g(q_0),\ldots,g(q_n)))$. 
Then $\bar{F}$ is $(n+1)$-cofinal by our hypotheses on $f$, $g$. 
Moreover, $c\circ\bar{F}$ is constant by hypotheses on $F$ and the definition of $\widetilde{c}$. 
\end{proof}

Recall that for two directed quasi-orders $\mathbb{P}$ and $\mathbb{Q}$, we say that $\mathbb{Q}$ is \emph{Tukey reducible} to $\mathbb{P}$ and write $\mathbb{Q}\leq_T\mathbb{P}$, if there is a map $f:\mathbb{P}\rightarrow \mathbb{Q}$ mapping cofinal subsets of $\mathbb{P}$ to cofinal subsets of $\mathbb{Q}$, or equivalently, if there is a map $g: \mathbb{Q}\rightarrow \mathbb{P}$ such that for every $p\in \mathbb{P}$ the set $\{q\in\mathbb{Q}: g(q)\leq p\}$ has an upper bound in
$\mathbb{Q}$. 
Notice that if $\mathbb{P}$ is a cofinal subset of $\mathbb{Q}$, then 
$\mathbb{P} \leq_T \mathbb{Q}$ and $\mathbb{Q} \leq_T \mathbb{P}$.
It is natural, particularly in lieu of Lemma \ref{reduction}, to wonder how partition hypotheses may or may not transmit along Tukey reductions; we record this question in our conclusion.

\begin{rem}
As noted, $\PH_n$ is an abbreviation of $\PH_n(\Om,\omega)$,
and in Section \ref{Sect:MeasPartHyp} we have seen that a measurable version of $\PH_n$ could hold.
What we would like to point out here is that it makes sense to consider a measurable version of
$\PH_n(\Pbb,\omega)$ for any specific Borel quasi-order $\Pbb$. The Tukey hierarchy of 
Borel directed orders is a relatively well-developed theory and the quasi-order $\Om$
has a special place in it.
However there are other Borel quasi-orders $\Pbb$ that also have special places
in this hierarchy and to which, moreover, $\Om$ Tukey reduces
(even to the extent that there is a Borel monotone map $f: \Pbb\rightarrow \Om$ with cofinal range in $\Om$),
and therefore,
for which the corresponding hypothesis $\PH_n(\Pbb,\omega)$ is stronger than $\PH_n$. 
One such special Borel directed quasi-order is the Banach lattice $\ell_1$.
The proof in \cite{To89} shows that, assuming $\OCA + \add (\ell^1) > \aleph_1$, if $\phi_x$ $(x \in \ell^1)$ is a family of functions $\phi_x:D_x \to \Zbb$
which cohere mod finite and such that if $x \leq y$ then $D_x \subseteq D_y$, then $\{\phi_x \mid x \in \ell^1\}$ is trivial.
If $\{\phi_x \mid x \in \ell^1\}$ is Borel, then the assertion that $\{\phi_x \mid x \in \ell^1\}$ is trivial
is equivalent to a $\PCA$-sentence and hence absolute.
Since $\OCA + \add (\ell^1) > \aleph_1$ can be forced over any model of ZFC, it follows from Shoenfield's absoluteness theorem \cite{Sho} 
that ZFC proves that all Borel coherent families indexed by $\ell^1$ (in the above sense) are trivial.
It is natural to ask if this argument can be generalized to higher dimensions and if the Borel version of
$\PH_n(\ell_1)$ for $n \geq 1$ is true.
See \cite{SoTo} and the references therein for more information on Borel quasi-orders and Tukey reductions. 
\end{rem}

We now record implications of partition hypotheses for the additivity of derived limits generalizing those recorded in Section \ref{Sect:PartHypAdd}. 
The next definition generalizes the notion of $\leq^k$. 

\begin{defn}
Given $\langle\Pbb_i\mid i\in I\rangle$ directed posets, $S\subseteq I$, and $x,y\in\prod_{i\in I}\Pbb_i$, we say that $x\leq^Sy$ if $x(i)\leq_{\Pbb_i} y(i)$ whenever $i\not\in S$.
We write $x\leq^*y$ if $x\leq^Sy$ for some finite $S\subseteq I$. 
\end{defn}

\iffalse
The proof of Proposition \ref{finite bound} generalizes to give the following. 
TODO: does it? It's not clear how to get $x\leq^S F(x)$. 

\begin{prop}
Suppose $\PH_n(\prod_{i\in I}\Pbb_i,\lambda)$, where $\mathbb{P}=\prod_i\Pbb_i$ is given the ordering $\leq^*$ and $|I|\leq\lambda$. 
Then whenever $c\colon\prod_i\Pbb_i^{[n+1]}\to\lambda$, there is an $F\colon\Pbb^{n+1}\to\Pbb$ which is $(n+1)$-cofinal with respect to $\leq^S$ for some finite $S\subseteq I$.  
\end{prop}
\fi

Theorem \ref{coherent_trivial} admits a natural generalization given by the following, with the same proof.

\begin{thm}
Let $\langle\Pbb_i\rangle_{i\in I}$ be posets and suppose that whenever $c\colon\left(\prod_{i\in I}\Pbb_i^{n+1}\right)\to |I|$ is a function, there is an $F$ which is $(n+1)$-cofinal with respect to $\leq^S$ for some finite $S$ such that $c\circ F^*$ is constant. 
Then for every collection of inverse systems $\left(\Gbf_i\right)_{i\in I}$ with each $\Gbf_i$ indexed over the corresponding $\Pbb_i$, the inclusion map 
\begin{align}
\label{eq:general_additivity}\bigoplus_{i\in I}{\lim}^n\,\Gbf_i\to {\lim}^n\,\bigoplus_{i\in I}\Gbf_i
\end{align}
is an isomorphism. 
\end{thm}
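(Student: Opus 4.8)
The plan is to copy the proof of Theorem~\ref{coherent_trivial} nearly word for word, with the linearly ordered index set $\omega$ of an $\Om$-system replaced by the bare index set $I$ and with each uniform ordering $\leq^k$ replaced by the ordering $\leq^S$ for a suitable finite $S\subseteq I$. First I would dispose of the case that $I$ is finite, where the claim is immediate from the finite additivity of ${\lim}^n$; so assume $I$ is infinite. Write $\Pbb=\prod_{i\in I}\Pbb_i$ with the ordering $\leq^*$, and for $x=(x_i)_{i\in I}\in\Pbb$ set $G_x:=\bigoplus_{i\in I}(\Gbf_i)_{x_i}$, with bonding maps the restricted product maps; this inverse system over $\Pbb$ is what $\bigoplus_{i\in I}\Gbf_i$ denotes. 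For each $j\in I$, pulling $\Gbf_j$ back along the projection $\Pbb\to\Pbb_j$ yields a system over $\Pbb$ that is pro-isomorphic to $\Gbf_j$ (the projection is cofinal) and that sits inside $\bigoplus_{i\in I}\Gbf_i$ as a coordinatewise direct summand; exactly as in the paragraph preceding Theorem~\ref{coherent_trivial}, this identifies ${\lim}^n\Gbf_j$ with a subgroup of ${\lim}^n\bigoplus_{i\in I}\Gbf_i$ and shows \eqref{eq:general_additivity} is injective. So only surjectivity needs proof.

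To that end, fix a cocycle $\Phi\in C^n_{\alt}\!\bigl(\bigoplus_{i\in I}\Gbf_i\bigr)$. Each $G_{\bigwedge\xbf}$ is a direct sum, so $\Phi(\xbf)$ has a finite support $\operatorname{supp}\Phi(\xbf)\in[I]^{<\omega}$; composing with a bijection $[I]^{<\omega}\to|I|$ gives a coloring $c_\Phi\colon\Pbb^{n+1}\to|I|$. By hypothesis there are a finite $S\subseteq I$ and an $F\colon\Pbb^{\leq n+1}\to\Pbb$ which is $(n+1)$-cofinal with respect to $\leq^S$ and with $c_\Phi\circ F^*$ constant; since enlarging $S$ only weakens $\leq^S$, I may assume that constant value is contained in $S$, so that $\Phi(F^*(\vec\sigma))$ is supported inside $S$ for every relevant $\vec\sigma$. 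Now I would run the formal approximation scheme of Theorem~\ref{coherent_trivial} over the free abelian groups $\Free(\{x\mid \rho_0\leq^S x\leq F(\rho)\}^{s+1})$: set $A_1(\rho)=e(\rho,F(\rho))$ and recursively $C_s(\tau)=e(\tau)-\sum_{i<s+1}(-1)^iA_s(\tau^i)$, $S_s(\tau)=d(C_s(\tau)*F(\tau))$, and $A_{s+1}(\tau)=(-1)^{s+1}C_s(\tau)*F(\tau)$, using the operations $d$ and $x\mapsto x*y$ and the identity $d(x*y)=d(x)*y+(-1)^sx$. The analogue of Lemma~\ref{lem:13} and its proof transfer verbatim: for $s\leq n$ and $\tau\in\Pbb^{s+1}$, $C_s(\tau)$ equals $(-1)^{s+1}S_s(\tau)$ plus terms $e(F^*(\vec\sigma))$ with $\vec\sigma\in\tau^{\bb{s+1}}$. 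Finally, for $\rho\in\Pbb^{n+1}$ let $\Ecal_\Phi^\rho\colon\Free(\{x\mid\bigwedge\rho\leq^S x\}^{n+1})\to G_{\bigwedge\rho}$ send $e(\sigma)$ to $\Phi(\sigma)$ projected to $G_{\bigwedge\rho}$ using zero maps on the coordinates in $S$ and the bonding maps elsewhere, and put $\Psi(\rho):=\Ecal_\Phi^\rho(A_n(\rho))$, an element of $C^{n-1}_{\alt}\!\bigl(\bigoplus_{i\in I}\Gbf_i\bigr)$. Since $S_n(\tau)$ is a combination of coboundary terms of the cocycle $\Phi$, $\Ecal_\Phi^\rho(S_n(\tau))=0$; and the remaining summands $e(F^*(\vec\sigma))$ of $C_n(\tau)$ have $\Phi$-values supported in $S$, hence are killed by the projection onto the coordinates outside $S$. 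Therefore $d\Psi$ and $\Phi$ agree after that projection, so $\Phi-d\Psi$ takes values in $\bigoplus_{i\in S}\Gbf_i$, and finite additivity places $[\Phi]$ in the image of $\bigoplus_{i\in S}{\lim}^n\Gbf_i$, hence of $\bigoplus_{i\in I}{\lim}^n\Gbf_i$.

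I do not expect a genuinely new obstacle: the heart of the argument is literally that of Theorem~\ref{coherent_trivial}. The only place demanding attention is that Theorem~\ref{coherent_trivial} twice exploits the order of $\omega$ in inessential ways, via the coloring $\xbf\mapsto\min\{m:\Phi(\xbf)\in\bigoplus_{i\leq m}\cdots\}$ and via the splitting $G_x=\bigoplus_{i\leq k}\oplus\bigoplus_{i>k}$; both must be rephrased using the support $\operatorname{supp}\Phi(\xbf)\in[I]^{<\omega}$ and the splitting $G_x=\bigoplus_{i\in S}\oplus\bigoplus_{i\in I\setminus S}$, after which the manipulations are mechanical. The only other point to nail down is the standard fact that the projection $\prod_{i\in I}\Pbb_i\to\Pbb_j$ is cofinal, so that passing to the pulled-back system does not change derived limits --- this is what justifies the identifications used in the injectivity step.
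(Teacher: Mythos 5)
Your proposal is correct and is essentially the paper's intended argument: the paper states this theorem ``with the same proof'' as Theorem \ref{coherent_trivial}, and your write-up is precisely that proof transported to the product setting, with $\leq^k$ replaced by $\leq^S$ and the coloring by minimal support index replaced by the coloring by $\operatorname{supp}\Phi(\xbf)\in[I]^{<\omega}$. The formal approximation scheme and the analogue of Lemma \ref{lem:13} indeed carry over verbatim, so there is nothing further to add.
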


We note the following generalization of the observation which was made after defining
$\PH_n$.
Note that the index set in this case is $\omega$ rather than $I$: although ensuring there is a finite $S$ such that $F(\xbf)\leq^S F(\ybf)$ for $\xbf\trianglelefteq\ybf$ is unchanged, constructing a finite $S$ with $x\leq^SF(x)$ for each $x$ seems to require a countable index set. 

\begin{prop}
Suppose $\langle \Pbb_i\mid i<\omega\rangle$ are given and $\PH_n(\prod_{i\in \omega}\Pbb_i,\lambda)$ holds, where $\mathbb{P}=\prod_i\Pbb_i$ is given the ordering $\leq^*$. 
Then whenever $c\colon\Pbb^{n+1}\to\lambda$, there is an $F\colon\prod_i\Pbb_i^{[n+1]}\to\Pbb$ which is $(n+1)$-cofinal with respect to $\leq^k$ for some $k<\omega$.  
\end{prop}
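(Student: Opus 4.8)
The plan is to adapt, essentially verbatim, the ``concentration on some $\leq^k$'' observation recorded after the definition of $\PH_n$, so the argument will closely mirror the $\Omega$ case. Write $\Pbb:=\prod_{i<\omega}\Pbb_i$ with its order $\leq^*$, and for $k<\omega$ let $x\leq^k y$ abbreviate the assertion that $x(i)\leq_{\Pbb_i}y(i)$ for all $i\geq k$ (equivalently, $x\leq^{\{0,\dots,k-1\}}y$). Throughout I will assume $\lambda\geq\aleph_0$; this is needed so that a coloring into $\lambda$ can additionally code a natural-number parameter, and for the intended applications (where the index set has size $\lambda$) one has $\lambda=\aleph_0$ anyway.

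First I would isolate two facts about $(\Pbb,\leq^*)$, and it is precisely here that countability of the index set enters. (i) $(\Pbb,\leq^*)$ is $\sigma$-directed: given $x_n\in\Pbb$ $(n<\omega)$, let $y(i)$ be any upper bound in $\Pbb_i$ of the finite set $\{x_0(i),\dots,x_i(i)\}$; then $x_n\leq^n y$ for every $n$. (ii) Every $\leq^*$-cofinal $\Upsilon\subseteq\Pbb$ is $\leq^m$-cofinal for some single $m<\omega$; the diagonal argument of \cite[Lemma~3]{strong_hom_add} goes through unchanged, forming the ``diagonal'' of a sequence $\langle x_k\mid k<\omega\rangle$ coordinatewise as in (i). Both arguments fail for uncountable index sets — this is the content of the remark preceding the proposition — and it is fact (ii) that makes the uniform cofinality clause of the conclusion attainable at all.

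The main construction then runs as follows. Given $c\colon\Pbb^{n+1}\to\lambda$, I would pass to $c'\colon\Pbb^{n+1}\to\lambda$ which, on each $\leq^*$-increasing tuple $\xbf$, codes the pair $(c(\xbf),d(\xbf))$, where $d(\xbf)$ is the least $k$ for which $\xbf$ is $\leq^k$-increasing (on the remaining tuples $c'$ is irrelevant, since $F^*$ produces only $\leq^*$-increasing tuples). Applying $\PH_n(\Pbb,\lambda)$ to $c'$ yields an $(n+1)$-cofinal $F\colon\Pbb^{\leq n+1}\to\Pbb$ with $c'\circ F^*$ constant; hence $c\circ F^*$ is constant, and for a fixed $k_1$ every $F^*(\sigma)$ is $\leq^{k_1}$-increasing. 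The point of the recorded data is to promote the $\trleq$-monotonicity of $F$ to the relation $\leq^{k_1}$: for any $\xbf\trleq\ybf$ in $\Pbb^{\leq n+1}$, with $\xbf$ of length $j$ and $\ybf$ of length $j'\leq n+1$, one exhibits a $\sigma\in\Pbb\bb{n+1}$ having $\xbf$ and $\ybf$ as its $j$th and $j'$th entries — filling positions below $j$ by deleting coordinates, positions strictly between $j$ and $j'$ by re-inserting the coordinates of $\ybf$ absent from $\xbf$ one at a time, and positions above $j'$ by appending arbitrary elements of $\Pbb$ — so that $\leq^{k_1}$-increasingness of $F^*(\sigma)$ forces $F(\xbf)\leq^{k_1}F(\ybf)$. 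Finally, to secure uniform cofinality from below I would diagonalize $F$ in the manner of Lemma \ref{extensionlemma}: since $\Pbb=\bigcup_{\ell<\omega}\Upsilon_\ell$ with $\Upsilon_\ell:=\{x\mid x\leq^\ell F(x)\}$, fact (i) supplies an $\ell$ with $\Upsilon_\ell$ $\leq^*$-cofinal and fact (ii) then an $m$ with $\Upsilon_\ell$ $\leq^m$-cofinal; fixing $g\colon\Pbb\to\Upsilon_\ell$ with $g\restriction\Upsilon_\ell=\id$ and $x\leq^m g(x)$, the function $\bar F(x_0,\dots,x_j):=F(g(x_0),\dots,g(x_j))$ is routinely checked — using that the coordinatewise $g$ preserves $\trleq$ and that $\bar F^*(\sigma)=F^*(g\circ\sigma)$ — to be $(n+1)$-cofinal with respect to $\leq^{\max(k_1,m,\ell)}$ while keeping $c\circ\bar F^*$ constant. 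The only genuinely new ingredient relative to the $\Omega$ argument is the verification of (i) and (ii) for $(\Pbb,\leq^*)$, together with the bookkeeping around the monotonicity upgrade; the mild annoyance is that the cofinality clause is exactly what forces the $\lambda\geq\aleph_0$ hypothesis.
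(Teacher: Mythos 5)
Your proof is correct and follows exactly the route the paper intends: the paper offers no separate argument for this proposition, instead pointing back to the ``concentration on some $\leq^k$'' remark following the definition of $\PH_n$ and to Lemma \ref{extensionlemma}, and your write-up reproduces that template while supplying precisely the two countability facts ($\sigma$-directedness of $(\Pbb,\leq^*)$ and the promotion of $\leq^*$-cofinal sets to $\leq^m$-cofinal ones) that the paper flags as the reason the index set must be $\omega$. Your caveat that the coding step requires $\lambda\geq\aleph_0$ is fair and consistent with the intended reading, since in the application $\lambda=|I|=\aleph_0$.
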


Finally, it's not difficult to see that the principle $\PH_1(\prod_{i\in\omega}\omega_1,\omega)$ is false; see \cite{Pra} for related failures of equation \ref{eq:general_additivity} to be an isomorphism.
The results of this section should help to demarcate which additivity relations are consequences of, or consistent with, or outright inconsistent with, the ZFC axioms.
\subsection*{The Partition Hypothesis on the ordinals}

For any ordinal $\varepsilon$ and $n \in \omega$, let $\PH_n(\varepsilon)$ denote $\PH_n(\varepsilon,\omega)$.
A main goal of this section is to prove the following result.\footnote{See also \cite[Section 4.1]{LHZ} for an alternate recent proof of this theorem.}
\begin{thm}
\label{theorem:partition_failure_ordinals}
For all $n\in\omega$ the partition hypothesis $\PH_n(\omega_n)$ is false.
\end{thm}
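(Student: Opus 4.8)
The plan is to prove $\PH_n(\omega_n)$ false by a downward induction on $n$ combined with a Fodor-style pressing-down argument, exploiting the fact that an $(n+1)$-cofinal function on the ordinals must be bounded below by the identity on singletons. The base case $n=0$ is already settled in the paper's discussion of $\PH_0$, but note that $\PH_0(\omega_0)=\PH_0(\omega)$ is \emph{true}; so the induction must genuinely start at $n=1$, and the mechanism producing the failure must first appear there. For $n=1$: I would define a coloring $c:\omega_1^{[2]}\to\omega$ that cannot be made constant along the range of any $F^*$ where $F:\omega_1^{\leq 2}\to\omega_1$ is $2$-cofinal. The key constraint on such an $F$ is $\alpha\leq F(\alpha)$ together with monotonicity $F(\alpha)\leq F(\alpha,\beta)$, $F(\beta)\leq F(\alpha,\beta)$ for $\alpha\leq\beta$. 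The natural coloring to try is one built from a ladder system or a $C$-sequence on $\omega_1$: fix for each limit $\gamma<\omega_1$ a cofinal $\omega$-sequence $\langle\gamma_k\rangle$, and let $c(\alpha,\beta)$ record the least $k$ such that $\alpha<\beta_k$ (the ``walks''-type distance). The point is that if $c\circ F^*\equiv m$ then for every pair $\alpha\leq\beta$ in the relevant domain we would have $\alpha'<\beta'_m$ where $\alpha'=F(\alpha)$, $\beta'=F(\alpha,\beta)$; combined with $\alpha\leq\alpha'$ this forces a strong boundedness on the $m$-th ladder point of $F(\alpha,\beta)$ below its natural value, which one drives to a contradiction by choosing $\beta$ with $\beta_m$ large relative to how $F$ grows — formally via a closure/pressing-down argument on a club of $\omega_1$.

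For the inductive step, I would show $\PH_{n}(\omega_{n})\Rightarrow\PH_{n-1}(\omega_{n-1})$, contrapositively $\neg\PH_{n-1}(\omega_{n-1})\Rightarrow\neg\PH_{n}(\omega_{n})$, using a reflection/restriction argument. Suppose $c:\omega_{n-1}^{[n]}\to\omega$ witnesses the failure at level $n-1$. Lift it to a coloring $\hat c:\omega_n^{[n+1]}\to\omega$ as follows: fix an increasing continuous sequence $\langle M_\xi\mid\xi<\omega_n\rangle$ of elementary submodels of some large $H(\theta)$ with $M_\xi\cap\omega_n=:\delta_\xi$ an ordinal of cofinality $\omega_{n-1}$ for stationarily many $\xi$ (or simply take a club of such $\delta$'s), and for each such $\delta$ fix a surjection $e_\delta:\omega_{n-1}\to\delta$; given an $(n+1)$-tuple $\vec\alpha\in\omega_n^{[n+1]}$, let $\hat c(\vec\alpha)$ encode both a component that tracks the ``level'' information as in the $n=1$ case (forcing $F$ to concentrate on a club of $\delta$'s of the right cofinality) and a component that, once restricted to such a $\delta$, reads off via $e_\delta^{-1}$ an instance of $c$ on the first $n$ coordinates. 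Then an $(n+1)$-cofinal $F$ on $\omega_n$ homogenizing $\hat c$, when restricted to tuples landing below a single fixed $\delta$ of cofinality $\omega_{n-1}$ and transported through $e_\delta$, would yield an $n$-cofinal homogenizing function for $c$ on a cofinal subset of $\omega_{n-1}$, contradicting $\neg\PH_{n-1}(\omega_{n-1})$ (after extending off the cofinal set, cf.\ the role of Lemma \ref{extensionlemma}).

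The main obstacle, as I see it, is making the restriction-and-transport step actually produce a genuinely $n$-cofinal function on $\omega_{n-1}$ rather than something only defined on a non-cofinal or non-downward-closed set, and ensuring the coloring $\hat c$ really does force $F$ to ``see'' a single $\delta$ of the correct cofinality rather than spreading its range across many. This is where the level-tracking component of the coloring — the part generalizing the $n=1$ ladder argument — has to be engineered carefully: one needs that any attempt by $F$ to be monotone and dominate the identity is incompatible with the range of $F^*$ crossing a club-many cofinality-$\omega_{n-1}$ points without ``locking on'' to one of them. I expect the cleanest implementation uses a $C$-sequence on $\omega_n$ avoiding cofinality $\omega_{n-1}$ points, so that walking from a large ordinal down to a small one is forced through a unique such point, and then the homogeneity of the level component pins $F$ to that point; the fine bookkeeping of which coordinates of the $(n+1)$-tuple carry the ``descent'' data and which carry the recursively embedded $c$-data is the delicate part of the write-up.
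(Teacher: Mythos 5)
Your base case already has a gap. The coloring $c(\alpha,\beta)=\min\{k\mid\alpha<\beta_k\}$ built from a ladder system is not injective in $\alpha$ for fixed $\beta$, and injectivity is precisely what makes the $n=1$ argument close: the paper takes $c_1(\alpha,\beta)=f_\beta(\alpha)$ for injections $f_\beta:\beta\to\omega$, so that homogeneity would force $f_{F(\alpha,\beta)}(F(\alpha))=f_{F(\alpha,\beta)}(F(\beta))$ with $F(\alpha)\neq F(\beta)$ (strictness being free by Lemma \ref{strictly_increasing}), an immediate contradiction. Your coloring, by contrast, can be homogenized for suitable ladder systems: choose each ladder with $\gamma_0=0$ and $\gamma_1\geq\sup(\mathrm{Lim}\cap\gamma)$, set $F(\alpha)=\alpha$ and let $F(\alpha,\beta)$ be the least limit of limits above $\beta$; then $c\circ F^*\equiv 1$. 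So the asserted "pressing-down contradiction" does not exist for an arbitrary ladder system. A walks-based proof for $\omega_1$ is possible, but it needs the subadditivity and unboundedness of $\rho_2$ (as in Proposition \ref{square}), not the single-step statistic you describe.

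The inductive step is the more serious problem. Restricting a homogenizing $(n+1)$-cofinal $F$ on $\omega_n$ to tuples below a fixed $\delta$ of cofinality $\omega_{n-1}$ and conjugating by a surjection $e_\delta:\omega_{n-1}\to\delta$ does not yield a witness to $\PH_{n-1}(\omega_{n-1})$: the values of $F$ on shorter tuples need not lie below $\delta$; conjugation by a mere bijection destroys both monotonicity and the domination $x\leq F(x)$, so the transported object is not $n$-cofinal in any sense Lemma \ref{extensionlemma} can repair; and no coloring component can force $F$ to concentrate below a single $\delta$, since $F$ is free to send every tuple past any prescribed club. The paper's proof avoids this entirely: it never attempts to recover a lower-level cofinal function. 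Instead it defines $\tilde c_{n+1}$ from $\tilde c_n$ via injections $h_\beta:\beta\to\omega_n$ and isolates the only structure that survives this order-scrambling, namely incidence: Lemma \ref{lemma:coloring_cones} shows $\tilde c_n$ is non-constant on the $n$-faces of any cone over an $(n-1)$-cycle, and Claims \ref{claim1} and \ref{claim2} show, by a $\Zbb/2\Zbb$ simplicial homology computation applied to the barycentric subdivision of $\partial\Delta_n(a)$ for a carefully chosen $a$, that the range of $F^*$ must contain such a cone. That homological ingredient is the actual engine of the induction, and your outline has no substitute for it.
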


It will be valuable to precede the proof of Theorem \ref{theorem:partition_failure_ordinals} with the following lemma. Call an $n$-cofinal function $F:\varepsilon^{\leq n}\to\varepsilon$ \emph{strictly increasing} if $F(\mathbf{x})<F(\mathbf{y})$ for all $\mathbf{x}\triangleleft\mathbf{y}$ in $\varepsilon^{\leq n}$.

\begin{lem}\label{strictly_increasing}
Let $\varepsilon$ be a limit ordinal. The partition hypothesis $\mathrm{PH}_n(\varepsilon)$ holds if and only if for every $c:\varepsilon^{n+1}\to\omega$ there is a strictly increasing $(n+1)$-cofinal $F:\varepsilon^{\leq n+1} \to \varepsilon$ such
that $c \circ F^*$ is constant.
\end{lem}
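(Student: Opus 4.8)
The backward direction is immediate: a strictly increasing $(n+1)$-cofinal function is in particular $(n+1)$-cofinal, so the existence of such an $F$ with $c\circ F^*$ constant is a special case of what $\PH_n(\varepsilon)$ asserts. The plan is therefore to concentrate on the forward direction: assuming $\PH_n(\varepsilon)$, produce for an arbitrary $c:\varepsilon^{n+1}\to\omega$ a \emph{strictly} increasing $(n+1)$-cofinal $F$ with $c\circ F^*$ constant. The idea is to run $\PH_n(\varepsilon)$ not on $c$ itself but on a refinement $c'$ whose value on a tuple records enough extra information to force the monotone witness to be strictly increasing; this is the same ``fold more data into the colouring'' trick already used in Section \ref{Sect:PH} (the paragraph after Lemma \ref{extensionlemma}).

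The key step is to choose the extra data correctly. Here the natural move is: for $\mathbf{x}=(x_0,\dots,x_n)\in\varepsilon^{[n+1]}$ let $c'(\mathbf{x})$ record, in addition to $c(\mathbf{x})$, whether the coordinates of $\mathbf{x}$ are \emph{strictly} increasing, i.e.\ whether $x_0<x_1<\dots<x_n$. Apply $\PH_n(\varepsilon)$ to $c'$ to get an $(n+1)$-cofinal $F:\varepsilon^{\leq n+1}\to\varepsilon$ with $c'\circ F^*$ constant. First I would argue that this constant value must be on the ``strictly increasing'' side: given any $\sigma\in\varepsilon\bb{n+1}$ whose top coordinate is a strictly increasing tuple $(x_0,\dots,x_n)$ with $x_i\leq F(x_i)$ (such $\sigma$ exist because $F$ is $(n+1)$-cofinal, so $F^*(\sigma)$ has strictly increasing coordinates for at least one choice of a strictly increasing $\mathbf{x}$ that we can spread out far enough — this uses only that $\varepsilon$ is a limit ordinal, so arbitrarily large strictly increasing tuples exist), $F^*(\sigma)$ is a strictly increasing tuple, pinning the constant value of $c'\circ F^*$ on the strictly-increasing side and hence making $c\circ F^*$ constant as well. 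Then, since $c'\circ F^*$ is constantly ``strictly increasing,'' every value $F^*(\sigma)=(F(\sigma(1)),\dots,F(\sigma(n+1)))$ is strictly increasing; in particular, applying this to $\sigma$'s of the form built from a pair $\mathbf{x}\triangleleft\mathbf{y}$ in $\varepsilon^{\leq n+1}$ (together with intermediate coordinates) shows $F(\mathbf{x})<F(\mathbf{y})$, i.e.\ $F$ is strictly increasing.

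The main obstacle I anticipate is the bookkeeping in the previous paragraph: one must check that, for \emph{any} $\mathbf{x}\triangleleft\mathbf{y}$ in $\varepsilon^{\leq n+1}$, there is a single $\sigma\in\varepsilon\bb{n+1}$ witnessing simultaneously that $F(\mathbf{x})$ and $F(\mathbf{y})$ occur as consecutive coordinates of $F^*(\sigma)$, so that the ``strictly increasing'' conclusion for $F^*(\sigma)$ transfers to the inequality $F(\mathbf{x})<F(\mathbf{y})$. This is a routine but slightly fiddly unwinding of the definition of $\varepsilon\bb{n+1}$ and of $F^*$: since $\mathbf{x}\trleq\mathbf{y}$ one extends the $\trleq$-chain $\mathbf{x}\trleq\mathbf{y}$ to a full $\trleq$-increasing sequence $\sigma=(\sigma(1),\dots,\sigma(n+1))\in\varepsilon\bb{n+1}$ passing through both $\mathbf{x}$ and $\mathbf{y}$, and then reads off the relevant coordinates of $F\circ\sigma$. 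One small point requiring the limit hypothesis on $\varepsilon$ is that we need genuinely strictly increasing tuples of every finite length below $\varepsilon$ in order to pin down the constant value; this holds precisely because $\varepsilon$ is a limit ordinal, which is exactly the hypothesis of the lemma.
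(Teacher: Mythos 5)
Your proposal is correct and follows essentially the same route as the paper: augment $c$ with a bit recording whether the tuple is strictly increasing, apply $\PH_n(\varepsilon)$ to the refined coloring, and observe that the resulting monochromatic witness must land on the strictly increasing side (using that $\varepsilon$ is a limit ordinal to spread out a chain) and hence is itself strictly increasing. The paper's proof is in fact terser than yours, leaving to the reader the details you spell out about pinning down the constant value and extending a pair $\mathbf{x}\triangleleft\mathbf{y}$ to a full chain in $\varepsilon\bb{n+1}$.
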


\begin{proof}
For the nontrivial implication, fix a $c$ as in the statement of the lemma.
Define the coloring $b:\varepsilon^{n+1}\to\{0,1\}$ by $b(\mathbf{x})=1$ if and only if $\mathbf{x}=(x_0,\dots,x_n)$ is a strictly increasing sequence of ordinals, and define $d:\varepsilon^{n+1}\to\omega\times\{0,1\}$ by $d(\mathbf{x})=(c(\mathbf{x}),b(\mathbf{x}))$. Our assumption $\mathrm{PH}_n(\varepsilon)$ implies that there exists an $(n+1)$-cofinal $F$ for which $d\circ F^*$ is constant; this implies in turn that $F$ is a strictly increasing $(n+1)$-cofinal function for which $c\circ F^*$ is constant.
\end{proof}

Note that the above argument applies \emph{mutatis mutandis} to any quasi-order without maximal elements.
Note also that to define an $F$ as above, it suffices to define one on the strictly increasing elements of $\varepsilon^{\leq n+1}$, an observation we will sometimes implicitly apply below.

To motivate our proof of Theorem \ref{theorem:partition_failure_ordinals}, we turn first to the cases of $n=0,1$, and $2$.
Clearly the coloring $c_0: i\mapsto i$ witnesses the failure of $\PH_0(\omega)$.
For the case of $\PH_1(\omega_1)$, fix injections $f_\beta:\beta\to\omega$ for each $\beta<\omega_1$ and let $c_1(\alpha,\beta)=c_1(\beta,\alpha)=f_\beta(\alpha)$ for all $\alpha<\beta<\omega_1$.
For any strictly increasing $2$-cofinal $F:\omega_1^{\leq 2}\to\omega_1$ there exist $\alpha<\beta<\omega_1$ with $F(\alpha) < F(\beta) < F(\alpha,\beta)$.
It then follows immediately from the fact that $f_{F(\alpha,\beta)}$ is injective that
$$c_1\circ F^*(\alpha,(\alpha,\beta))\neq c_1\circ F^*(\beta,(\alpha,\beta)).$$
For the case of $\PH_2(\omega_2)$, begin by fixing an injection $g_\gamma:\gamma\to\omega_1$ for each $\gamma<\omega_2$.
Define
$$c_2(\alpha,\beta,\gamma):=c_1(g_\gamma(\alpha),g_\gamma(\beta))$$
for all $\alpha<\beta<\gamma<\omega_2$.
Fix an arbitrary strictly increasing $3$-cofinal $F:\omega_2^{\leq 3}\to\omega_2$; we will show that $c_2\circ F^*$ is not constant.
Note that since $F$ is $3$-cofinal, there exist $\alpha<\beta<\gamma<\omega_2$ such that $$F(\alpha) < F(\beta) < F(\alpha,\beta) < F(\gamma) < F(\alpha,\gamma)\leq F(\beta,\gamma) < F(\alpha,\beta,\gamma).$$
(It is only perhaps not obvious how to arrange the penultimate inequality; to see this, fix a sequence $a=\langle\alpha_i\mid i\in\omega\rangle$ with $F(\alpha_i)<\alpha_j$ for any $i<j<\omega$, and a $\gamma>\mathrm{sup}_{i<j<\omega}\,F(\alpha_i,\alpha_j)$.
Observe now that if there did not exist an $i<j$ with $F(\alpha_i,\gamma)\leq F(\alpha_j,\gamma)$ then $\langle F(\alpha_i,\gamma)\mid i\in\omega\rangle$ would determine an infinite decreasing sequence of ordinals, a contradiction.)
Let $\delta$ denote $F(\alpha,\beta,\gamma)$ and let $A$ collect the ordinals listed above with the exception of $\delta$.
Define an edge-relation on $A$ by $\{F(\mathbf{x}),F(\mathbf{y})\}\in E_A$ if and only if $\mathbf{x}\triangleleft\mathbf{y}$. Note that the graph $(A,E_A)$ contains a cycle and, in consequence, that its $g_\delta$-image does as well.
Therefore there exist $\mathbf{x}_0,\mathbf{x}_1,\mathbf{y}_0,\mathbf{y}_1$ with
\begin{enumerate}[(i)]
\item \label{g_ineq} $g_\delta(F(\mathbf{x}_0))< g_\delta(F(\mathbf{x}_1))< g_\delta(F(\mathbf{y}_0))=g_\delta(F(\mathbf{y}_1))$, and
\item \label{E_A_cond} $\{F(\mathbf{x}_0),F(\mathbf{y}_0)\}$ and $\{F(\mathbf{x}_1),F(\mathbf{y}_1)\}$ both in $E_A$.
\end{enumerate}
Item (\ref{g_ineq}) implies that
\begin{align*} & c_2(F(\mathbf{x}_0),F(\mathbf{y}_0),\delta)=c_1(g_\delta(F(\mathbf{x}_0)),g_\delta(F(\mathbf{y}_0))) \\ \neq\, & c_2(F(\mathbf{x}_1),F(\mathbf{y}_1),\delta)=c_1(g_\delta(F(\mathbf{x}_1)),g_\delta(F(\mathbf{y}_1)))
\end{align*}
and item (\ref{E_A_cond}) implies that the arguments of the two $c_2$ terms above both fall in the image of $F^*$; this concludes the argument of the case $n=2$.

All the ideas of the proof of Theorem \ref{theorem:partition_failure_ordinals} are essentially present in this sequence.
Needed for its more general argument, however, is a vocabulary for higher-dimensional analogs of graphs and the higher-dimensional cycles within them.

\begin{defn}
\label{simplicial_definitions}
An \emph{abstract simplicial complex} $Y$ on a set $X$ is a family of nonempty finite subsets of $X$ which is closed under the taking of nonempty subsets.
The \emph{$n$-faces of $Y$} are its size-$(n+1)$ subsets, and we write $[Y]_n$ for their collection.
The \emph{dimension of $Y$} is $\dim(Y):=\sup\{n\mid [Y]_n\neq\emptyset\}$, and $Y$ is \emph{pure} if its set of $\subset$-maximal faces coincides with $[Y]_{\dim(Y)}$.
If $|a|=n+1$ then we write $\Delta_n(a)$ for the abstract $n$-simplex determined by $a$; this is simply $\Pscr(a)\backslash\{\emptyset\}$ viewed as an abstract simplicial complex.
Its \emph{boundary} $\Pscr(a)\backslash\{\emptyset,a\}$ is denoted $\partial\Delta_n(a)$.
For any $v\not\in X$ let $Y*v$ denote the \emph{cone over $Y$ by $v$}, namely $Y\cup\{b\cup\{v\}\mid b\in Y\cup\{\emptyset\}\}$.
If $v$ is an ordinal then $Y<v$ will mean that the vertex-set $X$ underlying $Y$ is a subset of $v$.
The \emph{barycentric subdivision $\sd (Y)$ of $Y$} is the abstract simplicial complex on $Y$ whose faces are the nonempty subsets of $Y$ which are linearly ordered by inclusion.

For any $Y$ as above we may consider its \emph{simplicial homology with coefficients in $\Zbb/2\Zbb$}.
This is the homology of the chain complex $\Ccal(Y)$ of the free $\Zbb/2\Zbb$-modules $C_n(Y)$ which are generated by the $n$-faces $a$ of $Y$; writing $\langle a\rangle$ for the generator associated to $a$, the boundary maps $d_n:C_n(Y)\to C_{n-1}(Y)$ of $\Ccal(Y)$ are those induced by the assignments $\langle a\rangle\mapsto\sum_{b\in [\partial\Delta_n(a)]_{n-1}}\langle b\rangle$. A \emph{homological $n$-cycle} is an $x\in C_n(Y)$ such that $d_n(x)=0$.

The following definitions and proposition are adapted from \cite[Defs. 4.1, 4.2, Prop. 5.1]{Connon}.
A sequence $a_0,\dots,a_k$ of elements of $[Y]_n$ is an \emph{$n$-path} if $|a_i\cap a_{i+1}|=n$ for all $i<k$.
If there exists an $n$-path between any two of its $n$-faces then $Y$ is \emph{$n$-path-connected}.
The maximal $n$-path-connected subcomplexes of $Y$ are its \emph{$n$-path components}.
For $n>0$, an \emph{$n$-cycle} is a pure $n$-dimensional simplicial complex $Y$ such that
\begin{itemize}
\item $Y$ is $n$-path connected, and
\item every $(n-1)$-face of $Y$ belongs to an even number of $n$-faces of $Y$.
\end{itemize}
For $n=0$, an $n$-cycle is simply a set of some even number of vertices.
\end{defn}
\begin{prop}
\label{prop:connon}
If $Y$ is a finite $n$-cycle with $n$-faces $a_0,\dots,a_k$ then $\sum_{i=0}^{k} \langle a_i\rangle$ is a homological $n$-cycle.
Conversely, if $\sum_{i=0}^{k} \langle a_i\rangle$ is a homological $n$-cycle then the $n$-path components of the simplicial complex generated by $\{a_0,\dots,a_k\}$ are $n$-cycles.
\end{prop}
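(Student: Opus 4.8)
The plan is to prove the two implications separately, working throughout over $\Zbb/2\Zbb$, where signs disappear and an $n$-face contributes to a boundary purely according to the parity of its incidences. We may assume $n\geq 1$ (for $n=0$ the forward direction is immediate since $d_0=0$, and the converse is read with the augmented/reduced convention, under which a $0$-chain is a cycle precisely when it has an even number of terms). For the forward implication, let $Y$ be a finite $n$-cycle; since $Y$ is pure and $n$-dimensional, every face of $Y$ lies in a maximal one, so $Y$ is the downward closure of its $n$-faces $a_0,\dots,a_k$. The $(n-1)$-faces of $\partial\Delta_n(a_i)$ are exactly the $n$-element subsets of $a_i$, hence
\[
d_n\Big(\sum_{i=0}^k\langle a_i\rangle\Big)=\sum_{i=0}^k\ \sum_{\substack{b\subset a_i\\ |b|=n}}\langle b\rangle=\sum_{b\in[Y]_{n-1}}\big(\#\{i\mid b\subset a_i\}\bmod 2\big)\,\langle b\rangle .
\]
As $Y$ is an $n$-cycle, every $(n-1)$-face $b$ of $Y$ lies in an even number of $n$-faces of $Y$, i.e.\ $\#\{i\mid b\subset a_i\}$ is even; so every coefficient vanishes and $\sum_i\langle a_i\rangle$ is a homological $n$-cycle. (Note $n$-path-connectedness of $Y$ plays no role here.)

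For the converse, let $x=\sum_i\langle a_i\rangle$ be a homological $n$-cycle; re-indexing, we may assume the $a_i$ are exactly the distinct $n$-faces in the support of $x$. Let $Z$ be the simplicial complex generated by $\{a_0,\dots,a_k\}$; since each $a_i$ has size $n+1$ and none contains another, $Z$ is pure $n$-dimensional with $[Z]_n=\{a_0,\dots,a_k\}$ and $[Z]_{n-1}$ the collection of $n$-element subsets of the $a_i$. Running the above computation in reverse, $d_n(x)=0$ says precisely that every $(n-1)$-face of $Z$ lies in an even number of $n$-faces of $Z$. Write $Z_1,\dots,Z_m$ for the $n$-path components of $Z$, i.e.\ the subcomplexes generated by the equivalence classes of $\{a_0,\dots,a_k\}$ under the relation of being joined by an $n$-path; each $Z_j$ is then $n$-path-connected and equal to the downward closure of its set of $n$-faces, hence pure $n$-dimensional. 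It remains to verify the parity clause for each $Z_j$.

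The key point is that $(n-1)$-incidence cannot cross $n$-path components: if distinct $a,a'\in[Z]_n$ share an $(n-1)$-face $b$, then $|a\cap a'|\geq n$, and, both having size $n+1$, $|a\cap a'|=n$, so $a,a'$ form an $n$-path and lie in a common $Z_j$. Consequently, given any $b\in[Z_j]_{n-1}$ — necessarily a subset of some $a'\in[Z_j]_n$, since $Z_j$ is generated by its $n$-faces — every $n$-face of $Z$ containing $b$ already lies in $Z_j$, so the number of $n$-faces of $Z_j$ containing $b$ equals the number of $n$-faces of $Z$ containing $b$, which we have shown is even. Thus each $Z_j$ satisfies both clauses in the definition and is an $n$-cycle. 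I expect the only genuine subtlety to be exactly this last transfer — descending the global parity condition $d_n(x)=0$ to the individual path components — which the displayed ``incidence stays inside a component'' observation renders routine; the surrounding verifications (purity of the generated complex, identification of its top two strata of faces, and the distinctness normalization of the $a_i$) are bookkeeping.
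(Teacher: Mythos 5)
Your proof is correct and complete. Note that the paper does not actually prove this proposition---it is imported verbatim from Connon's paper (cited as Prop.\ 5.1 there)---so the only comparison available is with that source; your argument (the mod-$2$ incidence count for the forward direction, and, for the converse, the observation that two distinct $n$-faces sharing an $(n-1)$-face necessarily intersect in exactly $n$ vertices and hence are $n$-path-adjacent, so the global parity condition localizes to each $n$-path component) is the standard one, and your handling of the two ambiguities in the statement as transcribed (the reduced-homology reading of the $n=0$ case, and the implicit distinctness of the $a_i$) resolves them in the intended way.
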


Write $[\varepsilon]^n$ for the collection of $n$-element subsets of $\varepsilon$.
Observe that the colorings $c_n:\omega_n^{n+1}\to\omega$ partially defined above may each be regarded as deriving from a coloring $\tilde{c}_n:[\omega_n]^{n+1}\to\omega$.
Put differently, the ordering of the elements of the argument of $c_n$ was immaterial to, and even a distraction from, our proof of the cases of $n=0,1$ and $2$.
Similarly, if $F:\omega_n^{\leq n+1}\to\omega_n$ is strictly increasing then $F^*$ may be viewed as outputting elements of $[\omega_n]^{n+1}$, since its output consists simply in increasing enumerations of such elements; below, we will tend to identify finite sets of ordinals with their increasing enumerations without further comment.

In other words, in the language of Definition \ref{simplicial_definitions} the crux of the argument of the $n=1$ case above is the fact that $\tilde{c}_1\restriction [s*F(\alpha,\beta)]_1$ is non-constant for any $0$-cycle $s < F(\alpha,\beta)$ and that $\{F(\alpha),F(\beta)\}$ is just such a $0$-cycle $s$.
The crux of the argument of the $n=2$ case is the fact that $\tilde{c}_2\restriction [t*F(\alpha,\beta,\gamma)]_2$ is non-constant for any $1$-cycle $t<F(\alpha,\beta,\gamma)$ (for the reason that any such $t$ contains in turn an $[s*x]_1$ for some $0$-cycle $s<x$) and that the collection of edges $E_A$ contains such a $1$-cycle $t$.
We will now define the colorings $\tilde{c}_n:[\omega_n]^{n+1}\to\omega$ more generally, show that each is non-constant on the $n$-faces of cones over $(n-1)$-cycles, and prove Theorem \ref{theorem:partition_failure_ordinals} by showing that for any strictly increasing $(n+1)$-cofinal $F$ the $F^{*}$-image of $\omega_n\bb{n+1}$ must contain such cones.

The colorings $\tilde{c}_n:[\omega_n]^{n+1}\to\omega$ are defined by recursion on $n\in\omega$; the base cases of $n\leq 2$ were described above.  Define $\tilde{c}_{n+1}$ from $\tilde{c}_n$ by fixing injections $h_\beta:\beta\to\omega_n$ for each $\beta\in\omega_{n+1}$ and letting $\tilde{c}_{n+1}(\{\alpha_0,\dots,\alpha_n,\beta\})=\tilde{c}_n(\{h_\beta(\alpha_0),\dots,h_\beta(\alpha_n)\})$ for all $\alpha_0<\dots<\alpha_n<\beta<\omega_{n+1}$.
\begin{lem}
\label{lemma:coloring_cones}
For any $n>0$ and $(n-1)$-cycle $t<\delta<\omega_n$, the coloring $\tilde{c}_n$ is non-constant on $[t*\delta]_n$.
\end{lem}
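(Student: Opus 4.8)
The plan is an induction on $n\geq 1$ that peels off one layer of the recursive definition of $\tilde c_n$ at each step, using Proposition \ref{prop:connon} to manufacture the cycles that drive the argument. Alongside the lemma we carry an auxiliary statement, which we will call the \emph{Claim at level $m$}: for every finite (nonempty) $m$-cycle $Z$ with vertex set contained in $\omega_m$, the coloring $\tilde c_m$ is non-constant on $[Z]_m$. The Claim at level $0$ is immediate: a $0$-cycle consists of at least two vertices, $[Z]_0$ is exactly that set of vertices, and $\tilde c_0$ is the identity.

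The first step is a reduction of the lemma at index $n$ to the Claim at level $n-1$. Because $t$ is pure of dimension $n-1$, the $n$-faces of $t*\delta$ are precisely the sets $b\cup\{\delta\}$ with $b\in[t]_{n-1}$; each such $b$ has all its vertices below $\delta$, so by the defining recursion $\tilde c_n(b\cup\{\delta\})=\tilde c_{n-1}(h_\delta[b])$, where $h_\delta\colon\delta\to\omega_{n-1}$ is the relevant injection and $h_\delta[b]$ denotes its image. Since $h_\delta$ is injective it induces an isomorphism of $t$ onto a simplicial complex $h_\delta[t]$ with vertex set in $\omega_{n-1}$; being an $(n-1)$-cycle is invariant under simplicial isomorphism, so $h_\delta[t]$ is again an $(n-1)$-cycle, and $\{h_\delta[b]\mid b\in[t]_{n-1}\}=[h_\delta[t]]_{n-1}$. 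Hence $\tilde c_n$ is non-constant on $[t*\delta]_n$ whenever $\tilde c_{n-1}$ is non-constant on $[h_\delta[t]]_{n-1}$, which is exactly the Claim at level $n-1$.

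The second step deduces, for $m\geq 1$, the Claim at level $m$ from the lemma at index $m$. Given a finite $m$-cycle $Z$ with vertices in $\omega_m$, let $\varepsilon$ be its largest vertex and let $L:=\{a\setminus\{\varepsilon\}\mid a\in[Z]_m,\ \varepsilon\in a\}$ be the link of $\varepsilon$. By Proposition \ref{prop:connon}, $z:=\sum_{a\in[Z]_m}\langle a\rangle$ is a homological $m$-cycle over $\Zbb/2\Zbb$. Splitting $z$ according to whether $\varepsilon\in a$ and comparing, in the identity $d_m z=0$, the coefficients of the $(m-1)$-faces not containing $\varepsilon$, one sees that $\sum_{c\in L}\langle c\rangle$ equals $d_m$ applied to the subsum of $z$ over faces avoiding $\varepsilon$ (there is no cancellation, as $a\mapsto a\setminus\{\varepsilon\}$ is a bijection from the $m$-faces of $Z$ through $\varepsilon$ onto $L$); applying $d_{m-1}$ and invoking $d_{m-1}d_m=0$ shows $\sum_{c\in L}\langle c\rangle$ is a homological $(m-1)$-cycle, and it is nonzero since $\varepsilon$, as a vertex of the pure complex $Z$, lies in some $m$-face, whence $L\neq\emptyset$. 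By the converse half of Proposition \ref{prop:connon}, some $(m-1)$-path component $t'$ of the complex generated by $L$ is an $(m-1)$-cycle; its $(m-1)$-faces lie in $L$ and its vertices are among those of $Z$ distinct from $\varepsilon$, so $t'<\varepsilon<\omega_m$ and $[t'*\varepsilon]_m\subseteq[Z]_m$. The lemma at index $m$ then gives that $\tilde c_m$ is non-constant on $[t'*\varepsilon]_m$, hence on $[Z]_m$.

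Chaining the two steps—Claim at level $0$, then for each $n\geq 1$ the lemma at index $n$ from the Claim at level $n-1$, and the Claim at level $n$ from the lemma at index $n$—yields the lemma for all $n>0$. The step I expect to demand the most care is the link computation in the paragraph above: isolating the $\varepsilon$-avoiding part of $d_m z$ as $\sum_{c\in L}\langle c\rangle$ modulo a boundary, verifying that $a\mapsto a\setminus\{\varepsilon\}$ introduces no $\Zbb/2$ cancellation, and confirming $L\neq\emptyset$ so that the extracted $(m-1)$-cycle is genuinely nontrivial. Everything else amounts to keeping track of which copy of $\omega$ indexes which layer of the recursion defining $\tilde c_n$.
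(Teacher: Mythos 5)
Your proof is correct and follows the same essential strategy as the paper's: induct on $n$, use the injection $h_\delta$ to push the cycle down one cardinal level, and extract a lower-dimensional cycle from the link of the top vertex so that the induction hypothesis applies to the resulting cone. The one substantive difference is that the paper simply cites an external result of Connon (his Proposition 4.3) for the fact that the link of a vertex in an $m$-cycle contains an $(m-1)$-cycle, whereas you derive it yourself from Proposition \ref{prop:connon} via the boundary computation $\sum_{c\in L}\langle c\rangle=d_m z_0$, $d_{m-1}d_m=0$; that computation is sound (the bijection $a\mapsto a\setminus\{\varepsilon\}$ onto $L$ rules out mod-$2$ cancellation, and purity gives $L\neq\emptyset$), so your argument is self-contained where the paper's is not. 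Your factoring of the induction through the explicit intermediate claim ``$\tilde c_m$ is non-constant on every $m$-cycle in $\omega_m$'' is also a clean reorganization of what the paper does in a single inductive step, and it replaces the paper's separately argued $n=1,2$ base cases with the trivial level-$0$ claim.
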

\begin{proof}
The argument is by induction on $n$; the cases of $n=1$ and $n=2$ were established above.
Therefore assume that the lemma holds for some $n=m$; we will show that it holds for $n=m+1$ as well.
To that end, fix a $t<\delta<\omega_{m+1}$ as in the statement of the lemma, let $\gamma=h^{-1}_\delta(\max\{h_\delta(\alpha)\mid\alpha\in\bigcup t\})$, and let $\{\sigma_i\mid i<k\}$ enumerate the $m$-faces of $t$ containing $\gamma$.
By \cite[Prop. 4.3]{Connon}, $\{\sigma_i\backslash\{\gamma\}\mid i<k\}$ contains an $(m-1)$-cycle $s$, and hence the $h_\delta$-image of $s$ (which we will write as $h_\delta(s)$) is an $(m-1)$-cycle below $\gamma$.
Since $\tilde{c}_{m+1}(\sigma_i\cup\{\delta\})=\tilde{c}_m(h_\delta``\sigma_i)$ for any $i<k$, the range of $\tilde{c}_m\restriction [h_\delta(s)*\gamma]_m$ is contained in the range of $\tilde{c}_{m+1}\restriction [t*\delta]_{m+1}$; together with our induction hypothesis, this implies that the latter function is non-constant, as claimed.
\end{proof}
\begin{proof}[Proof of Theorem \ref{theorem:partition_failure_ordinals}]
We will show that for any $n>0$ and strictly increasing $(n+1)$-cofinal $F$ the $F^*$-image of $\omega_n\bb{n+1}$ contains a cone $[t*\delta]_n$ for some $(n-1)$-cycle $t<\delta<\omega_n$.
For any sequence $\sigma\in\omega_n^{n+1}$ of distinct ordinals, let $c_n(\sigma)$ be $\tilde{c}_n$ of the underlying set of $\sigma$.
By Lemmas \ref{strictly_increasing} and \ref{lemma:coloring_cones}, the coloring $c_n$ will witness the failure of $\mathrm{PH}_n(\omega_n)$.

To this end, fix such an $n$ and $F$; by our work above, we may assume that $n>2$.
For some $a\in [\omega_n]^{n+1}$, the aforementioned $(n-1)$-cycle $t$ and cone-point $\delta$ will derive from the $(n-1)$-cycle
$\sd (\partial\Delta_n(a))$ and cone-point $a$, respectively, in the cone-decomposition of $\sd(\Delta_n(a))$ as $\sd(\partial\Delta_n(a))*a$. In particular, $\delta$ will equal $F(a)$.
Care is needed in the choice of $a$ simply to ensure that $F$ maps enough of the vertices of $\sd(\partial\Delta_n(a))$ to distinct ordinals that its $F$-image remains $(n-1)$-cyclic.
To sum up, our argument will consist in two steps:
\begin{enumerate}

\item verifying that $\sd(\partial\Delta_n(a))$ is an $(n-1)$-cycle for any $a\in [\omega_n]^{n+1}$, and

\item showing that for well-chosen $a$ this implies that the $F$-image of this cycle indeed contains an $(n-1)$-cycle $t$, as desired.

  \end{enumerate}

\begin{claim} \label{claim1}
  For any $a\in [\omega_n]^{n+1}$, $\sd(\partial\Delta_n(a))$ is an $(n-1)$-cycle.
\end{claim}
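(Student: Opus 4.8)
The plan is to describe $\sd(\partial\Delta_n(a))$ explicitly as the order complex of the poset $P_a$ of proper nonempty subsets of $a$ (ordered by $\subseteq$) and then to verify directly the three conditions defining an $(n-1)$-cycle. The case $n=1$ is immediate: here $\sd(\partial\Delta_1(a))$ consists of the two vertices $\{a_0\},\{a_1\}$ with no higher faces, which is a $0$-cycle by definition; so I would assume $n\geq 2$ from here on. First I would record the elementary structural facts: the faces of $\sd(\partial\Delta_n(a))$ are exactly the $\subsetneq$-chains in $P_a$; a $\subsetneq$-chain is $\subseteq$-maximal if and only if it contains exactly one subset of $a$ of each size $1,2,\dots,n$; and $\subseteq$-maximal chains are in bijection with linear orderings $(\sigma(0),\dots,\sigma(n))$ of the $n+1$ elements of $a$ via the correspondence $\sigma\leftrightarrow\big(\{\sigma(0)\}\subsetneq\{\sigma(0),\sigma(1)\}\subsetneq\cdots\subsetneq\{\sigma(0),\dots,\sigma(n-1)\}\big)$. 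Since every $\subsetneq$-chain extends to a maximal one, this shows $\sd(\partial\Delta_n(a))$ is pure of dimension $n-1$.

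Next I would treat the codimension-one condition. An $(n-2)$-face of $\sd(\partial\Delta_n(a))$ is a $\subsetneq$-chain whose members have sizes $\{1,\dots,n\}\setminus\{j\}$ for exactly one $j\in\{1,\dots,n\}$. A short case distinction---$j=1$, $1<j<n$, and $j=n$---shows that in each case there are exactly two ways to insert a subset of size $j$ into the chain, the pair of choices arising in every case from a two-element ``gap'' made available by the hypothesis $|a|=n+1$. Hence every $(n-2)$-face lies in exactly two $(n-1)$-faces, in particular in an even number of them.

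Finally, for $(n-1)$-path-connectedness, the key point is the computation that two $\subseteq$-maximal chains of $P_a$ share $n-1$ vertices---that is, form one step of an $(n-1)$-path---precisely when the two associated orderings of $a$ differ by transposing a pair of consecutive entries. Consequently the graph on $[\sd(\partial\Delta_n(a))]_{n-1}$ whose edges are the $(n-1)$-path steps is isomorphic to the Cayley graph of the symmetric group $S_{n+1}$ with respect to the adjacent transpositions $(0,1),(1,2),\dots,(n-1,n)$; since these generate $S_{n+1}$ this graph is connected, so $\sd(\partial\Delta_n(a))$ is $(n-1)$-path-connected. Together with purity and the codimension-one count, this exhibits $\sd(\partial\Delta_n(a))$ as an $(n-1)$-cycle.

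I expect the only mildly delicate point to be the bookkeeping behind the chain--ordering correspondence and the identification of an $(n-1)$-path step with an adjacent transposition; purity and the codimension-one count are entirely routine. As a sanity check, all of this simply records that $\sd(\partial\Delta_n(a))$ is a combinatorial triangulation of the sphere $S^{n-1}$, but I would keep the argument combinatorial and self-contained rather than invoking that fact.
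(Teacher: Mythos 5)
Your proposal is correct, and its core — the observation that an $(n-2)$-face is a chain omitting exactly one cardinality $j$, and that there are exactly two ways to insert a size-$j$ set between the neighboring sets $\sigma'\subset\sigma''$ of sizes $j\pm 1$ — is exactly the paper's argument for the even-incidence condition. Where you diverge is the $(n-1)$-path-connectedness: the paper disposes of this in one sentence by appealing to the path-connectedness of $\partial\Delta_n(a)$ and of the subdivision of each of its faces, whereas you identify the maximal chains of $P_a$ with linear orderings of $a$ and observe that the $(n-1)$-path adjacency graph is the Cayley graph of $S_{n+1}$ on adjacent transpositions. Your version is more explicit and fully verifiable (the paper's ``easy to see'' elides precisely the step of moving between the subdivisions of two different facets), at the cost of the bookkeeping you flag; the paper's version is shorter but leans on geometric intuition about the subdivided sphere. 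Your separate treatment of $n=1$ and the explicit purity check are harmless additions; the paper implicitly restricts to $n>2$ where the claim is applied.
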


\begin{proof}
Any $(n-2)$-face $b$ of $\sd(\partial\Delta_n(a))$ consists of a chain $\sigma_0 \subset \ldots \subset \sigma_{n-2}$ of elements of $[a]^{\leq n}$.
Some $j\in\{1,\dots,n\}$ is the cardinality of none of the elements of this chain and the $(n-1)$-faces $c$ of $\sd(\partial\Delta_n(a))$ containing $b$ consist precisely of the expansions of this chain by the addition to it of a $\sigma$ of length $j$.
But for any $\sigma'\subset\sigma''\subseteq a$ of length $j-1$ and $j+1$ respectively, exactly two subsets $\sigma$ of $a$ will satisfy $\sigma'\subset\sigma\subset\sigma''$; hence $b$ belongs to exactly two $(n-1)$-faces. It is easy to see that $\sd(\partial\Delta_n(a))$ is $(n-1)$-path connected; this follows from the fact that $\partial\Delta_n(a)$ is path-connected, as is the subdivision of any $(n-1)$-simplex, and in particular of any face of $\partial\Delta_n(a)$.
\end{proof}

For any $a\in [\omega_n]^{n+1}$ define the simplicial complex $X(a)$ on the vertex-set $F``[a]^{\leq n}$ by letting $\{F(\sigma_0),\dots,F(\sigma_j)\}\in X(a)$ whenever $\sigma_0\subset\dots\subset\sigma_j$.

\begin{claim} \label{claim2}
  There exists an $a\in [\omega_n]^{n+1}$ such that $X(a)$ contains as a subcomplex an $(n-1)$-cycle $t$.
\end{claim}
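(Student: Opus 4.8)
The plan is to construct $a$ by a short recursion and then locate an $(n-1)$-cycle inside $X(a)$ by pushing the fundamental $\Zbb/2\Zbb$-homology cycle of $\sd(\partial\Delta_n(a))$ forward along the vertex assignment $\sigma\mapsto F(\sigma)$. First I would fix ordinals $a_0<a_1<\cdots<a_n<\omega_n$ recursively, choosing at stage $k$ an $a_k$ strictly above $\sup\{F(\sigma)\mid\sigma\subseteq\{a_0,\dots,a_{k-1}\}\textnormal{ and }1\le|\sigma|\le n\}$; this is possible since that supremum, being the supremum of a finite set of ordinals, lies below $\omega_n$. Put $a:=\{a_0,\dots,a_n\}\in[\omega_n]^{n+1}$. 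The assignment $\sigma\mapsto F(\sigma)$ induces a simplicial map $\phi\colon\sd(\partial\Delta_n(a))\to X(a)$ — this is precisely the definition of $X(a)$ — and since $F$ is strictly increasing it is injective along every chain of proper nonempty subsets of $a$, so $\phi$ carries each $(n-1)$-face of $\sd(\partial\Delta_n(a))$ to a genuine $(n-1)$-face of $X(a)$.

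By Claim \ref{claim1} and the first assertion of Proposition \ref{prop:connon}, the chain $z:=\sum_{b\in[\sd(\partial\Delta_n(a))]_{n-1}}\langle b\rangle$ is a homological $(n-1)$-cycle in $\Ccal(\sd(\partial\Delta_n(a)))$. A simplicial map induces a chain map on simplicial homology with coefficients in $\Zbb/2\Zbb$, so $\phi_*(z)=\sum_{C}\langle F``C\rangle$ — the sum over the maximal chains $C=\{\sigma_1\subset\cdots\subset\sigma_n\}$ of proper nonempty subsets of $a$, each regarded as an $(n-1)$-face of $\sd(\partial\Delta_n(a))$ — is a homological $(n-1)$-cycle in $\Ccal(X(a))$. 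Assuming $\phi_*(z)\neq 0$, the second assertion of Proposition \ref{prop:connon} shows that the $(n-1)$-path components of the subcomplex of $X(a)$ generated by the faces appearing in $\phi_*(z)$ are $(n-1)$-cycles, and any nonempty such component is the required $t\subseteq X(a)$.

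The remaining and, I expect, only delicate point is the nonvanishing of $\phi_*(z)$; this is exactly where the recursive choice of $a$ is needed, since for an arbitrary enumeration of $a$ the faces $F``C$ can cancel wholesale in characteristic $2$. I would establish it by showing that the single ``straight'' maximal chain $C^*=\{\{a_0\}\subset\{a_0,a_1\}\subset\cdots\subset\{a_0,\dots,a_{n-1}\}\}$ is the only maximal chain with image face $b^*:=F``C^*$, so that $b^*$ occurs in $\phi_*(z)$ with coefficient $1$. Indeed, if a maximal chain $C=\{\sigma_1\subset\cdots\subset\sigma_n\}$ satisfied $F``C=b^*$ then, as $F$ is order-preserving along chains, $F(\sigma_i)=F(\{a_0,\dots,a_{i-1}\})$ for every $i$; letting $i$ be least with $\sigma_i\neq\{a_0,\dots,a_{i-1}\}$ one gets $\sigma_i=\{a_0,\dots,a_{i-2}\}\cup\{a_l\}$ for some $l\ge i$, and then $\{a_l\}\trleq\sigma_i$ yields $F(\sigma_i)\ge F(a_l)\ge a_l\ge a_i>F(\{a_0,\dots,a_{i-1}\})$ by the choice of $a_i$ — a contradiction. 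Everything else is routine manipulation of $\Zbb/2\Zbb$-simplicial chains, so I would concentrate the effort on this isolation of a multiplicity-one face.
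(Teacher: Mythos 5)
Your proposal is correct and follows essentially the same route as the paper: the same recursive choice of $a$ (each $a_k$ placed above all $F$-values of subsets of the earlier elements), the same pushforward of the fundamental $\Zbb/2\Zbb$-cycle of $\sd(\partial\Delta_n(a))$, and the same appeal to Proposition \ref{prop:connon} after isolating a multiplicity-one face. The only (immaterial) difference is that you certify uniqueness of the preimage of the ``straight'' chain $b^*$ at the level of the whole maximal chain, whereas the paper arranges the slightly stronger fact that each vertex $F(\{\alpha_0,\dots,\alpha_j\})$ already has a unique preimage among subsets of $a$.
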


\begin{proof}
By Claim \ref{claim1},
\begin{align*}
d_{n-1}\Bigg(\sum_{b\in[\sd(\partial\Delta_n(a))]_{n-1}}\langle b\rangle\Bigg)=0.
\end{align*}
(Recall that our homology computations take coefficients in $\Zbb/2\Zbb$.) It follows that
\begin{align}
\label{eq:homological_sum}
d_{n-1}\Bigg(\sum_{b\in[\sd(\partial\Delta_n(a))]_{n-1}}\langle F``b\rangle\Bigg)=0.
\end{align}
Observe then that if some $F``b$ is the image of a \emph{unique} $b\in\sd(\partial\Delta_n(a))$ then the argument of $d_{n-1}$ in (\ref{eq:homological_sum}) is nontrivial and hence corresponds, by Proposition \ref{prop:connon}, to a family of $(n-1)$-cycles within $X(a)$; any $(n-1)$-path component of such a family is then an $(n-1)$-cycle $t$ such as we desire.

It suffices therefore to choose an $a\in [\omega_n]^{n+1}$ possessing such a $b\in a\bb{n}$.
We do so as follows.
Let $\alpha_0=0$.
If for some $j\leq n$ the ordinals $\alpha_i$ $(i<j)$ have all been defined, then let $\alpha_j=\max\{F(\sigma)\mid\sigma\in \big[ \{\alpha_0,\dots,\alpha_{j-1}\}\big]^{\leq j}\}+1$.
Let $a=\{\alpha_0,\dots,\alpha_n\}$.
Our procedure ensures that $$\Big|F^{-1}\big(F(\{\alpha_0,\dots,\alpha_j\})\big)\cap [a]^{\leq n+1}\Big|=1$$ for all $j\leq n$, and hence that $b=(\{\alpha_0\},\{\alpha_0,\alpha_1\},\dots,\{\alpha_0,\dots,\alpha_{n-1}\})$ is as desired.
\end{proof}

This concludes the proof: by Claim \ref{claim2}
together with Lemma \ref{lemma:coloring_cones}, for any $n\geq 0$ and strictly increasing $(n+1)$-cofinal $F$, the function $c_n\circ F^*$ is non-constant; in other words, $c_n$ witnesses the failure of $\PH_n(\omega_n)$.
\end{proof}

Observe that the $n=0$ instance of Theorem \ref{theorem:partition_failure_ordinals} is sharp, in the sense that $\PH_0(\omega_1)$---and, indeed, $\PH_0(\varepsilon)$ for any ordinal $\varepsilon$ of cofinality other than $\omega$---is a ZFC theorem.
Under large cardinal assumptions, the $n=1$ instance of Theorem \ref{theorem:partition_failure_ordinals} is sharp as well. Recall that an ideal $\Ical\subset \Pscr(\lambda)$ is \emph{$\kappa$-dense} if $\Pscr(\lambda)/\Ical$ has a dense subset of cardinality $\kappa$.
\begin{thm}
\label{theorem:foreman_ideal}
If there exists a uniform, countably complete, $\aleph_1$-dense ideal $\Ical$ on $\omega_2$ then $\PH_1(\omega_2)$ holds.
\end{thm}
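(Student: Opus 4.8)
The plan is to argue directly, using no generic elementary embeddings or precipitousness: I will only need that $\Ical$ is a proper $\sigma$-ideal on $\omega_2$ (countable completeness), that every $\Ical$-positive set is unbounded in $\omega_2$ (uniformity), and that the quotient $\Pbb:=\Pscr(\omega_2)/\Ical$ has a dense subset $D$ of size $\aleph_1$, where crucially $\aleph_1<\mathrm{cf}(\omega_2)=\omega_2$ ($\aleph_1$-density). Here $\le$ on $\Pbb$ denotes mod-$\Ical$ containment. By Lemma \ref{strictly_increasing} it suffices, given an arbitrary $c:\omega_2^2\to\omega$, to produce a strictly increasing $2$-cofinal $F:\omega_2^{\le 2}\to\omega_2$ with $c\circ F^*$ constant.

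Fixing $c$, the first step is to assign to each $\alpha<\omega_2$ a color $m_\alpha\in\omega$ "stabilized" by a single small dense family. For each $\alpha$, partition $\omega_2$ into the pieces $A^m_\alpha:=\{\xi<\omega_2\mid c(\alpha,\xi)=m\}$, $m\in\omega$. Since $\Ical$ is a proper $\sigma$-ideal, any $\Ical$-positive $S$ has $S\cap A^m_\alpha$ positive for some $m$; hence $D_\alpha:=\{q\in\Pbb\mid q\le[A^m_\alpha]\text{ for some }m\in\omega\}$ is dense and downward closed, so the dense set $D$ meets it. Choose $[S_\alpha]\in D$ and $m_\alpha\in\omega$ with $S_\alpha\setminus A^{m_\alpha}_\alpha\in\Ical$. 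As $\alpha\mapsto([S_\alpha],m_\alpha)$ maps $\omega_2$ into $D\times\omega$, a set of size $\aleph_1<\mathrm{cf}(\omega_2)$, there are fixed $[S^*]\in D$ and $m^*\in\omega$ with $A:=\{\alpha<\omega_2\mid[S_\alpha]=[S^*],\,m_\alpha=m^*\}$ of size $\aleph_2$, hence cofinal in $\omega_2$. Then for any $a<b$ in $A$ the set $S^*\cap A^{m^*}_a\cap A^{m^*}_b$ differs from $S^*$ by the $\Ical$-set $(S^*\setminus A^{m^*}_a)\cup(S^*\setminus A^{m^*}_b)$, hence is $\Ical$-positive and therefore unbounded; so there is $\xi>b$ with $c(a,\xi)=c(b,\xi)=m^*$.

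The remaining step is to assemble $F$ from $A$ and $m^*$. Let $f:\omega_2\to A$ be the increasing enumeration (so $f$ is strictly increasing with $\gamma\le f(\gamma)$), and set $F(\langle\gamma\rangle):=f(\gamma)$. For $\gamma<\gamma'$, apply the previous step to $a=f(\gamma)<b=f(\gamma')$ to pick $\xi_{\gamma,\gamma'}>f(\gamma')$ with $c(f(\gamma),\xi_{\gamma,\gamma'})=c(f(\gamma'),\xi_{\gamma,\gamma'})=m^*$, and set $F(\langle\gamma,\gamma'\rangle):=\xi_{\gamma,\gamma'}$; by the remark following Lemma \ref{strictly_increasing} it is enough to define $F$ on strictly increasing tuples. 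Then $F$ is strictly increasing and $2$-cofinal (as $\gamma\le f(\gamma)<\xi_{\gamma,\gamma'}$), and every $\trleq$-increasing element of $\omega_2\bb2$ has the form $(\langle z\rangle,\langle x,y\rangle)$ with $x<y$ and $z\in\{x,y\}$, so $f(z)\in\{f(x),f(y)\}$, $\xi_{x,y}\in A^{m^*}_{f(x)}\cap A^{m^*}_{f(y)}$, and hence $c(F^*(\langle z\rangle,\langle x,y\rangle))=c(f(z),\xi_{x,y})=m^*$. Thus $c\circ F^*$ is constant, which is what we wanted.

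I expect the only real subtlety to be the pigeonhole in the middle step, i.e. checking that a single $\aleph_1$-sized dense family $D$ can be reused to assign a color $m_\alpha$ coherently to cofinally many $\alpha$ simultaneously; this rests on countable completeness (to keep a piece of each partition positive below a fixed condition, so that $D_\alpha$ is dense) and on $\aleph_1<\mathrm{cf}(\omega_2)$ (so that the resulting map $\omega_2\to D\times\omega$ has a cofinal fiber). It is precisely this last inequality that fails at $\omega_1$, consistent with the failure of $\PH_1(\omega_1)$ recorded above. The other ingredients — the reduction to strictly increasing $F$, unboundedness of $\Ical$-positive sets, and the verification that the assembled $F$ is $2$-cofinal — are routine.
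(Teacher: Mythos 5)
Your proof is correct and follows essentially the same route as the paper's: use countable completeness to find, for each $\alpha$, a positive set on which $c(\alpha,\cdot)$ is constant, then use $\aleph_1$-density together with a pigeonhole over $D\times\omega$ (the paper phrases this as finding a single positive $X$ below cofinally many $A_\alpha$ and then pigeonholing on the color) to extract a cofinal set with a common anchor and color, and finally assemble $F$ by sending singletons into that cofinal set and pairs to a common witness in the intersection of the relevant positive sets. The reorganization of the pigeonhole step and the explicit verification of $2$-cofinality are fine but do not change the argument.
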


\begin{proof}
Fix a coloring $c:\omega_2^2\to\omega$ and an $\Ical$ as in the premise of the theorem.
Using $\Ical$, we will define a strictly increasing $2$-cofinal $F$ such that $c\circ F^*$ is constant.
Since $\Ical$ is countably complete, we may begin by fixing for each $\alpha\in\omega_2$ an $\Ical$-positive $A_\alpha$ and $i_\alpha\in\omega$ such that $c(\alpha,\beta)=i_\alpha$ for all $\beta\in A_\alpha$.
Since $\Ical$ is $\aleph_1$-dense, there exists an unbounded $B\subseteq\omega_2$ and $\Ical$-positive $X\subseteq\omega_2$ such that $X \backslash A_\alpha \in\Ical$ for all $\alpha\in B$; by the pigeonhole principle, there then exists an $i\in\omega$ and unbounded $C\subseteq B$ such that $i_\alpha=i$ for all $\alpha\in C$.
For each $\alpha$ in $\omega_2$ let $F(\alpha)=\min\,C\backslash\alpha$, and for each $\alpha<\beta$ in $\omega_2$ let
$$F(\alpha,\beta)=\min\big(A_{F(\alpha)}\cap A_{F(\beta)}\backslash (F(\alpha)\cup F(\beta)+1)\big)$$
(it is the existence of the set $X$ that ensures that this expression is meaningful).
The composition $c\circ F^*$ takes the constant value $i$, as desired.
\end{proof}
\begin{rem}
Working from the assumption of a huge cardinal, Foreman constructs a uniform, countably complete, $\aleph_1$-dense ideal on $\omega_2$ in \cite{F98}.
It is not difficult to see that some large cardinal assumption is necessary for the conclusion of Theorem \ref{theorem:foreman_ideal}; by Proposition \ref{square} this assumption is at least that of a weakly compact cardinal.
We return to the question of the consistency strengths of $\PH_n(\omega_{n+1})$ in our conclusion below.
\end{rem}
Clearly the argument of Theorem \ref{theorem:foreman_ideal} will continue to apply with any $\kappa$ and $\kappa^+$ in the place of $\aleph_1$ and $\omega_2$, respectively.
A perhaps more interesting generalization of the argument arises with the question of $\PH_2(\omega_3)$.
Write $\add(\Ical)$ and $\dens(\Ical^+)$ for the completeness and density, respectively, of an ideal $\Ical$ on $\kappa$ (so that $\add (\Ical)=\dens (\Ical^+)=\aleph_1$ for the ideal of \cite{F98} invoked above, for example).
\begin{thm}
\label{theorem:omega_3_variant}
If there exist uniform ideals $\Ical$ and $\Jcal$ on a cardinal $\kappa$ satisfying
\begin{enumerate}

\item \label{I_sigma-add} $\aleph_1\leq\text{add}(\Ical)$,
\item \label{dens(I)<add(J)} $\text{dens}(\Ical^+)<\text{add}(\Jcal)\leq\kappa$, and
\item \label{dens(J)<kappa} $\text{dens}(\Jcal^+)<\kappa$,

\end{enumerate}
then $\PH_2(\kappa)$ holds.
\end{thm}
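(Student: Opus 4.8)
The plan is to combine the reduction of Lemma~\ref{strictly_increasing} with a two‑layer refinement of the argument of Theorem~\ref{theorem:foreman_ideal}, using the countably complete ideal $\Ical$ of condition~(\ref{I_sigma-add}) to homogenise the top coordinate of a given coloring of $[\kappa]^3$, and the ideal $\Jcal$ to homogenise the auxiliary $2$‑coloring which records \emph{which} homogenising set was produced at the first stage. By Lemma~\ref{strictly_increasing}, and the identification of strictly increasing tuples with their underlying sets, it suffices, given $c:[\kappa]^3\to\omega$, to build a strictly increasing $3$‑cofinal $F:\kappa^{\leq 3}\to\kappa$ with $c\circ F^*$ constant. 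I will assume $\kappa$ is regular, which is the case relevant to the applications (e.g. $\kappa=\omega_3$); recall $\add(\Jcal)\leq\mathrm{cf}(\kappa)$ for any uniform $\Jcal$, so in particular $\dens(\Ical^+)<\mathrm{cf}(\kappa)$.

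The engine is the following ideal‑assisted form of $\PH_1$, proved exactly as Theorem~\ref{theorem:foreman_ideal}: for any $\mu<\add(\Jcal)$ and any $d:[\kappa]^2\to\mu$ there is a strictly increasing $2$‑cofinal $F_0:\kappa^{\leq 2}\to\kappa$ with $d\circ F_0^*$ constant. Indeed, by $\mu^{+}$‑completeness of $\Jcal$ fix for each $\alpha<\kappa$ a set $C_\alpha\in\Jcal^+$ contained in $(\alpha,\kappa)$ and a value $j_\alpha<\mu$ with $d(\{\alpha,\beta\})=j_\alpha$ for all $\beta\in C_\alpha$; fix a dense $E\subseteq\Pscr(\kappa)/\Jcal$ with $|E|=\dens(\Jcal^+)<\kappa$ and refine each $C_\alpha$ to some $\hat C_\alpha\in E$; since $\mu<\kappa$ and $|E|<\kappa$, thin the $\alpha$'s to an unbounded $C^*$ on which $(\hat C_\alpha,j_\alpha)$ takes a constant value $(\hat C,j)$, and set $F_0(\alpha)=\min(C^*\setminus\alpha)$ and $F_0(\{\alpha,\beta\})=\min\big(C_{F_0(\alpha)}\cap C_{F_0(\beta)}\big)$. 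This is legitimate because $C_{F_0(\alpha)}\cap C_{F_0(\beta)}$ contains $\hat C$ modulo $\Jcal$ and is therefore $\Jcal$‑positive, and because each $C_{F_0(\alpha)}$ lies above $F_0(\alpha)$; a routine check then gives $F_0$ strictly increasing, $2$‑cofinal, with $d\circ F_0^*\equiv j$.

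Now carry out three steps. \textbf{(A)} By countable completeness of $\Ical$ (condition~(\ref{I_sigma-add})), for each $\{\alpha,\beta\}\in[\kappa]^2$ with $\alpha<\beta$ fix $A_{\alpha\beta}\in\Ical^+$ contained in $(\beta,\kappa)$ and $i_{\alpha\beta}\in\omega$ with $c(\{\alpha,\beta,\gamma\})=i_{\alpha\beta}$ for all $\gamma\in A_{\alpha\beta}$. \textbf{(B)} Fix a dense $D\subseteq\Pscr(\kappa)/\Ical$ with $|D|=\dens(\Ical^+)$, refine each $A_{\alpha\beta}$ to $\hat A_{\alpha\beta}\in D$, and define $d:[\kappa]^2\to D\times\omega$ by $d(\{\alpha,\beta\})=(\hat A_{\alpha\beta},i_{\alpha\beta})$; here $|D\times\omega|=\dens(\Ical^+)<\add(\Jcal)$ by condition~(\ref{dens(I)<add(J)}), and $\dens(\Jcal^+)<\kappa$ by condition~(\ref{dens(J)<kappa}). \textbf{(C)} Apply the lemma of the previous paragraph with $\mu=\dens(\Ical^+)$ to obtain a strictly increasing $2$‑cofinal $F_0:\kappa^{\leq 2}\to\kappa$ with $d\circ F_0^*$ constantly equal to some $(\hat A^*,i^*)\in D\times\omega$.

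\textbf{(D)} The payoff of step (C) is that for any $\alpha<\beta<\gamma$ and any subconfiguration $x\in\{y,z\}\subseteq\{\alpha,\beta,\gamma\}$, the pair $\{F_0(x),F_0(\{y,z\})\}$ (with $F_0(x)<F_0(\{y,z\})$ by strict increasingness) lies in the range of $F_0^*$, whence $\hat A_{F_0(x)F_0(\{y,z\})}=\hat A^*$ and $i_{F_0(x)F_0(\{y,z\})}=i^*$. Thus the (at most six) sets $A_{F_0(x)F_0(\{y,z\})}$ attached to the triple all contain $\hat A^*$ modulo $\Ical$, so their intersection is $\Ical$‑positive; define $F(\{\alpha,\beta,\gamma\})$ to be its least element, which is automatically above every $F_0(\{y,z\})$ since each $A_{F_0(x)F_0(\{y,z\})}\subseteq(F_0(\{y,z\}),\kappa)$. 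Setting $F\restriction\kappa^{\leq 2}=F_0$, one checks that $F$ is strictly increasing and $3$‑cofinal and that $F^*$ sends $\kappa\bb{3}$ to triples $(F_0(x),F_0(\{y,z\}),F(\{\alpha,\beta,\gamma\}))$ whose $c$‑value, by step (A), equals $i_{F_0(x)F_0(\{y,z\})}=i^*$. Hence $c\circ F^*\equiv i^*$, and $\PH_2(\kappa)$ follows.

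I expect step (D) to be the crux: the substantive point is that the finitely many $\Ical$‑positive witnesses relevant to a single triple have all been driven to the \emph{same} element $\hat A^*$ of $D$ --- which is exactly what homogenising the auxiliary coloring $d$ via the $\Jcal$‑assisted $\PH_1$ lemma accomplishes --- after which one need only confirm that their common (mod $\Ical$) refinement survives a finite intersection. (This $n=2$ argument is the first instance of a scheme that, for $\PH_n(\kappa)$, would require a tower of $n$ ideals linked by a cascade of additivity/density inequalities generalising (\ref{I_sigma-add})--(\ref{dens(J)<kappa}).)
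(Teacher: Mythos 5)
Your proof is correct and is essentially the paper's own argument: both homogenize the top coordinate via the countable completeness of $\Ical$, track the resulting $\Ical$-positive sets through a dense family of size $\dens(\Ical^+)$, homogenize the induced pair-coloring using $\add(\Jcal)>\dens(\Ical^+)$ together with $\dens(\Jcal^+)<\kappa$, and finally define $F$ on triples as the minimum of a finite, $\Ical$-positive intersection of the sets $A_{F(\sigma)F(\tau)}$; packaging the middle step as a self-contained $\Jcal$-assisted $\PH_1$ lemma for $<\add(\Jcal)$-colorings is only a cosmetic reorganization of the paper's steps. One small remark: your restriction to regular $\kappa$ is unnecessary, since your pigeonhole only needs that a partition of $\kappa$ into fewer than $\kappa$ pieces has a piece of cardinality $\kappa$, which is automatically unbounded.
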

\begin{proof}
Fix a coloring $c:\kappa^3\to\omega$.
We will construct a $3$-cofinal $F$ such that $c\circ F^*$ is constant.
By premise (\ref{I_sigma-add}), for all $\alpha<\beta$ in $\kappa$ there exists an $i_{\alpha\beta}\in\omega$ with $A_{\alpha\beta}:=\{\gamma\in\kappa\mid c(\alpha,\beta,\gamma)=i_{\alpha\beta}\}\not\in\Ical$.
Fix a set $E$ of representatives for the elements of a dense subset of $\Pscr(\kappa)/\Ical$ such that $|E| = \dens(\Ical^+)$.
By premise (\ref{dens(I)<add(J)}),
for each $\alpha\in\kappa$ there exists a $\Jcal$-positive $B_\alpha\subseteq\kappa$ and $X_\alpha\in E$ and $i_\alpha\in\omega$ with $ X_\alpha \backslash A_{\alpha\beta}\in\Ical$ and $i_{\alpha\beta}=i_\alpha$ for all $\beta\in B_\alpha$.
There then exists an $X\subseteq\kappa$ and $i\in\omega$ and $\Jcal$-positive $C$ such that $X_\alpha=X$ and $i_\alpha=i$ for all $\alpha\in C$.
By premise (\ref{dens(J)<kappa}), there exists a $\Jcal$-positive $Y$ and unbounded $D\subseteq C$ such that $Y \backslash B_\alpha\in\Jcal$ for all $\alpha\in D$.
Now (partially) define $F:\kappa^{\leq 3}\to\kappa$ as follows:
\begin{itemize}
\item Let $F(\alpha)=\min(D\backslash\alpha)$ for all $\alpha\in\kappa$,
\item let $F(\alpha,\beta)=\min\big(B_{F(\alpha)}\cap B_{F(\beta)}\backslash(F(\alpha)\cup F(\beta)+1)\big)$ for all $\alpha<\beta$ in $\kappa$, and
\item let $$F(\alpha,\beta,\gamma)=\min\Bigg(\bigcap_{\emptyset\subset\sigma\subset\tau\subset\{\alpha,\beta,\gamma\}}A_{F(\sigma),F(\tau)}\Big\backslash\bigg(\bigcup _{\emptyset\subset\sigma\subset\{\alpha,\beta,\gamma\}} F(\sigma)+1\bigg)\Bigg)$$ for all $\alpha<\beta<\gamma$ in $\kappa$.
\end{itemize}
Again it is the existence of the sets $Y$ and $X$, respectively, that ensures that the second and third of these expressions is meaningful.
It is now straightforward to see that $c\circ F^*$ takes the constant value $i$, as desired.
\end{proof}

Intriguingly, for any accessible cardinal $\kappa$ the consistency of Theorem \ref{theorem:omega_3_variant}'s premises with the ZFC axioms is unknown.
More particularly, when $\kappa=\omega_3$ they entail the existence of an $\aleph_1$-dense ideal on $\omega_3$, the consistency of which is a well-known open question.
Note also that longer sequences of ideals satisfying premises like those of Theorem \ref{theorem:omega_3_variant} will allow for even higher-order versions of the above argument, securing the consistency of $\PH_n(\kappa)$ wherever the premises themselves are.

We close this section with a few observations reconnecting our ordinal analyses with our paper's main focus on partition hypotheses on $\Omega$.
\begin{thm}
\label{theorem:weakly_compact}
$\PH_n(\kappa)$ holds for any weakly compact cardinal $\kappa$ and $n\in\omega$.
\end{thm}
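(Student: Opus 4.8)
The plan is to obtain $\PH_n(\kappa)$ as a quick consequence of the generalized partition relation enjoyed by weakly compact cardinals. First I would recall the standard fact (see, e.g., \cite{higher_infinite}) that a weakly compact $\kappa$ satisfies $\kappa\to(\kappa)^m_\lambda$ for all $m<\omega$ and all $\lambda<\kappa$; since $\kappa$ is inaccessible we have $\omega<\kappa$, so in particular $\kappa\to(\kappa)^{n+1}_\omega$ holds. Given a coloring $c\colon\kappa^{n+1}\to\omega$, I would apply this to the induced set-coloring $\tilde c\colon[\kappa]^{n+1}\to\omega$ defined by $\tilde c(\{\alpha_0<\dots<\alpha_n\}):=c(\alpha_0,\dots,\alpha_n)$, obtaining an $H\in[\kappa]^\kappa$ on which $\tilde c$ is constant with some value $i^*$; since $\kappa$ is regular, $H$ is unbounded in $\kappa$. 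It then suffices to construct an $(n+1)$-cofinal $F\colon\kappa^{\leq n+1}\to\kappa$ whose range lies inside $H$ and which is \emph{strictly} increasing, in the sense that $\mathbf{x}\triangleleft\mathbf{y}$ implies $F(\mathbf{x})<F(\mathbf{y})$.

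Such an $F$ I would build by recursion on the length $j$ of its argument: for $\mathbf{x}=(x_0,\dots,x_{j-1})$ with $1\leq j\leq n+1$, let $F(\mathbf{x})$ be the least member of $H$ that is both $\geq\max\{x_0,\dots,x_{j-1}\}$ and strictly larger than $F(\mathbf{x}')$ for every tuple $\mathbf{x}'$ obtained from $\mathbf{x}$ by deleting a single coordinate. These are finitely many constraints, each naming an ordinal below $\kappa$ already determined at an earlier stage, so their supremum is below $\kappa$ and the required member of $H$ exists by unboundedness of $H$ (for length-one tuples there are no deletion constraints, so $F((x))=\min\{h\in H\mid h\geq x\}\geq x$). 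The verifications that $x\leq F(x)$ for $x\in\kappa$, that $\mathbf{x}\trleq\mathbf{y}$ implies $F(\mathbf{x})\leq F(\mathbf{y})$, and that $\mathbf{x}\triangleleft\mathbf{y}$ implies $F(\mathbf{x})<F(\mathbf{y})$ (for a multi-step deletion one interposes single-coordinate deletions and chains the strict inequalities) are then routine; in particular $F$ is $(n+1)$-cofinal. Finally, for any $\sigma\in\kappa\bb{n+1}$ one has $\sigma(1)\triangleleft\cdots\triangleleft\sigma(n+1)$, since each $\sigma(i)$ is an $i$-tuple with $\sigma(i)\trleq\sigma(i+1)$ and hence is a proper sub-tuple of $\sigma(i+1)$; therefore $F(\sigma(1))<\cdots<F(\sigma(n+1))$, so $F^*(\sigma)=(F(\sigma(1)),\dots,F(\sigma(n+1)))$ is a strictly increasing tuple all of whose entries lie in $H$. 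Consequently $c(F^*(\sigma))=\tilde c(\{F(\sigma(1)),\dots,F(\sigma(n+1))\})=i^*$, and as $\sigma$ was arbitrary $c\circ F^*$ is constant, which is exactly $\PH_n(\kappa)$.

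I do not expect a genuine obstacle in this argument: it is essentially a one-paragraph deduction from the Ramsey property of weakly compact cardinals. The only mildly delicate points are citing the correct generalized partition relation and checking that the recursively defined $F$ is monotone and strictly so on proper sub-tuples — both thoroughly routine. (If preferred, one could instead invoke Lemma \ref{strictly_increasing} at the outset to reduce to strictly increasing colorings and cofinal functions, but the direct construction above renders that detour unnecessary.)
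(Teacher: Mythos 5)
Your proof is correct and follows essentially the same route as the paper's: both apply the weakly compact partition relation to obtain a size-$\kappa$ homogeneous set and then recursively define $F(\xbf)$ as the least element of that set exceeding all coordinates of $\xbf$ and all values $F(\ybf)$ for proper sub-tuples $\ybf\vartriangleleft\xbf$. Your write-up is, if anything, slightly more careful about the routine verifications the paper leaves to the reader.
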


\begin{proof}
Assume $\kappa$ is weakly compact and  
suppose that $c:\kappa^n \to \omega$.
Let $m \in \omega$ and $X \subseteq \kappa$ be such that $|X| = \kappa$ and
whenever $\vec{\alpha} \in X^n$ and has strictly increasing coordinates,
$c(\vec{\alpha}) = k$.
Define $F:\kappa^{\leq n} \to X$ by letting $F(\xbf)$ be the least element of $X$
strictly larger than any coordinate of $\xbf$ or $F(\ybf)$ for any $\ybf \vartriangleleft \xbf$.
It is easily verified that $F$ is an $n$-cofinal function, that the range of $F^*$ consists of
strictly increasing $n$-tuples from $X$, and hence that $c \circ F^*$ takes the constant value $i$.
\end{proof}

The connection with our results on partition hypotheses on $\Omega$ is the following: the length-$\kappa$ iteration of Hechler forcings appearing in both \cite{SVHDL} and \cite{strong_hom_add} effectively translates the partition hypotheses of Theorem \ref{theorem:weakly_compact} to the setting of $\Omega$.
Those works' subsequent deductions are now encapsulated by Section \ref{Sect:PartHypAdd}.
See the conclusion of \cite{SVHDL} for the outline of an argument that any length-$\kappa$
finite-support iteration of $\sigma$-centered posets of cardinality less than $\kappa$ will achieve the same effect.
A more direct translation is the following. 
\begin{prop}
\label{Prop:7.19}
Suppose that $\mathfrak{b}=\mathfrak{d}=\kappa$ and $\PH_n(\kappa)$. 
Then $\PH_n$. 
\end{prop}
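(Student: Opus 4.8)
The plan is to deduce $\PH_n=\PH_n(\Omega,\omega)$ from $\PH_n(\kappa,\omega)$ by exhibiting a monotone map from $\kappa$ onto a cofinal subset of $(\Omega,\leq^*)$ and then invoking Lemma \ref{reduction}. The only substantive ingredient is the classical observation that the hypothesis $\mathfrak{b}=\mathfrak{d}=\kappa$ produces a \emph{scale}: a $\leq^*$-increasing sequence $\langle f_\alpha\mid\alpha<\kappa\rangle$ in $\Omega$ which is $\leq^*$-cofinal.

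First I would construct such a scale by recursion on $\alpha<\kappa$. Fix a $\leq^*$-dominating family $\{g_\alpha\mid\alpha<\kappa\}\subseteq\Omega$, which exists because $\mathfrak{d}=\kappa$. Given $\langle f_\beta\mid\beta<\alpha\rangle$ with $\alpha<\kappa$, the set $\{f_\beta\mid\beta<\alpha\}\cup\{g_\alpha\}$ has cardinality strictly less than $\kappa=\mathfrak{b}$ and is therefore $\leq^*$-bounded; let $f_\alpha\in\Omega$ be any $\leq^*$-upper bound for it. The resulting sequence $\langle f_\alpha\mid\alpha<\kappa\rangle$ is $\leq^*$-increasing and $\leq^*$-dominates $\{g_\alpha\mid\alpha<\kappa\}$, hence is cofinal in $(\Omega,\leq^*)$.

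Next I would observe that the map $\iota\colon\kappa\to\Omega$ given by $\iota(\alpha)=f_\alpha$ is monotone with cofinal image: monotonicity is immediate from the sequence being $\leq^*$-increasing (together with reflexivity of $\leq^*$), and cofinality of $\iota``\kappa$ is exactly the scale property established above. Since $(\kappa,<)$ is a directed quasi-order and $\PH_n(\kappa,\omega)$ holds by hypothesis, Lemma \ref{reduction} applied with $\Pbb=\kappa$, $\mathbb{Q}=\Omega$, $\lambda=\omega$, and $f=\iota$ yields $\PH_n(\Omega,\omega)$, that is, $\PH_n$.

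I do not expect a genuine obstacle here; the one point worth flagging is that the full hypothesis $\mathfrak{b}=\mathfrak{d}$, and not merely $\mathfrak{d}=\kappa$, is used, since Lemma \ref{reduction} requires the transfer map to be monotone while an arbitrary dominating family of size $\kappa$ admits no monotone enumeration in general. (The construction also tacitly uses that $\mathfrak{b}$ and $\mathfrak{d}$ are unchanged whether computed in $\Omega$ or in ${}^{\omega}\omega$, which is routine.)
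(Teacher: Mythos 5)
Your proposal is correct and is essentially identical to the paper's proof, which likewise builds a $\kappa$-scale from $\mathfrak{b}=\mathfrak{d}=\kappa$ and applies Lemma \ref{reduction} to the resulting monotone map $\kappa\to\Omega$ with cofinal range. The extra details you supply (the recursive construction of the scale and the remark about why $\mathfrak{b}=\kappa$ is needed for monotonicity) are accurate elaborations of what the paper leaves implicit.
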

\begin{proof}
A $\kappa$-scale yields an $f\colon \kappa\to\Om$ which is increasing with cofinal range. 
Lemma \ref{reduction} completes the proof. 
\end{proof}
Partition hypotheses on $\Om$ also imply partition hypotheses on the ordinals; these interrelationships allow us to compute the exact consistency strengths of the principles $\PH_n$ for all $n\geq 0$.
To this end, we'll need one preliminary definition and proposition.
\begin{defn}
For any cardinal $\kappa$ the principle $\square(\kappa)$ is the assertion that there exists a sequence $\mathcal{C}=\langle C_\alpha\mid\alpha\in\kappa\rangle$ such that
\begin{itemize}
\item $C_\alpha$ is a closed unbounded subset of $\alpha$ for each $\alpha\in\kappa$. 
\item $C_\beta\cap \alpha=C_\alpha$ for every $\beta\in\kappa$ and limit point $\alpha$ of $C_\beta$. 
\item No club $C\subseteq\kappa$ satisfies $C\cap \alpha=C_\alpha$ at every limit point $\alpha$ of $C$. 
\end{itemize}
\end{defn}
Recall from \cite{To87} and \cite{Jensen} that if $\square(\kappa)$ fails at a regular uncountable
cardinal $\kappa$ then $\kappa$ is weakly compact in $L$.

\begin{prop} \label{square}
Suppose that $\kappa$ is a regular uncountable cardinal and that $\square(\kappa)$ holds. 
Then $\PH_n(\kappa)$ fails for all $n>0$. 
\end{prop}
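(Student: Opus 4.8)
The plan is to reduce to the case $n=1$ and then apply the method of minimal walks along the $\square(\kappa)$-sequence. The one-line argument showing that $\PH_{n+1}$ implies $\PH_n$ over $\Omega$ (Section~\ref{Sect:PH}) works verbatim over any directed quasi-order: given $c\colon\kappa^{n+1}\to\omega$, apply $\PH_{n+1}(\kappa)$ to $\tilde c(x_0,\dots,x_{n+1}):=c(x_0,\dots,x_n)$ and restrict the resulting $(n{+}2)$-cofinal witness to $\kappa^{\le n+1}$ to obtain a witness for $c$. Hence $\PH_n(\kappa)$ fails for every $n>0$ as soon as $\PH_1(\kappa)$ fails, and by Lemma~\ref{strictly_increasing} it suffices to produce a single $c\colon\kappa^2\to\omega$ admitting no strictly increasing $2$-cofinal $F\colon\kappa^{\le2}\to\kappa$ with $c\circ F^*$ constant.

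To build such a $c$, fix a $\square(\kappa)$-sequence $\mathcal C=\langle C_\alpha\mid\alpha<\kappa\rangle$ (taking $C_{\alpha+1}=\{\alpha\}$). For $\xi<\beta$ let $\beta=\beta_0>\beta_1>\dots>\beta_k=\xi$ with $\beta_{i+1}=\min(C_{\beta_i}\setminus\xi)$ be the walk from $\beta$ down to $\xi$ along $\mathcal C$, and set $c(\xi,\beta)=c(\beta,\xi)\in\omega$ to be a code for a finitary characteristic of this walk --- its length $\rho_2(\xi,\beta)=k$, possibly enriched by a bounded amount of further walk data, the exact choice being dictated by the extraction step below. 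What the coherence clause of $\square(\kappa)$ buys is that the functions $c(\cdot,\beta)$ themselves cohere; for instance $c(\cdot,\gamma)=c(\cdot,\beta)$ on all of $\beta$ whenever $\beta$ is a limit point of $C_\gamma$, since then the walk from $\gamma$ to any $\xi<\beta$ is just the walk from $\beta$ to $\xi$ with one extra initial step prepended. (This is the walk-theoretic analogue of the use made of the ``tower of injections'' $h_\beta$ in Theorem~\ref{theorem:partition_failure_ordinals}.)

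Now suppose toward a contradiction that some strictly increasing $2$-cofinal $F$ has $c\circ F^*\equiv m$; explicitly, $c(F(x),F(x,y))=m$ and $c(F(y),F(x,y))=m$ for all $x<y<\kappa$, these being precisely the values $c\circ F^*$ takes on the two inputs whose $\kappa^{[2]}$-component is $(x,y)$ (recall $F(x),F(y)<F(x,y)$). Let $D$ be the club of ordinals closed under $F$ on tuples of length $\le 2$. For each $\delta\in D$ the ``apexes'' $F(x,y)$ with $x<y<\delta$ are cofinal in $\delta$, and each is linked by a walk of recorded value $m$ to two points of $F``\delta$ lying below it. The plan is then to use this constancy together with the coherence of the walk functions to shrink $D$ to a club $E$ along which the initial segments of the $C_\delta$ $(\delta\in E)$ fit together coherently enough that their union $\bar C$ is a club with $\bar C\cap\alpha=C_\alpha$ at every limit point $\alpha$ of $\bar C$ --- a thread through $\mathcal C$, contradicting the third clause of $\square(\kappa)$ and proving the proposition.

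The reduction, Lemma~\ref{strictly_increasing}, and the club of closure points are all routine; the real work, and the main obstacle, is this last step --- converting ``$c$ constant on the $F^*$-image'' into an honest thread through $\mathcal C$. The delicate point is that a strictly increasing $2$-cofinal $F$ supplies, at each $\delta\in D$, only a sparse ``two-edges-per-apex'' fragment of the walk structure below $\delta$, so one must show this fragment is already rigid enough --- via $\square(\kappa)$-coherence --- to reconstruct $\mathcal C\restriction\delta$ uniformly across $\delta\in E$; the techniques here should parallel the standard derivations from $\square(\kappa)$ of failures of stationary reflection and of special $\kappa$-Aronszajn trees by minimal walks. Exactly how much walk data $c$ must record beyond $\rho_2$ will be pinned down by making this step go through.
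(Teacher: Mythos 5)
Your reduction to $n=1$ and the appeal to Lemma \ref{strictly_increasing} are fine, and minimal walks along the $\square(\kappa)$-sequence are indeed the right tool. But the heart of the argument is missing: the step you yourself flag as ``the real work'' --- converting constancy of $c\circ F^*$ into a thread through $\mathcal{C}$ --- is never carried out, and it is not clear it can be. As you note, a $2$-cofinal $F$ only gives you two monochromatic pairs per apex $F(x,y)$, which is far too sparse a fragment of the walk structure to reconstruct $\mathcal{C}\restriction\delta$; you leave both the extraction step and even the definition of $c$ (``possibly enriched by a bounded amount of further walk data\dots pinned down by making this step go through'') unspecified. A proof cannot defer its central step to an unspecified future adjustment of its central definition.

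The paper avoids thread-reconstruction entirely. The non-threadability of $\mathcal{C}$ is consumed once and for all in establishing two standard properties of $\rho_2$ from \cite{To87}: that $d(\alpha,\beta):=\sup_{\xi\leq\alpha}|\rho_2(\xi,\alpha)-\rho_2(\xi,\beta)|$ is finite, and that $\rho_2$ (hence $d$) is unbounded on every pair of $\kappa$-sized sets. Finiteness of $d$ gives the triangle inequality $d(\alpha,\beta)\leq d(\alpha,\gamma)+d(\beta,\gamma)$, and this is the whole point: taking $c$ to be (an extension of) $d$ and supposing $c\circ F^*\equiv i$, the two legs $c(F(\alpha),F(\alpha,\beta))$ and $c(F(\beta),F(\alpha,\beta))$ are both values of $c\circ F^*$, so subadditivity bounds $d(F(\alpha),F(\beta))$ by $2i$ for \emph{all} $\alpha<\beta$ --- contradicting unboundedness of $d$ on the $\kappa$-sized set $F``\kappa$. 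Note that $(F(\alpha),F(\beta))$ itself need not lie in the range of $F^*$; subadditivity is exactly what transfers the monochromaticity from the range of $F^*$ to all pairs from $F``\kappa$. This is the idea your proposal lacks. (A small side remark: when $\beta$ is a limit point of $C_\gamma$ and $\xi<\beta$, the walk from $\gamma$ to $\xi$ is the walk from $\beta$ to $\xi$ with its top vertex replaced by $\gamma$, not with an extra step prepended, so $\rho_2(\xi,\gamma)=\rho_2(\xi,\beta)$; this does not affect your argument but signals that the coherence you invoke needs more care than the sketch suggests.)
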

\begin{proof}
Fix a $\square(\kappa)$-sequence  $\mathcal{C}=\langle C_\alpha\mid\alpha\in\kappa\rangle$. Let $\rho_2:[\kappa]^2\rightarrow \omega $ be the characteristic of walks along the sequence defined recursively by $\rho_2(\alpha, \beta)=\rho_2(\alpha, \min(C_\beta\setminus \alpha))+1$ with the boundary condition $\rho_2(\alpha,\alpha)=0$. We shall need the following two properties of this characteristic (see \cite[1.14]{To87}):
\begin{itemize}
\item $d(\alpha, \beta)=\sup_{\xi\leq \alpha}|\rho_2(\xi, \alpha)-\rho_2(\xi, \beta)|<\infty$ for all $\alpha<\beta<\kappa$.
\item for any $i\in\omega$ and $\mathcal{A},\mathcal{B}\in[\kappa]^\kappa$, there exist $\alpha\in \mathcal{A}$ and $\beta\in \mathcal{B}$ such that $\rho_2(\alpha, \beta)>i$.
\end{itemize}
Since $d(\alpha, \beta)\geq \rho_2(\alpha, \beta)$ for all $\alpha<\beta<\kappa$ it follows that the function 
$d\colon[\kappa]^2\to\omega$ has the following two properties:
\begin{itemize}
    \item  $d(\alpha,\beta)\leq d(\alpha,\gamma)+d(\beta,\gamma)$ for every $\alpha<\beta<\gamma<\kappa$;
    \item for any $i\in\omega$ and $\mathcal{A}\in[\kappa]^\kappa$, there are $\alpha,\beta\in\mathcal{A}$ such that $d(\alpha,\beta)>i$. 
\end{itemize}
As before, we may identify $d$ with a partial function $\kappa^2\to\omega$; let $c$ denote any total extension of this function. 
Let $F$ be a strictly increasing $2$-cofinal function and let $\mathcal{A}=\{F(\alpha)\mid\alpha\in\kappa\}$.
The set $\mathcal{A}$ is unbounded in the regular cardinal $\kappa$, hence $|\mathcal{A}|=\kappa$. 
By the above, for any $i\in\omega$ there exist $F(\alpha)<F(\beta)$ such that 
\[2i<c(F(\alpha),F(\beta))\leq c(F(\alpha),F(\alpha,\beta))+c(F(\beta),F(\alpha,\beta)).\]
In consequence, $c\circ F^*$ cannot take the constant value $i$. This shows the failure of $\mathrm{PH}_1(\kappa)$, and since $\mathrm{PH}_n(\kappa)$ implies $\mathrm{PH}_m(\kappa)$ for all $n\geq m\geq 0$, this completes the proof.
\end{proof}
Note that $\PH_n(\kappa)$ is equivalent to $\PH_n(\operatorname{cf}(\kappa))$.
\begin{prop}
\label{OmtoOrds}
The partition hypothesis $\PH_n$ implies $\PH_n(\mathfrak{d})$. In particular, it implies that $\operatorname{cf}(\mathfrak{d})>\omega_n$; if $n>0$, it implies moreover that $\operatorname{cf}(\mathfrak{d})$ is weakly compact in $L$.
\end{prop}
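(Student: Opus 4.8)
The plan is to obtain the first assertion from Lemma~\ref{reduction} and the remaining two by combining it with Theorem~\ref{theorem:partition_failure_ordinals}, Proposition~\ref{square}, and the absoluteness fact for $\square$ recalled before the latter. For $\PH_n\Rightarrow\PH_n(\mathfrak{d})$ I would exhibit a monotone map $f\colon\Omega\to\mathfrak{d}$ with cofinal image and apply Lemma~\ref{reduction} with $\Pbb=\Omega$, $\mathbb{Q}=\mathfrak{d}$ (a directed quasi-order, being a limit ordinal), and $\lambda=\omega$. To build $f$, first fix a $\leq^*$-cofinal family $\langle x_\alpha\mid\alpha<\mathfrak{d}\rangle$ in $\Omega$ with the property that $x_\alpha\leq^* x_\beta$ implies $\alpha\leq\beta$; this is constructed by recursion, since at stage $\alpha$ one need only take $x_\alpha$ above both the $\alpha$-th term of some fixed cofinal set and some $z_\alpha\in\Omega$ that is not $\leq^*$-dominated by any $x_\beta$ with $\beta<\alpha$ (such $z_\alpha$ exists because no subset of $\Omega$ of size $<\mathfrak{d}$ is $\leq^*$-cofinal, and an upper bound of the two witnesses exists by directedness). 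For any $y\in\Omega$ the set $\{\alpha<\mathfrak{d}\mid x_\alpha\leq^* y\}$ is then bounded in $\mathfrak{d}$ (if $y\leq^* x_\beta$ it is contained in $\beta+1$), so $f(y):=\sup\{\alpha+1\mid x_\alpha\leq^* y\}$ is a well-defined ordinal below $\mathfrak{d}$; $f$ is clearly monotone, and it has cofinal image because $f(x_\beta)>\beta$ for every $\beta<\mathfrak{d}$. Lemma~\ref{reduction} then gives $\PH_n(\mathfrak{d})$.

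For the second assertion, recall from the remark just above that $\PH_n(\mathfrak{d})$ is equivalent to $\PH_n(\operatorname{cf}(\mathfrak{d}))$ and that $\operatorname{cf}(\mathfrak{d})$ is a regular cardinal. Were $\operatorname{cf}(\mathfrak{d})\leq\omega_n$ it would equal $\omega_k$ for some $k\leq n$, whence $\PH_n(\omega_k)$ and therefore $\PH_k(\omega_k)$ would hold (as $\PH_n(\Pbb)$ implies $\PH_m(\Pbb)$ whenever $m\leq n$), contradicting Theorem~\ref{theorem:partition_failure_ordinals}. Hence $\operatorname{cf}(\mathfrak{d})>\omega_n$; in particular $\operatorname{cf}(\mathfrak{d})$ is uncountable.

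Finally, assume $n>0$. Then $\PH_n(\mathfrak{d})$ implies $\PH_1(\mathfrak{d})$, hence $\PH_1(\operatorname{cf}(\mathfrak{d}))$, and $\operatorname{cf}(\mathfrak{d})$ is regular and uncountable by the previous paragraph. By the contrapositive of Proposition~\ref{square}, $\square(\operatorname{cf}(\mathfrak{d}))$ fails, so by the theorem of Todorcevic and Jensen recalled above $\operatorname{cf}(\mathfrak{d})$ is weakly compact in $L$. The only step needing genuine care is the construction of $f$ in the first paragraph; everything else is a direct assembly of results already established.
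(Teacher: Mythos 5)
Your proposal is correct and follows essentially the same route as the paper: reduce via Lemma~\ref{reduction} using a monotone map $\Omega\to\mathfrak{d}$ with cofinal image, then combine the equivalence $\PH_n(\mathfrak{d})\Leftrightarrow\PH_n(\operatorname{cf}(\mathfrak{d}))$ with Theorem~\ref{theorem:partition_failure_ordinals} and Proposition~\ref{square}. The only difference is that your construction of the reduction map is more laborious than necessary: the paper simply takes an arbitrary $\leq^*$-cofinal family $\langle x_\alpha\mid\alpha<\mathfrak{d}\rangle$ and uses $x\mapsto\min\{\alpha\mid x\leq^*x_\alpha\}$, whose image is cofinal because no subfamily of size less than $\mathfrak{d}$ is dominating.
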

\begin{proof}
If $\langle x_\alpha\mid\alpha<\mathfrak{d}\rangle$ is cofinal with respect to $\leq^*$, then $x\mapsto\min\{\alpha\mid x\leq^*x_\alpha\}$ is monotone with cofinal image. 
Lemma \ref{reduction} completes the proof. 
As we have noted, if $\square(\operatorname{cf}(\mathfrak{d}))$ fails then $\operatorname{cf}(\mathfrak{d})$ is weakly compact in $L$, hence the conclusion for $n>0$ follows from Proposition \ref{square}. 
\end{proof}
\begin{cor}
For all $n>0$ the consistency strength of the principle $\PH_n$ is exactly a weakly compact cardinal.
\end{cor}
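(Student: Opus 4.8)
The plan is to assemble the claim from two ingredients already in hand: the upper bound provided by Theorem \ref{PHfromHechlers} and the lower bound provided by Proposition \ref{OmtoOrds}. No new combinatorics is needed; the work is entirely bookkeeping about consistency strength.

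For the upper bound, suppose $\mathrm{ZFC}$ is consistent with the existence of a weakly compact cardinal. Working in such a model, let $\kappa$ be weakly compact and force with the length-$\kappa$ finite support iteration $\Hbb_\kappa$ of Hechler forcing. By Theorem \ref{PHfromHechlers}, the resulting generic extension satisfies $\PH_n$ (in fact $\PH_m$ for every $m\in\omega$). Since passing to a set-forcing extension never increases consistency strength, this gives $\mathrm{Con}(\mathrm{ZFC}+\text{``there is a weakly compact cardinal''})\Rightarrow\mathrm{Con}(\mathrm{ZFC}+\PH_n)$.

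For the lower bound, work in an arbitrary model of $\mathrm{ZFC}+\PH_n$ with $n>0$. By Proposition \ref{OmtoOrds}, $\operatorname{cf}(\dfrak)$ is weakly compact in $L$; in particular $L$ is an inner model of $\mathrm{ZFC}$ in which there is a weakly compact cardinal. Because $L$ is definable, $\mathrm{ZFC}+\PH_n$ proves that $\mathrm{ZFC}+\text{``there is a weakly compact cardinal''}$ has a model, i.e. $\mathrm{ZFC}+\PH_n\vdash\mathrm{Con}(\mathrm{ZFC}+\text{``there is a weakly compact cardinal''})$, which is the desired reverse implication of consistency strengths. Combining the two directions yields that the consistency strength of $\PH_n$ is exactly that of a weakly compact cardinal.

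There is no substantive obstacle here; the only point deserving a word of care is that the hypothesis $n>0$ is genuinely needed, and it is used precisely in the lower-bound half, via the clause of Proposition \ref{OmtoOrds} (resting ultimately on Proposition \ref{square} and the Todorcevic--Jensen fact on the failure of $\square(\kappa)$) which is false for $n=0$ — indeed $\PH_0$ is outright a theorem of $\mathrm{ZFC}$, so it carries no consistency strength at all.
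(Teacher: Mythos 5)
Your proposal is correct and is exactly the paper's argument: the paper's proof is the one-line observation that the corollary "follows immediately from Theorem \ref{PHfromHechlers} and Proposition \ref{OmtoOrds}," which you have simply unpacked (forcing extension for the upper bound, the inner model $L$ via $\operatorname{cf}(\dfrak)$ being weakly compact in $L$ for the lower bound). Your remark about where $n>0$ is used is also accurate.
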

\begin{proof}
This follows immediately from Theorem \ref{PHfromHechlers} and Proposition \ref{OmtoOrds}.
\end{proof}
\subsection*{Partition relations and hypotheses from a simplicial perspective}
In this section we show that our partition hypotheses admit concise formulation within the framework of simplicial homotopy theory, and, moreover, that they figure therein as only very minor variations on classical partition relations.

To do so, we recall some basics from the theory of simplicial sets. For brevity, we will leave several terms incidental to our ultimate aim only very loosely defined; readers are referred to Chapters I and III.4 of \cite{GJ} for a much fuller treatment.

\begin{defn}
Write $\mathbf{\Delta}$ for the category of finite nonempty ordinals, whose objects are typically written $[0]=\{0\}$, $[1]=\{0,1\}$, and so on, and whose morphisms are the order-preserving maps $f:[m]\to [n]$.
Among these maps we distinguish two main sorts:
\begin{itemize}
\item injections of the form $d^i_n:[n-1]\to [n]$, which omits $i\in [n]$ from its image, and
\item surjections of the form $s^i_n:[n+1]\to [n]$, which sends $i\in [n]$ and $i+1$ both to $i$.
\end{itemize} 
It is easy to see that these maps generate the morphisms of $\mathbf{\Delta}$.
In consequence, the \emph{cosimplicial identities}, a short list of the fundamental relations among them (like $s_n^i d_{n+1}^i=\mathrm{id}$), fully determines the category $\mathbf{\Delta}$; similarly for its opposite category $\mathbf{\Delta}^{\mathrm{op}}$, wherein the \emph{simplicial identities} play the analogous role (\cite[p. 4]{GJ}).
As is standard, we will suppress such maps' indices $n$ in what follows.

A \emph{simplicial set} $X$ is a functor $S:\mathbf{\Delta}^{\mathrm{op}}\to\mathsf{Set}$; put differently, it is:
\begin{itemize}
\item a family of sets $F([n])=X_n$ $(n\in\omega)$, together with
\item morphisms $F(f):X_n\to X_m$ whose relations mirror those among the morphisms in $\mathbf{\Delta}$.
\end{itemize}
The elements of $X_n$ are sometimes termed the \emph{$n$-dimensional faces} of $X$.
Maps of the form $F(d^i)$ or $F(s^i)$ are termed \emph{face maps} and \emph{degeneracies} and written $d_i$ and $s_i$, respectively.
An element of the image of the latter is termed \emph{degenerate}.
We write $\mathsf{sSet}$ for the category of simplicial sets; the morphisms therein are the natural transformations.
\end{defn}

A shaping intuition for simplicial sets is the following example.

\begin{exa}
\label{ex:simplicialcomplexesassimplicialsets}
Let $Z$ be totally ordered and let $Y$ be an abstract simplicial complex on $Z$. 
The order on $Z$ induces a family of face maps $d_i$ on the sequence $\mathcal{Y}=\{[Y]_n\mid n\in\omega\}$ which satisfy the relevant simplicial identities; lacking degeneracies, however, $\mathcal{Y}$ fails to define a simplicial set.
Nevertheless, there exists a minimal simplicial set $\overline{Y}$ which (levelwise) contains $\mathcal{Y}$, and it amounts simply to the closure of $\mathcal{Y}$ under the degeneracy maps $s_i$.
\end{exa}
Like abstract simplicial complexes, simplicial sets admit geometric realizations --- in fact there exists a geometric realization functor $|\cdot |:\mathsf{sSet}\to\mathsf{Top}$ which is left adjoint to the \emph{singular functor} $\mathrm{Sing}:\mathsf{Top}\to\mathsf{sSet}$,
where $\mathrm{Sing}(Y)_n$ is just the set of continuous maps from an $n$-simplex to the topological space $Y$ (this is, of course, the functor underlying the singular homology of $Y$).
The remarkable point is that whatever distortions or identifications these two functors may introduce, they do respect the fundamental notions of homotopy (fibrations, cofibrations, weak equivalences) on each side \cite{Qui}.
Moreover, each of these functors has image within that class of objects best suited for homotopy operations: the \emph{$CW$ complexes} in $\mathsf{Top}$; the \emph{Kan complexes} in $\mathsf{sSet}$.

For our purposes, the crucial example of a simplicial set is the following one.
\begin{exa}
For any quasi-order $\mathbb{P}$, let $N\mathbb{P}$ denote the \emph{nerve of $\mathbb{P}$}; this is the simplicial set defined by $(N\mathbb{P})_n=\mathbb{P}^{[n+1]}$ for all $n\in\omega$, with the morphism $(N\mathbb{P})_n\to (N\mathbb{P})_m$ associated to any morphism $[m]\to [n]$ in $\mathbf{\Delta}$ the obvious one.
\end{exa}
\begin{rem}
\label{rem:simplicialcolorings}
Observe that in the perspective of the previous example, a function $c:[\kappa]^{n+1}\to\lambda$ is just a $\lambda$-coloring of the nondegenerate $n$-faces of $N\kappa$.
\end{rem} 
The following theorem is folklore; see \cite[\href{https://kerodon.net/tag/002Z}{Tag 002Z}]{Ker} for a proof.
\begin{thm}
\label{thm:folklore}
The nerve functor $N$ fully and faithfully embeds the category $\mathsf{QO}$ of quasi-orders into the category $\mathsf{sSet}$ of simplicial sets.
\end{thm}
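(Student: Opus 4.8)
The plan is to deduce this from the classical fact that the nerve functor on small categories, $N\colon\mathsf{Cat}\to\mathsf{sSet}$, is fully faithful, together with the observation that $\mathsf{QO}$ sits inside $\mathsf{Cat}$ as a full subcategory. First I would record the dictionary: a quasi-order $\mathbb{P}$ is the same datum as a small category having at most one arrow between any two objects (an arrow $p\to p'$ precisely when $p\leq p'$), monotone maps are exactly functors between such categories, and under this identification the nerve of the excerpt --- $(N\mathbb{P})_n=\mathbb{P}^{[n+1]}$ with its evident face and degeneracy maps --- agrees with the usual categorical nerve. The one structural input I would isolate for use throughout is the \emph{rigidity} of these nerves: an $n$-simplex of $N\mathbb{P}$ simply \emph{is} a weakly increasing $(n+1)$-tuple, hence is determined by its sequence of vertices, and the $(n+1)$ vertex maps $[0]\to[n]$ in $\mathbf{\Delta}$ jointly detect equality of $n$-simplices.

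With this in hand, the core step is full faithfulness. Given a simplicial map $\alpha\colon N\mathbb{P}\to N\mathbb{Q}$ I would set $f:=\alpha_0\colon(N\mathbb{P})_0\to(N\mathbb{Q})_0$, i.e.\ a function $\mathbb{P}\to\mathbb{Q}$, and argue in two moves: (i) $f$ is monotone, since for $p\leq p'$ the edge $(p,p')$ maps under $\alpha_1$ to an edge of $N\mathbb{Q}$ whose endpoints --- read off via naturality against the cofaces $d^0,d^1$ --- are $f(p)$ and $f(p')$, and such an edge exists only when $f(p)\leq f(p')$; and (ii) $\alpha=N(f)$, since for any $n$-simplex $\sigma=(p_0,\dots,p_n)$ naturality against each vertex inclusion $[0]\to[n]$ forces the $i$-th vertex of $\alpha_n(\sigma)$ to be $f(p_i)$, whereupon rigidity pins down $\alpha_n(\sigma)=(f(p_0),\dots,f(p_n))$. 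This shows $\alpha$ is recovered from its vertex-level component (faithfulness), while conversely any monotone $f$ plainly induces a simplicial map $N(f)$ (fullness); naturality of the assignment $\alpha\leftrightarrow f$ in both variables is then immediate.

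Finally I would check that $N$ is injective on objects, so that it is a genuine full embedding and not merely fully faithful: from $N\mathbb{P}$ one reads back the underlying set as $(N\mathbb{P})_0$ and the relation ``$p\leq q$'' as ``there is a $1$-simplex $e$ with $d_1e=p$ and $d_0e=q$'', so $N\mathbb{P}=N\mathbb{Q}$ forces $\mathbb{P}=\mathbb{Q}$. I do not anticipate a genuine obstacle in any of this --- the whole argument is formal once rigidity is noted --- and the only place demanding care is the bookkeeping with the simplicial identities, namely keeping straight which composite of face maps extracts which vertex of a simplex. A fully detailed treatment of exactly this point is given in \cite[\href{https://kerodon.net/tag/002Z}{Tag 002Z}]{Ker}, which the theorem already cites.
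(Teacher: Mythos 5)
Your argument is correct and is essentially the standard folklore proof that the paper defers to via the Kerodon citation: identify quasi-orders with thin categories, observe that simplices of $N\mathbb{P}$ are rigid (determined by their vertex tuples), and recover a simplicial map from its $0$-level component by naturality against the vertex inclusions. The paper gives no proof of its own beyond that citation, so there is nothing further to compare.
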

Within the framework of $\mathsf{sSet}$, the functor $N$ has one main shortcoming, which is that the nerve of a (nontrivial) quasi-order is never a Kan complex.
More important than Kan complexes' definition for our immediate purposes is their analogy with CW complexes: just as any topological space may be replaced by a weakly equivalent CW complex, any simplicial set may be converted to a Kan complex via repeated applications of the \emph{$Ex$ functor}, which should be thought of as the \emph{reverse} (or more precisely right adjoint) \emph{of the subdivision functor on simplicial sets} (see \cite[p. 183]{GJ}).
We will define its levels after the following example, then proceed directly to this section's main aim: the parallel reformulations of classical partition relations and the hypotheses $\mathrm{PH}_n$.
\begin{exa}
Recall from Definition \ref{simplicial_definitions} the subdivision $\mathrm{sd}(Y)$ of an abstract simplicial complex $Y$.
Note also that a partial order linearly ordering the vertices of each of the faces of $Y$ would have sufficed for the construction of Example \ref{ex:simplicialcomplexesassimplicialsets}.
Hence $\overline{\mathrm{sd}(Y)}$ is well-defined, since $Z=(Y,\subseteq)$ forms such an ordering of the vertices of $\mathrm{sd}(Y)$; in fact it's straightforward to see that $\overline{\mathrm{sd}(Y)}\cong NZ$.

Write $\Delta^n$ for the simplicial set $\overline{\Delta_n}$, where $\Delta_n$ is, much as in Definition \ref{simplicial_definitions}, the abstract simplicial complex on $\mathbb{N}$ corresponding to the standard $n$-simplex; this is equivalent to the more standard definition $\Delta^n=\mathrm{Hom}_{\mathbf{\Delta}}(\,\cdot\,,[n])$.
Define then the subdivision $\mathrm{sd}$ of the \emph{simplicial set} $\Delta^n$ by $\mathrm{sd}\,\Delta^n:=\overline{\mathrm{sd}(\Delta_n)}\cong N(\Delta_n,\subseteq)$.
\end{exa}
\begin{defn}
For any simplicial set $X$, the levels of the $Ex$-image of $X$ are defined by
$$(Ex\,X)_n=\mathrm{Hom}_{\mathsf{sSet}}(\mathrm{sd}\,\Delta^n,X)$$
for all $n\in\omega$.
\end{defn}
\begin{lem}
\label{lem:Exofanerve}
For any quasi-order $\mathbb{P}$, the elements of $(Ex\,N\mathbb{P})_n$ are exactly the order-preserving images of $(\Delta_n,\subseteq)$ in $\mathbb{P}$.
\end{lem}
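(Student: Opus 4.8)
The plan is to unwind the definition of $Ex$ at level $n$ and then appeal to the full faithfulness of the nerve functor. By definition $(Ex\,N\mathbb{P})_n=\mathrm{Hom}_{\mathsf{sSet}}(\mathrm{sd}\,\Delta^n,N\mathbb{P})$, so the first step is to rewrite the source using the identification recorded just above, namely $\mathrm{sd}\,\Delta^n=\overline{\mathrm{sd}(\Delta_n)}\cong N(\Delta_n,\subseteq)$; since $\mathrm{Hom}$-sets out of isomorphic objects are canonically in bijection, this yields $(Ex\,N\mathbb{P})_n\cong\mathrm{Hom}_{\mathsf{sSet}}(N(\Delta_n,\subseteq),N\mathbb{P})$.

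Next I would observe that $(\Delta_n,\subseteq)$ --- the poset of nonempty subsets of $[n]$ ordered by inclusion --- is a finite partial order and hence an object of $\mathsf{QO}$, so that Theorem \ref{thm:folklore} (full faithfulness of $N\colon\mathsf{QO}\to\mathsf{sSet}$) applies and gives a bijection $\mathrm{Hom}_{\mathsf{sSet}}(N(\Delta_n,\subseteq),N\mathbb{P})\cong\mathrm{Hom}_{\mathsf{QO}}((\Delta_n,\subseteq),\mathbb{P})$. The right-hand side is by definition the set of order-preserving maps from $(\Delta_n,\subseteq)$ into $\mathbb{P}$, which is precisely the content of the lemma. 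To make the correspondence explicit --- and to justify the informal phrase ``order-preserving image'' --- one records that under this composite bijection a simplicial map is determined by its effect on $0$-simplices, that the $0$-simplices of $\mathrm{sd}\,\Delta^n$ are exactly the nonempty subsets $\sigma\subseteq[n]$, and that the value on $\sigma$ is a vertex of $N\mathbb{P}$, i.e.\ an element of $\mathbb{P}$, the simplicial-map condition then translating into monotonicity of $\sigma\mapsto$ (its value).

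I do not expect a genuine obstacle here: the lemma is in essence a translation of the definition of $Ex$ through the two facts $\mathrm{sd}\,\Delta^n\cong N(\Delta_n,\subseteq)$ and the full faithfulness of $N$. The only points requiring care are bookkeeping ones: confirming that $(\Delta_n,\subseteq)$ satisfies the hypotheses placed on objects of $\mathsf{QO}$, checking that the isomorphism $\mathrm{sd}\,\Delta^n\cong N(\Delta_n,\subseteq)$ is the one implicitly used when describing $Ex$, and spelling out that an order-preserving map into $\mathbb{P}$ is the same data as what the statement calls, loosely, an ``order-preserving image'' of $(\Delta_n,\subseteq)$ in $\mathbb{P}$.
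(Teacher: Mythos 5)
Your proposal is correct and follows exactly the paper's own argument: unwind the definition of $(Ex\,N\mathbb{P})_n$, use the identification $\mathrm{sd}\,\Delta^n\cong N(\Delta_n,\subseteq)$, and apply the full faithfulness of the nerve functor (Theorem \ref{thm:folklore}). The extra bookkeeping you supply (values on vertices, monotonicity) is a harmless elaboration of what the paper leaves implicit.
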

\begin{proof}
By definition, $(Ex\,N\mathbb{P})_n=\mathrm{Hom}_{\mathsf{sSet}}(\mathrm{sd}\,\Delta^n,N\mathbb{P})=\mathrm{Hom}_{\mathsf{sSet}}(N(\Delta_n,\subseteq),N\mathbb{P})$.
By Theorem \ref{thm:folklore}, this in turn equals $\mathrm{Hom}_{\mathsf{QO}}((\Delta_n,\subseteq),\mathbb{P})$, completing the proof.
\end{proof}
We require only one further item of notation.
\begin{notn}
For any $n\geq 0$ the family of maps $v_i:[0]\to[n]:0\mapsto i$ determines a family of maps $v_i^*:X_n\to X_0$. Let $$\mathrm{vert}(x)=\{v_i^*(x)\mid i\in [n]\}$$ for any $x\in X_n$.
\end{notn}
\begin{defn} For any simplicial set $X$ and $n>0$, say $S\subseteq X_n$ \emph{spans} $T\subseteq X_0$ if for every $\bar{t}\in [T]^{n+1}$ there exists an $s\in S$ with $\mathrm{vert}(s)=\bar{t}$.
Say $S$ \emph{spans} $T$ \emph{neatly} if for each $i\leq n$ the vertex map is injective on the collection $\{d_i(s)\mid s\in S\}$ of $i$-faces of elements of $S$.
\end{defn}

We now have the following equivalences. For any infinite cardinal $\kappa$:
\begin{itemize}
\item $\kappa\to(\kappa)_\omega^{n+1}$ is equivalent to:
\medskip

\begin{quote}
\emph{For all $c:[\kappa]^{n+1}\to\omega$ there exists a cofinal $T\subseteq\kappa$ and a $c$-monochromatic $S\subseteq (N\kappa)_n$ neatly spanning $S$.}
\end{quote}
\medskip

\item $\mathrm{PH}_n(\kappa)$ is equivalent to:
\medskip
\begin{quote}
\emph{For all $c:\kappa^{n+1}\to\omega$ there exists a cofinal $T\subseteq \kappa$ and a $c$-monochromatic $S\subseteq (Ex\,N \kappa)_n$ neatly spanning $T$.}
\end{quote}
\medskip

\item More generally, $\mathrm{PH}_n(\mathbb{P},\lambda)$ asserts for any quasi-order $\mathbb{P}$ and cardinal $\lambda$ that:

\medskip
\begin{quote}
\emph{For all $c:\mathbb{P}^{n+1}\to\lambda$ there exists a cofinal $T\subseteq \mathbb{P}$ and a $c$-monochromatic $S\subseteq (Ex\,N \mathbb{P})_n$ neatly spanning $T$.}
\end{quote}
\medskip

\end{itemize}
The first item follows directly from Remark \ref{rem:simplicialcolorings}, and the second and third are similarly immediate from definitions and Lemma \ref{lem:Exofanerve}.
In the second item, for example, note that the requirement that $S$ \emph{neatly} spans $T$ ensures that the association to each $\bar{t}\in [T]^{k+1}$ $(k\leq n)$ of an order-preserving image of $(\Delta_k,\subseteq)$ in $\kappa$ (one identifying the vertices of $\Delta_k$ with the elements of $\bar{t}$) is well-defined. This $\subseteq$-increasing association of values in $\kappa$ to, for example, the nonempty subsets $\{\alpha\}$, $\{\beta\}$, and $\{\alpha,\beta\}$ of each $\{\alpha,\beta\}\in [T]^2$ amounts to a partial $2$-cofinal function $T^{\leq 2}\to \kappa$, one which will monochromatically extend to a full $2$-cofinal function $\kappa^{\leq 2}\to\kappa$ by way of the remark following Lemma \ref{strictly_increasing} and (the logic of) Lemma \ref{extensionlemma}.
Further details are left to the reader.
The fact that in the simplicial language outlined above, the formulations of $\mathrm{PH}_n(\kappa)$ and of classical partition principles differ by only four characters is quite striking.

\section{Conclusion}
We conclude this work by recording several questions arising out of the analyses above.
First, we do not know whether ($\dagger$) is needed in the results in Sections \ref{Sect:topology} and \ref{Sect:MeasPartHyp}.
In particular:

\begin{question}
Are Theorem \ref{partition} or Corollary \ref{cor:CA_cohere->PCA_triv} true if the hypothesis of \emph{($\dagger$)} is removed?
\end{question}

It is also unclear whether the measurable cardinal is needed as a hypothesis in Proposition \ref{Hn_coll}.

\begin{question}
Does $L(\Rbb)$ satisfy that every subset of $\Omega^{[n]}$ is $\Hcal_n$-measurable 
after Levy collapsing an inaccessible cardinal to $\omega_1$?
\end{question}

\begin{question}
Is ($\dagger$) equivalent the assertion that $\aleph_1$ is an inaccessible cardinal in $L[r]$ for every $r \subseteq \omega$?
\end{question}
\noindent
This seems closely related to whether ZFC proves $\Qcal_n$ is c.c.c..

Two other conspicuous questions are the following:
\begin{question}
What is the consistency strength of $\PH_1(\omega_2)$?
\end{question}

\begin{question}
What is the consistency strength of $\PH_2(\omega_3)$?
\end{question}
\noindent
Here we should acknowledge the possibility of the answer ``$0=1$''; in other words, the possibility exists that $\PH_2(\omega_3)$ inconsistent with the ZFC axioms.

Returning to the setting of $\Omega$:

\begin{question}
What is the least value of the continuum compatible with the condition ``$\PH_n$ for all $n\in\omega$''?
\end{question}
\noindent
By Proposition \ref{OmtoOrds}, $\aleph_{\omega+1}$ is a lower bound.
We note that in \cite{SVHDLwLC} it is established that the vanishing of
${\lim}^n \Abf$ for all $n$---a consequence of $\PH_n$---is relatively consistent with
ZFC via a model in which the continuum is $\aleph_{\omega+1}$.
The question of whether this is optimal depends on answers to the following:
\begin{question}
If $n > 1$, what is the least value of the continuum compatible with ${\lim}^s \Abf = 0$ for all $s \leq n$?
What if $\mathfrak{b} = \dfrak$?
\end{question}
\noindent
The $n=1$ case was settled in \cite{DSV}, which established the consistency of ${\lim}^1 \Abf = 0$ with $\mathfrak{b} = \mathfrak{d} = 2^{\aleph_0} = \aleph_2$. Note that upper bounds $\lambda_n$ $(n>0)$ for the first part of the question are recorded in \cite[Theorem 6.1]{SVHDLwLC} (under the Generalized Continuum Hypothesis $\lambda_1=\aleph_2$, $\lambda_2=\aleph_7$, and so on, in a sequence with supremum $\aleph_\omega$).

As the preceding discussion underscores, the principles $\PH_n$ are stronger than is strictly necessary for the conclusions about higher limits that we derive from them.
More precisely, while $\PH_n$ quantifies over all colorings $c:\Omega^{n+1}\to\omega$, only a subclass of these colorings pertain to additivity questions for ${\lim}^n$.
Useful but more attainable variants of $\PH_n$ might amount to restrictions to just this class.
Put differently, while we have shown the consistency strength of $\PH_n$ to be equal to existence of a weakly compact cardinal, it seems likely that the consistency strength of the additivity of the associated $\lim^n$ functors is significantly less. 

\begin{question}
What is the consistency strength of the statement ``$\lim^n$ is additive on the class of $\Om$-systems''?
\end{question}

A related calibration of strength is the following question.

\begin{question} 
Suppose ${\lim}^n$ is additive for $\Omega$-systems in the inner model $L(\Rbb)$. Can we conclude that $\omega_1$ is an inaccessible cardinal in $L$?
\end{question}

Since large cardinals imply that $L(\Rbb)$ models that ${\lim}^n$ is additive for $\Omega$-systems,
it is natural to ask if this conclusion can be derived from a determinacy hypothesis.
\begin{question}
Assume the Axiom of Determinacy. 
Is ${\lim}^n$ additive for $\Omega$-systems?
\end{question}

We noted a further question in our discussion of generalized partition hypotheses in Section \ref{Sect:Generalizing}; this was the following:
\begin{question}
Let $\mathbb{P}$ and $\mathbb{Q}$ be directed quasi-orders with $\mathbb{P}\leq_T\mathbb{Q}$. 
Does $\PH_n(\mathbb{Q},\lambda)$ imply $\PH_n(\mathbb{P},\lambda)$?
\end{question}
This question is, in spirit, the converse of Lemma \ref{reduction}; by that lemma, in fact, this question is equivalent to that of whether $\PH_n(\mathbb{Q},\lambda)$ implies $\PH_n(\mathbb{P},\lambda)$ whenever $\mathbb{P}$ is a cofinal suborder of $\mathbb{Q}$.

Finally, it is natural to ask if results like ours on $L(\Rbb)$ can augment and refine the treatment and analysis of mathematical obstructions which derived limits tend to organize.
For example, Sections \ref{Sect:topology} and \ref{Sect:MeasPartHyp} may be read as evoking a subcomplex of the cochain complex of Definition \ref{limscochaincomplex}, one consisting only of its \emph{measurable} cochains.
Its cohomology groups might then be viewed as measurable variants of the functors ${\lim}^n$; by the results of Section \ref{Sect:MeasPartHyp}, these \emph{measurable higher limits} should be additive, and should even tend to equal zero.
The question is how algebraic a formalization any of this admits: whether such a family of functors forms a \emph{connected sequence of functors} in the sense of \cite{CaEi}, or whether, more particularly, they might correspond to a projective class in a category like $\mathsf{inv}$-$({^\omega}\omega)$, in the sense of \cite{EiMo2}.

\begin{question}
\label{ques:def_lim}
Do there exist measurable variants of the ${\lim}^n$ functors for $\Omega$-systems?
\end{question}
This question is somewhat open-ended; if the answer is, as suspected, yes, the question should be read as standing for the task of analyzing such variants and their relation to the classical functors ${\lim}^n$.
Applications in this case are not difficult to imagine; the existence of measurable---and by our results, better-behaved---variants of more compound functors like strong homology, for example, would be likely to follow.

\end{document}